\numberwithin{equation}{section}
\theoremstyle{definition}
\newtheorem{theorem}{Theorem}
\newtheorem{lemma}{Lemma}
\newtheorem{corollary}{Corollary}
\newtheorem{definition}{Definition}
\begin{document}
	
	\title{Superconvergent  HDG methods for  Maxwell's equations via
		the $M$-decomposition}

\author{Gang Chen%
	\thanks{ College of  Mathematics, Sichuan University, Chengdu 610064, China (\mbox{cglwdm@uestc.edu.cn}).}
	\and
	Peter Monk%
	\thanks{Department of Mathematics Science, University of Delaware, Newark, DE, USA (\mbox{monk@udel.edu}).}
	\and
	Yangwen Zhang%
	\thanks{Department of Mathematics Science, University of Delaware, Newark, DE, USA (\mbox{ywzhangf@udel.edu}).}
}

\date{\today}

\maketitle

\begin{abstract}
	The concept of  the  $M$-decomposition was introduced by Cockburn et al.\ in Math. Comp.\ vol.\  86 (2017),  pp.\ 1609-1641 {to provide criteria to guarantee optimal convergence rates for the Hybridizable Discontinuous Galerkin (HDG) method for coercive elliptic problems}. In that paper they systematically constructed  superconvergent hybridizable discontinuous Galerkin (HDG) methods to approximate  the solutions of elliptic PDEs on unstructured meshes.  In this paper, we use the $M$-decomposition to construct HDG methods for the Maxwell's equations on unstructured meshes in two dimension. In particular, we show the any choice of spaces having an $M$-decomposition,
	together with sufficiently rich auxiliary spaces, has an optimal error estimate and superconvergence even
	though the problem is not in general coercive. Unlike the elliptic case, we obtain a superconvergent rate for the curl of the solution, not the solution, and this is confirmed by our numerical experiments.  
\end{abstract}

\section{Introduction}
A large number of computational techniques have been developed for solving Maxwell's equations in both the frequency and time domains. In the frequency domain, and in the presence of inhomogeneous penetrable media, the  finite element method is often used. It has an additional advantage compared to finite differences in that it can handle complex geometries.

Methods using $\bm{H}(\text{curl};\Omega)$-conforming edge elements have been widely studied, see for example \cite{Nedelec_mixed_NM_1986,Nedelec_mixed_NM_1980,Hiptmair_electromagnetism_Acta_2002,Monk_Maxwell_Book_2003,Monk_Maxwell_NM_1992,Zhong_Maxwell_JCM_2009}. The implementation of the conforming method, particularly higher order elements, is complicated. Hence,  non-conforming methods provide an  interesting   alternative for this kind of problem that may also be attractive for nonlinear problems.
In particular, Discontinuous Galerkin (DG) methods  have been used to approximate the solution of the Maxwell's equations for a long time. The first DG method for solving
Maxwell's equations with high frequency was analyzed in \cite{Monk_Maxwell_CMAME_2002}. A local discontinuous Galerkin (LDG) scheme was proposed for the time-harmonic Maxwell's equations with low frequency  was studied in \cite{Perugia_LDG_maxwell_Math_Comp_2003} (see also \cite{Houston_Maxwell_SINUM_2004} for this problem using mixed DG methods).  These methods tend to have many more degrees of freedom than conforming methods so it is interesting to consider hybridizable methods.

This paper is concerned with developing a class of methods for Maxwell's equations in 2D.  Obviously Maxwell's
equations are usually studied in three dimensions, but if the domain and data functions are translation invariant in one direction, the full problem can be decoupled into a pair of problems posed in two dimensions.  To see how this is possible, consider
the usual time harmonic Maxwell system for the electric field $\bm E$ (a complex valued vector function):
\[
{\rm{}curl}\,\mu_r^{-1}{\rm{}curl}\,\bm E-\kappa^2\epsilon_r\bm E=\bm F.
\]
Here $\mu_r$ is the relative magnetic permeability, $\kappa>0$ is the wave number, $\epsilon_r$ is the relative electric permittivity which may be complex valued.  In addition
$\bm F=ik\epsilon_0\bm j$,  where $\bm j$ is the given current density and $\epsilon_0$ is the permittivity of vacuum.

If $\epsilon_r$, $\mu_r$ and $\bm F$ are independent of $x_3$, and if we seek a solution 
$\bm E$ that is also independent of $x_3$ we obtain a simpler partial differential equation for $\bm u=(E_1,E_2)^T$ 
given in (\ref{simple1}) below. In addition, a Helmholtz equation is obtained for $E_3$ but is not the subject of this paper (see 
\cite{Griesmaier_Monk_Helmholtz_JSC_2011,Cui_Zhang_Helmholtz_IMAJNA}). To define the problem for $\bm u$ we need some notation. Because we are now working in two dimensions the curl operator can be defined in two ways depending on whether its argument is a scalar or a vector. We therefore introduce  the following standard definitions where $\bm v$ is a smooth vector function and $p$ is a smooth scalar function:
\begin{align}
\nabla\times\bm v=(-\partial_y,\partial_x)^T\cdot\bm v, \qquad  \nabla\times p= (\partial_y,-\partial_x)^Tp,
\label{curl2D}
\end{align}
Similarly there are two definitions for the cross product again depending on the use of scalar or vector functions. If $\bm n$ is a unit vector (in practice the normal vector to a domain in $\mathbb{R}^2$), we define
\begin{align*}
\bm n\times\bm v =(-n_2,n_1)^T\cdot\bm v,\qquad  \bm n\times p=(n_2,-n_1)^Tp,
\end{align*} 
We can now state the problem we shall study. Let $\Omega$ be a bounded simply-connected Lipschitz polygon in $\mathbb{R}^2$ with connected boundary $\partial\Omega$. Then a typical model problem for $\bm u$ is to seek solutions of the following interior problem:
\begin{subequations}\label{Maxwell_equation_ori_form}
	\begin{align}
	\nabla\times(\mu_r^{-1}\nabla\times \bm{u}) -\kappa^2\epsilon_r\bm u&=\bm{f}&\text{ in }\Omega, \label{simple1}\\
	\bm{n}\times \bm{u} &={g}&\text{ on }\partial\Omega,
	\end{align}
\end{subequations}
where the right hand side is $\bm f=(F_1,F_2)^T$. 
To ensure the uniqueness of the solution to this problem (and hence existence via the Fredholm alternative), we assume that $\mu_r$ is real values and positive.  In addition, either $\Im(\epsilon_r)>0$, or $\Im(\epsilon_r)=0$ and $\kappa^2$ is not a Maxwell eigenvalue, where $\Im(\epsilon_r)$ denotes the image part of $\epsilon_r$.  Note that using the vector form of the problem has been advocated for
example in \cite{brenner0,brenner1} and these papers motivate in part the current study.

In this paper we shall study hybridizable discontinuous Galerkin (HDG) methods applied to Maxwell's equations (\ref{Maxwell_equation_ori_form}). HDG methods for elliptic problems were first proposed in 2009 in  \cite{Cockburn_Gopalakrishnan_Lazarov_Unify_SINUM_2009} and
an analysis using special projections was developed in \cite{Cockburn_Gopalakrishnan_Sayas_Porjection_MathComp_2010}. 
HDG methods have several distinct advantages including: allowing static condensation and hence less global degree of freedoms, flexibility in meshing (inherited from DG methods), ease of design and implementation, and  local conservation of physical quantities.  As a result, HDG methods have been proposed for a large number of problems, see, e.g., \cite{ChenHuShenSinglerZhangZheng_HDG_Convection_Dirtributed_Control_JCAM_2018,HuShenSinglerZhangZheng_HDG_Dirichlet_control3,HuMateosSinglerZhangZhang2,Cesmelioglu_NS_MathComp_2017,Muralikrishnan_Sriramkrishnan_Tran_Bui_iHDG_SISC_2017,Tan_Shallow_Water_SISC_2016,Sheldon_Miller_Jonathan_Fluid_JCP_2016}.

An important property of HDG methods is  the superconvergence of some quantities on  unstructured meshes (after element by element post-processing). One way
to guarantee the existence of an HDG projection and superconvergence is to ensure that the particular discretization 
spaces used in the HDG method satisfy an $M$-decomposition~\cite{Cockburn_M_decomposition_Part1_Math_Comp_2017}.  This reduces the problem of
determining whether a choice of spaces will have good convergence properties to simply checking some
inclusions and evaluating an index (see equation~(\ref{index_of_I_M})). This method of analysis has been extended to other applications, see for example~\cite{Cockburn_M_decomposition_H1_SINUM_2018,Cockburn_M_decomposition_Elasticity_IMA_2018,Cockburn_M_decomposition_Stokes_IMA_2017,Cockburn_M_decomposition_Part3_M2AN_2017,Cockburn_M_decomposition_Part2_M2AN_2017}.

The HDG method has been applied to Maxwell's equations in \cite{Cockburn_maxwell_HDG_JCP_2011} but without an error analysis. Later on, an error analysis was provided 
in \cite{Chen_maxwell_HDG_2017,Chen_Maxwell_HDG_CMAME_2018}  for  zero frequency and  in \cite{Feng_Maxwell_CMAM_2016,Lu_hp_HDG_Maxwell_Math_Comp_2017} for impedance boundary conditions and high wave number.  These papers did not use the $M$-decomposition and only considered simplicial elements.

The aim of this paper is to extend the concept of the $M$-decomposition to  time-harmonic Maxwell's equations with Dirichlet boundary condition in 2D. The main novelty of our paper is that we show that provided the HDG spaces satisfy the conditions for an $M$-decomposition, and certain auxiliary spaces contain constant piecewise polynomials, an optimal error estimate will hold as well as a super-convergence of the curl of the field (as was
observed in \cite{Cockburn_maxwell_HDG_JCP_2011}).  Note that in our context superconvergence of the curl of the field is important because this implies that both the electric and magnetic fields can be approximated at the same rate. We then use the $M$-decomposition to exhibit
finite element spaces with optimal convergence on triangles, parallelograms and squares.  Our convergence theory is supported by numerical examples in each case.

The outline of the paper is as follows.  In \Cref{HDG_mathods}, we set some notation and give the HDG formulation of \eqref{Maxwell_equation_ori_form}. In \Cref{M_decompositions_for_Maxwell_equations}, we follow the seminal work \cite{Cockburn_M_decomposition_Part1_Math_Comp_2017} to introduce the concept of the $M$-decomposition for Maxwell's equation. The error analysis is given in \Cref{Error_estimates}, we obtain optimal convergence rate for the electric field $\bm u$ and superconvergence rate for $\nabla \times \bm u$.
The construction of example spaces and  numerical experiments are provided to confirm our theoretical results in \Cref{construction_and_numerical_experiments}.  We end with a conclusion.

\section{The HDG method}
\label{HDG_mathods}

We start by defining some notation. For any sufficiently smooth  bounded domain $\Lambda\subset\mathbb{R}^2$, let $H^{m}(\Lambda)$ denote the usual  $m^{th}$-order Sobolev space of scalar functions on $\Lambda$, and $\|\cdot\|_{m, \Lambda}$, $|\cdot|_{m,\Lambda}$  denote the corresponding norm and semi-norm.
We use $(\cdot,\cdot){_\Lambda}$ to denote the complex inner product on $L^2(\Lambda)$.
Similarly, for the boundary $\partial \Lambda$ of $\Lambda$, we use $\langle\cdot,\cdot\rangle_{\partial\Lambda}$ to denote the $L^2$ inner product.
Note that bold face fonts will be used for vector analogues of the Sobolev spaces along with vector-valued functions. 

Recalling the definition of the curl operators in 2D in (\ref{curl2D}), for $\Lambda\subset\mathbb{R}^2$ we next define
\begin{align*}
\bm{H}(\text{curl};\Lambda):=\{\bm u\in \bm L^2(\Lambda): \nabla \times \bm u \in L^2({\Lambda}) \},  \quad  \bm H_0({\rm curl;\Lambda}) =\{\bm u\in \bm  H({\rm curl;\Lambda}): \bm n \times \bm u=0 \textup{ on } \partial \Lambda\}\\
{H}(\text{curl};\Lambda):=\{ u\in  L^2(\Lambda): \nabla \times u \in \bm L^2(\Lambda) \},\quad H_0({\rm curl;\Lambda}) =\{ u\in   H({\rm curl;\Lambda}): \bm n \times  u=\bm 0 \textup{ on } \partial \Lambda\},\\
\bm H({\rm div}(\Lambda):=\{\bm u\in \bm L^2(\Lambda): \nabla \cdot \bm u \in L^2({\Lambda}) \},  \quad  \bm H({\rm div}^0;\Lambda)=\{\bm u\in \bm H({\rm div};\Lambda): \nabla\cdot\bm u=0\}.
\end{align*}
where $ \bm n$ is  the  unit outward normal vector on $\partial \Lambda$.

Let $\mathcal{T}_h:=\{K\}$ denote a conforming mesh of $\Omega$, where $K$ is a Lipschitz polygonal elemen
with finitely many edges. For each $K\in\mathcal{T}_h$, we let $h_K$ be the infimum of the diameters of circles containing $K$ and denote the mesh size $h:=\max_{K\in\mathcal{T}_h}h_K$. {We shall need more assumptions on the mesh to perform our analysis. These assumptions replace the usual ``shape regularity'' assumption but we delay a discussion of this point until Section~\ref{messy_discussion}.} Let
$\partial\mathcal T_h$ denote the set of edges $F\subset \partial K$ of the elements $K\in\mathcal{T}_h$ (i.e. edges of distinct triangles are counted separately) and let $\mathcal{E}_h$ denote the set of edges in the mesh $\mathcal T_h$.   
We denote by $h_F$ the length of the edge $F$.
We abuse notation by using $\nabla \times$, $\nabla\cdot$ and $\nabla$ for broken \text{curl}, \text{div} and gradient operators with respect to mesh partition $\mathcal{T}_h$, respectively.
To simplify the notation, we also define a function ${\bf h}$ on $\mathcal T_h$, $\partial\mathcal T_h$ and $\mathcal E_h$ which dependenting on circumstances:
\begin{align*}
{\bf h}|_K=h_K,\quad\forall K\in \mathcal T_h,\qquad
{\bf h}|_{\partial K}=h_K,\quad\forall K\in \mathcal T_h,\qquad
{\bf h}|_{F}=h_F,\quad\forall F\in \mathcal E_h.
\end{align*}

For $u, v\in L^2(\mathcal{T}_h)$ and $\rho, \theta \in L^2(\partial\mathcal T_h)$, we define the following  inner product and norm
\begin{align*}
(u,v)_{\mathcal{T}_h} =\sum_{K\in\mathcal{T}_h}(u,v)_K,  \quad \|v\|^2_{\mathcal{T}_h}=\sum_{K\in\mathcal{T}_h}\|v\|^2_K,\quad 
\langle \rho, \theta \rangle_{\partial\mathcal{T}_h}=\sum_{K\in\mathcal{T}_h}\langle \rho,     
\theta\rangle_{\partial K}, \quad \|\theta\|^2_{\partial\mathcal{T}_h}=\sum_{K\in\mathcal{T}_h}\|\theta\|^2_{\partial K}.
\end{align*}

Given a choice of three finite dimensional polynomial spaces $V(K)\subset H^1(K)$, $\bm W(K)\subset \bm H(\text{curl}; K)$ and $ \bm M(F)\subset \bm L^2(F)$,  where $K$ is an arbitrary element in the mesh and $F$ is an arbitrary edge, we define the global spaces by
\begin{align*}
V_h&:=\{ v\in L^2(\mathcal{T}_h): w|_K\in V(K), K\in\mathcal{T}_h\},\\
\bm W_h&:=\{ \bm w\in  \bm L^2(\mathcal{T}_h): \bm w|_K\in \bm W(K), K\in\mathcal{T}_h\},\\
\bm M_h&:=\{\bm \mu\in\bm L^2(\mathcal E_h): \bm \mu|_F\in\bm M(F), F\in\mathcal E_h\}.
\end{align*}
{For later use, for any non-negative integer $k$, let $\mathcal{P}_k(K)$ denote the standard  space of polynomials in two variables have total degree less than or equal to $k$.}

Next, to give the HDG fomulation of \eqref{Maxwell_equation_ori_form}, we need to rewrite it into a mixed form. Let $ q=\mu^{-1}\nabla\times\bm u$ in \eqref{Maxwell_equation_ori_form} to get the following mixed form
\begin{subequations}\label{Maxwell_equation_mixed_form}
	\begin{align}
	\mu_r q-\nabla\times\bm u&=\bm 0&\text{ in }\Omega,\label{Mixed-1}\\
	\nabla\times q -\kappa^2\epsilon_r\bm u&=\bm{f}&\text{ in }\Omega,\label{Mixed-2}\\
	\bm{n}\times \bm{u} &={g}&\text{ on }\partial\Omega.\label{Mixed-3}
	\end{align}
\end{subequations}
As usual for HDG, the upcoming method and analysis are based on the above mixed form.

For the convenience, we next give the following integration by parts formula for each curl operator in two-dimensions. The proof is  followed by a standard density argument and hence we omit it here.
\begin{lemma}\label{integration_by_parts}  Let $K$ be an element in the mesh $\mathcal T_h$, and let
	$\bm u\in \bm H(\text{curl};K)$ and $r\in H(\text{curl};K)$. Then we have
	\begin{subequations}\label{parts}
		\begin{align}
		(\nabla\times\bm u,r)_K=\langle\bm n\times\bm u,r\rangle_{\partial K}+(\bm u,\nabla\times r)_K,\label{integration_by_parts1}\\	
		(\nabla\times r,\bm u)_K=\langle\bm n\times r, \bm u\rangle_{\partial K}+(r,\nabla\times \bm u)_K,\label{integration_by_parts2}
		\end{align}
	\end{subequations}
	where $\bm n$ is the unit outward normal to $K$.
\end{lemma}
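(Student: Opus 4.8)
The plan is to establish both identities first for smooth functions by a direct component-wise computation, and then to extend them to $\bm H(\text{curl};K)$ and $H(\text{curl};K)$ by a density argument.

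For the smooth case, take $\bm u=(u_1,u_2)^T\in C^\infty(\bar K)^2$ and $r\in C^\infty(\bar K)$. Using $\nabla\times\bm u=-\partial_y u_1+\partial_x u_2$ from \eqref{curl2D} and applying the classical divergence theorem on the Lipschitz polygon $K$ to each of $\int_K(\partial_x u_2)\,r$ and $\int_K(\partial_y u_1)\,r$, the interior contributions collect into $\int_K(u_1\partial_y r-u_2\partial_x r)=(\bm u,\nabla\times r)_K$ and the boundary contributions into $\int_{\partial K}(n_1u_2-n_2u_1)\,r=\langle\bm n\times\bm u,r\rangle_{\partial K}$, once the definitions of $\nabla\times r$ and $\bm n\times\bm u$ (stated just before \eqref{Maxwell_equation_ori_form}) are inserted; this yields \eqref{integration_by_parts1}. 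Identity \eqref{integration_by_parts2} is obtained in exactly the same way: writing $(\nabla\times r,\bm u)_K=\int_K(u_1\partial_y r-u_2\partial_x r)$ and integrating by parts produces $\int_K r\,(\nabla\times\bm u)+\int_{\partial K}r\,(n_2u_1-n_1u_2)$, and the boundary term is precisely $\langle\bm n\times r,\bm u\rangle_{\partial K}$ since $\bm n\times r=(n_2,-n_1)^T r$.

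To reach the general case, recall that $C^\infty(\bar K)$ is dense in both $\bm H(\text{curl};K)$ and $H(\text{curl};K)$ for the Lipschitz polygon $K$ (note that, for scalar arguments, $H(\text{curl};K)$ coincides with $H^1(K)$, since $\nabla\times r\in\bm L^2(K)$ is equivalent to $\nabla r\in\bm L^2(K)$). Pick smooth sequences $\bm u_n\to\bm u$ in $\bm H(\text{curl};K)$ and $r_n\to r$ in $H(\text{curl};K)$, write the identities for $(\bm u_n,r_n)$, and pass to the limit. The two volume integrals on each side converge because $\bm u_n$, $\nabla\times\bm u_n$, $r_n$ and $\nabla\times r_n$ converge in $L^2(K)$; for the surface terms one reads $\langle\cdot,\cdot\rangle_{\partial K}$ as the duality pairing between $H^{-1/2}(\partial K)$ and $H^{1/2}(\partial K)$ and uses that $\bm u\mapsto\bm n\times\bm u$ is bounded from $\bm H(\text{curl};K)$ into $H^{-1/2}(\partial K)$ while $r\mapsto r|_{\partial K}$ is bounded from $H^1(K)$ into $H^{1/2}(\partial K)$, so that $\langle\bm n\times\bm u_n,r_n\rangle_{\partial K}\to\langle\bm n\times\bm u,r\rangle_{\partial K}$, and symmetrically for the other pairing.

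The main (and essentially only) delicate obstacle is the boundary term: one has to invoke the correct trace spaces so that $\langle\bm n\times\bm u,r\rangle_{\partial K}$ makes sense and depends continuously on $\bm u$ in $\bm H(\text{curl};K)$ and on $r$ in $H^1(K)$. In the actual HDG application $\bm u$, $r$ and their discrete analogues are piecewise polynomials, so all traces lie in $L^2(\partial K)$ and the pairing is simply the $L^2(\partial K)$ inner product; stating the lemma at the level of the full function spaces, however, requires the duality interpretation above, which is why the density step, though routine, deserves a word.
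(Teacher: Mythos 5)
Your proof is correct and follows exactly the route the paper indicates: the paper omits the argument, stating only that it "is followed by a standard density argument," and your write-up (direct component-wise integration by parts for smooth functions using the 2D curl and cross-product definitions, then extension via density and the $H^{-1/2}(\partial K)$--$H^{1/2}(\partial K)$ trace pairing) is precisely that standard argument carried out. The component computations match the paper's sign conventions, so nothing further is needed.
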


We can now derive the HDG method for (\ref{Maxwell_equation_mixed_form}) by multiplying each equation by the appropriate discrete test
function, integrating element by element and use integration by parts  (see \eqref{parts}) element by element in the usual way (c.f.~\cite{Cockburn_Gopalakrishnan_Lazarov_Unify_SINUM_2009}).
Summing the results over all elements, the  HDG methods seeks an approximation to $(q,\bm u, \bm u|_{\mathcal{E}_h})$, by $(q_h,\bm u_h, \widehat{\bm u}_h)\in V_h\times\bm W_h\times  \bm M_h$, such that
\begin{subequations}\label{Maxwell_equation_HDG_form_ori}
	\begin{align}
	(\mu_r q_h, r_h)_{\mathcal{T}_h}-(\bm u_h,\nabla\times r_h)_{\mathcal{T}_h}-\langle \bm n\times\widehat{\bm u}_h, r_h \rangle_{\partial\mathcal{T}_h}&=0,\label{Maxwell_equation_HDG_form_ori_1}\\
	( q_h,\nabla\times\bm v_h)_{\mathcal{T}_h}
	+\langle\bm n\times\widehat{ q}_h,{\bm v}_h \rangle_{\partial\mathcal{T}_h}-(\kappa^2\epsilon_r\bm u_h,\bm v_h)_{\mathcal{T}_h}&=(\bm f,\bm v_h)_{\mathcal{T}_h},\label{Maxwell_equation_HDG_form_ori_2}\\
	\langle\bm n\times\widehat{ q}_h,\widehat{\bm v}_h \rangle_{\mathcal{F}_h/\partial\Omega}&=0,\label{Maxwell_equation_HDG_form_ori_3}\\
	\langle\bm n\times\widehat{\bm u}_h,\bm n\times\widehat{\bm v}_h \rangle_{\partial\Omega}&=\langle
	g,\bm n\times\widehat{\bm v}_h \rangle_{\partial\Omega}\label{Maxwell_equation_HDG_form_ori_4}
	\end{align}
	for all $(r_h,\bm v_h, \widehat{\bm v}_h)\in V_h\times\bm W_h\times \bm M_h$,  and the choice of $\bm n\times\widehat q_h$ follows
	the usual HDG pattern,
	\begin{align}
	\bm n\times\widehat{ q}_h=
	\bm n\times q_h+\tau\bm n\times(\bm u_h-\widehat{\bm u}_h      )\times\bm n,\label{Maxwell_equation_HDG_form_ori_5}
	\end{align}
\end{subequations}
where $\tau$ is a penalization parameter taken to be positive and piecewise constant on the edges of the mesh (more details will be given later).


\section{$M$-decompositions}
\label{M_decompositions_for_Maxwell_equations}

In this section, we follow   the seminal paper \cite{Cockburn_M_decomposition_Part1_Math_Comp_2017} to give the concept of the $M$-decomposition for  Maxwell's equation in two dimensions. 
To do this, we need an appropriate combined trace operator ${\rm tr}: V(K)\times \bm W(K)\mapsto L^2(\partial K)$ defined
as follows:
\begin{align}\label{combine_trace_operator}
{\rm tr}(v,\bm w):=(\bm n\times v+\bm n\times\bm w\times\bm n)|_{\partial K}.
\end{align}

\begin{definition}\label{def_M_decomposition}
	We say that $ V(K)\times\bm W(K)$ admits an $ M$-decomposition when the following conditions are met:
	\begin{subequations}
		\begin{align}\label{M_decomposition_1}
		\bm n\times V(K)\subset \bm M(\partial K),\qquad
		\bm n\times\bm W(K)\times\bm n\subset \bm M(\partial K),
		\end{align}
		and there exists a subspace $\widetilde{V}(K)\times \widetilde{\bm W}(K)$ of $V(K)\times {\bm W}(K)$ satisfying
		\begin{align}
		&\nabla\times V(K)\subset\widetilde{\bm W}(K),\qquad
		\nabla\times\bm W(K)\subset\widetilde{ V}(K),\label{M_decomposition_2}\\
		&{\rm tr}:\left( \widetilde{ V}^{\perp}(K)\times\widetilde{\bm W}^{\perp}(K)\right) \to \bm M(\partial K) \text{ is an isomorphism},\label{M_decomposition_3}\\
		&\text{for any }\bm{\mu}\in\bm M(\partial K),\text{ if }\bm n\times\bm{\mu}=0,\text{ it holds }\bm{\mu}=\bm 0.\label{M_decomposition_4}
		\end{align}
	\end{subequations}
	Here $\widetilde{V}^{\perp}(K)$ and $\widetilde{\bm W}^{\perp}(K)$ are the $L^2(K)$-orthogonal complements of $\widetilde{ V}(K)$ in $ V(K)$, and $\widetilde{\bm W}(K)$ in $\bm W(K)$, respectively.\end{definition}

We notice that conditions \eqref{M_decomposition_1}, \eqref{M_decomposition_2} and \eqref{M_decomposition_3}  follow closely those in the $M$-decomposition in \cite{Cockburn_M_decomposition_Part1_Math_Comp_2017}. Condition \eqref{M_decomposition_4} is  to ensure the uniqueness of $\widehat{\bm u}_h$ determined by the HDG scheme \eqref{Maxwell_equation_HDG_form_ori}. We shall show that  this implies optimal convergence of the associated HDG scheme (under some extra conditions on $\widetilde{V}(K)$ and $\widetilde{\bm W}(K)$) and verify that several families of elements
satisfy the $M$-decomposition. To give some idea of the form such elements can take, we refer to \Cref{table_p0} for examples of $M$-decompositions for Maxwell's equations in two-dimensions when 
\begin{equation}
\bm{M}(\partial K)=\{\bm \mu: \bm\mu|_F= \bm n\times p_0\text{ for some } p_0\in\mathcal{P}_0(F)\mbox{ and for each edge }F\mbox{ of }K\}\label{simpleM}.
\end{equation}

\begin{table}[H]\label{examples}
	\tiny
	\caption{\label{table_p0}Some examples of $M$-decompositions when $M(\partial K)$ is given 
		by (\ref{simpleM}).}	\centering
	
	\begin{tabular}{l|l|l|l|l|l|l}
		\Xhline{1pt}
		
		$K$&$V(K)$&$\bm W(K)$  &$\widetilde{V}(K)$ &$\widetilde{\bm W}(K)$  &$\widetilde{V}^{\perp}(K)$&$\widetilde{\bm W}^{\perp}(K)$  \\
		\hline
		Triangle&$\mathcal{P}_0(K)$	&$\bm{\mathcal{P}}_0(K)$  &$\{0\}$	&$\{\bm 0\}	$   &$\mathcal{P}_0(K)$	&$\bm{\mathcal{P}}_0(K)$\\

		Triangle&$\mathcal{P}_0(K)$	&$\bm{\mathcal{P}}_0(K)\oplus\text{span}\left\{\left(^y_{-x}\right)\right\}$	    &$\mathcal{P}_0(K)$	&$\{\bm 0\}	$   &$\{0\}$	&$\bm{\mathcal{P}}_0(K)\oplus\text{span}\left\{\left(^y_{-x}\right)\right\}$\\

		Square&$\mathcal{P}_0(K)$	&$\bm{\mathcal{P}}_0(K)\oplus\text{span}\left\{\left(^y_x\right)\right\}$	    &$\{0\}$	&$\{\bm 0\}	$   &$\mathcal{P}_0(K)$	&$\bm{\mathcal{P}}_0(K)\oplus\text{span}\left\{\left(^y_x\right)\right\}$\\

		Square&$\mathcal{P}_0(K)$	&$\bm{\mathcal{P}}_0(K)\oplus\text{span}\left\{\left(^y_x\right),\left(^y_{-x}\right)\right\}$	    &$\mathcal{P}_0(K)$	&$\{\bm 0\}	$   &$\{0\}$	&$\bm{\mathcal{P}}_0(K)\oplus\text{span}\left\{\left(^y_x\right),\left(^y_{-x}\right)\right\}$\\

		\Xhline{1pt}
	\end{tabular}
\end{table}

To verify that a  given space  $V(K)\times \bm W(K)$  admits an $M$-decomposition, we need to construct the associated spaces  $\widetilde{V}(K)$ and $\widetilde{\bm W}(K)$ in Definition~\ref{def_M_decomposition}. However, 
this is difficult, hence we need  a simple way to verify a given space  $V(K)\times \bm W(K)$ admits an $M$-decomposition. Moreover, if the given space $V(K)\times \bm W(K)$ does not  admit an $M$-decomposition, we need to understand how to find  to build a new space from $V(K)\times \bm W(K)$  that admits an $M$-decomposition.
Following \cite{Cockburn_M_decomposition_Part1_Math_Comp_2017}, we define the $M$-index as follows:
\begin{align}\label{index_of_I_M}
\begin{split}
I_M(V(K)\times\bm W(K)):&=\dim \bm M(\partial K)-\dim\{\bm n\times v|_{\partial K}:v\in V(K),\nabla\times v=\bm 0\}\\
&-\dim\{\bm n\times\bm w\times\bm n|_{\partial K}:\bm w\in\bm W(K),\nabla\times\bm w=0\}.
\end{split}
\end{align}

Now, we state the main result in this section, the proof is found in \Cref{Characterization_M_decompositions}.
\begin{theorem} \label{index_IM_M_decomposition}
	The spaces $V(K)$ and $\bm W(K)$ admit an $M$-decomposition if and only if 
	\begin{subequations}
		\begin{align}
		&\bm n\times V(K)\subset \bm M(\partial K),\qquad \bm n\times\bm W(K)\times\bm n\subset \bm M(\partial K),\label{iM1}\\
		&\nabla\times V(K)\subset\bm W(K),\qquad \nabla\times\bm W(K)\subset{ V}(K),\label{iM2}\\
		&I_M(V(K)\times\bm W(K))=0,\label{iM3}\\
		&\text{For any }\bm{\mu}\in\bm M(\partial K),\text{ if }\bm n\times\bm{\mu}=0,\text{ it holds }\bm{\mu}=\bm 0.\label{iM4}
		\end{align}
	\end{subequations}
\end{theorem}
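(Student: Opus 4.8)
Since the statement is the two--dimensional Maxwell analogue of the characterization of $M$--decompositions in \cite{Cockburn_M_decomposition_Part1_Math_Comp_2017}, I would follow that blueprint, the new ingredient being the bookkeeping for the two curl operators \eqref{curl2D} and the combined trace \eqref{combine_trace_operator}. Observe at the outset that both summands $\bm n\times v$ and $\bm n\times\bm w\times\bm n$ of ${\rm tr}(v,\bm w)$ are tangential fields on $\partial K$, so the range of ${\rm tr}$ is tangential; since ${\rm tr}$ must be onto $\bm M(\partial K)$, condition \eqref{iM4} (which forbids $\bm M(\partial K)$ from carrying a purely normal component) is exactly what makes this possible.

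\emph{Necessity of the conditions.} Suppose $V(K)\times\bm W(K)$ admits an $M$--decomposition with a subspace $\widetilde V(K)\times\widetilde{\bm W}(K)$. Then \eqref{iM1} and \eqref{iM4} are literally \eqref{M_decomposition_1} and \eqref{M_decomposition_4}, and \eqref{iM2} follows from \eqref{M_decomposition_2} together with $\widetilde V(K)\subset V(K)$ and $\widetilde{\bm W}(K)\subset\bm W(K)$. For \eqref{iM3} I would use that ${\rm tr}$ is an isomorphism --- in particular onto --- in \eqref{M_decomposition_3}, combined with two elementary facts noted at the end of this sketch: the two curl--free trace spaces appearing in \eqref{index_of_I_M} are always independent and (by \eqref{iM1}) contained in $\bm M(\partial K)$, and surjectivity of ${\rm tr}$ forces their span to be all of $\bm M(\partial K)$; together these give $I_M(V(K)\times\bm W(K))=0$.

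\emph{Sufficiency.} Assume \eqref{iM1}--\eqref{iM4} and set $\widetilde V(K):=\nabla\times\bm W(K)$ and $\widetilde{\bm W}(K):=\nabla\times V(K)+\bm W_b(K)$, where $\bm W_b(K):=\{\bm w\in\bm W(K):\nabla\times\bm w=0,\ \bm n\times\bm w\times\bm n|_{\partial K}=\bm 0\}$; by \eqref{iM2} these are subspaces of $V(K)$ and $\bm W(K)$, and \eqref{M_decomposition_1}, \eqref{M_decomposition_2} and \eqref{M_decomposition_4} hold by construction, so everything reduces to verifying \eqref{M_decomposition_3}, i.e.\ that ${\rm tr}\colon\widetilde V^\perp(K)\times\widetilde{\bm W}^\perp(K)\to\bm M(\partial K)$ is an isomorphism. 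For injectivity, let $(v,\bm w)\in\widetilde V^\perp(K)\times\widetilde{\bm W}^\perp(K)$ with ${\rm tr}(v,\bm w)=\bm 0$. Since $\nabla\times\bm w\in\widetilde V(K)$ and $\nabla\times v\in\widetilde{\bm W}(K)$, the orthogonality relations defining the complements give $(v,\nabla\times\bm w)_K=(\bm w,\nabla\times v)_K=0$; inserting these into the two identities of \Cref{integration_by_parts} (with $v$ and $\bm w$ as test functions) yields $\langle\bm n\times\bm w,v\rangle_{\partial K}=\langle\bm n\times v,\bm w\rangle_{\partial K}=0$, and, $\bm n\times v$ being tangential, pairing the trace relation $\bm n\times v=-\bm n\times\bm w\times\bm n$ with $\bm n\times\bm w\times\bm n$ forces $\|\bm n\times\bm w\times\bm n\|_{\partial K}^2=-\langle\bm n\times v,\bm w\rangle_{\partial K}=0$, hence $\bm n\times\bm w\times\bm n=\bm 0$ and $\bm n\times v=\bm 0$ (and therefore also $\bm n\times\bm w=\bm 0$) on $\partial K$. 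A second use of \Cref{integration_by_parts}, now with $\nabla\times v$ and $\nabla\times\bm w$ as test functions --- whose volume terms vanish by the same orthogonality relations and whose boundary terms vanish because $v$ and $\bm n\times\bm w$ vanish on $\partial K$ --- gives $\|\nabla\times v\|_K^2=\|\nabla\times\bm w\|_K^2=0$; thus $v$ is a constant with vanishing trace, so $v=0$, while $\bm w$ is curl--free with vanishing tangential trace, so $\bm w\in\bm W_b(K)\subset\widetilde{\bm W}(K)$ and, being also in $\widetilde{\bm W}^\perp(K)$, $\bm w=\bm 0$. Surjectivity then follows from injectivity, the dimension identity below, and \eqref{iM3}, since $\dim{\rm tr}\big(\widetilde V^\perp(K)\times\widetilde{\bm W}^\perp(K)\big)=\dim\widetilde V^\perp(K)+\dim\widetilde{\bm W}^\perp(K)=\dim\bm M(\partial K)$ while the range is contained in $\bm M(\partial K)$ by \eqref{iM1}.

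\emph{The dimension identity, and where the difficulty lies.} The computation on which the argument turns is the identity
\[
\dim\widetilde V^\perp(K)+\dim\widetilde{\bm W}^\perp(K)=\dim\{\bm n\times v|_{\partial K}:v\in V(K),\ \nabla\times v=\bm 0\}+\dim\{\bm n\times\bm w\times\bm n|_{\partial K}:\bm w\in\bm W(K),\ \nabla\times\bm w=0\}
\]
for the subspaces chosen above, which I would obtain by rank--nullity applied to $\nabla\times$ and to ${\rm tr}$ on the relevant curl--free subspaces; its one nontrivial input is $\nabla\times V(K)\cap\bm W_b(K)=\{\bm 0\}$, valid because $\bm w=\nabla\times r$ ($r\in V(K)$) lying in $\bm W_b(K)$ makes $r$ harmonic with vanishing normal derivative on $\partial K$, hence constant and $\bm w=\bm 0$. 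The independence of the two curl--free trace spaces, used in the necessity part, is of the same flavour: a tangential field of the form $\bm n\times v$ with $v$ constant cannot be a curl--free tangential trace unless it vanishes, since the tangential derivative of a potential integrates to zero around the connected boundary $\partial K$. The point I expect to be most delicate is the remaining step of the necessity argument --- that, when an $M$--decomposition exists, these curl--free traces genuinely \emph{span} $\bm M(\partial K)$ --- which I would extract from the surjectivity in \eqref{M_decomposition_3} following \cite{Cockburn_M_decomposition_Part1_Math_Comp_2017}; the inclusions and the integration--by--parts manipulations above are routine by comparison.
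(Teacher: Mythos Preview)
Your sufficiency argument is correct and coincides with the paper's: both choose the canonical subspaces $\widetilde V=\nabla\times\bm W(K)$ and $\widetilde{\bm W}=\nabla\times V(K)\oplus\bm W_0(K)$, establish that the trace map is injective on $\widetilde V^\perp\times\widetilde{\bm W}^\perp$ (your inline argument is exactly the content of the paper's \Cref{dimension_equal_V_W}), and then use the dimension identity together with $I_M=0$ to conclude surjectivity.

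The necessity direction, however, has a genuine gap at precisely the point you flag as ``most delicate''. Surjectivity of ${\rm tr}$ on $\widetilde V^\perp\times\widetilde{\bm W}^\perp$ only tells you that every $\bm\mu\in\bm M(\partial K)$ is the trace of some $(v^\perp,\bm w^\perp)$ in those complements; these elements need not be curl--free, and there is no direct way to trade them for curl--free ones with the same trace (for scalar $v$ the curl--free subspace consists of the constants, so its trace space is one--dimensional, far smaller than $\gamma\widetilde V^\perp$ in general). The paper closes this gap by a different route: it first proves (\Cref{uniquness_of_widetilde_V} through \Cref{inclusion_2}, assembled in \Cref{can_M_decomposition_theorem}) that whenever \emph{any} $M$--decomposition exists, the \emph{canonical} one with the subspaces above also exists. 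Once that is in hand, the isomorphism \eqref{M_decomposition_3} applied to the canonical subspaces gives $\dim\bm M(\partial K)=\dim\widetilde V^\perp+\dim\widetilde{\bm W}_c^\perp$, and then your own dimension identity immediately yields $I_M=0$. So the missing ingredient in your necessity proof is the passage from an arbitrary $M$--decomposition to the canonical one; this is the substantive work of the paper's preparatory lemmas and does not follow from surjectivity alone.
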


\Cref{index_IM_M_decomposition} provides a simply way to check if any given choice of spaces $V(K)\times \bm W(K)$ admits an $M$-decomposition by just verifying some inclusions and by calculating  a single number, namely, $I_M(V(K)\times\bm W(K))$. 
Of course the associated spaces $\widetilde{ V}(K)$ and $\widetilde{\bm W}(K)$ are essential to define an HDG  projection  for the a priori error analysis of the method and can be found once $V(K)$ and $\bm W(K)$ are known.

Moreover, the conditions in \Cref{index_IM_M_decomposition} are  ``if and only if'', which means that if $I_M(V(K)\times\bm W(K))$ is not zero,  we need to add to $\bm W(K)$ a space $\delta {\bm W}$ of dimension $I_M(V(K)\times\bm W(K))$ to obtain a new space admitting an $M$-decomposition.

\subsection{Properties of the $M$-decomposition}


We now prove a sequence of lemmas that culminate in the proof of Theorem~\ref{index_IM_M_decomposition}.\begin{lemma}\label{the_triple_orthogonal_property}
	Let $ V(K)\times\bm W(K)$ admit an $M$-decomposition with associated spaces $\widetilde{ V}(K)$ and $\widetilde{\bm W}(K)$. Then we have the following orthogonality property:
	\begin{align}\label{the_triple_orthogonal_property_3}
	\bm M(\partial K)=\gamma\widetilde{ V}^{\perp}(K)\oplus \gamma\widetilde{\bm W}^{\perp}(K),
	\end{align}
	where $\gamma\widetilde{ V}^{\perp}(K):=\{\bm n\times v^{\perp}|_{\partial K}: v^{\perp}\in\widetilde{ V}^{\perp}(K) \}$ and 
	$\gamma\widetilde{\bm W}^{\perp}(K):=\{\bm n\times\bm w^{\perp}\times\bm n|_{\partial K}:\bm w^{\perp}\in\widetilde{\bm W}^{\perp}(K) \}$.
\end{lemma}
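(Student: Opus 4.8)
The plan is to establish the direct sum decomposition $\bm M(\partial K) = \gamma\widetilde{V}^\perp(K) \oplus \gamma\widetilde{\bm W}^\perp(K)$ in three stages: first show the sum spans $\bm M(\partial K)$, then show the intersection is trivial, and finally tie both together via a dimension count. The key tool throughout will be condition \eqref{M_decomposition_3}, which says that ${\rm tr}$ restricted to $\widetilde{V}^\perp(K)\times\widetilde{\bm W}^\perp(K)$ is an isomorphism onto $\bm M(\partial K)$, together with the orthogonality relations that will come from condition \eqref{M_decomposition_2} and the integration-by-parts formulas in \Cref{integration_by_parts}.

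First I would observe that by the definition of the combined trace operator \eqref{combine_trace_operator}, for $(v^\perp,\bm w^\perp)\in\widetilde{V}^\perp(K)\times\widetilde{\bm W}^\perp(K)$ we have ${\rm tr}(v^\perp,\bm w^\perp) = \bm n\times v^\perp|_{\partial K} + \bm n\times\bm w^\perp\times\bm n|_{\partial K}$, which lies in $\gamma\widetilde{V}^\perp(K) + \gamma\widetilde{\bm W}^\perp(K)$. Since by \eqref{M_decomposition_3} these traces exhaust all of $\bm M(\partial K)$, we immediately get $\bm M(\partial K) = \gamma\widetilde{V}^\perp(K) + \gamma\widetilde{\bm W}^\perp(K)$, and moreover $\dim\bm M(\partial K) = \dim\widetilde{V}^\perp(K) + \dim\widetilde{\bm W}^\perp(K)$ because ${\rm tr}$ is injective on the product space (so in particular injective on each factor, giving $\dim\gamma\widetilde{V}^\perp(K)=\dim\widetilde{V}^\perp(K)$ and likewise for $\bm W$). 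A dimension count then forces the sum to be direct: $\dim\bm M(\partial K) = \dim\gamma\widetilde{V}^\perp(K) + \dim\gamma\widetilde{\bm W}^\perp(K)$ together with $\bm M(\partial K) = \gamma\widetilde{V}^\perp(K) + \gamma\widetilde{\bm W}^\perp(K)$ yields $\gamma\widetilde{V}^\perp(K)\cap\gamma\widetilde{\bm W}^\perp(K)=\{\bm 0\}$.

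The step I expect to be the main obstacle is verifying that the two pieces $\gamma\widetilde{V}^\perp(K)$ and $\gamma\widetilde{\bm W}^\perp(K)$ each have dimension equal to the corresponding interior space — i.e., that no nonzero $v^\perp\in\widetilde{V}^\perp(K)$ has $\bm n\times v^\perp|_{\partial K}=0$, and similarly no nonzero $\bm w^\perp\in\widetilde{\bm W}^\perp(K)$ has $\bm n\times\bm w^\perp\times\bm n|_{\partial K}=0$. For the first, suppose $\bm n\times v^\perp=0$; then ${\rm tr}(v^\perp,\bm 0) = 0$, so by injectivity of ${\rm tr}$ in \eqref{M_decomposition_3} we get $v^\perp=0$. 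The same argument handles $\bm w^\perp$, using ${\rm tr}(0,\bm w^\perp)=\bm n\times\bm w^\perp\times\bm n|_{\partial K}$. Thus condition \eqref{M_decomposition_3} does all the heavy lifting, and the decomposition follows without needing \eqref{M_decomposition_2}, \eqref{M_decomposition_4}, or the integration-by-parts lemma at all — though I would double-check whether the \emph{orthogonality} language in the statement is meant to be literal $L^2(\partial K)$-orthogonality (which would require showing $\langle\bm n\times v^\perp, \bm n\times\bm w^\perp\times\bm n\rangle_{\partial K}=0$, presumably via integration by parts and \eqref{M_decomposition_2}), or merely a direct-sum statement. If the former, I would use \eqref{integration_by_parts1} to rewrite the boundary pairing as an interior pairing against $\nabla\times$ terms, and then invoke \eqref{M_decomposition_2} together with the definition of $\widetilde{V}^\perp,\widetilde{\bm W}^\perp$ as $L^2(K)$-orthogonal complements to conclude the cross terms vanish.
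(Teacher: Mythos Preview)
Your proposal is correct, and your closing paragraph in fact sketches exactly the argument the paper gives. The $\oplus$ in the statement does mean literal $L^2(\partial K)$-orthogonality---the paper says so explicitly, and this orthogonality is invoked later (e.g.\ in the proof of \Cref{uniquness_of_widetilde_V} and in \Cref{condtion_3c_qauivalence})---so your dimension-count argument for mere directness, while correct, would not by itself close the proof.

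The paper's route is shorter than yours: after noting that \eqref{M_decomposition_3} gives $\bm M(\partial K)=\gamma\widetilde V^\perp(K)+\gamma\widetilde{\bm W}^\perp(K)$ (same as you), it goes straight to orthogonality. On each edge it decomposes $\bm w^\perp=\bm n\times\bm w^\perp\times\bm n+(\bm w^\perp\cdot\bm n)\bm n$, so that $\langle\bm n\times v^\perp,\bm n\times\bm w^\perp\times\bm n\rangle_{\partial K}=\langle\bm n\times v^\perp,\bm w^\perp\rangle_{\partial K}$; then \eqref{integration_by_parts2} turns this into $(\nabla\times v^\perp,\bm w^\perp)_K-(v^\perp,\nabla\times\bm w^\perp)_K$, and \eqref{M_decomposition_2} kills both terms since $\nabla\times v^\perp\in\widetilde{\bm W}(K)$ and $\nabla\times\bm w^\perp\in\widetilde V(K)$. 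Orthogonality then gives directness for free, so the separate injectivity and dimension arguments you carry out first are unnecessary. Your approach buys a proof of the direct-sum part that avoids \eqref{M_decomposition_2} and integration by parts, but since the stronger orthogonal statement is what is actually asserted and used, the paper's path is both what is required and more economical.
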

\begin{proof} 
	By  condition \eqref{M_decomposition_3} and the definition of the combined  trace operator ${\rm tr}$ in \eqref{combine_trace_operator}, we have 
	\begin{align*}
	\bm M(\partial K)=\gamma\widetilde{ V}^{\perp}(K) +  \gamma\widetilde{\bm W}^{\perp}(K).
	\end{align*}
	Hence, we only need to show that the sum is $L^2(\partial K)$-orthogonal. 
	
	For all $ v^{\perp}\in\widetilde{ V}^{\perp}(K)$, $\bm w^{\perp}\in\widetilde{\bm W}^{\perp}(K)$. On each edge $F$ of $\partial K$, it holds
	\begin{align}\label{decomposition1}
	\bm w^{\perp}=\bm n\times\bm w^{\perp}\times\bm n+(\bm w^{\perp}\cdot\bm n)\bm n.
	\end{align}
	Using  equations \eqref{decomposition1} and  \eqref{integration_by_parts2} we get
	\begin{align}\label{orth_pro_1}
	\begin{split}
	\langle\bm n\times v^{\perp},\bm n\times\bm w^{\perp}\times\bm n    \rangle_{\partial K}&=
	\langle\bm n\times v^{\perp},\bm w^{\perp}-(\bm w^{\perp}\cdot\bm n)\bm n   \rangle_{\partial K}\\
	&= \langle\bm n\times v^{\perp},\bm w^{\perp}   \rangle_{\partial K}\\
	&=(\nabla\times v^{\perp},\bm w^{\perp})_K- ( v^{\perp},\nabla\times\bm w^{\perp})_K.
	\end{split}
	\end{align}
	By equation \eqref{M_decomposition_2}, we have $\nabla\times v^{\perp} \in \widetilde{\bm W}(K)$ and $\nabla\times\bm w^{\perp}\in \widetilde V(K)$, hence 
	\begin{align}\label{the_triple_orthogonal_property_proof_1}
	\langle\bm n\times v^{\perp},\bm n\times\bm w^{\perp}\times\bm n    \rangle_{\partial K}=0.
	\end{align}
	This finishes the proof.
\end{proof}

\begin{lemma}\label{uniquness_of_widetilde_V}
	[Uniqueness of $\widetilde{V}(K)$] If $ V(K)\times\bm W(K)$ admits an $M$-decomposition with associated spaces $\widetilde{ V}(K)$ and $\widetilde{\bm W}(K)$, then the subspace $\widetilde V(K)$ is unique. Moreover,
	\begin{align}\label{uniquness_of_widetilde_V_1}
	\widetilde{V}(K)=\nabla\times\bm W(K).
	\end{align}
\end{lemma}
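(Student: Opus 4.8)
The plan is to prove the equality $\widetilde V(K)=\nabla\times\bm W(K)$ directly; uniqueness of $\widetilde V(K)$ is then immediate, since the right-hand side involves only the given space $\bm W(K)$ and not the particular auxiliary decomposition. Abbreviate $\widehat V(K):=\nabla\times\bm W(K)$. One inclusion, $\widehat V(K)\subseteq\widetilde V(K)$, is exactly the second relation in \eqref{M_decomposition_2}. Let $\widehat V^\perp(K)$ denote the $L^2(K)$-orthogonal complement of $\widehat V(K)$ in $V(K)$; since $\widehat V(K)\subseteq\widetilde V(K)$ we have $\widetilde V^\perp(K)\subseteq\widehat V^\perp(K)$. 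The remaining task is to show $\widetilde V(K)\subseteq\widehat V(K)$, and I will argue by contradiction after recording two lemmas about $\widehat V^\perp(K)$.

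The first is: \emph{if $v\in\widehat V^\perp(K)$ has vanishing trace on $\partial K$, then $v\equiv 0$.} To see this, apply \eqref{integration_by_parts1} with $r=v$ and an arbitrary $\bm w\in\bm W(K)$: the boundary term $\langle\bm n\times\bm w,v\rangle_{\partial K}$ vanishes by the zero trace, and $(\nabla\times\bm w,v)_K$ vanishes because $\nabla\times\bm w\in\widehat V(K)$ and $v\perp\widehat V(K)$, so $(\bm w,\nabla\times v)_K=0$ for all $\bm w\in\bm W(K)$. Since $\nabla\times v\in\nabla\times V(K)\subseteq\widetilde{\bm W}(K)\subseteq\bm W(K)$ by \eqref{M_decomposition_2}, taking $\bm w=\nabla\times v$ forces $\nabla\times v=0$; hence $v$ is constant on the connected set $K$, and the zero trace gives $v\equiv 0$.

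The second is: \emph{$\bm n\times v|_{\partial K}\in\gamma\widetilde V^\perp(K)$ for every $v\in\widehat V^\perp(K)$.} The calculation behind \eqref{orth_pro_1} uses only that its scalar argument lies in $H(\mathrm{curl};K)$, so for all $v\in V(K)$ and $\bm w\in\bm W(K)$,
\[
\langle\bm n\times v,\bm n\times\bm w\times\bm n\rangle_{\partial K}=(\nabla\times v,\bm w)_K-(v,\nabla\times\bm w)_K .
\]
Choosing $\bm w=\bm w^\perp\in\widetilde{\bm W}^\perp(K)$ kills the first term, since $\nabla\times v\in\nabla\times V(K)\subseteq\widetilde{\bm W}(K)$ and $\bm w^\perp\perp\widetilde{\bm W}(K)$, and kills the second, since $\nabla\times\bm w^\perp\in\widehat V(K)$ and $v\perp\widehat V(K)$; thus $\bm n\times v$ is $L^2(\partial K)$-orthogonal to $\gamma\widetilde{\bm W}^\perp(K)$. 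As $\bm n\times v\in\bm M(\partial K)$ by \eqref{M_decomposition_1} and $\bm M(\partial K)=\gamma\widetilde V^\perp(K)\oplus\gamma\widetilde{\bm W}^\perp(K)$ is an $L^2(\partial K)$-orthogonal sum by Lemma~\ref{the_triple_orthogonal_property}, it follows that $\bm n\times v\in\gamma\widetilde V^\perp(K)$.

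Now suppose, for contradiction, $\widehat V(K)\subsetneq\widetilde V(K)$, and pick $0\ne v^*\in\widetilde V(K)\cap\widehat V^\perp(K)$. By the second lemma there is $v^\perp\in\widetilde V^\perp(K)\subseteq\widehat V^\perp(K)$ with $\bm n\times v^*=\bm n\times v^\perp$ on $\partial K$; then $z:=v^*-v^\perp\in\widehat V^\perp(K)$ satisfies $\bm n\times z=0$, hence has zero trace on $\partial K$ (as $\bm n$ is a unit vector), so the first lemma gives $z=0$. Thus $v^*=v^\perp\in\widetilde V(K)\cap\widetilde V^\perp(K)=\{0\}$, contradicting $v^*\ne 0$; therefore $\widetilde V(K)=\widehat V(K)=\nabla\times\bm W(K)$. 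The step needing the most care is the second lemma — checking that the boundary identity behind \eqref{orth_pro_1} remains valid for a general $v\in V(K)$ and then exploiting the orthogonal splitting of $\bm M(\partial K)$ from Lemma~\ref{the_triple_orthogonal_property} to land $\bm n\times v$ inside $\gamma\widetilde V^\perp(K)$; everything else is routine bookkeeping with the orthogonal decompositions of $V(K)$ and $\bm W(K)$. (Equivalently, the two lemmas say $v\mapsto\bm n\times v$ embeds $\widehat V^\perp(K)$ into $\gamma\widetilde V^\perp(K)$, whence $\dim\widehat V^\perp(K)\le\dim\widetilde V^\perp(K)$, which together with $\widetilde V^\perp(K)\subseteq\widehat V^\perp(K)$ forces equality.)
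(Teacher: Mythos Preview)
Your proof is correct and follows essentially the same route as the paper's: both arguments show that any $v\in\widetilde V(K)$ orthogonal to $\nabla\times\bm W(K)$ must vanish by (i) using the integration-by-parts identity behind \eqref{orth_pro_1} to see that $\bm n\times v$ is orthogonal to $\gamma\widetilde{\bm W}^\perp(K)$, (ii) invoking the orthogonal splitting of $\bm M(\partial K)$ from Lemma~\ref{the_triple_orthogonal_property} to produce a $v^\perp\in\widetilde V^\perp(K)$ with matching tangential trace, and (iii) showing the difference has zero curl and zero boundary trace, hence vanishes. Your organization into two preliminary lemmas about the larger space $\widehat V^\perp(K)$ is a bit cleaner than the paper's inline argument, and the closing dimension-counting remark is a nice alternative viewpoint, but the substance is the same.
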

\begin{proof} 
	By \eqref{M_decomposition_2} we have $\nabla\times\bm W(K)\subset\widetilde{V}(K)$, therefore, we only have to prove 
	$[\nabla\times\bm W(K)]^{\perp}\cap \widetilde{V}(K)=\{0\}$. We take $v\in [\nabla\times\bm W(K)]^{\perp}\cap \widetilde{V}(K)$, then it  satisfies
	\begin{align}\label{uniqueness_1}
	(v,\nabla\times\bm w)_K=0\text{ for all }\bm w\in \bm W(K).
	\end{align}
	By the argument in \eqref{orth_pro_1} and \eqref{uniqueness_1} we have 
	\begin{align}\label{proof_orgonal_1}
	\langle\bm n\times v,\bm n\times\bm w^{\perp}\times\bm n \rangle_{\partial K}=0\text{ for all }\bm w^{\perp}\in \widetilde{\bm W}^{\perp}(K).
	\end{align} 
	For	$\bm n\times v\in \bm M(\partial K)$, since $ V(K)\times\bm W(K)$ admits an $M$-decomposition
	with associated spaces $\widetilde{ V} (K)$ and $ \widetilde{\bm W}(K)$, then by equation \eqref{the_triple_orthogonal_property_3}, 
	there exist $v^{\perp}\in \widetilde V^{\perp}(K)$ and  $\bm w^{\perp}\in\widetilde{\bm W}^{\perp}(K)$ such that
	\begin{align*}
	&\bm n\times v=-\bm n\times v^{\perp}+\bm n\times\bm w^{\perp}\times\bm n,\\
	&\langle\bm n\times v^{\perp},\bm n\times\bm w^{\perp}\times\bm n  \rangle_{\partial K}=0.
	\end{align*}
	By equation \eqref{proof_orgonal_1}, we have
	\begin{align*}
	\|\bm n\times (v+v^{\perp})\|^2_{\partial K}=\langle\bm n\times (v+v^{\perp}),\bm n\times {\bm w}^{\perp}\times\bm n\rangle_{\partial K}=0.
	\end{align*}
	Thus, on $\partial K$, we have $\bm n\times (v+v^{\perp})=\bm 0$, therefore
	\begin{align}\label{sum_equa_zero}
	v+v^{\perp}=0 \textup{ on } \partial K.
	\end{align}
	
	Next,  for all $\bm w\in\bm W(K)$, by equations \eqref{integration_by_parts1}, \eqref{uniqueness_1} and  \eqref{sum_equa_zero}, we get 
	\begin{align}\label{sum_equa_zero1}
	\begin{split}
	(\nabla\times v,\bm w)_K&=-\langle v,\bm n\times\bm w \rangle_{\partial K} + (v, \nabla\times \bm w)_K\\
	&=-\langle v,\bm n\times\bm w \rangle_{\partial K} \\
	&=\langle v^{\perp},\bm n\times\bm w \rangle_{\partial K}\\
	&=-(\nabla\times v^{\perp},\bm w)_K+(v^{\perp},\nabla\times\bm w)_K.
	\end{split}
	\end{align}
	By equation \eqref{M_decomposition_2}, we know $\nabla\times \bm  w \in \widetilde V(K)$. Moreover, $v^\perp\in \widetilde  V^\perp(K)$, hence we have 
	\begin{align}\label{sum_equa_zero2}
	(v^{\perp},\nabla\times\bm w)_K=0.
	\end{align}
	We combine equations \eqref{sum_equa_zero1} and \eqref{sum_equa_zero2} to get
	\begin{align}\label{important_proof_1}
	(\nabla\times (v+v^{\perp}),\bm w)_K=0\text{ for all }\bm w\in\bm W(K).
	\end{align}
	
	Finally, since $v\in \widetilde V(K)$ and  $v^{\perp}\in \widetilde V^{\perp}(K)$, then by the equation \eqref{M_decomposition_2} we obtain  $\nabla\times (v+v^{\perp})\in \widetilde{\bm W}(K)\subset\bm W(K)$. We take 
	$\bm w = \nabla\times (v+v^{\perp})$ in the equation \eqref{important_proof_1} to get 
	\begin{align}\label{important_proof_2}
	\nabla\times (v+v^{\perp})=\bm 0.
	\end{align}
	Combine equations \eqref{sum_equa_zero} and \eqref{important_proof_2} to get
	\begin{align*}
	v+v^{\perp}=0\textup{ on } K.
	\end{align*}
	Since $v\in \widetilde V(K)$ and  $v^{\perp}\in \widetilde V^{\perp}(K)$, it follows that $v=-v^{\perp}=0$.
	This proves that $[\nabla\times\bm W(K)]^{\perp}\cap \widetilde{V}(K)=\{0\}$, i.e., the space $\widetilde{V}(K)$ is unique and $ \widetilde{V}(K)\subset \left[[\nabla\times\bm W(K)]^{\perp}\right]^\perp = \nabla\times\bm W(K)$. Furthermore, 
	by the equation \eqref{M_decomposition_2}, we have 
	\begin{align*}
	\widetilde{V}(K)=\nabla\times\bm W(K).
	\end{align*}	
\end{proof}

In \Cref{uniquness_of_widetilde_V}, we have proved that if $ V(K)\times\bm W(K)$ admits an $M$-decomposition with associated spaces $\widetilde{ V}(K)$ and $\widetilde{\bm W}(K)$, then $\widetilde V(K)$ is unique and $\widetilde V(K) = \nabla\times \bm W(K)$. However, we do not have similar characterization of the space $\widetilde{\bm W}(K)$, i.e., the space $\widetilde{\bm W}(K)$ is not unique. Hence, it is important to provide a   ``canonical'' $M$-decomposition. 

First, we define the following space:
\begin{align}\label{definition_W0}
\bm W_{0}(K)=\{\bm w\in \bm W(K):\nabla\times\bm w=0\mbox{ in }K,\bm n\times\bm w\times\bm n=\bm 0
\mbox{ on }\partial K \}.
\end{align}

\begin{lemma} \label{can_M_decomposition_theorem} 
	[The canonical $M$-decomposition]
	Let $ V(K)\times\bm W(K)$ admit an $M$-decomposition with associated spaces $\widetilde{ V}(K)$ and $\widetilde{\bm W}(K)$,
	then $ V(K)\times\bm W(K)$ admits an $M$-decomposition
	with associated spaces $\widetilde{ V} (K)$ and $\widetilde{\bm W}_c(K)$, in which 
	\begin{align}\label{can_M_decomposition_theorem_1}
	\widetilde{V}(K)=\nabla\times\bm W(K),\qquad\widetilde{\bm W}_c(K)=\nabla\times V(K)\oplus\bm W_{0}(K).
	\end{align}
	In this case, we say that $ V(K)\times\bm W(K)$ admits the canonical $M$-decomposition
	with associated spaces $\widetilde{ V} (K)$ and $\widetilde{\bm W}_c(K)$.
\end{lemma}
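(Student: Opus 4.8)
The plan is to check that the pair $\widetilde{V}(K)$, $\widetilde{\bm W}_c(K)$ of \eqref{can_M_decomposition_theorem_1} satisfies every item of \Cref{def_M_decomposition}. Conditions \eqref{M_decomposition_1} and \eqref{M_decomposition_4} involve only $V(K)$, $\bm W(K)$ and $\bm M(\partial K)$, so they are inherited from the given $M$-decomposition. For \eqref{M_decomposition_2} I would first note that $\widetilde{\bm W}_c(K)$ is a genuine subspace of $\bm W(K)$: $\nabla\times V(K)\subset\widetilde{\bm W}(K)\subset\bm W(K)$ by \eqref{M_decomposition_2} for the given decomposition, $\bm W_0(K)\subset\bm W(K)$ by \eqref{definition_W0}, and the sum $\nabla\times V(K)\oplus\bm W_0(K)$ is direct because if $\nabla\times v=\bm w_0\in\bm W_0(K)$ then, applying \eqref{integration_by_parts2} with $r=v$ and $\bm u=\nabla\times v$ and using $\nabla\times\bm w_0=0$ together with the fact that $\bm n\times v$ is tangential while $\bm w_0$ is purely normal on $\partial K$, one gets $\|\nabla\times v\|_K^2=\langle\bm n\times v,\bm w_0\rangle_{\partial K}=0$. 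Then \eqref{M_decomposition_2} holds for the new pair: $\nabla\times V(K)\subset\widetilde{\bm W}_c(K)$ by construction, and $\nabla\times\bm W(K)=\widetilde V(K)$ is exactly \Cref{uniquness_of_widetilde_V}.

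The core of the argument is \eqref{M_decomposition_3}: that ${\rm tr}$ restricted to $\widetilde V^{\perp}(K)\times\widetilde{\bm W}_c^{\perp}(K)$ — here $\widetilde{\bm W}_c^{\perp}(K)$ is the $L^2(K)$-orthogonal complement of $\widetilde{\bm W}_c(K)$ in $\bm W(K)$ — is an isomorphism onto $\bm M(\partial K)$. First I would rerun the computation in \eqref{orth_pro_1}, which applies verbatim since the new pair still satisfies \eqref{M_decomposition_2}: for $v^{\perp}\in\widetilde V^{\perp}(K)$ and $\bm w^{\perp}\in\widetilde{\bm W}_c^{\perp}(K)$ one has $\nabla\times v^{\perp}\in\nabla\times V(K)\subset\widetilde{\bm W}_c(K)$ and $\nabla\times\bm w^{\perp}\in\nabla\times\bm W(K)=\widetilde V(K)$, whence $\langle\bm n\times v^{\perp},\bm n\times\bm w^{\perp}\times\bm n\rangle_{\partial K}=0$; thus the images $\gamma\widetilde V^{\perp}(K)$ and $\gamma\widetilde{\bm W}_c^{\perp}(K)$ are $L^2(\partial K)$-orthogonal and their orthogonal sum is precisely the range of ${\rm tr}$. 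Injectivity then follows: if ${\rm tr}(v^{\perp},\bm w^{\perp})=0$, orthogonality forces $\bm n\times v^{\perp}=0$ and $\bm n\times\bm w^{\perp}\times\bm n=0$ on $\partial K$; since ${\rm tr}$ is injective on $\widetilde V^{\perp}(K)\times\{0\}$ for the given decomposition, $v^{\perp}=0$, and for $\bm w^{\perp}$ one observes that $\bm n\times\bm w^{\perp}\times\bm n=0$ also gives $\bm n\times\bm w^{\perp}=0$ on $\partial K$, so by \eqref{integration_by_parts1} and $\bm w^{\perp}\perp\nabla\times V(K)\subset\widetilde{\bm W}_c(K)$ we obtain $(\nabla\times\bm w^{\perp},v)_K=0$ for every $v\in V(K)$; taking $v=\nabla\times\bm w^{\perp}\in\nabla\times\bm W(K)=\widetilde V(K)\subset V(K)$ forces $\nabla\times\bm w^{\perp}=0$, hence $\bm w^{\perp}\in\bm W_0(K)\subset\widetilde{\bm W}_c(K)$, and since also $\bm w^{\perp}\in\widetilde{\bm W}_c^{\perp}(K)$ we conclude $\bm w^{\perp}=0$.

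What remains, and what I expect to be the main obstacle, is surjectivity. Injectivity, together with \Cref{uniquness_of_widetilde_V} (which fixes $\widetilde V^\perp(K)$) and the fact that the original ${\rm tr}$ is an isomorphism (so $\dim\widetilde V^{\perp}(K)+\dim\widetilde{\bm W}^{\perp}(K)=\dim\bm M(\partial K)$), already gives $\dim\widetilde{\bm W}_c(K)\ge\dim\widetilde{\bm W}(K)$; so it suffices to prove the reverse inequality $\dim(\nabla\times V(K))+\dim\bm W_0(K)\le\dim\widetilde{\bm W}(K)$. The natural route is to show $\bm W_0(K)\subset\widetilde{\bm W}(K)$ — combined with $\nabla\times V(K)\subset\widetilde{\bm W}(K)$ this gives $\widetilde{\bm W}_c(K)\subset\widetilde{\bm W}(K)$ and hence the bound: writing $\bm w_0=\bm a+\bm b$ with $\bm a\in\widetilde{\bm W}(K)$ and $\bm b\in\widetilde{\bm W}^{\perp}(K)$, one has $\bm n\times\bm b\times\bm n=-\bm n\times\bm a\times\bm n$ on $\partial K$ (since $\bm w_0$ has vanishing tangential trace), so $\bm b=0$ by injectivity of the tangential trace on $\widetilde{\bm W}^{\perp}(K)$ \emph{provided one can show the tangential trace of the $\widetilde{\bm W}(K)$-component $\bm a$ vanishes}; controlling this last trace by playing $\nabla\times\bm w_0=0$ against the isomorphism \eqref{M_decomposition_3} is the delicate point. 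Equivalently, the needed dimension identity is precisely $I_M(V(K)\times\bm W(K))=0$ for the index of \eqref{index_of_I_M}, which one can read off from the existence of an $M$-decomposition using \Cref{the_triple_orthogonal_property} and \Cref{uniquness_of_widetilde_V}. Once this count is in hand, injectivity upgrades to an isomorphism, all conditions of \Cref{def_M_decomposition} hold for $\widetilde V(K)\times\widetilde{\bm W}_c(K)$, and the proof is complete.
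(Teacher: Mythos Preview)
Your verification of \eqref{M_decomposition_1}, \eqref{M_decomposition_2}, \eqref{M_decomposition_4} and your injectivity argument for ${\rm tr}$ on $\widetilde V^{\perp}(K)\times\widetilde{\bm W}_c^{\perp}(K)$ are all correct, and the dimension inequality $\dim\widetilde{\bm W}_c(K)\ge\dim\widetilde{\bm W}(K)$ that you extract from injectivity is genuine. The gap is exactly where you flag it: the reverse inequality needed for surjectivity. Unfortunately neither of the two routes you sketch closes it. Your ``natural route'' $\bm W_0(K)\subset\widetilde{\bm W}(K)$ is simply false in general: take the triangle with $V(K)=\mathcal P_k(K)$, $\bm W(K)=\bm{\mathcal P}_k(K)$ and the non-canonical choice $\widetilde{\bm W}(K)=\bm{\mathcal P}_{k-1}(K)$ from \Cref{Simple_V_W_tilde_triangle}; for $k\ge2$ the space $\bm W_0(K)$ contains $\nabla(\lambda_1\lambda_2\lambda_3 p_{k-2})$, which has degree $k$ and does not lie in $\bm{\mathcal P}_{k-1}(K)$. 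In fact your own dimension inequality shows that $\widetilde{\bm W}_c(K)\subset\widetilde{\bm W}(K)$ would force equality, contradicting the non-uniqueness of $\widetilde{\bm W}(K)$ asserted just before \Cref{can_M_decomposition_theorem}. Your fallback, reading off $I_M=0$ from \Cref{the_triple_orthogonal_property} and \Cref{uniquness_of_widetilde_V}, is circular in the paper's architecture: those two lemmas only give $\dim\bm M(\partial K)=\dim\widetilde V^{\perp}(K)+\dim\widetilde{\bm W}^{\perp}(K)$, and turning this into $I_M=0$ requires precisely $\dim\widetilde{\bm W}(K)=\dim\widetilde{\bm W}_c(K)$, which is the missing inequality; indeed the paper proves the ``only if'' direction of \Cref{index_IM_M_decomposition} \emph{using} \Cref{can_M_decomposition_theorem}.

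The paper's device for bridging this gap is to introduce an intermediate space
\[
\bm X(K)=\{\widetilde{\bm w}^{\perp}-\bm\Pi_0\widetilde{\bm w}^{\perp}:\widetilde{\bm w}^{\perp}\in\widetilde{\bm W}^{\perp}(K)\},
\]
where $\bm\Pi_0$ is the $L^2(K)$-projection onto $\bm W_0(K)$. Because $\bm W_0(K)$ has vanishing tangential trace, $\gamma\bm X(K)=\gamma\widetilde{\bm W}^{\perp}(K)$ while $\dim\bm X(K)\le\dim\widetilde{\bm W}^{\perp}(K)$; this lets the isomorphism \eqref{M_decomposition_3} for the given decomposition transfer verbatim to $\widetilde V^{\perp}(K)\times\bm X(K)$, so $V(K)\times\bm W(K)$ admits an $M$-decomposition with associated spaces $\widetilde V(K)$ and $\bm X^{\perp}(K)$ (\Cref{M_decomposition_X_perp}). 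The proof is then completed by the two inclusions $\widetilde{\bm W}_c(K)\subset\bm X^{\perp}(K)$ (\Cref{inclusion_1}) and $\bm X^{\perp}(K)\subset\widetilde{\bm W}_c(K)$ (\Cref{inclusion_2}, argued as in \Cref{uniquness_of_widetilde_V}), giving $\widetilde{\bm W}_c(K)=\bm X^{\perp}(K)$. The point is that $\bm X(K)$ is built from the \emph{given} $\widetilde{\bm W}^{\perp}(K)$, so the original isomorphism can be leveraged directly; your attempts work only with $\widetilde{\bm W}_c(K)$ and try to compare dimensions after the fact, which cannot succeed without an extra ingredient of this kind.
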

The proof of \Cref{can_M_decomposition_theorem} follows from \Cref{uniquness_of_widetilde_V,inclusion_1,M_decomposition_X_perp,inclusion_2}. 

Next, we define the following space:
\begin{align*}
\bm X(K)=\{\widetilde{\bm w}^{\perp}-\bm{\Pi}_0\widetilde{\bm w}^{\perp}:\widetilde{\bm w}^{\perp}\in \widetilde{\bm W}^{\perp}(K) \},
\end{align*}
where $\bm{\Pi}_0$ is the $L^2$-projection onto the space $\bm W_0(K)$.

\begin{lemma}\label{inclusion_1}
	If $ V(K)\times\bm W(K)$ admits an $M$-decomposition with associated spaces $\widetilde{ V}(K)$ and $\widetilde{\bm W}(K)$, then we have $\widetilde{\bm W}_c(K) = \nabla\times V(K)\oplus\bm W_{0}(K) \subset \bm X^{\perp}(K)$.	
\end{lemma}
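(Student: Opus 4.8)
The plan is to verify the inclusion directly from the definitions, by showing that every element of $\widetilde{\bm W}_c(K)=\nabla\times V(K)+\bm W_{0}(K)$ is $L^2(K)$-orthogonal to each generator $\bm x=\widetilde{\bm w}^{\perp}-\bm{\Pi}_0\widetilde{\bm w}^{\perp}$ of $\bm X(K)$, where $\widetilde{\bm w}^{\perp}$ ranges over $\widetilde{\bm W}^{\perp}(K)$. I would first note that $\bm X(K)\subset\bm W(K)$ and $\widetilde{\bm W}_c(K)\subset\bm W(K)$ (the latter since $\nabla\times V(K)\subset\widetilde{\bm W}(K)\subset\bm W(K)$ by \eqref{M_decomposition_2} and $\bm W_{0}(K)\subset\bm W(K)$ by definition), so that $\bm X^{\perp}(K)$ is unambiguously the $L^2(K)$-orthogonal complement of $\bm X(K)$ in $\bm W(K)$; by linearity it then suffices to handle the two summands $\bm W_{0}(K)$ and $\nabla\times V(K)$ separately.

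For $\bm w_0\in\bm W_{0}(K)$ the claim is immediate from the definition of $\bm X(K)$: since $\bm{\Pi}_0$ is the $L^2(K)$-projection onto $\bm W_{0}(K)$, the residual $\widetilde{\bm w}^{\perp}-\bm{\Pi}_0\widetilde{\bm w}^{\perp}$ is $L^2(K)$-orthogonal to $\bm W_{0}(K)$, hence $(\bm w_0,\bm x)_K=0$. For $v\in V(K)$ I would split $(\nabla\times v,\bm x)_K=(\nabla\times v,\widetilde{\bm w}^{\perp})_K-(\nabla\times v,\bm{\Pi}_0\widetilde{\bm w}^{\perp})_K$. The first term vanishes because $\nabla\times v\in\widetilde{\bm W}(K)$ by \eqref{M_decomposition_2}, while $\widetilde{\bm w}^{\perp}\in\widetilde{\bm W}^{\perp}(K)$. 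For the second term, set $\bm w_0:=\bm{\Pi}_0\widetilde{\bm w}^{\perp}\in\bm W_{0}(K)$ and apply the integration-by-parts identity \eqref{integration_by_parts2} with $r=v$ and $\bm u=\bm w_0$:
\[
(\nabla\times v,\bm w_0)_K=\langle\bm n\times v,\bm w_0\rangle_{\partial K}+(v,\nabla\times\bm w_0)_K=\langle\bm n\times v,\bm w_0\rangle_{\partial K},
\]
the volume term dropping because $\nabla\times\bm w_0=0$ in $K$. On each edge of $\partial K$ I would decompose $\bm w_0=\bm n\times\bm w_0\times\bm n+(\bm w_0\cdot\bm n)\bm n$ as in \eqref{decomposition1}: the tangential part vanishes since $\bm w_0\in\bm W_{0}(K)$, and the remaining integrand $\langle\bm n\times v,(\bm w_0\cdot\bm n)\bm n\rangle_{\partial K}$ vanishes pointwise because $\bm n\times v=(n_2,-n_1)^T v$ is orthogonal to $\bm n$. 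Hence $(\nabla\times v,\bm w_0)_K=0$, and therefore $(\nabla\times v,\bm x)_K=0$.

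Combining the two cases, every element of $\nabla\times V(K)+\bm W_{0}(K)=\widetilde{\bm W}_c(K)$ is $L^2(K)$-orthogonal to all of $\bm X(K)$, which is precisely the assertion $\widetilde{\bm W}_c(K)\subset\bm X^{\perp}(K)$. I do not expect a genuine obstacle here; the only step that requires a little care is the treatment of the boundary term, where one must use simultaneously that $\bm w_0$ has vanishing tangential trace and that $\bm n\times v$ is purely tangential, so that the only product that could survive — that of a normal and a tangential field — is zero.
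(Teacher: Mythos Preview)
Your proof is correct and follows essentially the same route as the paper: split $\widetilde{\bm W}_c(K)$ into the two summands, use the projection property of $\bm\Pi_0$ for $\bm W_0(K)$, use $\nabla\times V(K)\subset\widetilde{\bm W}(K)$ to kill $(\nabla\times v,\widetilde{\bm w}^\perp)_K$, and integrate by parts against $\bm\Pi_0\widetilde{\bm w}^\perp\in\bm W_0(K)$ for the remaining term. The only cosmetic difference is in the boundary term: the paper applies \eqref{integration_by_parts1} to obtain $\langle v,\bm n\times\bm\Pi_0\widetilde{\bm w}^\perp\rangle_{\partial K}$, which vanishes in one stroke because $\bm n\times\bm w_0\times\bm n=\bm 0$ forces the scalar $\bm n\times\bm w_0=0$; you instead use \eqref{integration_by_parts2} and the tangential--normal decomposition \eqref{decomposition1}, which is equivalent but one step longer.
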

\begin{proof}
	For any  $\bm w\in\nabla\times V(K)\oplus\bm W_0(K)$ and $\widetilde{\bm w}^{\perp}\in \widetilde{\bm W}^{\perp}(K)$, then there exist $v\in V(K)$
	and $\bm r\in \bm W_0(K)$ such that  $\bm w=\nabla\times v+\bm r$. Hence, we have 
	\begin{align}\label{proof_inclusion_1}
	\begin{split}
	(\bm w,\widetilde{\bm w}^{\perp}-\bm{\Pi}_0\widetilde{\bm w}^{\perp})_K&=(\nabla\times v,\widetilde{\bm w}^{\perp}-\bm{\Pi}_0\widetilde{\bm w}^{\perp})_K+(\bm r,\widetilde{\bm w}^{\perp}-\bm{\Pi}_0\widetilde{\bm w}^{\perp})_K\\
	&=(\nabla\times v,\widetilde{\bm w}^{\perp})_K-(\nabla\times v,\bm{\Pi}_0\widetilde{\bm w}^{\perp})_K+(\bm r,\widetilde{\bm w}^{\perp}-\bm{\Pi}_0\widetilde{\bm w}^{\perp})_K\\
	&=(\nabla\times v,\widetilde{\bm w}^{\perp})_K-(\nabla\times v,\bm{\Pi}_0\widetilde{\bm w}^{\perp})_K\\
	&=-(\nabla\times v,\bm{\Pi}_0\widetilde{\bm w}^{\perp})_K,\\
	\end{split}
	\end{align}
	where the last equality follows from definition \eqref{M_decomposition_1}.
	
	Next, we estimate $(\nabla\times v,\bm{\Pi}_0\widetilde{\bm w}^{\perp})_K$, since  $ \bm{\Pi}_0\widetilde{\bm w}^{\perp} \in \bm W_0(K)$, by \eqref{integration_by_parts1} to obtain
	\begin{align}\label{proof_inclusion_3}
	(\nabla\times v,\bm{\Pi}_0\widetilde{\bm w}^{\perp})_K=(v,\nabla\times\bm{\Pi}_0\widetilde{\bm w}^{\perp})_K-\langle v,\bm n\times \bm{\Pi}_0\widetilde{\bm w}^{\perp} \rangle_{\partial K}=0.
	\end{align}
	
	Combining  \eqref{proof_inclusion_1} and  \eqref{proof_inclusion_3} show that 
	\begin{align*}
	(\bm w,\widetilde{\bm w}^{\perp}-\bm{\Pi}_0\widetilde{\bm w}^{\perp})_K=0.
	\end{align*}
	This proves $\bm w\in \bm X^{\perp}(K)$, i.e., $\nabla\times V(K)\oplus\bm W_{0}(K) \subset \bm X^{\perp}(K)$.	
\end{proof}

\begin{lemma}\label{M_decomposition_X_perp}
	If $ V(K)\times\bm W(K)$ admits an $M$-decomposition with associated spaces $\widetilde{ V}(K)$ and $\widetilde{\bm W}(K)$, then $ V(K)\times\bm W(K)$ admits an $M$-decomposition with associated spaces $\widetilde{ V} (K)$ and $\bm X^{\perp}(K)$.
\end{lemma}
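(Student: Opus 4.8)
The plan is to check, in turn, the four defining conditions of \Cref{def_M_decomposition} for the pair $(\widetilde V(K),\bm X^{\perp}(K))$, where $\bm X^{\perp}(K)$ is understood as the $L^2(K)$-orthogonal complement of $\bm X(K)$ taken inside $\bm W(K)$; this makes sense since $\bm X(K)\subset\bm W(K)$ (both $\widetilde{\bm w}^{\perp}$ and $\bm{\Pi}_0\widetilde{\bm w}^{\perp}$ lie in $\bm W(K)$), and it guarantees that $\widetilde V(K)\times\bm X^{\perp}(K)$ is again a subspace of $V(K)\times\bm W(K)$. Conditions \eqref{M_decomposition_1} and \eqref{M_decomposition_4} refer only to $V(K)$, $\bm W(K)$ and $\bm M(\partial K)$, so they are inherited verbatim from the given $M$-decomposition. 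In \eqref{M_decomposition_2}, the inclusion $\nabla\times\bm W(K)\subset\widetilde V(K)$ is part of the hypothesis (the first component of the associated pair is unchanged), while $\nabla\times V(K)\subset\bm X^{\perp}(K)$ follows immediately from \Cref{inclusion_1}. Thus the entire task reduces to verifying \eqref{M_decomposition_3}: that ${\rm tr}$ restricts to an isomorphism from $\widetilde V^{\perp}(K)\times(\bm X^{\perp}(K))^{\perp}$ onto $\bm M(\partial K)$.

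Two observations drive this. First, because $\bm W(K)$ is finite dimensional and $\bm X(K)$ is a subspace, $(\bm X^{\perp}(K))^{\perp}=\bm X(K)$, so it is enough to show ${\rm tr}:\widetilde V^{\perp}(K)\times\bm X(K)\to\bm M(\partial K)$ is an isomorphism. Second, the tangential trace kills $\bm W_0(K)$: for any $\widetilde{\bm w}^{\perp}\in\widetilde{\bm W}^{\perp}(K)$ we have $\bm{\Pi}_0\widetilde{\bm w}^{\perp}\in\bm W_0(K)$, so by \eqref{definition_W0} $\bm n\times(\bm{\Pi}_0\widetilde{\bm w}^{\perp})\times\bm n=\bm 0$ on $\partial K$; hence, setting $\bm x:=\widetilde{\bm w}^{\perp}-\bm{\Pi}_0\widetilde{\bm w}^{\perp}\in\bm X(K)$, the combined trace \eqref{combine_trace_operator} obeys ${\rm tr}(v^{\perp},\bm x)=\bm n\times v^{\perp}+\bm n\times\widetilde{\bm w}^{\perp}\times\bm n={\rm tr}(v^{\perp},\widetilde{\bm w}^{\perp})$ for every $v^{\perp}\in\widetilde V^{\perp}(K)$. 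In short, ${\rm tr}$ on $\widetilde V^{\perp}(K)\times\bm X(K)$ factors through ${\rm tr}$ on $\widetilde V^{\perp}(K)\times\widetilde{\bm W}^{\perp}(K)$ via the surjection $\widetilde{\bm w}^{\perp}\mapsto\widetilde{\bm w}^{\perp}-\bm{\Pi}_0\widetilde{\bm w}^{\perp}$.

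With this factorization, both properties transfer from the original decomposition. Surjectivity: as $(v^{\perp},\widetilde{\bm w}^{\perp})$ runs over $\widetilde V^{\perp}(K)\times\widetilde{\bm W}^{\perp}(K)$, the values ${\rm tr}(v^{\perp},\widetilde{\bm w}^{\perp})$ already exhaust $\bm M(\partial K)$ by \eqref{M_decomposition_3} for the given decomposition, and each equals ${\rm tr}(v^{\perp},\widetilde{\bm w}^{\perp}-\bm{\Pi}_0\widetilde{\bm w}^{\perp})$ with the second argument in $\bm X(K)$. Injectivity: if ${\rm tr}(v^{\perp},\bm x)=\bm 0$ with $v^{\perp}\in\widetilde V^{\perp}(K)$ and $\bm x=\widetilde{\bm w}^{\perp}-\bm{\Pi}_0\widetilde{\bm w}^{\perp}\in\bm X(K)$, then ${\rm tr}(v^{\perp},\widetilde{\bm w}^{\perp})=\bm 0$, so $v^{\perp}=0$ and $\widetilde{\bm w}^{\perp}=\bm 0$ by injectivity of ${\rm tr}$ on $\widetilde V^{\perp}(K)\times\widetilde{\bm W}^{\perp}(K)$, whence $\bm x=\bm 0$. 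This establishes \eqref{M_decomposition_3} and finishes the proof.

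I do not anticipate a genuine obstacle; the argument is essentially bookkeeping. The one spot that wants care is keeping straight in which space each orthogonal complement is taken (inside $\bm W(K)$ for $\bm X^{\perp}(K)$ and $\widetilde{\bm W}^{\perp}(K)$, inside $V(K)$ for $\widetilde V^{\perp}(K)$) and, if one prefers to argue via bijectivity of the reparametrization, knowing that $\widetilde{\bm w}^{\perp}\mapsto\widetilde{\bm w}^{\perp}-\bm{\Pi}_0\widetilde{\bm w}^{\perp}$ is actually a bijection $\widetilde{\bm W}^{\perp}(K)\to\bm X(K)$. The latter reduces to $\widetilde{\bm W}^{\perp}(K)\cap\bm W_0(K)=\{\bm 0\}$, which follows by the same mechanism: any $\bm w$ in the intersection has $\bm n\times\bm w\times\bm n=\bm 0$ on $\partial K$, so ${\rm tr}(0,\bm w)=\bm 0$, and injectivity of ${\rm tr}$ on $\widetilde V^{\perp}(K)\times\widetilde{\bm W}^{\perp}(K)$ forces $\bm w=\bm 0$; this also yields the handy dimension identity $\dim\bm X(K)=\dim\widetilde{\bm W}^{\perp}(K)$.
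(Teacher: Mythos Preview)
Your proof is correct and follows essentially the same route as the paper: both reduce \eqref{M_decomposition_3} to the observation that subtracting $\bm{\Pi}_0\widetilde{\bm w}^{\perp}\in\bm W_0(K)$ does not change the tangential trace, so $\gamma\bm X(K)=\gamma\widetilde{\bm W}^{\perp}(K)$. The only cosmetic difference is that the paper then finishes with a dimension count (noting $\dim\bm X(K)\le\dim\widetilde{\bm W}^{\perp}(K)$ and invoking surjectivity), whereas you verify injectivity and surjectivity of ${\rm tr}$ on $\widetilde V^{\perp}(K)\times\bm X(K)$ directly via the factorization; your closing remark that $\widetilde{\bm W}^{\perp}(K)\cap\bm W_0(K)=\{\bm 0\}$ in fact sharpens the paper's inequality to an equality.
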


\begin{proof}
	By the \Cref{def_M_decomposition}, we need to check conditions \eqref{M_decomposition_1}-\eqref{M_decomposition_4}. It is obvious to see that \eqref{M_decomposition_1} and \eqref{M_decomposition_4} hold. Hence, we only need to check conditions \eqref{M_decomposition_2} and \eqref{M_decomposition_3}.
	
	First, by the \Cref{inclusion_1}, we have  $\nabla\times V(K)\oplus\bm W_{0}(K) \subset \bm X^{\perp}(K)$, which implies  $\nabla\times V(K)\subset \bm X^{\perp}(K)$. Moreover, it is obvious that $\nabla \times \bm  W(K)\subset \widetilde V(K)$. This proves  condition \eqref{M_decomposition_2}.	
	
	Next, we  check  condition \eqref{M_decomposition_3}. By the definition of $\bm X(K)$, it is obvious from the definition of $\bm W_0(K)$ in \eqref{definition_W0} that 
	\begin{align}\label{Proof_M_decomposition_X_perp_1}
	\gamma\bm X(K)=\gamma\widetilde{\bm W}^{\perp}(K).
	\end{align}
	Since $\bm \Pi_0$ is the $L^2$-projection onto space $\bm W_0(K)$ and we already proved that $\bm W_{0}(K) \subset \bm X^{\perp}(K)$ in \Cref{inclusion_1}, we can use the definition of $\bm X(K)$ to get 
	\begin{align}\label{Proof_M_decomposition_X_perp_2}
	\dim \bm X(K)\le \dim \widetilde{\bm W}^{\perp}(K).
	\end{align}
	Since $ V(K)\times\bm W(K)$ admits an $M$-decomposition with associated spaces $\widetilde{ V}(K)$ and $\widetilde{\bm W}(K)$, then by \eqref{M_decomposition_3} it holds that: ${\rm tr}:\left( \widetilde{ V} ^{\perp}(K)\times{\widetilde {\bm W}}^{\perp}(K)\right) \to \bm M(\partial K)$ is an isomorphism. Then by \eqref{Proof_M_decomposition_X_perp_1} and \eqref{Proof_M_decomposition_X_perp_2} it follows that  ${\rm tr}:\left( \widetilde{ V} ^{\perp}(K)\times{\bm X}(K)\right) \to \bm M(\partial K)$ is an isomorphism. Since $\bm X(K)$ and $\bm X^{\perp}(K)$ are complete, then $\bm X(K)=[\bm X^{\perp}(K)]^{\perp}$. Therefore, $ V(K)\times\bm W(K)$ admit an $M$-decomposition
	with associated spaces $\widetilde{ V} (K)$ and $\bm X^{\perp}(K)$.

\end{proof}

\begin{lemma}\label{inclusion_2}
	If $ V(K)\times\bm W(K)$ admits an $M$-decomposition with associated spaces $\widetilde{ V}(K)$ and $\widetilde{\bm W}(K)$, then we have $\bm X^{\perp}(K)\subset\widetilde{\bm W}_c(K) = \nabla\times V(K)\oplus\bm W_{0}(K)$.
\end{lemma}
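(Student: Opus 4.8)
Since \Cref{inclusion_1} already gives the inclusion $\widetilde{\bm W}_c(K)\subset\bm X^{\perp}(K)$, it is enough to show that the two spaces have the same dimension; the asserted inclusion then follows at once, and, together with \Cref{uniquness_of_widetilde_V,M_decomposition_X_perp}, this completes the proof of the canonical decomposition in \Cref{can_M_decomposition_theorem}. So the plan is a dimension count. By \Cref{M_decomposition_X_perp}, $V(K)\times\bm W(K)$ admits an $M$-decomposition with associated spaces $\widetilde V(K)$ and $\bm X^{\perp}(K)$, and $[\bm X^{\perp}(K)]^{\perp}=\bm X(K)$; hence \eqref{M_decomposition_3} for this pair says ${\rm tr}\colon\widetilde V^{\perp}(K)\times\bm X(K)\to\bm M(\partial K)$ is an isomorphism, while \eqref{M_decomposition_3} for the original pair says ${\rm tr}\colon\widetilde V^{\perp}(K)\times\widetilde{\bm W}^{\perp}(K)\to\bm M(\partial K)$ is an isomorphism. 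Comparing dimensions yields $\dim\bm X(K)=\dim\widetilde{\bm W}^{\perp}(K)$, so $\dim\bm X^{\perp}(K)=\dim\bm W(K)-\dim\bm X(K)=\dim\widetilde{\bm W}(K)$; injectivity of the second isomorphism also gives $\widetilde{\bm W}^{\perp}(K)\cap\bm W_{0}(K)=\{\bm 0\}$.

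It remains to identify $\dim\widetilde{\bm W}_c(K)$ with $\dim\widetilde{\bm W}(K)$. First, the sum $\nabla\times V(K)\oplus\bm W_{0}(K)$ is genuinely direct: if $\nabla\times v\in\bm W_{0}(K)$ then $-\Delta v=\nabla\times(\nabla\times v)=0$ in $K$ and $\bm n\times(\nabla\times v)\times\bm n=\bm 0$ on $\partial K$, the latter being equivalent to the vanishing of the normal derivative of $v$ on $\partial K$; uniqueness for the Neumann problem forces $v$ to be constant, whence $\nabla\times v=\bm 0$. Therefore $\dim\widetilde{\bm W}_c(K)=\dim\nabla\times V(K)+\dim\bm W_{0}(K)$, and the lemma is reduced to the single identity $\dim\widetilde{\bm W}(K)=\dim\nabla\times V(K)+\dim\bm W_{0}(K)$.

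This last identity is the crux, and I expect it to be the main obstacle. Using \Cref{uniquness_of_widetilde_V} ($\widetilde V(K)=\nabla\times\bm W(K)$), the trace isomorphism and triple orthogonality \Cref{the_triple_orthogonal_property} for the original pair, and the rank-nullity theorem for $\nabla\times\colon\bm W(K)\to V(K)$, one computes $\dim\widetilde{\bm W}(K)=\dim V(K)+\dim\{\bm w\in\bm W(K):\nabla\times\bm w=\bm 0\}-\dim\bm M(\partial K)$, so the identity to be proved becomes exactly $I_M(V(K)\times\bm W(K))=0$ in the sense of \eqref{index_of_I_M}: the map $v\mapsto\bm n\times v|_{\partial K}$ is injective on $\{v\in V(K):\nabla\times v=\bm 0\}$ (a curl-free $v$ is constant, and a constant with $\bm n\times v=\bm 0$ on $\partial K$ is zero), and the kernel of $\bm w\mapsto\bm n\times\bm w\times\bm n|_{\partial K}$ on $\{\bm w\in\bm W(K):\nabla\times\bm w=\bm 0\}$ is precisely $\bm W_{0}(K)$, so $I_M=0$ is the assertion that $\dim\bm M(\partial K)$ equals the sum of the dimensions of the traces of curl-free elements of $V(K)$ and of $\bm W(K)$.

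Thus the remaining work is to show that every $\bm\mu\in\bm M(\partial K)$ is the combined trace ${\rm tr}(v,\bm w)$ of a \emph{curl-free} pair $v\in V(K)$, $\bm w\in\bm W(K)$. This is not immediate from \eqref{M_decomposition_3}, which only supplies $\bm\mu=\bm n\times v^{\perp}+\bm n\times\bm w^{\perp}\times\bm n$ with $v^{\perp}\in\widetilde V^{\perp}(K)$, $\bm w^{\perp}\in\widetilde{\bm W}^{\perp}(K)$ in general not curl-free; the plan would be to correct $v^{\perp}$ and $\bm w^{\perp}$ using the inclusions \eqref{M_decomposition_2} and the exactness of the 2D curl complex on $K$, with the non-degeneracy \eqref{M_decomposition_4} and the relation $\widetilde{\bm W}^{\perp}(K)\cap\bm W_{0}(K)=\{\bm 0\}$ used to keep the corrections inside the spaces. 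An equivalent and perhaps more convenient formulation of the crux is to show that the $L^{2}(K)$-projection onto $\bm W_{0}(K)$ is injective on the orthogonal complement of $\nabla\times V(K)$ inside $\widetilde{\bm W}(K)$, i.e.\ that $\widetilde{\bm W}(K)\cap[\widetilde{\bm W}_c(K)]^{\perp}=\{\bm 0\}$; combined with the steps above and \Cref{inclusion_1} this closes the proof. The naive alternative --- writing each $\bm w\in\bm X^{\perp}(K)$ as $\nabla\times v+\bm r$ by solving a Neumann problem for $v$ --- fails in general, since the Neumann solution need not be polynomial, which is exactly why the $M$-decomposition hypotheses are indispensable.
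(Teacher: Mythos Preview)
Your proposal is incomplete: you correctly reduce the lemma to a dimension identity and then to the ``crux'' that $\widetilde{\bm W}(K)\cap[\widetilde{\bm W}_c(K)]^{\perp}=\{\bm 0\}$ (equivalently, $I_M=0$), but you do not prove this crux, only sketch a plan. Note that you cannot simply invoke \Cref{index_IM_M_decomposition} here: in the paper's logical order, the ``only if'' direction of that theorem passes through \Cref{dimension_equal_V_W}, which in turn assumes the canonical $M$-decomposition, i.e.\ \Cref{can_M_decomposition_theorem}, whose proof uses the present lemma. So invoking $I_M=0$ would be circular.

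The paper's own approach is the direct one it hints at: mimic the proof of \Cref{uniquness_of_widetilde_V}, but for the $M$-decomposition with associated spaces $\widetilde V(K)$ and $\bm X^{\perp}(K)$ furnished by \Cref{M_decomposition_X_perp}. Concretely, take $\bm w\in\bm X^{\perp}(K)$ orthogonal to $\widetilde{\bm W}_c(K)$; from $(\bm w,\nabla\times v)_K=0$ for all $v\in V(K)$ and \eqref{integration_by_parts2} one gets $\langle\bm n\times v^{\perp},\bm n\times\bm w\times\bm n\rangle_{\partial K}=0$ for all $v^{\perp}\in\widetilde V^{\perp}(K)$, so by \Cref{the_triple_orthogonal_property} (applied with $\bm X(K)$ in place of $\widetilde{\bm W}^{\perp}(K)$) there is $\bm x\in\bm X(K)$ with $\bm n\times(\bm w+\bm x)\times\bm n=\bm 0$. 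Two more integrations by parts, using $\nabla\times V(K)\subset\bm X^{\perp}(K)$ and $\nabla\times\bm W(K)\subset V(K)$, give $\nabla\times(\bm w+\bm x)=0$, hence $\bm w+\bm x\in\bm W_0(K)\subset\bm X^{\perp}(K)$; therefore $\bm x\in\bm X(K)\cap\bm X^{\perp}(K)=\{\bm 0\}$ and $\bm w\in\bm W_0(K)\subset\widetilde{\bm W}_c(K)$, forcing $\bm w=\bm 0$. This is exactly the step that fills your gap (indeed, the same argument run with the original pair $\widetilde V(K),\widetilde{\bm W}(K)$ proves your formulation~(2) of the crux), so the dimension-counting detour buys nothing beyond what the direct argument already yields.
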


The proof of \Cref{inclusion_2} is similar to that of \Cref{uniquness_of_widetilde_V} and hence we omit it here.

\begin{lemma}\label{dimension_equal_V_W} 
	Let $ V(K)\times\bm W(K)$ admit the canonical $M$-decomposition
	with associated spaces $\widetilde{ V} (K)$ and $\widetilde{\bm W}_c(K)$, then there holds
	\begin{align}\label{dimension_equal_V_W_1}
	\dim\gamma\widetilde{V}^{\perp}(K)=\dim\widetilde{V}^{\perp}(K), \qquad
	\dim\gamma\widetilde{\bm W}_c^{\perp}(K)=\dim\widetilde{\bm W}_c^{\perp}(K).
	\end{align}
\end{lemma}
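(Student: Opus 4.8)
The plan is to deduce both identities directly from the injectivity half of condition \eqref{M_decomposition_3}, applied to the canonical $M$-decomposition. By \Cref{can_M_decomposition_theorem}, the hypothesis guarantees that $V(K)\times\bm W(K)$ admits an $M$-decomposition with associated spaces $\widetilde V(K)=\nabla\times\bm W(K)$ and $\widetilde{\bm W}_c(K)=\nabla\times V(K)\oplus\bm W_0(K)$; in particular \eqref{M_decomposition_3} holds for this pair, so ${\rm tr}:\widetilde V^{\perp}(K)\times\widetilde{\bm W}_c^{\perp}(K)\to\bm M(\partial K)$ is an isomorphism, hence injective.

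For the first identity, I would regard $v^{\perp}\mapsto\bm n\times v^{\perp}|_{\partial K}$ as a linear map from $\widetilde V^{\perp}(K)$ onto $\gamma\widetilde V^{\perp}(K)$; by the rank--nullity theorem it suffices to show this map is injective. If $v^{\perp}\in\widetilde V^{\perp}(K)$ and $\bm n\times v^{\perp}|_{\partial K}=0$, then since $\bm 0\in\widetilde{\bm W}_c^{\perp}(K)$ and ${\rm tr}(v^{\perp},\bm 0)=\bm n\times v^{\perp}|_{\partial K}=0$ by the definition \eqref{combine_trace_operator}, injectivity of ${\rm tr}$ forces $(v^{\perp},\bm 0)=(0,\bm 0)$, i.e.\ $v^{\perp}=0$. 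Hence $\dim\gamma\widetilde V^{\perp}(K)=\dim\widetilde V^{\perp}(K)$. The second identity is entirely analogous: $\bm w^{\perp}\mapsto\bm n\times\bm w^{\perp}\times\bm n|_{\partial K}$ maps $\widetilde{\bm W}_c^{\perp}(K)$ onto $\gamma\widetilde{\bm W}_c^{\perp}(K)$, and if $\bm w^{\perp}\in\widetilde{\bm W}_c^{\perp}(K)$ has vanishing tangential trace then ${\rm tr}(0,\bm w^{\perp})=\bm n\times\bm w^{\perp}\times\bm n|_{\partial K}=0$, so injectivity of ${\rm tr}$ gives $\bm w^{\perp}=0$; thus $\dim\gamma\widetilde{\bm W}_c^{\perp}(K)=\dim\widetilde{\bm W}_c^{\perp}(K)$.

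There is no serious obstacle here; the only points to get right are that the canonical $M$-decomposition is genuinely an $M$-decomposition (so that \eqref{M_decomposition_3} applies with $\widetilde{\bm W}_c$ in place of a generic $\widetilde{\bm W}$), which is exactly \Cref{can_M_decomposition_theorem}, and that inserting the zero element in one slot of ${\rm tr}$ isolates each of the two separate trace maps. As an alternative route, one can argue by dimension counting: \Cref{the_triple_orthogonal_property} applied to the canonical decomposition gives $\dim\bm M(\partial K)=\dim\gamma\widetilde V^{\perp}(K)+\dim\gamma\widetilde{\bm W}_c^{\perp}(K)$, while \eqref{M_decomposition_3} gives $\dim\bm M(\partial K)=\dim\widetilde V^{\perp}(K)+\dim\widetilde{\bm W}_c^{\perp}(K)$; since the trace maps can only decrease dimension, $\dim\gamma\widetilde V^{\perp}(K)\le\dim\widetilde V^{\perp}(K)$ and $\dim\gamma\widetilde{\bm W}_c^{\perp}(K)\le\dim\widetilde{\bm W}_c^{\perp}(K)$, and comparing the two identities forces both inequalities to be equalities.
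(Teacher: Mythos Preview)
Your proof is correct for the lemma as stated, and both routes you give (injectivity of ${\rm tr}$ on each factor; dimension counting via \Cref{the_triple_orthogonal_property}) are clean and short. However, the paper proceeds differently, and the difference matters for how the lemma is used.

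The paper does \emph{not} invoke condition \eqref{M_decomposition_3}. Instead, for $v^{\perp}\in\widetilde V^{\perp}(K)$ with $\bm n\times v^{\perp}|_{\partial K}=0$, it uses only the inclusion $\nabla\times\bm W(K)\subset\widetilde V(K)$ from \eqref{M_decomposition_2} together with integration by parts to deduce $\nabla\times v^{\perp}=0$, hence $v^{\perp}$ is constant and vanishes on $\partial K$, so $v^{\perp}=0$. For $\bm w^{\perp}\in\widetilde{\bm W}_c^{\perp}(K)$ with vanishing tangential trace, the same kind of argument gives $\nabla\times\bm w^{\perp}=0$, whence $\bm w^{\perp}\in\bm W_0(K)\subset\widetilde{\bm W}_c(K)$ by the canonical definition \eqref{can_M_decomposition_theorem_1}, forcing $\bm w^{\perp}=0$. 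Thus the paper's proof rests only on \eqref{M_decomposition_2} and the explicit form of the canonical spaces.

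This distinction is not cosmetic. In the ``if'' direction of \Cref{index_IM_M_decomposition}, the goal is precisely to establish \eqref{M_decomposition_3} from the index condition, and the paper appeals to \eqref{dimension_equal_V_W_1} at the final step. Your argument, which assumes \eqref{M_decomposition_3} at the outset, would make that application circular; the paper's argument, which avoids \eqref{M_decomposition_3}, does not. So while your proof is valid under the stated hypothesis, the paper's longer route buys a strictly weaker hypothesis that is essential downstream.
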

\begin{proof} 
	Let $v^{\perp}\in \widetilde{V}^\perp(K)$, if $v^{\perp}=0$ on $K$, then obviously we have $\bm n\times v^{\perp}=\bm 0$ on $\partial K$; if $\bm n\times v^{\perp}=\bm 0$ on $\partial K$, then it holds $v^{\perp}=0$ on $\partial K$. 
	For any $\bm w\in \bm W(K)$, by condition \eqref{M_decomposition_2} we know $\nabla \times \bm  w \in \widetilde{V}^\perp(K)$, which implies $(v^{\perp},\nabla \times \bm w)_K=0$. Hence, integration by parts leads to
	\begin{align}\label{dimension_equal_V_W_proof_1}
	0=\langle\bm n\times v^{\perp},\bm w \rangle_{\partial K}=(\nabla\times v^{\perp},\bm w)_K-(v^{\perp},\nabla \times \bm w)_K	=(\nabla\times v^{\perp},\bm w)_K.
	\end{align}
	We take $\bm w=\nabla\times v^{\perp}$ in \eqref{dimension_equal_V_W_proof_1} to get $\nabla\times v^{\perp}=0$ on $K$, combining this with  $v^{\perp}=0$ on $\partial K$ imply  $v^{\perp}=0$ on $K$.
	
	Let $\bm w^{\perp}\in \widetilde{\bm W}_c ^{\perp}(K)$, if $\bm w^{\perp}=\bm 0$ on $K$, then we have $\bm n\times\bm w^{\perp}\times\bm n=\bm 0$ on $\partial K$; if $\bm n\times\bm w^{\perp}\times\bm n=\bm 0$, for any $v\in V(K)$, we have
	\begin{align}\label{dimension_equal_V_W_proof_2} 
	\begin{split}
	0&=\langle\bm n\times\bm w^{\perp}\times\bm n,v\times\bm n \rangle_{\partial K}=\langle\bm w^{\perp}-(\bm w^{\perp}\cdot\bm n)\bm n,v\times\bm n \rangle_{\partial K}\\
	&=\langle\bm n\times\bm w^{\perp},v \rangle_{\partial K}=(\nabla\times\bm w^{\perp},v)_K-(\bm w^{\perp},\nabla\times v)_K\\
	&=(\nabla\times\bm w^{\perp},v)_K.
	\end{split}
	\end{align}
	We take $v=\nabla\times\bm w^{\perp}\in V(K)$ in \eqref{dimension_equal_V_W_proof_2} to get $\nabla\times\bm w^{\perp}=0$ on $K$, then combining this with  $\bm n\times\bm w^{\perp}\times\bm n=\bm 0$ on $\partial K$ to get $\bm w^{\perp}\in\bm W_0(K)$. 
	By \eqref{can_M_decomposition_theorem_1}, we have $\bm w^{\perp}\in  \widetilde{\bm W}_c(K)$, remembering that $\bm w^{\perp}\in \widetilde{\bm W}_c ^{\perp}(K)$, then $\bm w^{\perp}=\bm 0$.
	
\end{proof}

%
%
%
%
%
%

\subsection{Proof of \Cref{index_IM_M_decomposition}}
\label{Characterization_M_decompositions}
\begin{proof}
	\textbf{The proof of only if:}
	Let $ V(K)\times\bm W(K)$ admits an $M$-decomposition, then by the \Cref{def_M_decomposition} we know  \eqref{iM1}, \eqref{iM2} and \eqref{iM4} hold. Moreover, \Cref{can_M_decomposition_theorem} implies that 
	$ V(K)\times\bm W(K)$ admits the canonical  $M$-decomposition with the associated  spaces $\widetilde V(K)$ and $\bm {\widetilde W}_c(K)$.  By \eqref{M_decomposition_3} and \eqref{dimension_equal_V_W_1} we have
	\begin{align}\label{dimension_imp}
	\dim V(K)+\dim\bm W(K)=\dim\widetilde{ V}(K)+\dim\widetilde{\bm W}_c(K)+\dim \bm M(\partial K).
	\end{align}
	This gives
	\begin{align*}
	\dim \bm M(\partial K)=	\dim V(K)+\dim\bm W(K)-\dim\widetilde{ V}(K)-\dim\widetilde{\bm W}_c(K).
	\end{align*}
	Then we use \eqref{uniquness_of_widetilde_V_1} and \eqref{can_M_decomposition_theorem_1} to get
	\begin{align*}
	\dim \bm M(\partial K)&=	\dim V(K)+\dim\bm W(K)-\dim\nabla\times\bm W(K)-\dim[\nabla\times V(K)\oplus\bm W_0(K)]\\
	&=[\dim V(K)-\dim\nabla\times V(K)]+[\dim\bm W(K)-\dim\nabla\times\bm W(K)-\dim\bm W_0(K)]\\
	&=\dim\{\bm n\times v|_{\partial K}:v\in V(K),\nabla\times v=\bm 0\}\\
	&\quad+\dim\{\bm n\times\bm w\times\bm n|_{\partial K}:\bm w\in\bm W(K),\nabla\times\bm w=0\},
	\end{align*}
	this proves \eqref{iM3}.
	
	\textbf{The proof of if:} If \eqref{iM1}, \eqref{iM2}, \eqref{iM3} and \eqref{iM4} hold, then we only need to prove \eqref{M_decomposition_2} and \eqref{M_decomposition_3}. {If} we define $\widetilde V(K)$ and $\widetilde{\bm W}(K)$ as
	\begin{align*}
	\widetilde{V}(K)=\nabla\times\bm W(K),
	\qquad
	\widetilde{\bm W}(K)=\nabla\times V(K)\oplus\bm W_0(K),
	\end{align*}
	then \eqref{M_decomposition_2} holds. 
	
	Next, we prove that \eqref{M_decomposition_3} hold. Use $I_M(V(K)\times\bm W(K))=0$ to get
	\begin{align*}
	0&=I_M(V(K)\times\bm W(K))\\
	&=\dim \bm M(\partial K)-\dim\{\bm n\times v|_{\partial K}:v\in V(K),\nabla\times v=\bm 0\}\\
	&\quad-\dim\{\bm n\times\bm w\times\bm n|_{\partial K}:\bm w\in\bm W(K),\nabla\times\bm w=0\}\\
	&=\dim \bm M(\partial K)-[\dim V(K)-\dim\nabla\times V(K)]-[\dim\bm W(K)-\dim\nabla\times\bm W(K)-\dim\bm W_0(K)]\\
	&=\dim \bm M(\partial K)-[\dim V(K)+\dim\bm W(K)-\dim\nabla\times\bm W(K)-\dim[\nabla\times V(K)\oplus\bm W_0(K)]\\
	&=\dim \bm M(\partial K) +\dim\widetilde{ V}(K)+\dim\widetilde{\bm W}(K)-\dim V(K)-\dim\bm W(K),
	\end{align*}
	which combines with \eqref{dimension_equal_V_W_1} to get \eqref{M_decomposition_3}. This finishes our proof.
	
\end{proof}

\section{Error Analysis}
\label{Error_estimates}

In this section, we present our main result, an error analysis for the HDG approximation to Maxwell's equations given by \eqref{Maxwell_equation_HDG_form_ori}.  To simplify the derivation we shall assume that $\mu_r$ and
$\epsilon_r$ are constants. {First we discuss the extra conditions on the spaces $V(K)$ and $W(K)$ needed for 
	this analysis.  These conditions arise because at this point each element $K\in{\cal T}_h$ is a general polygon, yet we need certain  properties for functions in these spaces (that hold for standard elements including triangles, parallelograms and squares that are considered later in this paper). For triangles these conditions follow if
	the mesh is assumed to be regular, and the spaces $V(K)$ and $\bm W(K)$ are sufficiently rich. After this discussion, we consider an adjoint problem needed for the analysis and finally present the error analysis.}

\subsection{{Additional assumptions on the approximation spaces}}\label{messy_discussion}
{Throughout this section we
	assume that the following conditions on the local spaces $V(K)$ and $\bm W(K)$ hold:}
\begin{enumerate}
	\item {Most importantly, we assume that the space $V(K)\times \bm W(K)$ admits an $M$-decomposition.}
	\item {The spaces $V(K)$ and $\bm W(K)$ must satisfy
		\begin{equation}
		\mathcal {P}_0(K)\in \widetilde{ V}(K)\mbox{ and }[\mathcal {P}_0(K)]^2\in \widetilde{\bm W}(K),\label{approx-prop}
		\end{equation}
		for all elements $K$.  In addition, we assume that if $\Pi_0$ (respectively $\bm\Pi_0$) denotes the $L^2(K)$ (respectively $\bm L^2(K)$) orthogonal projection onto $V(K)$ (respectively $\bm W(K)$) then the following estimates hold:
		\[
		\Vert \bm w-\bm \Pi_0\bm w\Vert_K\leq Ch_K^s\Vert \bm w\Vert_{\bm H^s(K)}
		\mbox{ and }
		\Vert p-\Pi_0p\Vert_K\leq Ch_K^s\Vert p\Vert_{H^s(K)}
		\]
		for any sufficiently smooth $\bm w$ or $p$ and $0\leq s\leq 1$.}
	\item {Let $\mathcal T_h^\star$ be a refined mesh of $\mathcal T_h$ consisting of simplices obtained by subdividing each element $K\in \mathcal T_h$ using triangles.  We assume that the number of triangles used in each element is bounded independent of $h$ (i.e. there is a fixed maximum number of triangles covering each $K$ independent of $h$). Next we define $\bm W_h^\star = \{\bm u \in \bm L^2(\Omega): \bm u|_K\in   [{\mathcal P}_\ell(K)]^2, \forall K\in\mathcal T_h^\star\}$ and $\ell\ge1$ is some integer  such that $\bm W_h\subset\bm W_h^\star$.  We assume that $\mathcal T_h^\star$ is shape-regular.  This assumption implies that standard scaling estimates
		can be used for $V(K)$ and $\bm W(K)$, because scaling can be used triangle by triangle on the $\mathcal T_h^\star$. In addition, standard finite element spaces constructed on this mesh have the usual approximation properties.}
	
	{Note that this notion of shape regularity for the general mesh is 
		what is the analogue of that used to define shape regularity for a  quadrilateral mesh in~\cite{MR851383}. }
	
\end{enumerate}    

\subsection{The dual problem}
Consider the following dual problem: find $({\Psi},\bm\Phi)\in H({\rm curl};\Omega)\times [\bm H_0({\rm curl};\Omega)\cap\bm H({\rm div}^0;\Omega)]$ such that 
\begin{subequations}\label{dual_Mixed}
	\begin{align}
	\mu_r \Psi-\nabla\times\bm \Phi&=\bm 0&\text{ in }\Omega,\label{dual_Mixed_1}\\
	\nabla\times \Psi -\kappa^2\overline{\epsilon_r}\bm \Phi&=\bm{\Theta}&\text{ in }\Omega,\label{dual_Mixed_2}\\
	\bm{n}\times \bm{\Phi} &=\bm{0}&\text{ on }\Gamma,\label{dual_Mixed_3}
	\end{align}
\end{subequations}
where $\bm{\Theta}\in \bm H({\rm div};\Omega)$ and $\nabla\cdot\bm{\Theta}=0$ and $\overline{\epsilon_r}$ is the
complex conjugate of $\epsilon_r$.
Under our assumptions on {$\mu_r$, $\epsilon_r$ and $\kappa$}, this problem has a unique solution. The regularity of the solution of \eqref{dual_Mixed} is given in \Cref{regularity_dual_pro}.


We recall the following result, where $L_0^2(\Omega)$ denotes the space of functions in $L^2(\Omega)$ with average value zero.	
\begin{lemma}[c.f {\cite[Corollary 2.4]{MR851383}}] \label{lbb1}
	Let $\Omega$ be a bounded connected Lipschitz domain in $\mathbb R^2$, then for any $f\in L^2_0(\Omega)$, there exists a $\bm v\in \bm H_0^1(\Omega)$ such that
	\begin{align*}
	\nabla\cdot\bm v=f,\qquad \|\bm v\|_{\bm H^1(\Omega)}\le C\|f\|_{L^2(\Omega)}.
	\end{align*}
\end{lemma}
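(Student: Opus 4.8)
The statement is the classical surjectivity of the divergence operator from $\bm H_0^1(\Omega)$ onto $L^2_0(\Omega)$ together with a bounded right inverse, so the cleanest route is the functional-analytic one. The plan is to regard $\mathrm{div}\colon\bm H_0^1(\Omega)\to L^2_0(\Omega)$ as a bounded linear operator and to prove it is surjective with closed range; by the closed range theorem this is equivalent to its adjoint being bounded below, and after the usual identifications the adjoint is $-\nabla\colon L^2_0(\Omega)\to\bm H^{-1}(\Omega)$ (integrate by parts against $\bm v\in\bm H_0^1(\Omega)$). Hence it suffices to establish
\[
\|q\|_{L^2(\Omega)}\le C\,\|\nabla q\|_{\bm H^{-1}(\Omega)}\qquad\text{for all }q\in L^2_0(\Omega).
\]
Once this holds, a minimal-norm (Riesz representation) argument produces, for each $f\in L^2_0(\Omega)$, a $\bm v\in\bm H_0^1(\Omega)$ with $\nabla\cdot\bm v=f$ and $\|\bm v\|_{\bm H^1(\Omega)}\le C\|f\|_{L^2(\Omega)}$ with the same constant $C$.

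The real content is therefore the displayed inequality, the homogeneous form of Ne\v{c}as' inequality, and I would get it in two steps. First, the inhomogeneous estimate $\|q\|_{L^2(\Omega)}\le C(\|q\|_{H^{-1}(\Omega)}+\|\nabla q\|_{\bm H^{-1}(\Omega)})$ for all $q\in L^2(\Omega)$: cover $\overline\Omega$ by finitely many coordinate patches, in each of which a bi-Lipschitz change of variables flattens a piece of $\partial\Omega$, use a subordinate partition of unity to reduce to the half-space case, and prove the estimate there by Fourier analysis (boundedness of the Riesz transforms on $L^2$). Second, upgrade to the homogeneous inequality on $L^2_0(\Omega)$ by a Peetre--Tartar compactness argument: were it false there would be $q_n\in L^2_0(\Omega)$ with $\|q_n\|_{L^2}=1$ and $\|\nabla q_n\|_{\bm H^{-1}}\to 0$; compactness of $L^2(\Omega)\hookrightarrow H^{-1}(\Omega)$ gives a subsequence converging in $H^{-1}$, hence by the inhomogeneous estimate in $L^2$, to some $q$ with $\nabla q=0$, $\int_\Omega q=0$, and $\|q\|_{L^2}=1$, impossible since $\Omega$ is connected.

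A constructive alternative is Bogovskii's explicit right inverse. If $\Omega$ is star-shaped with respect to a ball $B$, fix $\omega\in C_c^\infty(B)$ with $\int\omega=1$ and set
\[
\bm v(x)=\int_\Omega f(y)\,\frac{x-y}{|x-y|^2}\Big(\int_{|x-y|}^{\infty}\omega\Big(y+\xi\,\tfrac{x-y}{|x-y|}\Big)\,\xi\,d\xi\Big)\,dy ;
\]
then $\nabla\cdot\bm v=f$, $\bm v\in\bm H_0^1(\Omega)$, and $\|\bm v\|_{\bm H^1}\le C\|f\|_{L^2}$ follows from Calder\'on--Zygmund estimates for the singular kernel $\partial_k G_j(x,y)$ (homogeneous of degree $-2$ in $x-y$ in two dimensions, modulo a less singular remainder). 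A general bounded Lipschitz domain is covered by finitely many open sets whose intersections with $\Omega$ are star-shaped with respect to balls; take a subordinate partition of unity $\{\phi_i\}$, write $f=\sum_i f_i$ with each $f_i$ supported in its patch and of mean zero (the mean-zero defects can be arranged to telescope because $\sum_i\int_\Omega\phi_i f=\int_\Omega f=0$), apply the star-shaped construction to each $f_i$, and sum.

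The step I expect to be the main obstacle in either route is that $\partial\Omega$ is only Lipschitz, not smooth or convex: in the first route this is transferring the half-space Fourier estimate through the bi-Lipschitz flattening, and in the second it is producing a finite cover of $\Omega$ by pieces each star-shaped with respect to a ball. Both are standard but technical, and since this lemma plays only an auxiliary role here I would in the end simply invoke \cite[Corollary 2.4]{MR851383}, exactly as stated.
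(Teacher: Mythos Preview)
Your proposal is correct, and in fact your final sentence matches exactly what the paper does: the paper gives no proof at all of this lemma, but simply recalls it as a known result and cites \cite[Corollary 2.4]{MR851383}. Your two sketched routes (Ne\v{c}as' inequality via closed range, and the Bogovskii operator with a star-shaped decomposition) are both standard correct proofs, but they go well beyond what the paper itself provides.
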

With the above result, we are ready to prove the following lemma:
\begin{lemma}\label{lbb2} Let $\Omega$ be a bounded connected Lipschitz domain in $\mathbb R^2$, then for any $f\in L^2(\Omega)$, there exists a $\bm v\in \bm H^1(\Omega)$ such that
	\begin{align*}
	\nabla\cdot\bm v=f,\qquad \|\bm v\|_{\bm H^1(\Omega)}\le C\|f\|_{L^2(\Omega)}.
	\end{align*}
\end{lemma}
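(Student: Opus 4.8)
The plan is to reduce the general statement about arbitrary $f\in L^2(\Omega)$ to the zero-average case already handled by Lemma~\ref{lbb1}. The only obstruction to applying Lemma~\ref{lbb1} directly is that $f$ need not lie in $L^2_0(\Omega)$, so I would first subtract off a convenient function whose divergence accounts for the mean of $f$. Concretely, let $c = \frac{1}{|\Omega|}\int_\Omega f\,dx$ be the average of $f$, and fix a smooth reference vector field, e.g. $\bm v_1(x) = \tfrac{c}{2}\,x$ (the radial field), which satisfies $\nabla\cdot\bm v_1 = c$ on $\mathbb{R}^2$ and is certainly in $\bm H^1(\Omega)$ since $\Omega$ is bounded, with $\|\bm v_1\|_{\bm H^1(\Omega)}\le C_\Omega|c|\le C_\Omega|\Omega|^{-1/2}\|f\|_{L^2(\Omega)}$ by Cauchy--Schwarz.

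Next I would set $\tilde f := f - \nabla\cdot\bm v_1 = f - c$. By construction $\int_\Omega \tilde f\,dx = 0$, so $\tilde f\in L^2_0(\Omega)$, and $\|\tilde f\|_{L^2(\Omega)}\le \|f\|_{L^2(\Omega)} + |c|\,|\Omega|^{1/2}\le C\|f\|_{L^2(\Omega)}$. Applying Lemma~\ref{lbb1} to $\tilde f$ produces $\bm v_2\in \bm H_0^1(\Omega)\subset \bm H^1(\Omega)$ with $\nabla\cdot\bm v_2 = \tilde f$ and $\|\bm v_2\|_{\bm H^1(\Omega)}\le C\|\tilde f\|_{L^2(\Omega)}\le C\|f\|_{L^2(\Omega)}$. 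Then $\bm v := \bm v_1 + \bm v_2\in \bm H^1(\Omega)$ satisfies
\begin{align*}
\nabla\cdot\bm v = \nabla\cdot\bm v_1 + \nabla\cdot\bm v_2 = c + \tilde f = f,
\end{align*}
and by the triangle inequality $\|\bm v\|_{\bm H^1(\Omega)}\le \|\bm v_1\|_{\bm H^1(\Omega)} + \|\bm v_2\|_{\bm H^1(\Omega)}\le C\|f\|_{L^2(\Omega)}$, which is exactly the claimed bound.

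There is no real obstacle here; the argument is a routine lifting of the mean. The only point requiring a line of care is the bound on $\|\bm v_1\|_{\bm H^1(\Omega)}$: since $\Omega$ is bounded and Lipschitz, the linear field $x\mapsto \tfrac{c}{2}x$ has all first derivatives constant and $L^2$-norms controlled by $|c|\,\mathrm{diam}(\Omega)\,|\Omega|^{1/2}$, and $|c|$ itself is controlled by $\|f\|_{L^2(\Omega)}$ via Cauchy--Schwarz, so all constants depend only on $\Omega$. (If one prefers to avoid fixing an explicit field, one can instead invoke Lemma~\ref{lbb1} with $f$ replaced by the characteristic-function-type data $\chi - |\Omega|^{-1}$ for a suitable normalized bump $\chi$, but the explicit radial field is cleanest.) This completes the reduction and hence the proof.
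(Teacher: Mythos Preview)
Your proof is correct and follows essentially the same approach as the paper: subtract the mean $c$ of $f$, apply Lemma~\ref{lbb1} to the zero-average remainder, and add back an explicit field with constant divergence $c$. The only cosmetic difference is that the paper uses the one-component field $((x-x_0)\bar f,\,0)^T$ in place of your radial field $\tfrac{c}{2}x$.
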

\begin{proof} 
	Let $\bar f = |\Omega|^{-1}(f,1)_{\Omega}$ be the mean value of $f$, then $f-\bar f\in L_0^2(\Omega)$. By \Cref{lbb1},  there exists a $\bm w=(w_1, w_2)^T\in \bm H_{0}^1(\Omega)$, such that $\nabla\cdot\bm w=f-\bar f$, and $\|\bm w\|_{\bm H^1(\Omega)}\le C\|f-\bar f\|_{L^2(\Omega)}$. 
	
	Let $(x_0, y_0)$ be a point in the domain $\Omega$. Define $\bm v=(v_1,v_2)^T$ with $v_1=w_1+(x-x_0)\bar f$, $v_2=w_2$, then $\bm v\in \bm H^1(\Omega)$ and 
	\begin{align*}
	\nabla\cdot\bm v&=\nabla\cdot\bm w+\nabla\cdot((x-x_0)\bar f,0)^T=f,\\
	\|\bm v\|_{\bm H^1(\Omega)}&\le 
	\|\bm w\|_{\bm H^1(\Omega)}+\|(x-x_0)\bar f\|_{\bm H^1(\Omega)}\\
	&\le C\|f-\bar f\|_{L^2(\Omega)}+\|(x-x_0)\bar f\|_{L^2(\Omega)}+\|\bar f\|_{L^2(\Omega)}\\
	&\le  C(\|f\|_{L^2(\Omega)}+\|\bar f\|_{L^2(\Omega)})\\
	&\le C\|f\|_{L^2(\Omega)}.
	\end{align*}
\end{proof}
The previous result can be used to prove the existence of a vector potential as follows

\begin{lemma}\label{le41} Let $f\in  L^2(\Omega)$, then there exists a function $\bm w\in \bm H^1(\Omega)$, such that
	\begin{align*}
	\nabla\times \bm w= f, \qquad \|\bm w\|_{\bm H^1(\Omega)}\le  C\|f\|_{L^2(\Omega)}.
	\end{align*}
\end{lemma}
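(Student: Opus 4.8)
The plan is to obtain $\bm w$ as a rotation of a vector field whose \emph{divergence} equals $f$, thereby reducing the statement directly to \Cref{lbb2}. Concretely, I would first invoke \Cref{lbb2} with the same right-hand side $f\in L^2(\Omega)$ to produce a field $\bm v=(v_1,v_2)^T\in\bm H^1(\Omega)$ satisfying $\nabla\cdot\bm v=f$ and $\|\bm v\|_{\bm H^1(\Omega)}\le C\|f\|_{L^2(\Omega)}$.

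The second step is the rotation. Define $\bm w:=(-v_2,v_1)^T$. Since each component of $\bm w$ is, up to sign, a component of $\bm v$, we immediately get $\bm w\in\bm H^1(\Omega)$ and $\|\bm w\|_{\bm H^1(\Omega)}=\|\bm v\|_{\bm H^1(\Omega)}\le C\|f\|_{L^2(\Omega)}$. Using the definition of the scalar curl in (\ref{curl2D}), i.e.\ $\nabla\times\bm w=-\partial_y w_1+\partial_x w_2$, we compute
\[
\nabla\times\bm w=-\partial_y(-v_2)+\partial_x v_1=\partial_x v_1+\partial_y v_2=\nabla\cdot\bm v=f,
\]
which is precisely the desired identity, and the norm bound is inherited directly.

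There is essentially no obstacle here; the only point requiring care is the bookkeeping of signs so that the rotation is consistent with the sign conventions fixed in (\ref{curl2D})--(\ref{Maxwell_equation_ori_form}). Conceptually this is just the planar observation that the scalar curl of a vector field equals the divergence of its $90^{\circ}$ rotation, so a bounded right inverse of $\nabla\times$ in $\bm H^1(\Omega)$ comes for free from the corresponding bounded right inverse of $\nabla\cdot$ supplied by \Cref{lbb2} (which in turn rests on the Bogovski\u{\i}-type result \Cref{lbb1}).
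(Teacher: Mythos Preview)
Your proposal is correct and follows essentially the same approach as the paper: invoke \Cref{lbb2} to obtain $\bm v\in\bm H^1(\Omega)$ with $\nabla\cdot\bm v=f$, then rotate by setting $\bm w=(-v_2,v_1)^T$ so that $\nabla\times\bm w=\nabla\cdot\bm v=f$ with the same $\bm H^1$ norm. (In fact your version is slightly cleaner, as the paper's proof contains a typo writing ``$w_2=w_1$'' where ``$w_2=v_1$'' is intended.)
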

\begin{proof} By \Cref{lbb2}, there exists a $\bm v=(v_1,v_2)^T\in \bm H^1(\Omega)$, such that $\nabla\cdot\bm v=f$ and $\|\bm v\|_{\bm H^1(\Omega)}\le C\|f\|_{L^2(\Omega)}$.
	We take $\bm w=(w_1,w_2)^T$ with $w_1=-v_2$ and $w_2=w_1$, hence
	\begin{align*}
	\nabla\times\bm w&=-\partial_yw_1+\partial_xw_2=\partial_yv_2+\partial_xv_1=\nabla\cdot\bm v=f,\\
	\|\bm w\|_{\bm H^1(\Omega)}&=\|\bm v\|_{\bm H^1(\Omega)}\le C\|f\|_{L^2(\Omega)}.
	\end{align*}
\end{proof}

The proof of the next theorem follows that of \cite[Proposition 3.7]{MR1626990} and \cite[Theorem 3.50]{Monk_Maxwell_Book_2003}, which deal with the 3D case.	
\begin{theorem} \label{th2} Let $\Omega$ be a simply connected Lipschitz domain in $\mathbb R^2$, then the space $ \bm H_0({\rm curl};\Omega)\cap\bm H({\rm div};\Omega)$ is imbedded in the space $\bm H^s(\Omega)$ with some $s\in (\frac{1}{2},1]$, and the following estimate holds
	\begin{align}
	\|\bm u\|_{\bm H^s(\Omega)}\le C\left(
	\|\nabla\times\bm u\|_{L^2(\Omega)}
	+\|\nabla\cdot\bm u\|_{L^2(\Omega)}
	\right),
	\end{align}
	for all $\bm u\in\bm H_0({\rm curl};\Omega)\cap\bm H({\rm div};\Omega)$.		
\end{theorem}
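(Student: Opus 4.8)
The plan is to peel off the curl of $\bm u$ with a regular vector potential, so that the remainder is a gradient, and then to control that gradient by sharp elliptic regularity for the scalar Laplacian on $\Omega$. First, set $f=\nabla\times\bm u\in L^2(\Omega)$ and apply \Cref{le41} to obtain $\bm w\in\bm H^1(\Omega)$ with $\nabla\times\bm w=f$ and $\|\bm w\|_{\bm H^1(\Omega)}\le C\|\nabla\times\bm u\|_{L^2(\Omega)}$. Then $\bm u-\bm w\in\bm L^2(\Omega)$ has $\nabla\times(\bm u-\bm w)=0$; since $\Omega$ is simply connected, the two-dimensional Poincar\'e (de Rham) lemma provides a scalar potential $\phi\in H^1(\Omega)$, unique up to an additive constant, with $\bm u-\bm w=\nabla\phi$, hence $\bm u=\bm w+\nabla\phi$.

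Next I would identify the boundary value problem solved by $\phi$. Taking divergences, $\Delta\phi=\nabla\cdot\bm u-\nabla\cdot\bm w\in L^2(\Omega)$ with $\|\Delta\phi\|_{L^2(\Omega)}\le C\big(\|\nabla\cdot\bm u\|_{L^2(\Omega)}+\|\nabla\times\bm u\|_{L^2(\Omega)}\big)$. For the boundary data, one uses that for a scalar $\phi$ one has $\bm n\times\nabla\phi=\partial_s\phi$, the arclength (tangential) derivative along $\partial\Omega$; applying $\bm n\times$ to $\bm u-\bm w=\nabla\phi$ and using $\bm n\times\bm u=0$ on $\partial\Omega$ gives $\partial_s\phi=-\bm n\times\bm w$ on $\partial\Omega$, with $\bm n\times\bm w\in H^{1/2}(\partial\Omega)$ and $\|\bm n\times\bm w\|_{H^{1/2}(\partial\Omega)}\le C\|\bm w\|_{\bm H^1(\Omega)}$. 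Because $\int_{\partial\Omega}\bm n\times\bm w\,ds=\int_\Omega\nabla\times\bm w\,dx=\int_\Omega\nabla\times\bm u\,dx=\int_{\partial\Omega}\bm n\times\bm u\,ds=0$ (integrate \eqref{integration_by_parts1} against the constant $1$) and $\partial\Omega$ is connected, integrating $\partial_s\phi$ around $\partial\Omega$ recovers a single-valued, suitably normalized trace $\phi\vert_{\partial\Omega}$ with one extra derivative, i.e. $\|\phi\vert_{\partial\Omega}\|_{H^{3/2}(\partial\Omega)}\le C\|\nabla\times\bm u\|_{L^2(\Omega)}$.

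Then I would invoke the sharp regularity theory for the inhomogeneous Dirichlet problem for $-\Delta$ on the Lipschitz (here polygonal) domain $\Omega\subset\mathbb R^2$ — the same ingredient used for the 3D analogue in \cite{MR1626990} and \cite[Thm.~3.50]{Monk_Maxwell_Book_2003} — which yields some $s\in(\tfrac12,1]$ depending only on $\Omega$ with $\phi\in H^{1+s}(\Omega)$ and $\|\phi\|_{H^{1+s}(\Omega)}\le C\big(\|\Delta\phi\|_{L^2(\Omega)}+\|\phi\vert_{\partial\Omega}\|_{H^{s+1/2}(\partial\Omega)}\big)$ (modulo constants, killed by taking $\nabla\phi$). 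Therefore $\nabla\phi\in\bm H^s(\Omega)$, and since $\bm w\in\bm H^1(\Omega)\hookrightarrow\bm H^s(\Omega)$, we conclude $\bm u=\bm w+\nabla\phi\in\bm H^s(\Omega)$ with $\|\bm u\|_{\bm H^s(\Omega)}\le C\big(\|\nabla\times\bm u\|_{L^2(\Omega)}+\|\nabla\cdot\bm u\|_{L^2(\Omega)}\big)$, as asserted in \Cref{th2}.

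The two delicate points I expect are: (i) the boundary bookkeeping — justifying $\bm n\times\nabla\phi=\partial_s\phi$ for the $\bm H(\mathrm{curl})$ field $\nabla\phi$ and recovering a single-valued trace $\phi\vert_{\partial\Omega}$ of the claimed fractional regularity on a merely Lipschitz boundary, which is exactly where connectedness of $\partial\Omega$ and the vanishing of $\int_{\partial\Omega}\bm n\times\bm u$ enter; and (ii) quoting the correct (best possible) exponent $s>1/2$ from the corner/Lipschitz regularity theory for the Dirichlet Laplacian. An essentially equivalent alternative, avoiding the Dirichlet trace, is to peel off the divergence first via \Cref{lbb2}, write the residual divergence-free field as a stream function $\nabla\times\psi$ (again using simple connectedness), note that $\bm n\times\bm u=0$ becomes a Neumann condition $\partial_n\psi=\bm n\times\bm z$ for $-\Delta\psi$ whose compatibility condition holds precisely because $\int_\Omega\nabla\times\bm u=0$, and then apply the same elliptic regularity; this matches more closely the vector-potential argument used in the cited 3D references.
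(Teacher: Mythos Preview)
Your overall strategy matches the paper's: split $\bm u=\bm w+\nabla\phi$ with $\bm w\in\bm H^1$ carrying the curl (via \Cref{le41}), then obtain $H^{1+s}$ regularity for $\phi$ from a Dirichlet Laplace problem and conclude. The one genuine difference is how the Dirichlet datum $\phi|_{\partial\Omega}$ is obtained with the right fractional regularity. You work intrinsically on $\Omega$, read off $\partial_s\phi=-\bm n\times\bm w\in H^{1/2}(\partial\Omega)$, check the zero-average compatibility, and integrate along $\partial\Omega$ to lift to $\phi|_{\partial\Omega}\in H^{3/2}(\partial\Omega)$. The paper instead embeds $\bar\Omega$ in a smooth ball $\mathcal O$, extends $\bm u$ by zero (legitimate precisely because $\bm n\times\bm u=0$), applies \Cref{le41} on $\mathcal O$, and writes $\bm u-\bm w=\nabla\chi$ on all of $\mathcal O$; since $\nabla\chi=-\bm w$ in $\mathcal O\setminus\bar\Omega$, one gets $\chi\in H^2$ outside $\Omega$ and hence $\chi|_{\partial\Omega}\in H^{3/2}(\partial\Omega)$ by the trace theorem from the exterior, with no arclength integration needed.

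What each buys: the paper's extension trick is the direct 2D transcription of the 3D argument in \cite{MR1626990,Monk_Maxwell_Book_2003} and sidesteps exactly the ``delicate point (i)'' you flag --- no need to justify $\bm n\times\nabla\phi=\partial_s\phi$ at low regularity or to lift a tangential derivative on a merely Lipschitz curve. Your route is more elementary and intrinsically two-dimensional (the arclength antiderivative has no clean 3D analogue), and it makes the role of connectedness of $\partial\Omega$ explicit. Both arrive at the same elliptic regularity citation for the final step.
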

\begin{proof} 
	Let  $\mathcal O$ be  a smooth open set  with a connected boundary (a circle for instance), which contains $\bar{\Omega}$.
	Let $\bm u\in \bm H_0({\rm curl};\Omega)\cap\bm H({\rm div};\Omega)$, we  extend $\bm u$ to $\mathcal O$ by zero, so $\bm u\in \bm H({\rm curl};\mathcal O)$, therefore $\nabla\times\bm u\in L^2(\mathcal O)$.
	From \Cref{le41}, there exists a function $\bm w\in \bm H^1(\mathcal O)$ such that $\nabla\times\bm w=\nabla\times\bm u$ in $\mathcal O$ and $\|\bm w\|_{\bm H^1(\mathcal O)}\le C\|\nabla\times\bm u\|_{L^2(\mathcal O)}$. Since $\nabla\times(\bm u-\bm w)=0$ in $\mathcal O$, then there is a function $\chi\in H^1(\mathcal O)$ such that $\bm u-\bm w=\nabla\chi$ in $\mathcal O$. Since $\bm u=\bm 0$ in $\mathcal O/\bar{\Omega}$, we have $-\bm w=\nabla\chi$ in $\mathcal O\setminus\bar{\Omega}$, therefore, $\chi\in H^2(\mathcal O\setminus \bar{\Omega})$. Then  $\chi\in H^1(\Omega)\setminus\mathbb R$ satisfies 
	\begin{align*}
	\Delta\chi&=\nabla\cdot\bm u-\nabla\cdot\bm w\quad\text{ in }\Omega,
	\end{align*}
	where $\nabla\chi|_{\partial\Omega}$ takes the exterior value of $\nabla\chi=-\bm w$. So $\chi|_{\partial\Omega}\in H^{\frac{1}{2}+s}(\partial\Omega)$ with some $s\in (\frac{1}{2},1]$.
	By \cite[Corollary 18.15]{MR961439}, we have $\chi\in H^{1+s}(\Omega)$  and
	\begin{align*}
	\|\chi\|_{H^{1+s}(\Omega)}&\le C
	\left(\|\nabla\cdot\bm u-\nabla\cdot \bm w\|_{H^{-1+s}(\Omega)}+\|\nabla\chi\|_{H^{-\frac{1}{2}+s}(\partial \Omega)}\right).
	\end{align*}
	Therefore, using the fact that $\nabla\chi=-\bm w$ on $\mathcal O\setminus\bar{\Omega}$ it holds
	\begin{align*}
	\|\bm u\|_{\bm H^{s}(\Omega)}&=\|\bm w+\nabla\chi\|_{\bm H^{s}(\Omega)}\nonumber\\
	&\le \|\bm w\|_{\bm H^1(\Omega)}
	+C
	\left(
	\|\nabla\cdot\bm u-\nabla\cdot\bm w\|_{H^{-1+s}(\Omega)}+\|\nabla\chi\|_{H^{-\frac{1}{2}+s}(\partial\Omega)}
	\right)
	\nonumber\\
	&\le C\left(
	\|\bm w\|_{\bm H^1(\Omega)}+\|\nabla\cdot\bm u\|_{L^2(\Omega)}
	\right)\nonumber\\
	&\le C\left(
	\|\bm w\|_{\bm H^1(\mathcal O)}+\|\nabla\cdot\bm u\|_{L^2(\Omega)}
	\right)\nonumber\\
	&\le C\left(
	\|\nabla\times\bm u\|_{L^2(\Omega)}
	+\|\nabla\cdot\bm u\|_{L^2(\Omega)}
	\right).
	\end{align*}
	Thus we finish our proof.
\end{proof}

Now we can state a complete regularity result for the adjoint problem:

\begin{theorem}\label{regularity_dual_pro} 
	Let $\mu_r$ be a smooth function, then we have the following regularity for the solution of problem \eqref{dual_Mixed}
	\begin{align}\label{regularity_dual}
	\|\Psi\|_{H^1(\Omega)}+\|\bm{\Phi}\|_{\bm H^s(\Omega)}\le C\|\bm{\Theta}\|_{\bm L^2(\Omega)},
	\end{align}
	for some  $s\in (\frac{1}{2},1]$ depending on $\Omega$.
\end{theorem}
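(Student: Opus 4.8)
The plan is to reduce the claimed estimate \eqref{regularity_dual} to the $\bm L^2$-stability bound
\[
\|\Psi\|_{L^2(\Omega)}+\|\bm\Phi\|_{\bm L^2(\Omega)}\le C\|\bm\Theta\|_{\bm L^2(\Omega)},
\]
and then to bootstrap that bound to the stated Sobolev regularity using \Cref{th2} and the structure of the two-dimensional curl operators. The $\bm L^2$-stability bound is essentially the statement that \eqref{dual_Mixed}, being the adjoint of the original problem (which under the stated hypotheses on $\mu_r$, $\epsilon_r$, $\kappa$ is uniquely solvable and hence, by the Fredholm alternative and the open mapping theorem, has a bounded solution operator), has a solution operator bounded from $\bm L^2(\Omega)$ to $\bm L^2(\Omega)$; equivalently it follows from a standard compactness argument, sketched below. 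I would take this bound as the starting point.

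Granting the $\bm L^2$ bound, the bootstrap is short. From \eqref{dual_Mixed_1} we have $\nabla\times\bm\Phi=\mu_r\Psi\in L^2(\Omega)$, while $\bm\Phi\in\bm H({\rm div}^0;\Omega)$ gives $\nabla\cdot\bm\Phi=0$ and \eqref{dual_Mixed_3} gives $\bm n\times\bm\Phi=0$ on $\Gamma$; hence $\bm\Phi\in\bm H_0({\rm curl};\Omega)\cap\bm H({\rm div};\Omega)$ and \Cref{th2} yields $\bm\Phi\in\bm H^s(\Omega)$ for some $s\in(\tfrac12,1]$ together with
\[
\|\bm\Phi\|_{\bm H^s(\Omega)}\le C\big(\|\nabla\times\bm\Phi\|_{L^2(\Omega)}+\|\nabla\cdot\bm\Phi\|_{L^2(\Omega)}\big)=C\|\mu_r\Psi\|_{L^2(\Omega)}\le C\|\Psi\|_{L^2(\Omega)},
\]
using that $\mu_r$ is smooth on the bounded domain $\Omega$, hence bounded. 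For $\Psi$, equation \eqref{dual_Mixed_2} gives $\nabla\times\Psi=\bm\Theta+\kappa^2\overline{\epsilon_r}\bm\Phi\in\bm L^2(\Omega)$; since $\Psi$ is scalar, $\nabla\times\Psi=(\partial_y\Psi,-\partial_x\Psi)^T$ has the same pointwise modulus as $\nabla\Psi$, so $\Psi\in H^1(\Omega)$ with $\|\nabla\Psi\|_{\bm L^2(\Omega)}=\|\nabla\times\Psi\|_{\bm L^2(\Omega)}\le\|\bm\Theta\|_{\bm L^2(\Omega)}+C\|\bm\Phi\|_{\bm L^2(\Omega)}$. Combining these two estimates with the $\bm L^2$-stability bound gives \eqref{regularity_dual}.

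Thus the only substantive point is the $\bm L^2$-stability estimate, and this is where the non-coercivity bites: when $\Im(\epsilon_r)>0$ a one-line energy identity obtained by testing \eqref{dual_Mixed_2} with $\overline{\bm\Phi}$ and using \eqref{dual_Mixed_1} suffices, but when $\Im(\epsilon_r)=0$ it does not. I would then argue by contradiction in the usual way: if no such $C$ existed there would be data $\bm\Theta_n$ with $\|\bm\Theta_n\|_{\bm L^2(\Omega)}\to0$ and corresponding solutions normalized by $\|\Psi_n\|_{L^2(\Omega)}+\|\bm\Phi_n\|_{\bm L^2(\Omega)}=1$. The bootstrap above — whose right-hand sides involve only the normalized $\bm L^2$ quantities — shows $\{\bm\Phi_n\}$ is bounded in $\bm H^s(\Omega)$ and $\{\Psi_n\}$ is bounded in $H^1(\Omega)$, so by Rellich compactness we may pass to subsequences converging strongly in $\bm L^2(\Omega)$ and $L^2(\Omega)$ to a limit $(\Psi,\bm\Phi)$, which by linearity and continuity of \eqref{dual_Mixed} solves the problem with $\bm\Theta=0$ and hence vanishes by the assumed uniqueness, contradicting $\|\Psi\|_{L^2(\Omega)}+\|\bm\Phi\|_{\bm L^2(\Omega)}=\lim(\|\Psi_n\|_{L^2(\Omega)}+\|\bm\Phi_n\|_{\bm L^2(\Omega)})=1$. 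The one piece of bookkeeping to check carefully is that the limit $(\Psi,\bm\Phi)$ still lies in $\bm H({\rm div}^0;\Omega)$ and satisfies $\bm n\times\bm\Phi=0$ on $\Gamma$; this holds because both constraints are preserved under weak limits of sequences bounded in the corresponding graph norms.
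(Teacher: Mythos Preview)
Your proof is correct and arrives at the same conclusion, but the route differs from the paper's in two places.

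For the $\bm L^2$-stability bound, the paper does not argue by contradiction. Instead it writes the problem as an operator equation $(\mathcal I+\mathcal K)\bm\Phi=\widetilde{\bm\Phi}$, where $\widetilde{\bm\Phi}$ solves the coercive problem $a^+(\widetilde{\bm\Phi},\bm v)=(\bm\Theta,\bm v)_\Omega$ with $a^+(\bm u,\bm v)=(\mu_r^{-1}\nabla\times\bm u,\nabla\times\bm v)_\Omega+(\bm u,\bm v)_\Omega$, and $\mathcal K$ is the solution operator of $a^+(\mathcal K\bm w,\bm v)=-(\kappa^2\overline{\epsilon_r}-1)(\bm w,\bm v)_\Omega$. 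Compactness of $\mathcal K$ follows from the embedding in \Cref{th2}, and the Fredholm alternative then yields $\|\bm\Phi\|_{\bm L^2(\Omega)}\le C\|\widetilde{\bm\Phi}\|_{\bm L^2(\Omega)}\le C\|\bm\Theta\|_{\bm L^2(\Omega)}$ directly. Your contradiction argument is equally valid and is in fact the same Fredholm mechanism in disguise; the paper's version has the minor advantage of being constructive (no sequences), while yours avoids introducing the auxiliary bilinear form and operator.

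For the $H^1$ bound on $\Psi$, the paper takes a slightly longer path: it first bounds $\|\nabla\times(\mu_r^{-1}\nabla\times\bm\Phi)\|_{\bm L^2(\Omega)}$ from \eqref{dual_Mixed_2}, infers $\nabla\times\bm\Phi\in H^1(\Omega)$ (using that in 2D the curl of a scalar controls the full gradient), and then reads off $\Psi=\mu_r^{-1}\nabla\times\bm\Phi\in H^1(\Omega)$ from the smoothness of $\mu_r$. Your route---use \eqref{dual_Mixed_2} directly to get $\nabla\times\Psi\in\bm L^2(\Omega)$ and observe $|\nabla\times\Psi|=|\nabla\Psi|$---is shorter and more transparent. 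Both then invoke \Cref{th2} for $\|\bm\Phi\|_{\bm H^s(\Omega)}$ in the same way.
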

\begin{proof} 
	To simplify the notation, we define
	\begin{align*}
	a^+(\bm u,\bm v)=(\mu_r^{-1}\nabla\times\bm u,\nabla\times\bm v)_{\Omega}+(\bm u,\bm v)_{\Omega}.
	\end{align*}
	Let $\widetilde{\bm{\Phi}}\in \bm H_0({\rm curl};\Omega)\cap\bm H({\rm div^0};\Omega)$ be the solution of
	\begin{align}\label{def1}
	a^+(\widetilde{\bm{\Phi}},\bm v)=(\bm \Theta,\bm v)_{\Omega},\qquad \forall \bm v\in \bm H_0({\rm curl};\Omega)\cap\bm H({\rm div^0};\Omega).
	\end{align}
	Let $\mathcal K: \bm L^2(\Omega)\to\bm H_0({\rm curl};\Omega)\cap\bm H({\rm div^0};\Omega)$ be such that for any $\bm w\in \bm L^2(\Omega)$ the function  $\mathcal K\bm w$ satisfies
	\begin{align}\label{def2}
	a^+(\mathcal K \bm w,\bm v)=-(\kappa^2\overline{\epsilon_r}-1)(\bm w,\bm v)_{\Omega},\qquad \forall \bm v\in \bm H_0({\rm curl};\Omega)\cap\bm H({\rm div^0};\Omega).
	\end{align}
	Obviously,  $\widetilde{\bm{\Phi}}$ and $\mathcal K $ are well-defined and
	\begin{align*}
	a^+((\mathcal I+\mathcal K )\bm{\Phi},\bm v)=a^+(\widetilde{\bm{\Phi}},\bm v).
	\end{align*}
	This gives
	\begin{align}\label{eq}
	(\mathcal I+\mathcal K )\bm{\Phi}=\widetilde{\bm{\Phi}}.
	\end{align}
	Form \eqref{def1} and \eqref{def2}, we get
	\begin{align*}
	\|\widetilde{\bm{\Phi}}\|_{\bm L^2(\Omega)}+\|\nabla\times\widetilde{\bm{\Phi}}\|_{L^2(\Omega)}&\le C\|\bm{\Theta}\|_{\bm L^2(\Omega)},\\
	\|\mathcal K \bm{\Phi}\|_{\bm L^2(\Omega)}+\|\nabla\times\mathcal K \bm{\Phi}\|_{L^2(\Omega)}&\le C\|\bm{\Phi}\|_{\bm L^2(\Omega)}.
	\end{align*}
	From \Cref{th2}, we know that $ \bm H_0({\rm curl};\Omega)\cap\bm H({\rm div^0};\Omega)$ is compactly imbedded in the space  $\bm L^2(\Omega)$. Using \eqref{def2} we see that $\mathcal K $ is a self-adjoint and compact operator on $\bm L^2(\Omega)$. Hence,  since our assumptions on $\epsilon_r$ and $\kappa^2$ guarantee at most one solution, by the Fredholm Alternative,  \eqref{eq} has a unique solution. Therefore,
	\begin{align*}
	\|{\bm{\Phi}}\|_{\bm L^2(\Omega)}&=
	\|(\mathcal I+\mathcal K )^{-1}\widetilde{\bm{\Phi}}\|_{\bm L^2(\Omega)}\le C\|\bm{\Theta}\|_{\bm L^2(\Omega)},\\
	\|\nabla\times\bm{\Phi}\|_{L^2(\Omega)}&\le \|\nabla\times\mathcal K\bm{\Phi}\|_{L^2(\Omega)}
	+ \|\nabla\times\widetilde{\bm{\Phi}}\|_{L^2(\Omega)}\\
	&\le C\left(\|\bm{\Theta}\|_{\bm L^2(\Omega)}+\|\bm{\Phi}\|_{\bm L^2(\Omega)}\right)\le C\|\bm{\Theta}\|_{\bm L^2(\Omega)}.
	\end{align*}
	By the \Cref{dual_Mixed},  we have
	\begin{align*}
	\|\nabla\times(\mu_r^{-1}\nabla\times\bm{\Phi})\|_{\bm L^2(\Omega)}\le C\|\bm{\Theta}\|_{\bm L^2(\Omega)}+C\|\bm{\Phi}\|_{\bm L^2(\Omega)}\le C\|\bm{\Theta}\|_{\bm L^2(\Omega)}.
	\end{align*}
	Since $\mu$ is smooth, then we have $\nabla\times\bm{\Phi}\in H^1(\Omega)$ and
	\begin{align*}
	\|\nabla\times\bm{\Phi}\|_{H^1(\Omega)}\le C\|\bm{\Theta}\|_{\bm L^2(\Omega)}.
	\end{align*}
	Since \Cref{th2} ensures $ \bm H_0({\rm curl};\Omega)\cap\bm H({\rm div^0};\Omega)$ is imbeded in the space $\bm H^s(\Omega)$ with $(\frac{1}{2},1]$, then we have 	
	\begin{align}
	\|\bm{\Phi}\|_{\bm H^s(\Omega)}\le C\left(
	\|\bm{\Phi}\|_{\bm L^2(\Omega)}+\|\nabla\times\bm{\Phi}\|_{L^2(\Omega)}	\right)\le C\|\bm{\Theta}\|_{\bm L^2(\Omega)}. 
	\end{align}
	This finishes our proof.
\end{proof}


Throughout this section, we use $C$ to denote a positive constant independent of mesh size, which may take on different values at each occurrence. Let $P_V$, $P_{\widetilde V}$, $\bm P_{\bm W}$ and $\bm P_{\widetilde{\bm W}}$ are the $L^2$- projections on the spaces $V_h, \widetilde V_h, \bm W_h$ and $\widetilde{\bm W}_h$, respectively.

Now we state the main result in this section. The proof is found in \Cref{proof_of_main_result_2}.
\begin{theorem}\label{main_result_error_analysis} Suppose the spaces $(V_h,\bm W_h,\bm M_h)$ have an $M$-decomposition and the assumptions in Section~\ref{messy_discussion} are satisfied.
	Let $( q,\bm u)\in  H({\rm curl};\Omega)\times \bm H({\rm curl};\Omega)$ and $( q_h,\bm u_h,\widehat{\bm u}_h)\in V_h\times \bm W_h\times \bm M_h$ be the solution of \eqref{Maxwell_equation_mixed_form} and \eqref{Maxwell_equation_HDG_form_ori}, respectively.  Then there exists an $h_0>0$ such that for all $h\le h_0$, we have the error estimate
	\begin{align*}
	\|q- q_h\|_{\mathcal T_h}&\le C\left(\|q - P_V q\|_{\mathcal T_h} + \|\bm u-\bm {P_{W}} \bm u\|_{\mathcal T_h}\right),\\
	\|\bm u - \bm u_h\|_{\mathcal T_h} &\le C\left(\|q - P_V q\|_{\mathcal T_h} + \|\bm u-\bm {P_{W}} \bm u\|_{\mathcal T_h}\right).
	\end{align*}
	Furthermore, the post processed solution $\bm u_h^\star\in \bm W^{\star}(\mathcal T_h)$ defined later in \eqref{post_processing} satisfies  the estimate
	\begin{align*}
	\|\nabla\times( \bm u - \bm u_h^{\star})\|_{\mathcal{T}_h} \le C\left( \|q - P_V q\|_{\mathcal T_h} +  \|\bm u-\bm {P_{W}} \bm u\|_{\mathcal T_h}+\inf_{\bm w_h^\star\in \bm W^{\star}(\mathcal T_h)}\|\nabla\times(\bm u-\bm w_h^\star)\|_{\mathcal{T}_h}\right).
	\end{align*}	
\end{theorem}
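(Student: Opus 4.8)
The plan is to adapt the projection-based error analysis of HDG methods (\cite{Cockburn_Gopalakrishnan_Sayas_Porjection_MathComp_2010}) in the $M$-decomposition framework of \cite{Cockburn_M_decomposition_Part1_Math_Comp_2017}, together with a duality (Aubin--Nitsche) step to cope with the loss of coercivity. First I would introduce the tailored HDG projection $(\Pi q,\bm\Pi\bm u)\in V(K)\times\bm W(K)$, defined element by element by the interior conditions $(\Pi q-q,v)_K=0$ for all $v\in\widetilde V(K)$ and $(\bm\Pi\bm u-\bm u,\bm w)_K=0$ for all $\bm w\in\widetilde{\bm W}(K)$, together with a trace-matching condition $\langle{\rm tr}(\Pi q-q,\tau(\bm\Pi\bm u-P_{\bm M}\bm u)),\bm\mu\rangle_{\partial K}=0$ for all $\bm\mu\in\bm M(\partial K)$. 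Its unique solvability is exactly what the $M$-decomposition provides: the triple orthogonality of \Cref{the_triple_orthogonal_property} and the isomorphism property \eqref{M_decomposition_3} allow one to solve for the $\widetilde V^{\perp}(K)\times\widetilde{\bm W}^{\perp}(K)$-component from the trace equation (using that $\tau$ is positive and piecewise constant). The additional hypotheses of Section~\ref{messy_discussion} --- the constants lying in $\widetilde V(K)$ and $\widetilde{\bm W}(K)$, first-order approximability of $V(K)$ and $\bm W(K)$, and shape-regularity of the refinement $\mathcal T_h^\star$ --- then yield, by a scaling argument carried out element by element over $\mathcal T_h^\star$, the bound $\|q-\Pi q\|_K+\|\bm u-\bm\Pi\bm u\|_K\le C(\|q-P_Vq\|_K+\|\bm u-\bm P_{\bm W}\bm u\|_K)$.

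Writing $\bm\varepsilon^q=\Pi q-q_h$, $\bm\varepsilon^{\bm u}=\bm\Pi\bm u-\bm u_h$ and $\bm\varepsilon^{\widehat{\bm u}}=P_{\bm M}\bm u-\widehat{\bm u}_h$, I would subtract \eqref{Maxwell_equation_HDG_form_ori} from the mixed system \eqref{Maxwell_equation_mixed_form} tested against the discrete spaces, using \Cref{integration_by_parts}: by the defining properties of the HDG projection and the inclusions \eqref{M_decomposition_1}--\eqref{M_decomposition_2}, all projection-error contributions except the two reaction terms cancel, so that $(\bm\varepsilon^q,\bm\varepsilon^{\bm u},\bm\varepsilon^{\widehat{\bm u}})$ solves a copy of \eqref{Maxwell_equation_HDG_form_ori} with right-hand sides $-\mu_r(q-\Pi q,r_h)_{\mathcal T_h}$ and $-\kappa^2\epsilon_r(\bm u-\bm\Pi\bm u,\bm v_h)_{\mathcal T_h}$. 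Testing with $(\bm\varepsilon^q,\bm\varepsilon^{\bm u},\bm\varepsilon^{\widehat{\bm u}})$, using \eqref{M_decomposition_4} to deduce $\bm\varepsilon^{\widehat{\bm u}}=0$ on $\partial\Omega$, and combining the equations with complex conjugation produces the energy identity
\begin{align*}
\mu_r\|\bm\varepsilon^q\|_{\mathcal T_h}^2+\|\tau^{1/2}\,\bm n\times(\bm\varepsilon^{\bm u}-\bm\varepsilon^{\widehat{\bm u}})\times\bm n\|_{\partial\mathcal T_h}^2-\kappa^2\epsilon_r\|\bm\varepsilon^{\bm u}\|_{\mathcal T_h}^2=-\mu_r(q-\Pi q,\bm\varepsilon^q)_{\mathcal T_h}-\kappa^2\epsilon_r(\bm u-\bm\Pi\bm u,\bm\varepsilon^{\bm u})_{\mathcal T_h}
\end{align*}
(up to complex conjugation of some scalar factors), so that taking real parts and using Young's inequality bounds $\|\bm\varepsilon^q\|_{\mathcal T_h}$ and the stabilization seminorm $\|\tau^{1/2}\,\bm n\times(\bm\varepsilon^{\bm u}-\bm\varepsilon^{\widehat{\bm u}})\times\bm n\|_{\partial\mathcal T_h}$ by $C(\|\bm\varepsilon^{\bm u}\|_{\mathcal T_h}+\|q-\Pi q\|_{\mathcal T_h}+\|\bm u-\bm\Pi\bm u\|_{\mathcal T_h})$.

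Because of the indefinite term $-\kappa^2\epsilon_r\|\bm\varepsilon^{\bm u}\|_{\mathcal T_h}^2$, the crux is to estimate $\|\bm\varepsilon^{\bm u}\|_{\mathcal T_h}$ on its own, by a duality argument. I would solve the adjoint problem \eqref{dual_Mixed} with data the divergence-free part of $\bm\varepsilon^{\bm u}$ (the complementary, curl-free, part being controlled directly from the second error equation), insert the HDG projection of the adjoint solution $(\Psi,\bm\Phi)$ into the primal error equations and $(\bm\varepsilon^q,\bm\varepsilon^{\bm u},\bm\varepsilon^{\widehat{\bm u}})$ into the consistently discretized adjoint equations, and exploit once more the orthogonality of the HDG projection together with the regularity $\Psi\in H^1(\Omega)$, $\bm\Phi\in\bm H^s(\Omega)$, $s\in(1/2,1]$, of \Cref{regularity_dual_pro} and the scaling estimates on $\mathcal T_h^\star$. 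This produces $\|\bm\varepsilon^{\bm u}\|_{\mathcal T_h}\le Ch^{s}(\|\bm\varepsilon^q\|_{\mathcal T_h}+\|\tau^{1/2}\,\bm n\times(\bm\varepsilon^{\bm u}-\bm\varepsilon^{\widehat{\bm u}})\times\bm n\|_{\partial\mathcal T_h})+C(\|q-P_Vq\|_{\mathcal T_h}+\|\bm u-\bm P_{\bm W}\bm u\|_{\mathcal T_h})$; combined with the energy bound of the previous paragraph and absorbing the $h^s$-term on the left for $h\le h_0$ small (which, applied with vanishing data, also shows that \eqref{Maxwell_equation_HDG_form_ori} is uniquely solvable for $h\le h_0$), this gives $\|\bm\varepsilon^q\|_{\mathcal T_h}+\|\bm\varepsilon^{\bm u}\|_{\mathcal T_h}\le C(\|q-P_Vq\|_{\mathcal T_h}+\|\bm u-\bm P_{\bm W}\bm u\|_{\mathcal T_h})$, and the triangle inequality with the projection bound of the first paragraph yields the asserted estimates for $\|q-q_h\|_{\mathcal T_h}$ and $\|\bm u-\bm u_h\|_{\mathcal T_h}$. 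I expect this step --- reducing to the divergence-free part of $\bm\varepsilon^{\bm u}$, proving the order-$h^s$ bound on the duality functional on a general polygonal mesh, and the final kick-back --- to be the main obstacle.

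Finally, the post-processed field $\bm u_h^\star\in\bm W^\star(\mathcal T_h)$ is defined element by element by $(\nabla\times\bm u_h^\star,\nabla\times\bm w)_K=(\mu_r q_h,\nabla\times\bm w)_K$ for all $\bm w\in\bm W^\star(K)$, together with a condition pinning down the curl-free component of $\bm u_h^\star$ (e.g.\ $(\bm u_h^\star-\bm u_h,\nabla\phi)_K=0$ for $\phi$ in a suitable polynomial space), which makes the local problem uniquely solvable. For any $\bm w_h^\star\in\bm W^\star(\mathcal T_h)$, testing the local equation with $\bm u_h^\star-\bm w_h^\star\in\bm W^\star(K)$ and using $\nabla\times\bm u=\mu_r q$ gives $\|\nabla\times(\bm u_h^\star-\bm w_h^\star)\|_K\le\|\nabla\times(\bm u-\bm w_h^\star)\|_K+|\mu_r|\,\|q-q_h\|_K$, so by the triangle inequality and taking the infimum over $\bm w_h^\star$,
\begin{align*}
\|\nabla\times(\bm u-\bm u_h^\star)\|_{\mathcal T_h}\le C\left(\|q-q_h\|_{\mathcal T_h}+\inf_{\bm w_h^\star\in\bm W^\star(\mathcal T_h)}\|\nabla\times(\bm u-\bm w_h^\star)\|_{\mathcal T_h}\right);
\end{align*}
substituting the bound already obtained for $\|q-q_h\|_{\mathcal T_h}$ finishes the proof.
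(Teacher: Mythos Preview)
Your outline matches the paper's proof almost step for step: the HDG projection built from the $M$-decomposition, the error equations, the energy identity giving control of $\varepsilon_h^q$ and the stabilization term in terms of $\|\varepsilon_h^{\bm u}\|$, a duality argument to bound $\|\varepsilon_h^{\bm u}\|$ and close the loop for small $h$, and the post-processing argument are all exactly as in the paper.

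The one place where your sketch is vague and the paper does real work is the phrase ``solve the adjoint problem with data the divergence-free part of $\bm\varepsilon^{\bm u}$''. Since $\bm\varepsilon^{\bm u}$ is a discontinuous piecewise polynomial, it has no sensible divergence-free part in $\bm H(\mathrm{div};\Omega)$, so one cannot feed it directly into \eqref{dual_Mixed}. The paper resolves this by (i) passing to a conforming approximation $\bm\Pi_h^{\mathrm{curl},c}\bm\varepsilon^{\bm u}\in\bm H_0(\mathrm{curl};\Omega)$ on the refined simplicial mesh $\mathcal T_h^\star$, with the jump-controlled estimates of \Cref{approxiamtion_of_pi_conforming_curl}; (ii) removing a discrete gradient $\nabla\sigma_h$ via \Cref{def_Pi_Wm} so the result is discretely divergence-free; (iii) replacing this discrete divergence-free field by an exactly divergence-free $\bm\Theta$ with the same curl, using \Cref{theta} (which in turn rests on the commuting N\'ed\'elec interpolant \eqref{443} and the embedding of \Cref{th2}) to get $\|\bm w_h-\bm\Theta\|\le Ch^s\|\nabla\times\bm\Theta\|$. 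The curl-free remainder $\nabla\sigma_h$ is then killed exactly by testing the HDG scheme with $(0,\nabla\sigma_h,\nabla\sigma_h-(\bm n\cdot\nabla\sigma_h)\bm n)$ and using $\nabla\cdot(\epsilon_r\bm u)=\nabla\cdot(\epsilon_r\bm u_h)$ weakly --- this is your ``second error equation'' observation, made precise. The resulting bound is $\|\varepsilon_h^{\bm u}\|\le C h^{s-1/2}\|q-\Pi_Vq\|+C\|\bm u-\bm\Pi_{\bm W}\bm u\|$ (the $h^{-1/2}$ loss coming from an inverse estimate on $\nabla\times\varepsilon_h^{\bm u}$), which since $s>1/2$ still permits the kick-back. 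You correctly flagged this as the main obstacle; the above is what is needed to carry it out.
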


\subsection{The HDG Projection}
\label{The_HDG_Projection}
An appropriate HDG projection  plays a key role in the derivation of optimal error estimates and  superconvergence (see for example \cite{Cockburn_Dong_Guzman,Cockburn_Gopalakrishnan_Sayas_Porjection_MathComp_2010,Cockburn_Gopalakrishnan_Nguyen_Peraire_Sayas_Stokes_MathComp_2011,Chabaud_Cockburn_Heat_MathComp_2012,Celiker_Cockburn_Shi_MathComp_2012,Cockburn_Qiu_Shi_MathComp_2012,Chen_Cockburn_Convection_Diffusion_IMAJNA_2012,Cesmelioglu_Cockburn_Nguyen_Peraire_Oseen_JSC_2013}). 
In the case of Maxwell's equations, we define the following  HDG projection: find $({\Pi_V} q, \bm{\Pi_W}\bm u)\in  V(K)\times\bm W(K)$ such that
\begin{subequations}\label{projection}
	\begin{align}
	({\Pi_V} q, v_{h})_K&=( q, v_{h})_K&\forall v_h\in\widetilde{ V}(K), \label{P1}\\
	(\bm{\Pi_W}\bm u, \bm w_{h})_K&=(\bm u, \bm w_{h})_K&\forall\bm w_h\in\widetilde{\bm W}(K),\label{P2}\\
	\langle{\Pi_V} q-\tau\bm n\times\bm{\Pi_W}\bm u, \bm n\times\bm{\mu}_h \rangle_{F}&=\langle q-\tau\bm n\times\bm u, \bm n\times\bm{\mu}_h \rangle_{F}\nonumber\\&\qquad\forall\bm \mu_h\in\bm M(F) \mbox{ and for all edges } F\subset\partial K.\label{P3}
	\end{align}
\end{subequations}
The following theorem proves that the above definition uniquely specifies the projections and provides optimal error estimates for this projection.


\begin{theorem}\label{projection_error}
	System \eqref{projection} defines a unique projection $({\Pi_Vq}, \bm{\Pi_Wu})$. Moreover, we have the following error estimate:
	\begin{subequations}\label{projection_error_11}
		\begin{align}
		\|\bm{\Pi_W\bm u}-\bm u\|_K &\le C\left(\|\bm u-\bm{P_Wu}\|_{ K}+h_K\|\nabla\times q-\bm{P}_{\widetilde{\bm W}}\nabla\times q\|_{K}+h_K^{1/2}\|\bm n \times (\bm u-\bm{P_W}\bm u)\|_{\partial K}\right),\label{projection_error_1}\\
		\|{\Pi_Vq}- q\|_K&\le C \left(h_K^{1/2}\|q-{P_V} q\|_{\partial K}+ h_K^{1/2}\|\bm n\times (\bm{\Pi_Wu}-\bm u)\|_{\partial K}+\| q-{P_Vq}\|_K\right).\label{projection_error_2}
		\end{align}
	\end{subequations}	
\end{theorem}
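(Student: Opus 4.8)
The plan is to follow the now-standard HDG-projection argument adapted to the two-dimensional Maxwell setting, exploiting the structure provided by the $M$-decomposition. First I would establish well-posedness: since $\dim(V(K)\times\bm W(K)) = \dim\widetilde V(K)+\dim\widetilde{\bm W}(K)+\dim\bm M(\partial K)$ by \eqref{dimension_imp}, the number of equations in \eqref{projection} matches the number of unknowns, so it suffices to show that the only solution of the homogeneous system (with $q=0$, $\bm u=0$) is zero. Taking $v_h={\Pi_V}q$ in \eqref{P1} restricted to $\widetilde V(K)$ and $\bm w_h=\bm{\Pi_W}\bm u$ in \eqref{P2} restricted to $\widetilde{\bm W}(K)$ forces the $\widetilde V(K)$- and $\widetilde{\bm W}(K)$-components to vanish; then \eqref{P3} reads $\langle {\rm tr}({\Pi_V}q,\bm{\Pi_W}\bm u),\bm n\times\bm\mu_h\rangle_F=0$ (after absorbing $\tau$ and using condition \eqref{iM4} so that pairing against $\bm n\times\bm\mu_h$ over $\bm M(F)$ is nondegenerate), and since the remaining parts of ${\Pi_V}q,\bm{\Pi_W}\bm u$ lie in $\widetilde V^\perp(K)\times\widetilde{\bm W}^\perp(K)$, the isomorphism property \eqref{M_decomposition_3} forces them to vanish too. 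This gives uniqueness, hence existence.

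Next I would derive the error estimates. Write $\delta^q := {\Pi_V}q - P_V q \in V(K)$ and $\bm\delta^{\bm u}:=\bm{\Pi_W}\bm u - \bm{P_W}\bm u\in\bm W(K)$; the triangle inequality then reduces everything to bounding $\|\delta^q\|_K$ and $\|\bm\delta^{\bm u}\|_K$. Decompose $\delta^q = \delta^q_{\widetilde V} + \delta^q_{\perp}$ and $\bm\delta^{\bm u}=\bm\delta^{\bm u}_{\widetilde{\bm W}}+\bm\delta^{\bm u}_\perp$ along $\widetilde V(K)\oplus\widetilde V^\perp(K)$ and $\widetilde{\bm W}(K)\oplus\widetilde{\bm W}^\perp(K)$. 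Equations \eqref{P1}--\eqref{P2} combined with the defining properties of $P_V,\bm{P_W}$ show that the tangential-interior components satisfy $(\delta^q_{\widetilde V},v_h)_K = (q-P_Vq,v_h)_K$ for $v_h\in\widetilde V(K)$, but since $P_Vq$ is the full $L^2(K)$-projection onto $V(K)\supset\widetilde V(K)$ this right-hand side is zero, so $\delta^q_{\widetilde V}=0$ and likewise $\bm\delta^{\bm u}_{\widetilde{\bm W}}=\bm 0$. Thus only the $\perp$-components remain, and by the isomorphism \eqref{M_decomposition_3} together with the inverse-trace inequalities available on the shape-regular refined mesh $\mathcal T_h^\star$ (Assumption 3 in Section~\ref{messy_discussion}), we control $\|\delta^q_\perp\|_K$ and $\|\bm\delta^{\bm u}_\perp\|_K$ by $\|{\rm tr}(\delta^q,\bm\delta^{\bm u})\|_{\partial K}$ with a constant scaling like $h_K^{1/2}$ relative to the interior $L^2$ norm.

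The boundary term is then handled via \eqref{P3}: using \eqref{iM1} (so $\bm n\times v$ and $\bm n\times\bm w\times\bm n$ lie in $\bm M(\partial K)$ and may be used as test objects) and \eqref{iM4}, I would write $\langle \delta^q_\perp - \tau\,\bm n\times\bm\delta^{\bm u}_\perp,\bm n\times\bm\mu_h\rangle_F = \langle (P_Vq-q) - \tau\,\bm n\times(\bm{P_W}\bm u-\bm u),\bm n\times\bm\mu_h\rangle_F$ for all $\bm\mu_h\in\bm M(F)$; choosing $\bm\mu_h$ to recover the trace of $(\delta^q_\perp,\bm\delta^{\bm u}_\perp)$ itself, and invoking the $\mathcal P_0$-richness assumption \eqref{approx-prop} to identify the piecewise-constant part of $\tau$ and the traces, yields $\|{\rm tr}(\delta^q,\bm\delta^{\bm u})\|_{\partial K}\lesssim \|q-P_Vq\|_{\partial K} + \|\bm n\times(\bm u-\bm{P_W}\bm u)\|_{\partial K}$ plus coupling terms. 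Feeding this back, applying a scaled trace inequality $\|q-P_Vq\|_{\partial K}\lesssim h_K^{-1/2}\|q-P_Vq\|_K$ where needed, and bounding $\|q-P_Vq\|_K$ and the curl-type quantities through \eqref{integration_by_parts1}--\eqref{integration_by_parts2} (to convert $\bm n\times(\bm u-\bm{P_W}\bm u)$ tested against constants into $\|\nabla\times q-\bm P_{\widetilde{\bm W}}\nabla\times q\|_K$ via \eqref{Mixed-1}), produces exactly the two stated bounds \eqref{projection_error_1}--\eqref{projection_error_2}; the appearance of $\|\bm n\times(\bm{\Pi_W}\bm u-\bm u)\|_{\partial K}$ on the right of \eqref{projection_error_2} reflects the genuine coupling between the two projected quantities, so the estimates are proved together rather than separately.

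The main obstacle I anticipate is the coupling between $\delta^q_\perp$ and $\bm\delta^{\bm u}_\perp$ through the $\tau$-term in \eqref{P3}: unlike the self-adjoint elliptic case, here the trace operator mixes $\bm n\times v$ and $\bm n\times\bm w\times\bm n$, and one must verify that for $\tau$ bounded below by a positive constant the bilinear form $\langle \cdot - \tau\,\bm n\times(\cdot),\bm n\times(\cdot)\rangle_{\partial K}$ restricted to ${\rm tr}(\widetilde V^\perp(K)\times\widetilde{\bm W}^\perp(K))$ remains coercive; this is where condition \eqref{iM4} (injectivity of $\bm\mu\mapsto\bm n\times\bm\mu$ on $\bm M(\partial K)$, equivalently $\bm n\times(\cdot)$ is a bijection on $\bm M(\partial K)$) does the essential work, but tracking the $h_K$-powers carefully through the equivalence of $\|\cdot\|_{\partial K}$ norms under the map $\bm n\times(\cdot)$ on the refined mesh is the delicate bookkeeping.
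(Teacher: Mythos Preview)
Your uniqueness argument is essentially right in spirit, though the paper executes it by concrete test-function choices rather than by ``absorbing $\tau$'': one tests \eqref{P3} first with $\bm\mu_h=\bm n\times\bm{\Pi_W}\bm u\times\bm n$, uses integration by parts together with $\nabla\times\bm{\Pi_W}\bm u\in\widetilde V(K)$ and $\nabla\times{\Pi_V}q\in\widetilde{\bm W}(K)$ to kill the cross term, and obtains $\bm n\times\bm{\Pi_W}\bm u\times\bm n=0$; then $\bm\mu_h=\bm n\times{\Pi_V}q$ gives $\bm n\times{\Pi_V}q=0$, and \eqref{M_decomposition_3} finishes.

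The error-estimate half, however, has a real gap. Your plan is to bound $\delta^q_\perp$ and $\bm\delta^{\bm u}_\perp$ simultaneously from the coupled boundary relation, which would at best produce $h_K^{1/2}\|q-P_Vq\|_{\partial K}$ in the bound for $\bm{\Pi_W}\bm u-\bm u$; that is \emph{not} the term $h_K\|\nabla\times q-\bm P_{\widetilde{\bm W}}\nabla\times q\|_K$ appearing in \eqref{projection_error_1}. Your attempt to manufacture this curl term ``via \eqref{Mixed-1}'' cannot work: Theorem~\ref{projection_error} is a statement about the projection of an arbitrary pair $(q,\bm u)$, with no assumption that they satisfy the PDE, so \eqref{Mixed-1} is unavailable. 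The curl term also does not come from $\bm n\times(\bm u-\bm P_{\bm W}\bm u)$ as you suggest.

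What you are missing is a \emph{decoupling} step. Testing \eqref{P3} with $\bm\mu_h=\bm n\times\bm w_h\times\bm n$ for $\bm w_h\in\widetilde{\bm W}^\perp(K)$ gives
\[
\langle\tau\,\bm n\times\bm{\Pi_W}\bm u\times\bm n,\bm w_h\rangle_{\partial K}
=\langle\bm n\times(q-{\Pi_V}q),\bm w_h\rangle_{\partial K}
+\langle\tau\,\bm n\times\bm u\times\bm n,\bm w_h\rangle_{\partial K},
\]
and now the crucial observation is that integration by parts turns the first term on the right into $(\nabla\times q,\bm w_h)_K$, because $(q-{\Pi_V}q,\nabla\times\bm w_h)_K=0$ (since $\nabla\times\bm w_h\in\widetilde V(K)$ by \eqref{M_decomposition_2} and \eqref{P1}) and $(\nabla\times{\Pi_V}q,\bm w_h)_K=0$ (since $\nabla\times{\Pi_V}q\in\widetilde{\bm W}(K)$). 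Thus $\bm{\Pi_W}\bm u$ satisfies a closed system involving only $\bm u$ and $\nabla\times q$, with no reference to ${\Pi_V}q$; replacing $(\nabla\times q,\bm w_h)_K$ by $(\nabla\times q-\bm P_{\widetilde{\bm W}}\nabla\times q,\bm w_h)_K$ (valid since $\bm w_h\perp\widetilde{\bm W}(K)$) and scaling gives \eqref{projection_error_1}. Only \emph{after} this is established does one return to \eqref{P3} to bound ${\Pi_V}q-q$, and that is why $\|\bm n\times(\bm{\Pi_W}\bm u-\bm u)\|_{\partial K}$ legitimately sits on the right of \eqref{projection_error_2}: the two estimates are proved sequentially, not jointly.
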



We only give a proof for \eqref{projection_error_1} in the following three lemmas, since \eqref{projection_error_2} is very similar.

\begin{lemma}[Existence and Uniqueness]\label{uniquness}
	System \eqref{projection} defines a unique projection $({\Pi_Vq}, \bm{\Pi_Wu})$.
\end{lemma}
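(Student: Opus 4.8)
The plan is to verify that \eqref{projection} is a \emph{square} linear system for $({\Pi_V} q,\bm{\Pi_W}\bm u)\in V(K)\times\bm W(K)$ and then prove uniqueness, which for a square system is equivalent to existence. By \Cref{can_M_decomposition_theorem} we may assume the associated spaces are canonical, so $\widetilde V(K)=\nabla\times\bm W(K)$ and $\widetilde{\bm W}(K)=\nabla\times V(K)\oplus\bm W_0(K)$ with $\bm W_0(K)$ as in \eqref{definition_W0}. The right-hand sides of \eqref{P1}, \eqref{P2}, \eqref{P3} prescribe a linear functional on $\widetilde V(K)\times\widetilde{\bm W}(K)\times\bm M(\partial K)$ (for \eqref{P3} because $\bm\mu_h\mapsto\langle\,\cdot\,,\bm n\times\bm\mu_h\rangle_{\partial K}$ is a functional on $\bm M(\partial K)$). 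Since ${\rm tr}\colon\widetilde V^{\perp}(K)\times\widetilde{\bm W}^{\perp}(K)\to\bm M(\partial K)$ is an isomorphism by \eqref{M_decomposition_3}, $\dim\widetilde V^{\perp}(K)+\dim\widetilde{\bm W}^{\perp}(K)=\dim\bm M(\partial K)$, hence $\dim V(K)+\dim\bm W(K)=\dim\widetilde V(K)+\dim\widetilde{\bm W}(K)+\dim\bm M(\partial K)$; the system is square, so it suffices to show that setting $q=0$, $\bm u=\bm 0$ forces $({\Pi_V} q,\bm{\Pi_W}\bm u)=(0,\bm 0)$.

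With $q=0$, $\bm u=\bm 0$, equations \eqref{P1}--\eqref{P2} give ${\Pi_V} q\in\widetilde V^{\perp}(K)$ and $\bm{\Pi_W}\bm u\in\widetilde{\bm W}^{\perp}(K)$, and \eqref{P3} becomes $\langle{\Pi_V} q-\tau\,\bm n\times\bm{\Pi_W}\bm u,\bm n\times\bm\mu_h\rangle_{\partial K}=0$ for all $\bm\mu_h\in\bm M(\partial K)$. The crucial point is that the scalar boundary trace $\psi:={\Pi_V} q-\tau\,\bm n\times\bm{\Pi_W}\bm u$ is $\bm n\times$ applied to an element of $\bm M(\partial K)$: using the elementary $2$D identities $\bm n\times(\bm n\times v)=-v$ and $\bm n\times(\bm n\times\bm w\times\bm n)=\bm n\times\bm w$ on each edge (immediate from the definitions accompanying \eqref{curl2D}) together with \eqref{M_decomposition_1}, the element $\bm\nu:=\bm n\times{\Pi_V} q+\tau\,\bm n\times\bm{\Pi_W}\bm u\times\bm n$ belongs to the linear space $\bm M(\partial K)$ and satisfies $\bm n\times\bm\nu=-\psi$. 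Choosing $\bm\mu_h=\bm\nu$ in \eqref{P3} gives $\|\bm n\times\bm\nu\|_{\partial K}^{2}=0$, so $\bm n\times\bm\nu=\bm 0$ on $\partial K$; then \eqref{M_decomposition_4} forces $\bm\nu=\bm 0$, i.e.\ ${\Pi_V} q=\tau\,\bm n\times\bm{\Pi_W}\bm u$ and $\bm n\times{\Pi_V} q=-\tau\,\bm n\times\bm{\Pi_W}\bm u\times\bm n$ on $\partial K$.

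It remains to decouple this single boundary identity into the vanishing of the individual traces. From \eqref{P1} and $\nabla\times\bm W(K)\subset\widetilde V(K)$ (see \eqref{M_decomposition_2}) we get $({\Pi_V} q,\nabla\times\bm w)_K=0$ for all $\bm w\in\bm W(K)$, so \Cref{integration_by_parts} yields $(\nabla\times{\Pi_V} q,\bm w)_K=\langle\bm n\times{\Pi_V} q,\bm w\rangle_{\partial K}$ for all $\bm w\in\bm W(K)$. Taking $\bm w=\bm{\Pi_W}\bm u$ and using that $\nabla\times{\Pi_V} q\in\nabla\times V(K)\subset\widetilde{\bm W}(K)$ (by \eqref{M_decomposition_2}) is $L^2(K)$-orthogonal to $\bm{\Pi_W}\bm u\in\widetilde{\bm W}^{\perp}(K)$, the left-hand side vanishes, so $\langle\bm n\times{\Pi_V} q,\bm{\Pi_W}\bm u\rangle_{\partial K}=0$; substituting $\bm n\times{\Pi_V} q=-\tau\,\bm n\times\bm{\Pi_W}\bm u\times\bm n$ and writing $\bm{\Pi_W}\bm u$ in tangential/normal parts via \eqref{decomposition1} reduces this to $\langle\tau\,\bm n\times\bm{\Pi_W}\bm u\times\bm n,\bm n\times\bm{\Pi_W}\bm u\times\bm n\rangle_{\partial K}=0$. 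Since $\tau>0$ this gives $\bm n\times\bm{\Pi_W}\bm u\times\bm n=\bm 0$, hence also $\bm n\times\bm{\Pi_W}\bm u=\bm 0$ and ${\Pi_V} q=\tau\,\bm n\times\bm{\Pi_W}\bm u=0$, on $\partial K$. Now ${\Pi_V} q\in\widetilde V^{\perp}(K)$ has zero trace and $\bm{\Pi_W}\bm u\in\widetilde{\bm W}^{\perp}(K)$ has zero tangential trace, and the arguments in the proof of \Cref{dimension_equal_V_W} apply: testing $(\nabla\times{\Pi_V} q,\bm w)_K=\langle\bm n\times{\Pi_V} q,\bm w\rangle_{\partial K}=0$ with $\bm w=\nabla\times{\Pi_V} q\in\bm W(K)$ (admissible by \eqref{M_decomposition_2}) gives $\nabla\times{\Pi_V} q=0$, whence ${\Pi_V} q\equiv0$ (constant with zero boundary value); the analogous computation for $\bm{\Pi_W}\bm u$ gives $\nabla\times\bm{\Pi_W}\bm u=0$, so $\bm{\Pi_W}\bm u\in\bm W_0(K)\subset\widetilde{\bm W}(K)$, forcing $\bm{\Pi_W}\bm u=\bm 0$.

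The main obstacle is the middle step. One must recognize ${\Pi_V} q-\tau\,\bm n\times\bm{\Pi_W}\bm u$ as the $\bm n\times$-image of an element of $\bm M(\partial K)$ so that the injectivity condition \eqref{M_decomposition_4} can be invoked, and then exploit the orthogonality $\nabla\times V(K)\subset\widetilde{\bm W}(K)\perp\widetilde{\bm W}^{\perp}(K)$ to pass from the single scalar boundary relation to the separate vanishing of $\bm n\times{\Pi_V} q$ and $\bm n\times\bm{\Pi_W}\bm u$; the remaining steps are just a rerun of the $M$-decomposition computations already carried out for \Cref{dimension_equal_V_W}.
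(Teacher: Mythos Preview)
Your proof is correct and rests on the same ideas as the paper's, but the organization is more circuitous. The paper proceeds more directly: after noting the system is square and that ${\Pi_V} q\in\widetilde V^{\perp}(K)$, $\bm{\Pi_W}\bm u\in\widetilde{\bm W}^{\perp}(K)$, it tests \eqref{P3} with $\bm\mu_h=\bm n\times\bm{\Pi_W}\bm u\times\bm n$ and uses the same integration-by-parts/orthogonality computation you carry out (your ``decoupling'' step) to obtain $\bm n\times\bm{\Pi_W}\bm u\times\bm n=\bm 0$ in one stroke; a second test with $\bm\mu_h=\bm n\times{\Pi_V} q$ then gives $\bm n\times{\Pi_V} q=\bm 0$. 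The conclusion follows immediately from the injectivity in \eqref{M_decomposition_3}. Your route --- building the combined test $\bm\nu$, invoking \eqref{M_decomposition_4} to get $\bm\nu=\bm 0$, and only then decoupling --- reaches the same two trace identities but with an extra loop; the appeal to \eqref{M_decomposition_4} is in fact redundant, since $\|\psi\|_{\partial K}=0$ already gives everything you use.

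One small caveat: your opening ``we may assume the associated spaces are canonical'' is not quite innocent, because the projection \eqref{projection} is defined relative to the \emph{given} $\widetilde V(K),\widetilde{\bm W}(K)$, and your final step ($\bm{\Pi_W}\bm u\in\bm W_0(K)\subset\widetilde{\bm W}(K)$) genuinely uses the canonical choice. The paper avoids this by simply invoking the isomorphism \eqref{M_decomposition_3} --- which holds for \emph{any} admissible associated spaces --- once the two traces vanish. Replacing your last paragraph by that one-line citation removes the dependence on the canonical decomposition and shortens the argument.
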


\begin{proof}
	By \Cref{def_M_decomposition} we have
	\begin{align*}
	\dim V(K)+\dim\bm W(K)=\dim\widetilde{ V}(K)+\dim\widetilde{\bm W}(K)+\dim \bm M(\partial K).
	\end{align*}
	This means that system \eqref{projection} is square, hence we only need to prove uniqueness. We set the right hand sides of \eqref{projection} to zero, i.e., 
	$ q=0$ and $\bm u=\bm 0$. By \eqref{P1} and \eqref{P2}, we have
	\begin{align}\label{well_posedness_1}
	{\Pi_V q}\in\widetilde{ V}^{\perp}(K) \quad \textup{and} \quad \bm{\Pi_Wu}\in\widetilde{\bm W}^{\perp}(K).
	\end{align}
	Since $\bm n\times \bm  W(K)\times \bm n \subset \bm  M(\partial K)$, then we can take $\bm\mu_h=\bm n \times \bm{\Pi_Wu}\times \bm n$ in \eqref{P3} to get
	\begin{align*}
	\langle\tau\bm n\times\bm{\Pi_W}\bm u,\bm n\times(\bm n\times\bm{\Pi_W}\bm u\times\bm n)\rangle_{\partial K}& = \langle\Pi_V q,   \bm n\times(\bm n\times\bm{\Pi_W}\bm u\times\bm n) \rangle_{\partial K}\\
	&=\langle{\Pi_V} q,\bm n\times\bm{\Pi_W}\bm u \rangle_{\partial K}\\
	&=({\Pi_Vq},\nabla\times\bm{\Pi_W}\bm u)_K-(\nabla\times{\Pi_Vq},\bm{\Pi_W}\bm u)_K\\
	&=0.
	\end{align*}
	Since $\tau$ is  piecewise constant and positive, then
	\begin{align}\label{well_posedness_2}
	\bm n\times\bm{\Pi_W}\bm u\times\bm n=\bm 0 \quad \textup{on}\quad  \partial K.
	\end{align}
	Moreover, $\bm n\times\bm{\Pi_W}\bm u=0$ on $\partial K$. Since $\bm n\times V(K) \subset \bm  M(\partial K)$, then we can take  $\bm\mu_h=\bm n\times{\Pi_Vq}$ in \eqref{P3} to get
	\begin{align}\label{well_posedness_3}
	\bm n\times{\Pi_V} q=0\quad \textup{on}\quad  \partial K.
	\end{align}
	
	We combine \eqref{well_posedness_1}, \eqref{well_posedness_2}, \eqref{well_posedness_3} and \eqref{M_decomposition_3} to conclude that $\bm{\Pi_Wu}=\bm 0$ and   ${\Pi_Vq}=0$. This proves the system \eqref{projection} defines a unique projection $({\Pi_Vq},\bm{\Pi_Wu})$.
\end{proof}

To estimate $\bm{\Pi_Wu}-\bm u$, we decouple the projection $\bm{\Pi_W}$ from $\Pi_V$ in \eqref{projection} as follows.
\begin{lemma} The projection
	$\bm{\Pi_Wu}$  satisfies
	\begin{subequations}\label{decoup}
		\begin{align}
		(\bm{\Pi_W}\bm u,\bm v_h)_K&=(\bm u,\bm v_h)_K, &\forall\bm v_h\in \widetilde{\bm W}(K),\label{A1}\\
		\langle\tau\bm n\times \bm{\Pi_Wu}\times\bm n,\bm w_h \rangle_{\partial K}&=(\nabla\times q,\bm w_h)_K+\langle\tau\bm n\times \bm u\times\bm n,\bm w_h \rangle_{\partial K}, & \forall\bm w_h\in\widetilde{\bm W}^{\perp}(K).\label{A2}
		\end{align}
	\end{subequations}
\end{lemma}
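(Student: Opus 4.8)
The plan is to extract the two identities in \eqref{decoup} from the defining relations \eqref{P1}--\eqref{P3} of the HDG projection by choosing test functions that disentangle the boundary coupling between $\Pi_V q$ and $\bm{\Pi_W}\bm u$. The first relation \eqref{A1} requires no argument, as it is literally \eqref{P2}; all the content is in \eqref{A2}, which expresses the tangential trace of $\bm{\Pi_W}\bm u$ in terms of $q$ and $\bm u$ alone, with $\Pi_V q$ eliminated.

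To derive \eqref{A2}, fix $\bm w_h\in\widetilde{\bm W}^{\perp}(K)\subset\bm W(K)$. Since $\bm n\times\bm W(K)\times\bm n\subset\bm M(\partial K)$ by \eqref{M_decomposition_1}, the tangential trace $\bm n\times\bm w_h\times\bm n$ is an admissible test function in \eqref{P3} on every edge $F$ of $K$. I would insert $\bm\mu_h=\bm n\times\bm w_h\times\bm n$ into \eqref{P3}, sum over the edges of $K$, and use the two-dimensional trace identity $\bm n\times(\bm n\times\bm w_h\times\bm n)=\bm n\times\bm w_h$ (the normal part of $\bm w_h$ is annihilated because $\bm n\times\bm n=0$, an identity already used in the proof of Lemma~\ref{uniquness}). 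This rewrites \eqref{P3} as
\[
\langle\Pi_V q-q,\,\bm n\times\bm w_h\rangle_{\partial K}
=\langle\tau\,\bm n\times\bm{\Pi_W}\bm u-\tau\,\bm n\times\bm u,\,\bm n\times\bm w_h\rangle_{\partial K}.
\]
Because for any vector $\bm v$ one has $\langle\bm n\times\bm v,\bm n\times\bm w_h\rangle_{\partial K}=\langle\bm n\times\bm v\times\bm n,\bm w_h\rangle_{\partial K}$, the right-hand side equals $\langle\tau\,\bm n\times\bm{\Pi_W}\bm u\times\bm n,\bm w_h\rangle_{\partial K}-\langle\tau\,\bm n\times\bm u\times\bm n,\bm w_h\rangle_{\partial K}$, so \eqref{A2} will follow once I show that $\langle\Pi_V q-q,\bm n\times\bm w_h\rangle_{\partial K}=(\nabla\times q,\bm w_h)_K$.

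For that last identity I would apply the integration-by-parts formula \eqref{integration_by_parts2} with the scalar $r=\Pi_V q-q$ and the vector $\bm w_h$, together with the sign relation $\langle r,\bm n\times\bm w_h\rangle_{\partial K}=-\langle\bm n\times r,\bm w_h\rangle_{\partial K}$, to obtain
\[
\langle\Pi_V q-q,\bm n\times\bm w_h\rangle_{\partial K}
=(\Pi_V q-q,\nabla\times\bm w_h)_K-(\nabla\times(\Pi_V q-q),\bm w_h)_K.
\]
The first term vanishes: $\nabla\times\bm w_h\in\widetilde V(K)$ by \eqref{M_decomposition_2}, so \eqref{P1} gives $(\Pi_V q-q,\nabla\times\bm w_h)_K=0$. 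For the second term, $\nabla\times\Pi_V q\in\nabla\times V(K)\subset\widetilde{\bm W}(K)$, again by \eqref{M_decomposition_2}, while $\bm w_h\in\widetilde{\bm W}^{\perp}(K)$, hence $(\nabla\times\Pi_V q,\bm w_h)_K=0$ and $-(\nabla\times(\Pi_V q-q),\bm w_h)_K=(\nabla\times q,\bm w_h)_K$. Substituting back establishes \eqref{A2}.

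The argument is essentially bookkeeping, and I expect the only delicate points to be (i) keeping the two-dimensional curl and cross-product conventions of \eqref{curl2D} straight so that the trace identities $\bm n\times(\bm n\times\bm w_h\times\bm n)=\bm n\times\bm w_h$ and $\langle r,\bm n\times\bm w_h\rangle_{\partial K}=-\langle\bm n\times r,\bm w_h\rangle_{\partial K}$ carry the correct signs, and (ii) recognizing at the right moment that $\nabla\times\Pi_V q$ lies in $\widetilde{\bm W}(K)$ and is therefore $L^2(K)$-orthogonal to $\bm w_h\in\widetilde{\bm W}^{\perp}(K)$ --- this orthogonality is precisely what converts $\nabla\times\Pi_V q$ into $\nabla\times q$ in the final expression.
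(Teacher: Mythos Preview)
Your proposal is correct and follows essentially the same route as the paper's proof: insert $\bm\mu_h=\bm n\times\bm w_h\times\bm n$ into \eqref{P3}, then eliminate $\Pi_V q$ from the resulting boundary term via integration by parts together with the two orthogonality facts $(\Pi_V q-q,\nabla\times\bm w_h)_K=0$ (from \eqref{P1} and $\nabla\times\bm W(K)\subset\widetilde V(K)$) and $(\nabla\times\Pi_V q,\bm w_h)_K=0$ (from $\nabla\times V(K)\subset\widetilde{\bm W}(K)$ and $\bm w_h\in\widetilde{\bm W}^\perp(K)$). The only cosmetic difference is that the paper writes the key boundary term as $\langle\bm n\times(q-\Pi_V q),\bm w_h\rangle_{\partial K}$ and applies \eqref{integration_by_parts2} directly, whereas you keep it as $\langle\Pi_V q-q,\bm n\times\bm w_h\rangle_{\partial K}$ and invoke the sign relation $\langle r,\bm n\times\bm w_h\rangle_{\partial K}=-\langle\bm n\times r,\bm w_h\rangle_{\partial K}$ first; the computations are identical after that.
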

\begin{proof}
	Noticing that \eqref{P3} can be rewritten as	
	\begin{align}\label{rewrite_1}
	\langle \tau\bm n\times\bm{\Pi_W}\bm u,\bm n\times\bm \mu_h \rangle_{\partial K}=\langle \Pi_V q - q,\bm n\times\bm \mu_h\rangle_{\partial K} + \langle\tau \bm n \times \bm u ,\bm n\times\bm \mu_h \rangle_{\partial K}.
	\end{align}
	Since $\bm n\times\bm W(K)\times\bm n\subset \bm M(\partial K)$, then we take $\bm \mu_h = \bm n\times \bm w_h\times \bm n$ in \eqref{rewrite_1} to get 
	\begin{align}\label{A3}
	\langle \tau\bm n\times\bm{\Pi_W}\bm u\times \bm n, \bm w_h \rangle_{\partial K}=\langle  \bm n\times (q-\Pi_V q ), \bm w_h\rangle_{\partial K} + \langle\tau \bm n \times \bm u \times \bm n, \bm w_h \rangle_{\partial K}.
	\end{align}
	Then, for all $\bm w_h\in \widetilde{\bm W}^{\perp}(K)$, by \eqref{M_decomposition_2} and \eqref{P1}, we have 
	\begin{subequations}
		\begin{align}
		(\nabla\times{\Pi_Vq},\bm w_h)_K=0,\label{zero}\\
		( q-{\Pi_Vq},\nabla\times\bm w_h)_K=0.\label{pk-1}
		\end{align}	
	\end{subequations}
	Next, we use the integration by parts identity  \eqref{integration_by_parts2} to get
	\begin{align}\label{A4}
	\langle \bm n\times(  q-{\Pi_Vq}  ),\bm w_h\rangle_{\partial K}&=(\nabla\times( q-{\Pi_Vq}),\bm w_h)_K-( q-{\Pi_Vq},\nabla\times\bm w_h)_K\nonumber\\
	&=(\nabla\times( q-{\Pi_Vq}),\bm w_h)_K, &\text{by }\eqref{pk-1}\nonumber\\
	&=(\nabla\times q,\bm w_h)_K.&\text{by }\eqref{zero}
	\end{align}
	Therefore, \eqref{P2}, \eqref{A3} and \eqref{A4} gives the  system \eqref{decoup}.
\end{proof}
Now we can give the proof of \eqref{projection_error_1}.
\begin{proof}[Proof of \eqref{projection_error_1}]
	By the definition of $\bm {P_W}$ and $\bm {P_{\widetilde W}}$,  we can rewrite equation \eqref{decoup} as follows: 
	\begin{subequations}\label{A5}
		\begin{align}
		(\bm{\Pi_W}\bm u-\bm {P_W}\bm u,\bm v_h)_K&=0,&\forall\bm v_h\in\widetilde{\bm W}(K),\label{A5a}\\
		\langle\tau\bm n\times (\bm{\Pi_Wu}-\bm{P_W}\bm u),\bm n\times\bm w_h \rangle_{\partial K}&=(\nabla\times q-\bm P_{\widetilde{\bm W}}\nabla\times q,\bm w_h)_K\nonumber\\
		& \ \ +\langle\tau\bm n\times (\bm u-\bm{P_W}\bm u),\bm n\times\bm w_h \rangle_{\partial K}, &\forall\bm w_h\in\widetilde{\bm W}^{\perp}(K). \label{A5b}
		\end{align}
	\end{subequations}
	By the same arguments as in the proof of \Cref{uniquness}, we can prove that $\bm{\Pi_Wu}-\bm{P_W}\bm u\in\bm W(K)$ is uniquely determined by the right hand side  of \eqref{A5}. {Using a standard scaling estimate (this can be used because of the  assumption on $\mathcal T_h^*$ in Section~\ref{messy_discussion}}) we have
	\begin{align*}
	\|\bm{\Pi_Wu}-\bm{P_W}\bm u\|_K\le C h_K\|\tau^{-1}(\nabla\times q-\bm{P}_{\widetilde{\bm W}}\nabla\times q)\|_{K}+h_K^{1/2}\|\bm n\times (\bm u-\bm{P_W}\bm u)\|_{\partial K}.
	\end{align*}
	Thus, the triangle inequality gives the desired result.	
\end{proof}
%
%

Next, we extend the error estimates \eqref{projection_error_11} to fractional order Sobolev spaces. {To do this we use a local inverse inequality. For any function $\bm w_h\in \bm W(K)$ or $p_h\in V(K)$ the following inverse estimate holds:
	\[
	\Vert \bm w_h\Vert_{H^s(K)}\leq Ch^{-s}_K\Vert\bm w_h\Vert_K,\mbox{ and }
	\Vert \bm p_h\Vert_{H^s(K)}\leq Ch^{-s}_K\Vert\bm p_h\Vert_K
	\]
	with $0\leq s\leq 1$. The constant $C$ is independent of the function, element and mesh size.  Note that this assumption follows from our assumption
	on the auxiliary mesh $\mathcal T_h^*$ when $s=1$ and trivially holds when $s=0$. Hence by interpolation it holds
	for general $0\leq s\leq 1$.}

\begin{lemma}\label{projection_errorf}  For
	any $s\in [0,1]$, we have 
	\begin{subequations}\label{projection_error_11f}
		\begin{align}
		\|\bm{\Pi_W\bm u}-\bm u\|_{\bm H^s(K)} &\le  C h_K^{-s}\left(\|\bm u-\bm{P_Wu}\|_{ K}+h_K\|\nabla\times q-\bm{P}_{\widetilde{\bm W}}\nabla\times q\|_{K}+h_K^{1/2}\|\bm n \times (\bm u-\bm{P_W}\bm u)\|_{\partial K}\right) \nonumber\\
		&\quad + \|\bm P_{\bm W} \bm u-\bm u\|_{\bm H^s(K)},\label{projection_error_1f}\\
		\|{\Pi_Vq}- q\|_{ H^s(K)} &\le C \left(h_K^{1/2-s}\|q-{P_V} q\|_{\partial K}+h_K^{-s}\| q-{P_Vq}\|_K +\| q-{P_Vq}\|_{H^s(K)}+\| \bm u-\bm {P_W}\bm u\|_{\bm H^s(K)}\right)\nonumber\\
		&\quad +C \left(h_K^{-s}\|\bm u-\bm{P_Wu}\|_{ K}+h_K^{1-s}\|\nabla\times q-\bm{P}_{\widetilde{\bm W}}\nabla\times q\|_{K}+h_K^{1/2-s}\|\bm n \times (\bm u-\bm{P_W}\bm u)\|_{\partial K}\right).\label{projection_error_2f}
		\end{align}
	\end{subequations}	
\end{lemma}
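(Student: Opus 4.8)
The plan is to reduce both fractional-order bounds to the $L^2(K)$ estimates \eqref{projection_error_1} and \eqref{projection_error_2} already established in \Cref{projection_error}, by isolating the polynomial part of each projection error and applying the local inverse inequality recorded just above the lemma, while keeping the $L^2$-projection errors (measured in the continuous norms $\bm H^s(K)$, $H^s(K)$) untouched. Throughout, every inequality is applied element by element.

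For \eqref{projection_error_1f} I would split $\bm{\Pi_W}\bm u-\bm u=(\bm{\Pi_W}\bm u-\bm{P_W}\bm u)+(\bm{P_W}\bm u-\bm u)$, keep the second term as it stands, and, since $\bm{\Pi_W}\bm u-\bm{P_W}\bm u\in\bm W(K)$, apply the inverse inequality to get $\|\bm{\Pi_W}\bm u-\bm{P_W}\bm u\|_{\bm H^s(K)}\le Ch_K^{-s}\|\bm{\Pi_W}\bm u-\bm{P_W}\bm u\|_K$. Then $\|\bm{\Pi_W}\bm u-\bm{P_W}\bm u\|_K\le\|\bm{\Pi_W}\bm u-\bm u\|_K+\|\bm u-\bm{P_W}\bm u\|_K$, and inserting \eqref{projection_error_1} produces exactly the $h_K^{-s}(\,\cdots)$ group on the right of \eqref{projection_error_1f}; the triangle inequality closes the estimate.

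For \eqref{projection_error_2f} I would proceed in the same manner: split $\Pi_V q-q=(\Pi_V q-P_V q)+(P_V q-q)$, keep $\|P_V q-q\|_{H^s(K)}$, and use the inverse inequality together with \eqref{projection_error_2} to bound
\[
\|\Pi_V q-P_V q\|_{H^s(K)}\le Ch_K^{-s}\bigl(\|\Pi_V q-q\|_K+\|q-P_V q\|_K\bigr)\le Ch_K^{-s}\bigl(h_K^{1/2}\|q-P_V q\|_{\partial K}+h_K^{1/2}\|\bm n\times(\bm{\Pi_W}\bm u-\bm u)\|_{\partial K}+\|q-P_V q\|_K\bigr).
\]
This gives the terms $h_K^{1/2-s}\|q-P_V q\|_{\partial K}$ and $h_K^{-s}\|q-P_V q\|_K$ at once and leaves the boundary coupling term $h_K^{1/2-s}\|\bm n\times(\bm{\Pi_W}\bm u-\bm u)\|_{\partial K}$. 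I would control it by $\|\bm n\times(\bm{\Pi_W}\bm u-\bm u)\|_{\partial K}\le\|\bm{\Pi_W}\bm u-\bm u\|_{\partial K}$ followed by the scaled trace inequality $\|\bm\phi\|_{L^2(\partial K)}\le Ch_K^{s-1/2}\|\bm\phi\|_{\bm H^s(K)}$, which turns $h_K^{1/2-s}\|\bm{\Pi_W}\bm u-\bm u\|_{\partial K}$ into $C\|\bm{\Pi_W}\bm u-\bm u\|_{\bm H^s(K)}$; at this point \eqref{projection_error_1f}, just proved, supplies precisely the remaining four terms of \eqref{projection_error_2f}.

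All the inverse, scaling and trace inequalities used above hold with constants independent of $K$ and $h$ by virtue of the auxiliary shape-regular simplicial refinement $\mathcal T_h^\star$ introduced in Section~\ref{messy_discussion}: the polynomial inverse estimate for $0\le s\le1$ was recorded there, and the scaled trace inequality follows from the same change-of-variables argument applied to the (uniformly many) simplices covering $K$. The one genuinely delicate point is that the scaled trace inequality needed for the boundary coupling term requires $s>1/2$; this is exactly the range relevant to the later duality argument (where $\bm\Phi$ only lies in $\bm H^s$ with $s\in(1/2,1]$), so the restriction is harmless, and if one insists on $s\le1/2$ one can instead split $\bm n\times(\bm{\Pi_W}\bm u-\bm u)$ through $\bm{P_W}\bm u$ and use a discrete trace inequality on the polynomial piece. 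I expect this fractional-trace bookkeeping — together with tracking the exact powers of $h_K$ so that everything collapses onto the stated right-hand sides — to be the main, though modest, obstacle; the remainder is a mechanical repetition of the proof of \eqref{projection_error_11}.
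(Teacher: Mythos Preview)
Your proposal is essentially correct and follows the same approach as the paper: split through the $L^2$-projections $\bm P_{\bm W}$ and $P_V$, apply the local inverse inequality to the polynomial part, and feed in \eqref{projection_error_1}--\eqref{projection_error_2}. The only notable difference is the trace step for the boundary coupling term in \eqref{projection_error_2f}: where you use the single-term bound $\|\bm\phi\|_{\partial K}\le Ch_K^{s-1/2}\|\bm\phi\|_{\bm H^s(K)}$ (valid for $s>1/2$) and then discuss a workaround for $s\le 1/2$, the paper simply invokes the two-term trace inequality
\[
\|\bm{\Pi_W}\bm u-\bm u\|_{\partial K}\le C\bigl(h_K^{-1/2}\|\bm{\Pi_W}\bm u-\bm u\|_K+h_K^{s-1/2}\|\bm{\Pi_W}\bm u-\bm u\|_{\bm H^s(K)}\bigr),
\]
then combines it with \eqref{projection_error_1} and \eqref{projection_error_1f}; this handles all $s\in[0,1]$ in one stroke and avoids the case distinction you flag.
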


\begin{proof}
	Using the fact that $\bm P_{\bm W}$ is the $\bm L^2$ orthogonal projection on $\bm W(K)$ and {applying the   local inverse inequality discussed before the statement of the lemma,} we get 
	\begin{align*}
	\|\bm{\Pi_W\bm u}-\bm u\|_{\bm H^s(K)}  &= \|\bm{\Pi_W\bm u}- \bm P_{\bm W} \bm u + \bm P_{\bm W} \bm u-\bm u\|_{\bm H^s(K)} \\
	&\le \|\bm{\Pi_W\bm u}- \bm P_{\bm W} \bm u\|_{\bm H^s(K)} + \|\bm P_{\bm W} \bm u-\bm u\|_{\bm H^s(K)} \\
	&\le C h_K^{-s}\|\bm{\Pi_W\bm u}- \bm P_{\bm W} \bm u\|_{K} + \|\bm P_{\bm W} \bm u-\bm u\|_{\bm H^s(K)}\\
	&\le  C h_K^{-s}\|\bm{\Pi_W\bm u}-  \bm u\|_{K} +C h_K^{-s}\|\bm P_{\bm W} \bm u - \bm u\|_{K} + \|\bm P_{\bm W} \bm u-\bm u\|_{\bm H^s(K)}.
	\end{align*} 
	Combining the estimate \eqref{projection_error_1} and the above inequality we have
	\begin{align*}
	\|\bm{\Pi_W\bm u}-\bm u\|_{\bm H^s(K)}  &\le  C h_K^{-s}\left(\|\bm u-\bm{P_Wu}\|_{ K}+h_K\|\nabla\times q-\bm{P}_{\widetilde{\bm W}}\nabla\times q\|_{K}+h_K^{1/2}\|\bm n \times (\bm u-\bm{P_W}\bm u)\|_{\partial K}\right) \\
	&\quad + \|\bm P_{\bm W} \bm u-\bm u\|_{\bm H^s(K)}.
	\end{align*}
	This proves \eqref{projection_error_1f}.
	
	Next, we prove \eqref{projection_error_2f}. By the same arguments we have 
	\begin{align*}
	\|{\Pi_Vq}- q\|_{ H^s(K)} \le C h_K^{-s}\|{\Pi_V q}-  q\|_{K} +C h_K^{-s}\| P_{V} q - q\|_{K} + \| P_{V} q- q\|_{ H^s(K)}.
	\end{align*} 
	By Lemma 7.2 in \cite{MR3702417} to get 
	\begin{align}\label{trace_in}
	\|\bm{\Pi_W\bm u}-\bm u\|_{\partial K}\le C\left(h_K^{-1/2}	\|\bm{\Pi_W\bm u}-\bm u\|_{K} + h_K^{s-1/2} 	\|\bm{\Pi_W\bm u}-\bm u\|_{\bm H^s(K)}  \right).
	\end{align}
	Using estimates \eqref{projection_error_2}, \eqref{projection_error_1}, \eqref{trace_in} and  \eqref{projection_error_1f}, we can  obtain \eqref{projection_error_2f}. 
\end{proof}

Since $\mathcal {P}_0(K)\in \widetilde{ V}(K)$ and $[\mathcal {P}_0(K)]^2\in \widetilde{\bm W}(K)$ {with appropriate projection error bounds (see Section~\ref{messy_discussion})},  by \Cref{main_result_error_analysis} and \Cref{projection_errorf}, we have the following corollary.
\begin{corollary}
	Let $(\Psi,\bm\Phi)\in  H({\rm curl};\Omega)\times [\bm H_0({\rm curl};\Omega)\cap\bm H({\rm div}^0;\Omega)]$ be the solution of \eqref{dual_Mixed} and  assume that the regularity result \eqref{regularity_dual} holds, then for $s\in (1/2, 1]$, we have 
	\begin{subequations}
		\begin{align}
		\|\bm{\Pi_W\bm \Phi}-\bm \Phi\|_{\mathcal T_h} +\|{\Pi_V \Psi}- \Psi\|_{\mathcal T_h}  \le C h^s \|\bm \Theta\|_{\mathcal T_h},\label{really_need1}\\
		\|\bm{\Pi_W\bm \Phi}-\bm \Phi\|_{\bm H^s(\mathcal T_h)} +\|{\Pi_V \Psi}- \Psi\|_{H^s(\mathcal T_h)}  \le C \|\bm \Theta\|_{\mathcal T_h}.\label{really_need2}
		\end{align}		
	\end{subequations}
	
\end{corollary}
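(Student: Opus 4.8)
The plan is to apply the projection error bounds of \Cref{projection_error} and \Cref{projection_errorf} with the pair $(q,\bm u)$ replaced by the dual pair $(\Psi,\bm\Phi)$, and then to estimate every term on the resulting right-hand sides using the regularity estimate \eqref{regularity_dual}. Substituting $(q,\bm u)=(\Psi,\bm\Phi)$ into \eqref{projection_error_11f} (the case $s=0$ essentially reproducing \eqref{projection_error_11}), the quantities we must control on a generic element $K$ are $\|\bm\Phi-\bm P_{\bm W}\bm\Phi\|_K$, $\|\Psi-P_V\Psi\|_K$, $\|\Psi-P_V\Psi\|_{\partial K}$, $\|\bm n\times(\bm\Phi-\bm P_{\bm W}\bm\Phi)\|_{\partial K}$, $\|\nabla\times\Psi-\bm P_{\widetilde{\bm W}}\nabla\times\Psi\|_K$, $\|\bm P_{\bm W}\bm\Phi-\bm\Phi\|_{\bm H^s(K)}$ and $\|P_V\Psi-\Psi\|_{H^s(K)}$; after bounding each, we sum over $\mathcal T_h$.

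For the interior approximation terms I would invoke the local approximation property of the $L^2$-projections postulated in Section~\ref{messy_discussion}, giving $\|\bm\Phi-\bm P_{\bm W}\bm\Phi\|_K\le Ch_K^s\|\bm\Phi\|_{\bm H^s(K)}$ and, since $\Psi\in H^1(\Omega)$ by \eqref{regularity_dual}, $\|\Psi-P_V\Psi\|_K\le Ch_K^s\|\Psi\|_{H^s(K)}$. The term with $\nabla\times\Psi$ needs only the trivial bound $\|\nabla\times\Psi-\bm P_{\widetilde{\bm W}}\nabla\times\Psi\|_K\le\|\nabla\times\Psi\|_K$, and by \eqref{dual_Mixed_2} one has $\nabla\times\Psi=\kappa^2\overline{\epsilon_r}\bm\Phi+\bm\Theta\in\bm L^2(\Omega)$ with $\|\nabla\times\Psi\|_{\bm L^2(\Omega)}\le C\|\bm\Theta\|_{\bm L^2(\Omega)}$, so the resulting $h\,\|\nabla\times\Psi\|_{\mathcal T_h}$ is absorbed into $Ch^s\|\bm\Theta\|_{\mathcal T_h}$ using $h\le h^s$ for $s\le 1$. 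The boundary terms I would treat with the multiplicative trace inequality (Lemma 7.2 of \cite{MR3702417}, already used in \eqref{trace_in}), which is legitimate on the polygonal elements because of the shape-regular auxiliary mesh $\mathcal T_h^\star$; this reduces $\|\bm n\times(\bm\Phi-\bm P_{\bm W}\bm\Phi)\|_{\partial K}$ and $\|\Psi-P_V\Psi\|_{\partial K}$ to a combination of the $L^2(K)$- and $H^s(K)$-norms already under control.

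The one genuinely new ingredient is an $\bm H^s$ quasi-stability bound for the $L^2$-projections, namely $\|\bm P_{\bm W}\bm\Phi-\bm\Phi\|_{\bm H^s(K)}\le C\|\bm\Phi\|_{\bm H^s(K)}$ and $\|P_V\Psi-\Psi\|_{H^s(K)}\le C\|\Psi\|_{H^s(K)}$. I would prove these by inserting the element average (which lies in $\bm W(K)$, respectively $V(K)$, since $[\mathcal P_0(K)]^2\subset\widetilde{\bm W}(K)$ and $\mathcal P_0(K)\subset\widetilde V(K)$), applying the local inverse inequality stated just before \Cref{projection_errorf} to the finite element remainder, and using that $\bm P_{\bm W}$ and $P_V$ are the respective $L^2$ best approximations together with the standard $O(h_K^s)$ bound for approximation by constants on $\mathcal T_h^\star$. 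Collecting everything, each right-hand term of \eqref{projection_error_11} is bounded by $Ch_K^s$ times a local Sobolev norm of $\Psi$ or $\bm\Phi$, and each right-hand term of \eqref{projection_error_11f} by $C$ times such a norm (the $h_K^{-s}$ prefactors cancelling the $h_K^s$); summing over $K$, taking square roots, and invoking $\|\Psi\|_{H^1(\Omega)}+\|\bm\Phi\|_{\bm H^s(\Omega)}\le C\|\bm\Theta\|_{\bm L^2(\Omega)}$ from \Cref{regularity_dual_pro} yields \eqref{really_need1} and \eqref{really_need2}. I expect the main obstacle to be purely the bookkeeping: matching powers of $h_K$ across the many terms of \eqref{projection_error_11f} while keeping track that $\bm\Phi$ carries only $\bm H^s$ regularity with $s\in(\tfrac12,1]$ — which both forces the restriction on $s$ and guarantees $s>\tfrac12$, so that the traces of $\bm\Phi$ on $\partial K$ entering the estimates are well defined.
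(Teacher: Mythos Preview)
Your proposal is correct and matches the paper's intended approach: the paper does not give a detailed proof, stating only that the corollary follows from the assumptions in Section~\ref{messy_discussion} (in particular $\mathcal P_0(K)\subset\widetilde V(K)$, $[\mathcal P_0(K)]^2\subset\widetilde{\bm W}(K)$ and the associated projection bounds) together with the projection error estimates of \Cref{projection_error} and \Cref{projection_errorf} and the regularity result \eqref{regularity_dual}. Your write-up is precisely the natural way to fill in these details, and the $H^s$ quasi-stability of the $L^2$-projections that you flag as the ``new ingredient'' is indeed implicit in the paper and proved by the standard argument you outline.
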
 
We can now prove our main result: \Cref{main_result_error_analysis}.
\subsection{Proof of \Cref{main_result_error_analysis}}
\label{proof_of_main_result_2}
First, we define the following HDG operator $\mathscr B: [V_h\times \bm W_h\times M_h]^2\to \mathbb C$
\begin{align}\label{def_maxwell_HDG_B}
\begin{split}
\mathscr B(q_h, \bm u_h, \widehat{\bm u}_h; r_h, \bm v_h, \widehat{\bm v}_h) &= 	(\mu q_h, r_h)_{\mathcal{T}_h}-(\bm u_h,\nabla\times r_h)_{\mathcal{T}_h}-\langle \bm n\times\widehat{\bm u}_h, r_h \rangle_{\partial\mathcal{T}_h}\\
&\quad +(\nabla\times q_h,\bm v_h)_{\mathcal{T}_h}
+\langle{ q}_h,\bm n\times\widehat{\bm v}_h \rangle_{\partial\mathcal{T}_h}\\
&\quad +\langle\tau\bm n\times(\bm u_h-\widehat{\bm u}_h),
\bm n\times(\bm v_h-\widehat{\bm v}_h) \rangle_{\partial\mathcal{T}_h}.
\end{split}
\end{align}
By the definition of $\mathscr B$ in \eqref{def_maxwell_HDG_B}, we can rewrite the HDG formulation of the system \eqref{Maxwell_equation_HDG_form_ori} in a compact form, as follows:

\begin{lemma} 
	The HDG method seeks  $(q_h,\bm u_h, \widehat{\bm u}_h)\in V_h\times\bm W_h\times  \bm M_h^{ g}$ such that
	\begin{align}\label{Maxwell_equation_HDG_form_comp}
	\mathscr B(q_h, \bm u_h, \widehat{\bm u}_h; r_h, \bm v_h, \widehat{\bm v}_h) - (\kappa^2\epsilon_r\bm u_h,\bm v_h)_{\mathcal{T}_h}=(\bm f,\bm v_h)_{\mathcal{T}_h},
	\end{align}
	for all $( r_h,\bm v_h,\widehat{\bm v}_h)\in V_h\times\bm W_h\times\bm M_h^{ 0}$, in which $\bm M_h^{g}$ and $\bm M_h^{0}$ are defined as
	\begin{align*}
	\bm M_h^{ g}&=\{\bm\mu\in\bm M_h:\bm n\times\bm\mu|_{\partial\Omega}=(\bm {P_M}(\bm n\times g))\times\bm n\},\\
	\bm M_h^{ 0}&=\{\bm\mu\in\bm M_h:\bm n\times\bm\mu|_{\partial\Omega}= 0\},
	\end{align*}
	where $\bm {P_M}$ denotes the $L^2$-projection from $\bm L^2(F)$ onto space $\bm M(F)$. Thus if $\bm u\in \bm L^2(F)$ then $\bm{P_M}\bm u\in \bm M(F)$ satisfies
	\begin{align}\label{def_PM}
	\langle\bm{P_Mu},\bm v_h\rangle_F&=\langle\bm u,\bm v_h\rangle_F&\forall \bm v_h\in \bm M(F).
	\end{align}
\end{lemma}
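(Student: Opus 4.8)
The plan is to derive \eqref{Maxwell_equation_HDG_form_comp} from \eqref{Maxwell_equation_HDG_form_ori} by purely algebraic rearrangement, and then run the argument backwards; no structural hypothesis beyond those already imposed on the spaces is used. First I would dispose of the boundary data. Testing \eqref{Maxwell_equation_HDG_form_ori_4} against an arbitrary $\widehat{\bm v}_h\in\bm M_h$ and using the definition \eqref{def_PM} of $\bm{P_M}$ edge by edge shows that \eqref{Maxwell_equation_HDG_form_ori_4} holds if and only if the trace of $\widehat{\bm u}_h$ on $\partial\Omega$ is the stated $\bm M_h$-projection of $g$, i.e.\ $\widehat{\bm u}_h\in\bm M_h^{g}$; here the injectivity of $\bm n\times(\cdot)$ on $\bm M(F)$ from \eqref{M_decomposition_4} is what makes this characterization unambiguous. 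Second, restricting the remaining test functions to $\widehat{\bm v}_h\in\bm M_h^{0}$ is exactly what allows the interior–edge sum $\langle\cdot,\cdot\rangle_{\mathcal F_h/\partial\Omega}$ in \eqref{Maxwell_equation_HDG_form_ori_3} to be extended to $\langle\cdot,\cdot\rangle_{\partial\mathcal T_h}$, because on $\partial\Omega$ the integrand $\bm n\times\widehat q_h$ is a tangential vector while $\bm n\times\widehat{\bm v}_h=0$ forces $\widehat{\bm v}_h$ to be purely normal, so the added boundary contribution vanishes.

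Next I would rewrite \eqref{Maxwell_equation_HDG_form_ori_2}. Applying the integration-by-parts identity \eqref{integration_by_parts2} with $r=q_h$, $\bm u=\bm v_h$ element by element gives $(q_h,\nabla\times\bm v_h)_{\mathcal T_h}=(\nabla\times q_h,\bm v_h)_{\mathcal T_h}-\langle\bm n\times q_h,\bm v_h\rangle_{\partial\mathcal T_h}$, and inserting the flux definition \eqref{Maxwell_equation_HDG_form_ori_5} collapses the two edge terms into $\langle\tau\,\bm n\times(\bm u_h-\widehat{\bm u}_h)\times\bm n,\bm v_h\rangle_{\partial\mathcal T_h}$. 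Two elementary $2$D identities (each a direct consequence of the definitions \eqref{curl2D} and the ones following it) then finish the job: $\langle\bm n\times p,\bm w\rangle_{\partial K}=-\langle p,\bm n\times\bm w\rangle_{\partial K}$ for a scalar $p$, and $\langle\bm n\times\bm a\times\bm n,\bm w\rangle_{\partial K}=\langle\bm n\times\bm a,\bm n\times\bm w\rangle_{\partial K}$ (both sides being the integral of the product of the tangential components). This turns \eqref{Maxwell_equation_HDG_form_ori_2} into $(\nabla\times q_h,\bm v_h)_{\mathcal T_h}+\langle\tau\,\bm n\times(\bm u_h-\widehat{\bm u}_h),\bm n\times\bm v_h\rangle_{\partial\mathcal T_h}-(\kappa^2\epsilon_r\bm u_h,\bm v_h)_{\mathcal T_h}=(\bm f,\bm v_h)_{\mathcal T_h}$, and applying the same substitution to the extended \eqref{Maxwell_equation_HDG_form_ori_3} yields $\langle q_h,\bm n\times\widehat{\bm v}_h\rangle_{\partial\mathcal T_h}=\langle\tau\,\bm n\times(\bm u_h-\widehat{\bm u}_h),\bm n\times\widehat{\bm v}_h\rangle_{\partial\mathcal T_h}$ for all $\widehat{\bm v}_h\in\bm M_h^{0}$.

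The last step is to add these three relations: \eqref{Maxwell_equation_HDG_form_ori_1} is literally the first line of \eqref{def_maxwell_HDG_B} set to $0$, the rewritten \eqref{Maxwell_equation_HDG_form_ori_2} supplies its second and third lines paired with $\bm v_h$, and the rewritten \eqref{Maxwell_equation_HDG_form_ori_3} contributes the $\widehat{\bm v}_h$ terms, the two penalty pieces combining into $\langle\tau\,\bm n\times(\bm u_h-\widehat{\bm u}_h),\bm n\times(\bm v_h-\widehat{\bm v}_h)\rangle_{\partial\mathcal T_h}$; the result is exactly \eqref{Maxwell_equation_HDG_form_comp}. Conversely, given a solution of \eqref{Maxwell_equation_HDG_form_comp} with $\widehat{\bm u}_h\in\bm M_h^{g}$, specializing the test triple to $(r_h,\bm 0,\bm 0)$ recovers \eqref{Maxwell_equation_HDG_form_ori_1}, to $(0,\bm v_h,\bm 0)$ recovers \eqref{Maxwell_equation_HDG_form_ori_2} (after undoing the integration by parts and the flux substitution), to $(0,\bm 0,\widehat{\bm v}_h)$ recovers \eqref{Maxwell_equation_HDG_form_ori_3} for $\widehat{\bm v}_h\in\bm M_h^{0}$, and $\widehat{\bm u}_h\in\bm M_h^{g}$ is equivalent to \eqref{Maxwell_equation_HDG_form_ori_4} as in the first step. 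Since every manipulation is an identity there is no analytic difficulty; the only delicate point is the bookkeeping of the two-dimensional curl and cross-product sign conventions, and in particular verifying that the $\bm{P_M}$-characterization of $\bm M_h^{g}$ really is equivalent to the weak boundary condition \eqref{Maxwell_equation_HDG_form_ori_4}.
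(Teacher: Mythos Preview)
Your derivation is correct and is precisely the standard computation one expects here: substitute the numerical flux \eqref{Maxwell_equation_HDG_form_ori_5}, integrate by parts in \eqref{Maxwell_equation_HDG_form_ori_2}, use the $2$D identities relating $\bm n\times(\cdot)$ and $\bm n\times(\cdot)\times\bm n$, and combine \eqref{Maxwell_equation_HDG_form_ori_1}--\eqref{Maxwell_equation_HDG_form_ori_3} while absorbing \eqref{Maxwell_equation_HDG_form_ori_4} into the constraint $\widehat{\bm u}_h\in\bm M_h^g$. The paper does not give a proof of this lemma at all; it simply states the compact form as an immediate consequence of the definition \eqref{def_maxwell_HDG_B} of $\mathscr B$, so there is nothing to compare against beyond noting that your argument fills in exactly the omitted bookkeeping.
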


Next, we give some properties of the operator $\mathscr B$ below, the proof of the following lemma is very simple and we omit it here.
\begin{lemma}\label{symmertic_of_B}
	For any $(q_h,\bm u_h, \widehat{\bm u}_h, r_h, \bm v_h, \widehat{\bm v}_h) \in [V_h\times\bm W_h\times  \bm M_h]^2$, we have 
	\begin{align*}
	\mathscr B(q_h, \bm u_h, \widehat{\bm u}_h; r_h, -\bm v_h, -\widehat{\bm v}_h) = 	\overline{\mathscr B(r_h, \bm v_h, \widehat{\bm v}_h;q_h, -\bm u_h, -\widehat{\bm u}_h)}.
	\end{align*}
\end{lemma}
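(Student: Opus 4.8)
The plan is a direct term-by-term expansion, using only linearity and the Hermitian symmetry of the inner products. First I would use the fact that $\mathscr B(\,\cdot\,;\,\cdot\,)$ is linear in each of its two triples of arguments separately, so that replacing $(\bm v_h,\widehat{\bm v}_h)$ by $(-\bm v_h,-\widehat{\bm v}_h)$ on the left — and $(\bm u_h,\widehat{\bm u}_h)$ by $(-\bm u_h,-\widehat{\bm u}_h)$ on the right — merely flips the sign of every term of the definition \eqref{def_maxwell_HDG_B} that is linear in the affected unknown. After these sign changes the left-hand side reads
\begin{align*}
\mathscr B(q_h,\bm u_h,\widehat{\bm u}_h;r_h,-\bm v_h,-\widehat{\bm v}_h)
&=(\mu q_h,r_h)_{\mathcal T_h}-(\bm u_h,\nabla\times r_h)_{\mathcal T_h}-\langle\bm n\times\widehat{\bm u}_h,r_h\rangle_{\partial\mathcal T_h}\\
&\quad-(\nabla\times q_h,\bm v_h)_{\mathcal T_h}-\langle q_h,\bm n\times\widehat{\bm v}_h\rangle_{\partial\mathcal T_h}\\
&\quad-\langle\tau\bm n\times(\bm u_h-\widehat{\bm u}_h),\bm n\times(\bm v_h-\widehat{\bm v}_h)\rangle_{\partial\mathcal T_h},
\end{align*}
and an entirely analogous expansion holds for $\mathscr B(r_h,\bm v_h,\widehat{\bm v}_h;q_h,-\bm u_h,-\widehat{\bm u}_h)$ with $(q_h,\bm u_h,\widehat{\bm u}_h)$ and $(r_h,\bm v_h,\widehat{\bm v}_h)$ interchanged.

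Next I would take the complex conjugate of the second expression and invoke the Hermitian symmetry of the volume and boundary $L^2$ inner products, $\overline{(a,b)_{\mathcal T_h}}=(b,a)_{\mathcal T_h}$ and $\overline{\langle a,b\rangle_{\partial\mathcal T_h}}=\langle b,a\rangle_{\partial\mathcal T_h}$, together with the standing hypotheses that $\mu$ (i.e.\ $\mu_r$) and the stabilization parameter $\tau$ are real-valued, so that they are fixed by conjugation and may be moved freely between the two slots of an inner product. Conjugating term by term then sends $(\mu r_h,q_h)_{\mathcal T_h}$ back to $(\mu q_h,r_h)_{\mathcal T_h}$ and the penalty term to itself, while the two ``mixed'' volume terms swap — $(\bm u_h,\nabla\times r_h)_{\mathcal T_h}$ with $(\nabla\times q_h,\bm v_h)_{\mathcal T_h}$ — and likewise the two ``mixed'' boundary terms $\langle\bm n\times\widehat{\bm u}_h,r_h\rangle_{\partial\mathcal T_h}$ and $\langle q_h,\bm n\times\widehat{\bm v}_h\rangle_{\partial\mathcal T_h}$. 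Comparing with the display above shows the two sides agree term for term; in particular no integration by parts is needed.

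It is worth recording, finally, why the identity is stated for $\mathscr B$ rather than for the full sesquilinear form appearing in \eqref{Maxwell_equation_HDG_form_comp}: the contribution $(\kappa^2\epsilon_r\bm u_h,\bm v_h)_{\mathcal T_h}$ has deliberately been kept outside $\mathscr B$ because $\epsilon_r$ is allowed to be complex, so conjugating it would produce $\overline{\epsilon_r}$ and destroy the symmetry — this is exactly why the dual problem \eqref{dual_Mixed} carries $\overline{\epsilon_r}$. There is essentially no analytical obstacle here: the only care required is in consistently tracking which slot of the complex inner product is the conjugate-linear one and in bookkeeping the several sign changes, and the ``main step'' is simply the observation that every term on one side is the complex conjugate of exactly one term on the other.
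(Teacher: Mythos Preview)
Your argument is correct and is exactly the straightforward verification the paper has in mind when it states that ``the proof of the following lemma is very simple and we omit it here.'' The paper provides no proof to compare against, and your term-by-term expansion using Hermitian symmetry of the inner products together with the realness of $\mu_r$ and $\tau$ is the natural route.

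One small wording point: $\mathscr B$ is sesquilinear rather than bilinear (one triple sits in the conjugate-linear slot of each inner product), so ``linear in each of its two triples'' is not quite accurate. This has no effect on your computation, since you only ever multiply by the real scalar $-1$, but it would be cleaner to say ``real-linear'' or simply note that replacing an argument by its negative flips the sign regardless of which slot is conjugate-linear.
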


\begin{lemma}
	If for all $r_h\in V_h$,  $(q_h,\bm u_h, \widehat{\bm u}_h) \in V_h\times\bm W_h\times  \bm M_h$ satisfies
	\begin{align*}
	\mathscr B(q_h, \bm u_h, \widehat{\bm u}_h; r_h, \bm 0, \bm 0) = (G,  r_h)_{\mathcal T_h},
	\end{align*}
	where $G\in L^2(\Omega)$, then we have 
	\begin{align}\label{curl_uh_qh_inequality}
	\|\nabla \times \bm u_h\|_{\mathcal T_h} \le C\left(\|q_h\|_{\mathcal T_h} + \|{\bf h}^{-1/2} \bm n\times(\bm u_h-\widehat{\bm u}_h)\|_{\partial \mathcal T_h}+\|G\|_{\mathcal T_h}\right).
	\end{align}
\end{lemma}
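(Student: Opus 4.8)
The plan is to test the given identity with the piecewise polynomial $r_h=\nabla\times\bm u_h$ and recover an energy-type identity for $\|\nabla\times\bm u_h\|_{\mathcal T_h}$. First I would specialize the definition \eqref{def_maxwell_HDG_B} of $\mathscr B$ to the arguments $(r_h,\bm 0,\bm 0)$: the terms weighted by $\tau$ and the terms pairing with $\bm v_h$, $\widehat{\bm v}_h$ all vanish, so the hypothesis reads
\begin{align*}
(\mu_r q_h, r_h)_{\mathcal{T}_h}-(\bm u_h,\nabla\times r_h)_{\mathcal{T}_h}-\langle \bm n\times\widehat{\bm u}_h, r_h \rangle_{\partial\mathcal{T}_h}=(G,r_h)_{\mathcal T_h}\qquad\forall r_h\in V_h.
\end{align*}
Applying the integration by parts identity \eqref{integration_by_parts1} element by element, $(\bm u_h,\nabla\times r_h)_{\mathcal T_h}=(\nabla\times\bm u_h,r_h)_{\mathcal T_h}-\langle\bm n\times\bm u_h,r_h\rangle_{\partial\mathcal T_h}$, so the identity becomes
\begin{align*}
(\nabla\times\bm u_h,r_h)_{\mathcal T_h}=(\mu_r q_h,r_h)_{\mathcal T_h}+\langle\bm n\times(\bm u_h-\widehat{\bm u}_h),r_h\rangle_{\partial\mathcal T_h}-(G,r_h)_{\mathcal T_h}\qquad\forall r_h\in V_h.
\end{align*}

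The key structural point, and the only real obstacle, is that the test function $\nabla\times\bm u_h$ must lie in $V_h$. This is exactly what the $M$-decomposition supplies: by \eqref{M_decomposition_2} we have $\nabla\times\bm W(K)\subset\widetilde V(K)\subset V(K)$ for every $K$, hence $\nabla\times\bm u_h\in V_h$. Choosing $r_h=\nabla\times\bm u_h$ in the displayed identity therefore gives the energy identity
\begin{align*}
\|\nabla\times\bm u_h\|_{\mathcal T_h}^2=(\mu_r q_h,\nabla\times\bm u_h)_{\mathcal T_h}+\langle\bm n\times(\bm u_h-\widehat{\bm u}_h),\nabla\times\bm u_h\rangle_{\partial\mathcal T_h}-(G,\nabla\times\bm u_h)_{\mathcal T_h}.
\end{align*}

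To finish I would estimate the three terms on the right. The first and third are bounded by Cauchy–Schwarz together with the boundedness of $\mu_r$, giving $C(\|q_h\|_{\mathcal T_h}+\|G\|_{\mathcal T_h})\,\|\nabla\times\bm u_h\|_{\mathcal T_h}$. For the face term I would split the weight,
\begin{align*}
\langle\bm n\times(\bm u_h-\widehat{\bm u}_h),\nabla\times\bm u_h\rangle_{\partial\mathcal T_h}\le \|{\bf h}^{-1/2}\bm n\times(\bm u_h-\widehat{\bm u}_h)\|_{\partial\mathcal T_h}\,\|{\bf h}^{1/2}\nabla\times\bm u_h\|_{\partial\mathcal T_h},
\end{align*}
and then apply the discrete trace (scaling) inequality $\|\nabla\times\bm u_h\|_{\partial K}\le Ch_K^{-1/2}\|\nabla\times\bm u_h\|_K$, which is valid because $\nabla\times\bm u_h$ is a piecewise polynomial on the shape-regular auxiliary mesh $\mathcal T_h^\star$ of Section~\ref{messy_discussion}; this yields $\|{\bf h}^{1/2}\nabla\times\bm u_h\|_{\partial\mathcal T_h}\le C\|\nabla\times\bm u_h\|_{\mathcal T_h}$. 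Collecting the three bounds and dividing through by $\|\nabla\times\bm u_h\|_{\mathcal T_h}$ (the inequality being trivial when this quantity vanishes) produces exactly \eqref{curl_uh_qh_inequality}. Everything past the choice of test function is routine Cauchy–Schwarz plus the scaled trace inequality; the substantive ingredient is the inclusion $\nabla\times\bm W(K)\subset V(K)$ coming from the $M$-decomposition.
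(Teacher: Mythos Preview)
Your proof is correct and follows essentially the same approach as the paper: specialize $\mathscr B$ to $(r_h,\bm 0,\bm 0)$, take $r_h=\nabla\times\bm u_h$ (legitimate by the $M$-decomposition inclusion $\nabla\times\bm W(K)\subset V(K)$), integrate by parts, and finish with Cauchy--Schwarz plus the scaled trace/inverse inequality. If anything, you are more explicit than the paper in justifying why $\nabla\times\bm u_h\in V_h$ and why the discrete trace inequality applies via the auxiliary mesh $\mathcal T_h^\star$.
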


\begin{proof}
	By the definition of $\mathscr B$ in \eqref{def_maxwell_HDG_B}, we have 
	\begin{align}\label{curl_uh_qh_inequality_proof_1}
	(\mu q_h, r_h)_{\mathcal{T}_h}-(\bm u_h,\nabla\times r_h)_{\mathcal{T}_h}-\langle \bm n\times\widehat{\bm u}_h, r_h \rangle_{\partial\mathcal{T}_h}= (G, \bm r_h)_{\mathcal T_h}.
	\end{align}
	We take $r_h = \nabla\times \bm u_h$ in \eqref{curl_uh_qh_inequality_proof_1} and integrate by parts to get
	\begin{align*}
	(\mu q_h, \nabla\times \bm u_h)_{\mathcal{T}_h}-(\nabla\times\bm u_h,\nabla\times \bm u_h)_{\mathcal{T}_h}-\langle \bm n\times (\bm u_h-\widehat{\bm u}_h),  \nabla\times \bm u_h \rangle_{\partial\mathcal{T}_h}=(G, \nabla\times \bm u_h)_{\mathcal T_h}.
	\end{align*}
	After apply the Cauchy-Schwartz inequality and the   local inverse inequality can get our desired result.
\end{proof}

Now, we give the proof of \Cref{main_result_error_analysis}, splitting it into three steps.
\subsubsection{Step 1: Error equations and energy arguments}
\begin{lemma}\label{error_projection_lemma} 
	Let  $(q,\bm u)\in  H({\rm curl};\Omega)\times [\bm H_0({\rm curl};\Omega)\cap\bm H({\rm div}^0;\Omega)]$ be the weak solution of \eqref{Maxwell_equation_mixed_form}, then for all  $(r_h,\bm v_h,\widehat{\bm v}_h)\in  V_h\times\bm W_h\times\bm M_h^0$,  we have 
	\begin{align}\label{error_projection}
	\mathscr B({\Pi_V} q, \bm{\Pi_W}\bm u, \bm{P_M}\bm u; r_h,\bm v_h,\widehat{\bm v}_h) = (\mu({\Pi_V} q- q), r_h)_{\mathcal{T}_h}+(\nabla\times {q}, \bm v_h)_{\mathcal{T}_h}.
	\end{align}
\end{lemma}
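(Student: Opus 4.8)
The plan is to derive \eqref{error_projection} from a consistency identity for the exact solution together with the defining equations \eqref{projection} of the HDG projection, exploiting the linearity of $\mathscr B$ in its first three arguments. First I would establish the consistency identity. Evaluate $\mathscr B$ at $(q,\bm u,\bm u|_{\mathcal E_h})$, where the last slot is the single-valued trace of $\bm u$ on the skeleton, which is legitimate under the standing regularity assumptions on the exact solution. Integrating the term $(\bm u,\nabla\times r_h)_{\mathcal T_h}$ by parts via \eqref{integration_by_parts1} and using \eqref{Mixed-1}, the first line of \eqref{def_maxwell_HDG_B} collapses to zero after cancellation with $-\langle\bm n\times\bm u,r_h\rangle_{\partial\mathcal T_h}$; the penalization line vanishes since $\bm u-\bm u|_{\mathcal E_h}=\bm 0$ on $\partial\mathcal T_h$; and $\langle q,\bm n\times\widehat{\bm v}_h\rangle_{\partial\mathcal T_h}=0$ because $q\in H({\rm curl};\Omega)$ (equivalently $q\in H^1(\Omega)$) has a single-valued trace while $\widehat{\bm v}_h\in\bm M_h$ is single-valued on $\mathcal E_h$, so the two contributions of each interior edge carry opposite outward normals and cancel, and $\bm n\times\widehat{\bm v}_h=\bm 0$ on $\partial\Omega$ as $\widehat{\bm v}_h\in\bm M_h^0$. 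This gives $\mathscr B(q,\bm u,\bm u|_{\mathcal E_h};r_h,\bm v_h,\widehat{\bm v}_h)=(\nabla\times q,\bm v_h)_{\mathcal T_h}$.

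Writing $e_q:=\Pi_V q-q$, $\bm e_u:=\bm{\Pi_W}\bm u-\bm u$ and $\widehat{\bm e}_u:=\bm{P_M}\bm u-\bm u|_{\mathcal E_h}$, linearity reduces the claim to showing $\mathscr B(e_q,\bm e_u,\widehat{\bm e}_u;r_h,\bm v_h,\widehat{\bm v}_h)=(\mu(\Pi_V q-q),r_h)_{\mathcal T_h}$. I would expand this with \eqref{def_maxwell_HDG_B} and discard the volume contributions other than $(\mu e_q,r_h)_{\mathcal T_h}$ using \eqref{projection}: since $\nabla\times r_h\in\widetilde{\bm W}(K)$ by \eqref{M_decomposition_2}, property \eqref{P2} yields $(\bm e_u,\nabla\times r_h)_{\mathcal T_h}=0$; and after integrating $(\nabla\times e_q,\bm v_h)_K$ by parts via \eqref{integration_by_parts2}, the remaining volume term $(e_q,\nabla\times\bm v_h)_K$ vanishes because $\nabla\times\bm v_h\in\widetilde V(K)$ by \eqref{M_decomposition_2} and \eqref{P1} applies.

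The crux is to show that the surviving edge terms on $\partial\mathcal T_h$ --- namely $-\langle\bm n\times\widehat{\bm e}_u,r_h\rangle$, $\langle\bm n\times e_q,\bm v_h\rangle$ (produced by the integration by parts), $\langle e_q,\bm n\times\widehat{\bm v}_h\rangle$, and $\langle\tau\bm n\times(\bm e_u-\widehat{\bm e}_u),\bm n\times(\bm v_h-\widehat{\bm v}_h)\rangle$ --- sum to zero. Working edge by edge with the 2D cross-product identities $\langle\bm n\times\bm w,p\rangle_F=-\langle\bm w,\bm n\times p\rangle_F$ and $\bm n\times(\bm n\times\bm w)=-\bm n\times\bm w\times\bm n$, I would regroup them into: (i) a pairing of $e_q-\tau\bm n\times\bm e_u$ with $\bm n\times(\bm v_h-\widehat{\bm v}_h)=\bm n\times(\bm n\times\bm v_h\times\bm n-\widehat{\bm v}_h)$, which is zero by \eqref{P3} because $\bm n\times\bm v_h\times\bm n-\widehat{\bm v}_h\in\bm M(F)$ (using $\bm n\times\bm W(K)\times\bm n\subset\bm M(\partial K)$ from \eqref{M_decomposition_1} and $\widehat{\bm v}_h|_F\in\bm M(F)$); and (ii) two pairings of $\widehat{\bm e}_u$ with, respectively, $\bm n\times r_h|_F$ --- which lies in $\bm M(F)$ by $\bm n\times V(K)\subset\bm M(\partial K)$ --- and $\tau\bm n\times(\bm v_h-\widehat{\bm v}_h)\times\bm n$ --- which lies in $\bm M(F)$ by \eqref{M_decomposition_1}, since $\tau$ is piecewise constant on edges and $\bm M(F)$ consists of tangential traces (as in all the examples considered here, so that only the tangential parts of $\widehat{\bm v}_h$ and $\bm{P_M}\bm u$ enter). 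Both pairings vanish because $\widehat{\bm e}_u=\bm{P_M}\bm u-\bm u$ is $\bm L^2(F)$-orthogonal to $\bm M(F)$ by \eqref{def_PM}. Adding the consistency identity to this computation gives \eqref{error_projection}.

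I expect the only real difficulty to be the bookkeeping in the last step: keeping the scalar-versus-vector 2D cross products and their signs straight, and matching each leftover interface term to an element of $\bm M(F)$ so that either \eqref{P3} or the orthogonality of $\bm{P_M}$ can be invoked. Everything else is a routine application of \eqref{parts}, the projection equations \eqref{projection}, the mixed equation \eqref{Mixed-1}, and the single-valuedness of $q$, $\bm u$ and $\widehat{\bm v}_h$.
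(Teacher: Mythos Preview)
Your proof is correct and follows essentially the same approach as the paper: both rely on the projection identities \eqref{P1}--\eqref{P3}, the orthogonality \eqref{def_PM} of $\bm{P_M}$, integration by parts \eqref{parts}, and the single-valuedness of $q$ and $\widehat{\bm v}_h\in\bm M_h^0$ to eliminate the interface terms. The only organizational difference is that you split $\mathscr B(\Pi_V q,\bm{\Pi_W}\bm u,\bm{P_M}\bm u;\,\cdot\,)$ by linearity into a consistency identity for the exact solution plus a projection-error contribution, whereas the paper expands $\mathscr B(\Pi_V q,\bm{\Pi_W}\bm u,\bm{P_M}\bm u;\,\cdot\,)$ directly and simplifies term by term; the resulting cancellations and their justifications (including the implicit use of the fact that elements of $\bm M(F)$ are tangential, so that $\bm n\times\widehat{\bm v}_h\times\bm n=\widehat{\bm v}_h$) are identical in both arguments.
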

\begin{proof}
	By the definition of $\mathscr B$ in \eqref{def_maxwell_HDG_B} and use \eqref{M_decomposition_1}
	we have	
	\begin{align*}
	\hspace{1em}&\hspace{-1em}\mathscr B({\Pi_V} q, \bm{\Pi_W}\bm u, \bm{P_M}\bm u; r_h,\bm v_h,\widehat{\bm v}_h)\\
	& = (\mu {\Pi_V} q, r_h)_{\mathcal{T}_h}-(\bm{\Pi_W}\bm u,\nabla\times r_h)_{\mathcal{T}_h}-\langle \bm n\times \bm{P_M}\bm u, r_h \rangle_{\partial\mathcal{T}_h}\\
	&\quad +(\nabla\times \Pi_V q,\bm v_h)_{\mathcal{T}_h}
	+\langle \Pi_V q,\bm n\times\widehat{\bm v}_h \rangle_{\partial\mathcal{T}_h} \\
	&\quad +\langle\tau\bm n\times(\bm{\Pi_W}\bm u- \bm{P_M}\bm u),
	\bm n\times(\bm v_h-\widehat{\bm v}_h) \rangle_{\partial\mathcal{T}_h}\\
	&=(\mu{\Pi_V} q, r_h)_{\mathcal{T}_h}-(\bm u,\nabla\times r_h)_{\mathcal{T}_h}
	-\langle\bm n\times\bm u, r_h \rangle_{\partial\mathcal{T}_h}&\text{by }\eqref{P2}, \eqref{def_PM}\nonumber\\
	&\quad + ({\Pi_Vq},\nabla\times\bm v_h)_{\mathcal{T}_h}
	+\langle {\Pi_V} q,\bm n\times(\widehat{\bm v}_h-\bm v_h) \rangle_{\partial\mathcal{T}_h}&\text{ by }\eqref{integration_by_parts1}\\
	&\quad +\langle\tau\bm n\times(\bm{\Pi_W}\bm u- \bm u),
	\bm n\times(\bm v_h-\widehat{\bm v}_h) \rangle_{\partial\mathcal{T}_h}\\
	&=(\mu{\Pi_V} q-\nabla\times\bm u, r_h)_{\mathcal{T}_h}&\text{ by }\eqref{integration_by_parts1}\\
	&\quad +({q},\nabla\times\bm v_h)_{\mathcal{T}_h}
	+\langle {\Pi_V} q,\bm n\times(\widehat{\bm v}_h-\bm v_h) \rangle_{\partial\mathcal{T}_h}&\text{by }\eqref{P1}\\
	&\quad +\langle\tau\bm n\times(\bm{\Pi_W}\bm u- \bm u),
	\bm n\times(\bm v_h-\widehat{\bm v}_h) \rangle_{\partial\mathcal{T}_h}.
	\end{align*}	
	Since $\widehat{\bm v}_h $ is single valued on interior edges and equal to zero on boundary faces, then $\langle q,\bm n\times\widehat{\bm v}_h\rangle_{\partial\mathcal{T}_h}=0$. Moreover, by \eqref{integration_by_parts1} we have 
	\begin{align*}
	\hspace{2em}&\hspace{-2em}(q,\nabla\times\bm v_h)_{\mathcal{T}_h} + \langle {\Pi_V} q,\bm n\times(\widehat{\bm v}_h-\bm v_h) \rangle_{\partial\mathcal{T}_h}\\
	&=(\nabla\times\bm{q},\bm v_h)_{\mathcal{T}_h}
	+\langle  q- {\Pi_V} q,\bm n\times(\bm v_h-\widehat{\bm v}_h) \rangle_{\partial\mathcal{T}_h}\\
	& = (\nabla\times\bm{q},\bm v_h)_{\mathcal{T}_h}
	-\langle\tau\bm n\times(\bm{\Pi_W}\bm u- \bm u),
	\bm n\times(\bm v_h-\widehat{\bm v}_h) \rangle_{\partial\mathcal{T}_h}&\text{by }\eqref{P3}.
	\end{align*}
	This implies
	\begin{align*}
	\hspace{2em}&\hspace{-2em} \mathscr B({\Pi_V} q, \bm{\Pi_W}\bm u, \bm{P_M}\bm u; r_h,\bm v_h,\widehat{\bm v}_h)\\
	&=(\mu{\Pi_V} q-\nabla\times\bm u, r_h)_{\mathcal{T}_h}+(\nabla\times {q}, \bm v_h)_{\mathcal{T}_h}\\
	& = (\mu({\Pi_V} q- q), r_h)_{\mathcal{T}_h}+(\nabla\times {q}, \bm v_h)_{\mathcal{T}_h},&\textup{ by }\eqref{Maxwell_equation_mixed_form}
	\end{align*}	
	and completes the proof.
\end{proof}	

To simplify notation, we define
\begin{align}\label{varepsilojn}
\varepsilon_h^{ q}={\Pi_Vq}- q_h,\ \ \
\varepsilon_h^{\bm u}=\bm{\Pi_W} \bm u-\bm u_h,\ \ \
\varepsilon_h^{\widehat{\bm u}}=\bm{P_M} \bm u-\widehat{\bm u}_h.
\end{align}

We subtract \eqref{Maxwell_equation_HDG_form_comp} from \eqref{error_projection} to get the following error equations.
\begin{lemma}  
	Using the notation \eqref{varepsilojn},  for any $(r_h,\bm v_h,\widehat{\bm v}_h)\in V_h,\times\bm W_h\times\bm M_h^0$, we have
	\begin{align}\label{HDG_error_equation}
	\begin{split}
	\hspace{2em}&\hspace{-2em} \mathscr B(\varepsilon^{ q}_h,\varepsilon_h^{\bm u}, \varepsilon_h^{\widehat{\bm u}},  r_h,\bm v_h,\widehat{\bm v}_h)-(\kappa^2\epsilon_r\varepsilon_h^{\bm u},\bm v_h)_{\mathcal{T}_h}\\
	& = (\mu({\Pi_V} q- q), r_h)_{\mathcal{T}_h}+(\kappa^2\epsilon_r(\bm u-\bm{\Pi_W} \bm u),\bm v_h)_{\mathcal{T}_h}.
	\end{split}
	\end{align}
\end{lemma}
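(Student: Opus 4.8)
The plan is to obtain \eqref{HDG_error_equation} by subtracting the compact discrete equation \eqref{Maxwell_equation_HDG_form_comp} from the consistency identity \eqref{error_projection} of \Cref{error_projection_lemma}, and then rewriting the resulting right-hand side by means of the strong form of the PDE. All the substantive content has already been established in \Cref{error_projection_lemma}, so this lemma amounts to a bookkeeping step.

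First I would observe that both \eqref{error_projection} and \eqref{Maxwell_equation_HDG_form_comp} are valid for every test triple $(r_h,\bm v_h,\widehat{\bm v}_h)\in V_h\times\bm W_h\times\bm M_h^0$, so subtraction on this common test space is legitimate. Since $\mathscr B$ is linear in its first three arguments and $\varepsilon_h^{q}={\Pi_V}q-q_h$, $\varepsilon_h^{\bm u}=\bm{\Pi_W}\bm u-\bm u_h$, $\varepsilon_h^{\widehat{\bm u}}=\bm{P_M}\bm u-\widehat{\bm u}_h$ by \eqref{varepsilojn}, subtracting gives
\[
\mathscr B(\varepsilon_h^{q},\varepsilon_h^{\bm u},\varepsilon_h^{\widehat{\bm u}};r_h,\bm v_h,\widehat{\bm v}_h)+(\kappa^2\epsilon_r\bm u_h,\bm v_h)_{\mathcal{T}_h}=(\mu({\Pi_V}q-q),r_h)_{\mathcal{T}_h}+(\nabla\times q,\bm v_h)_{\mathcal{T}_h}-(\bm f,\bm v_h)_{\mathcal{T}_h}.
\]

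Next I would split the zeroth-order discrete term using $\bm u_h=\bm{\Pi_W}\bm u-\varepsilon_h^{\bm u}$, so that $(\kappa^2\epsilon_r\bm u_h,\bm v_h)_{\mathcal{T}_h}=(\kappa^2\epsilon_r\bm{\Pi_W}\bm u,\bm v_h)_{\mathcal{T}_h}-(\kappa^2\epsilon_r\varepsilon_h^{\bm u},\bm v_h)_{\mathcal{T}_h}$; moving the $\varepsilon_h^{\bm u}$ contribution to the left turns the left-hand side into precisely $\mathscr B(\varepsilon_h^{q},\varepsilon_h^{\bm u},\varepsilon_h^{\widehat{\bm u}};r_h,\bm v_h,\widehat{\bm v}_h)-(\kappa^2\epsilon_r\varepsilon_h^{\bm u},\bm v_h)_{\mathcal{T}_h}$. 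On the right-hand side I would invoke the second mixed equation \eqref{Mixed-2}, i.e.\ $\nabla\times q=\bm f+\kappa^2\epsilon_r\bm u$, which cancels $-(\bm f,\bm v_h)_{\mathcal{T}_h}$ against $(\nabla\times q,\bm v_h)_{\mathcal{T}_h}$ and leaves $(\kappa^2\epsilon_r\bm u,\bm v_h)_{\mathcal{T}_h}$; combined with $-(\kappa^2\epsilon_r\bm{\Pi_W}\bm u,\bm v_h)_{\mathcal{T}_h}$ this becomes $(\kappa^2\epsilon_r(\bm u-\bm{\Pi_W}\bm u),\bm v_h)_{\mathcal{T}_h}$, which is exactly \eqref{HDG_error_equation}.

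I do not expect a genuine obstacle here; the only points requiring care are organizational: checking that the two equations are tested against the same space $\bm M_h^0$, correctly exploiting the linearity of $\mathscr B$ in its first three slots, and using the pointwise identity $\nabla\times q=\bm f+\kappa^2\epsilon_r\bm u$, which is licit because $(q,\bm u)$ is the strong (mixed) solution assumed in the hypotheses of \Cref{error_projection_lemma}.
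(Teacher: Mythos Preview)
Your proposal is correct and follows essentially the same approach as the paper: subtract \eqref{Maxwell_equation_HDG_form_comp} from \eqref{error_projection} using the linearity of $\mathscr B$ in its first three slots, then use \eqref{Mixed-2} to replace $(\nabla\times q,\bm v_h)_{\mathcal T_h}-(\bm f,\bm v_h)_{\mathcal T_h}$ by $(\kappa^2\epsilon_r\bm u,\bm v_h)_{\mathcal T_h}$ and combine with the $\bm{\Pi_W}\bm u$ term. The only cosmetic difference is that the paper starts from the desired left-hand side and expands, whereas you subtract first and then rearrange; the ingredients and logic are identical.
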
	
\begin{proof}
	By the definition of $\mathscr B$ in \eqref{def_maxwell_HDG_B} and \Cref{error_projection_lemma}, we get
	\begin{align*}
	\hspace{2em}&\hspace{-2em} \mathscr B(\varepsilon^{ q}_h,\varepsilon_h^{\bm u}, \varepsilon_h^{\widehat{\bm u}},  r_h,\bm v_h,\widehat{\bm v}_h)-(\kappa^2\epsilon_r\varepsilon_h^{\bm u},\bm v_h)_{\mathcal{T}_h}\\
	& = \mathscr B(\Pi_V q,\bm{\Pi_W}, \bm {P_M u},  r_h,\bm v_h,\widehat{\bm v}_h) -\mathscr B({ q}_h, {\bm u}_h, \widehat{\bm u}_h,  r_h,\bm v_h,\widehat{\bm v}_h)\\
	&\quad -  (\kappa^2\epsilon_r\bm{\Pi_W u},\bm v_h)_{\mathcal{T}_h} +  (\kappa^2\epsilon_r\bm u_h,\bm v_h)_{\mathcal{T}_h}\\
	& = \mathscr B(\Pi_V q,\bm{\Pi_W}, \bm {P_M u},  r_h,\bm v_h,\widehat{\bm v}_h) -  (\kappa^2\epsilon_r\bm{\Pi_W u},\bm v_h)_{\mathcal{T}_h}\\
	&\quad -\left[\mathscr B({ q}_h, {\bm u}_h, \widehat{\bm u}_h,  r_h,\bm v_h,\widehat{\bm v}_h)-  (\kappa^2\epsilon_r\bm u_h,\bm v_h)_{\mathcal{T}_h}\right]\\
	& = (\mu({\Pi_V} q- q), r_h)_{\mathcal{T}_h}+(\nabla\times {q}, \bm v_h)_{\mathcal{T}_h}-  (\kappa^2\epsilon_r\bm{\Pi_W u},\bm v_h)_{\mathcal{T}_h}-(\bm f, \bm v_h)_{\mathcal{T}_h}\\
	& = (\mu({\Pi_V} q- q), r_h)_{\mathcal{T}_h}+(\kappa^2\epsilon_r(\bm u-\bm{\Pi_W} \bm u),\bm v_h)_{\mathcal{T}_h},
	\end{align*}
	where we used \eqref{Mixed-2} in the last inequality.

\end{proof}

\begin{lemma}\label{energy_of_q}
	Using definition \eqref{varepsilojn},  we have the error estimate
	\begin{align}\label{es0}
	\|\sqrt{\mu}\varepsilon_h^{ q}\|_{\mathcal{T}_h}+\|\sqrt{\tau}\bm n\times(\varepsilon_h^{\bm u}-\varepsilon_h^{\widehat{\bm u}})\|_{\partial\mathcal{T}_h}
	\le C\left( \| q-{\Pi_Vq}\|_{\mathcal{T}_h}+\|\bm u-\bm{\Pi_Wu}\|_{\mathcal{T}_h}
	+\|\varepsilon_h^{\bm u}\|_{\mathcal{T}_h}\right).
	\end{align}
\end{lemma}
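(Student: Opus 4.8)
The plan is to run the standard HDG energy argument: test the error equation \eqref{HDG_error_equation} against the errors themselves and read off the coercive quantity $\|\sqrt{\mu}\varepsilon_h^q\|_{\mathcal T_h}^2+\|\sqrt{\tau}\,\bm n\times(\varepsilon_h^{\bm u}-\varepsilon_h^{\widehat{\bm u}})\|_{\partial\mathcal T_h}^2$ from the real part of $\mathscr B$. First I would note that $\varepsilon_h^{\widehat{\bm u}}=\bm{P_M}\bm u-\widehat{\bm u}_h\in\bm M_h^0$, since $\widehat{\bm u}_h$ and $\bm{P_M}\bm u$ carry the same trace data on $\partial\Omega$; hence $(r_h,\bm v_h,\widehat{\bm v}_h)=(\varepsilon_h^q,\varepsilon_h^{\bm u},\varepsilon_h^{\widehat{\bm u}})$ is an admissible choice in \eqref{HDG_error_equation}, which gives
\[
\mathscr B(\varepsilon_h^q,\varepsilon_h^{\bm u},\varepsilon_h^{\widehat{\bm u}};\varepsilon_h^q,\varepsilon_h^{\bm u},\varepsilon_h^{\widehat{\bm u}})-(\kappa^2\epsilon_r\varepsilon_h^{\bm u},\varepsilon_h^{\bm u})_{\mathcal T_h}=(\mu(\Pi_V q-q),\varepsilon_h^q)_{\mathcal T_h}+(\kappa^2\epsilon_r(\bm u-\bm{\Pi_W}\bm u),\varepsilon_h^{\bm u})_{\mathcal T_h}.
\]

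Next I would take the real part of this identity. Expanding $\mathscr B$ from \eqref{def_maxwell_HDG_B} with identical arguments, the term $-(\varepsilon_h^{\bm u},\nabla\times\varepsilon_h^q)_{\mathcal T_h}$ pairs with its conjugate $(\nabla\times\varepsilon_h^q,\varepsilon_h^{\bm u})_{\mathcal T_h}$ and the term $-\langle\bm n\times\varepsilon_h^{\widehat{\bm u}},\varepsilon_h^q\rangle_{\partial\mathcal T_h}$ pairs with its conjugate $\langle\varepsilon_h^q,\bm n\times\varepsilon_h^{\widehat{\bm u}}\rangle_{\partial\mathcal T_h}$, so each such sum is purely imaginary and drops out under $\mathrm{Re}$ (this is, equivalently, a one-line consequence of the near self-adjointness recorded in \Cref{symmertic_of_B}). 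Since $\mu>0$ and $\tau>0$ are real and piecewise constant, the survivors are exactly $\mathrm{Re}\,\mathscr B(\varepsilon_h^q,\varepsilon_h^{\bm u},\varepsilon_h^{\widehat{\bm u}};\varepsilon_h^q,\varepsilon_h^{\bm u},\varepsilon_h^{\widehat{\bm u}})=\|\sqrt{\mu}\,\varepsilon_h^q\|_{\mathcal T_h}^2+\|\sqrt{\tau}\,\bm n\times(\varepsilon_h^{\bm u}-\varepsilon_h^{\widehat{\bm u}})\|_{\partial\mathcal T_h}^2$. Crucially, no coercivity of the form is used here; the zeroth-order term $(\kappa^2\epsilon_r\varepsilon_h^{\bm u},\varepsilon_h^{\bm u})_{\mathcal T_h}$ is simply moved to the right-hand side and, using boundedness of $\epsilon_r$, bounded by $C\|\varepsilon_h^{\bm u}\|_{\mathcal T_h}^2$.

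Then I would finish with Cauchy--Schwarz and Young's inequality on the two data terms: $|(\mu(\Pi_V q-q),\varepsilon_h^q)_{\mathcal T_h}|\le \tfrac12\|\sqrt{\mu}\,\varepsilon_h^q\|_{\mathcal T_h}^2+C\|q-\Pi_V q\|_{\mathcal T_h}^2$ and $|(\kappa^2\epsilon_r(\bm u-\bm{\Pi_W}\bm u),\varepsilon_h^{\bm u})_{\mathcal T_h}|\le C\big(\|\bm u-\bm{\Pi_W}\bm u\|_{\mathcal T_h}^2+\|\varepsilon_h^{\bm u}\|_{\mathcal T_h}^2\big)$. Absorbing the $\tfrac12\|\sqrt{\mu}\,\varepsilon_h^q\|_{\mathcal T_h}^2$ into the left-hand side and taking square roots yields \eqref{es0}. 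The argument is essentially routine; the only points that need any care are verifying that $\varepsilon_h^{\widehat{\bm u}}$ is an admissible test function and the algebraic cancellation of the skew terms of $\mathscr B$ in the real part. I do not expect a genuine obstacle in this lemma — the absence of coercivity only makes itself felt in the later duality-based estimates, where $\|\varepsilon_h^{\bm u}\|_{\mathcal T_h}$ on the right of \eqref{es0} must still be controlled.
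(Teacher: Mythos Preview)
Your proof is correct and follows essentially the same energy argument as the paper: the paper splits the test function into $(\varepsilon_h^q,\bm 0,\bm 0)$ and $(0,\varepsilon_h^{\bm u},\varepsilon_h^{\widehat{\bm u}})$, conjugates the first identity and adds it to the second so that the skew terms cancel exactly, whereas you test with $(\varepsilon_h^q,\varepsilon_h^{\bm u},\varepsilon_h^{\widehat{\bm u}})$ at once and take the real part to kill the same skew terms---these are two presentations of the same computation. The remaining Cauchy--Schwarz/Young step and the handling of the $\kappa^2\epsilon_r$ term are identical.
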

\begin{proof} 
	First, we take $ (r_h,\bm v_h,\widehat{\bm v}_h)=({\varepsilon_h^{ q}},\bm 0, \bm 0) $ in \eqref{HDG_error_equation} to get
	\begin{align}\label{energy_of_q_proof_1}
	\mathscr B(\varepsilon^{ q}_h,\varepsilon_h^{\bm u}, \varepsilon_h^{\widehat{\bm u}};  {\varepsilon_h^{ q}},\bm 0,\bm 0)=(\mu({\Pi_V} q- q), \varepsilon_h^{ q})_{\mathcal{T}_h}.
	\end{align}
	Next, we take $ (r_h,\bm v_h,\widehat{\bm v}_h)=(0,\varepsilon_h^{\bm u}, \varepsilon_h^{\widehat{\bm u}}) $ in \eqref{HDG_error_equation} to get
	\begin{align}\label{energy_of_q_proof_2}
	\mathscr B(\varepsilon^{ q}_h,\varepsilon_h^{\bm u}, \varepsilon_h^{\widehat{\bm u}};  0,\varepsilon_h^{\bm u}, \varepsilon_h^{\widehat{\bm u}})-(\kappa^2\epsilon_r\varepsilon_h^{\bm u},{\varepsilon_h^{\bm u}})_{\mathcal{T}_h}=(\kappa^2\epsilon_r(\bm u-\bm{\Pi_W} \bm u),{\varepsilon_h^{\bm u}})_{\mathcal{T}_h}.
	\end{align}
	By   the equations \eqref{energy_of_q_proof_1} and \eqref{energy_of_q_proof_2}, we get
	\begin{align}\label{energy_of_q_proof_3}
	\begin{split}
	\hspace{2em}&\hspace{-2em} \overline{\mathscr B(\varepsilon^{ q}_h,\varepsilon_h^{\bm u}, \varepsilon_h^{\widehat{\bm u}};  {\varepsilon_h^{ q}},\bm 0,\bm 0)} + \mathscr B(\varepsilon^{ q}_h,\varepsilon_h^{\bm u}, \varepsilon_h^{\widehat{\bm u}};  0,\varepsilon_h^{\bm u}, \varepsilon_h^{\widehat{\bm u}})-(\kappa^2\epsilon_r\varepsilon_h^{\bm u},{\varepsilon_h^{\bm u}})_{\mathcal{T}_h}\\
	&=\overline{(\mu({\Pi_V} q- q), \varepsilon_h^{ q})_{\mathcal{T}_h}}+(\kappa^2\epsilon_r(\bm u-\bm{\Pi_W} \bm u),{\varepsilon_h^{\bm u}})_{\mathcal{T}_h}.
	\end{split}
	\end{align}
	On the other hand, by the definition of $\mathscr B$ in \eqref{def_maxwell_HDG_B} to get 
	\begin{align}\label{energy_of_q_proof_4}
	\overline{\mathscr B(\varepsilon^{ q}_h,\varepsilon_h^{\bm u}, \varepsilon_h^{\widehat{\bm u}};  {\varepsilon_h^{ q}},\bm 0,\bm 0)} + \mathscr B(\varepsilon^{ q}_h,\varepsilon_h^{\bm u}, \varepsilon_h^{\widehat{\bm u}};  0,\varepsilon_h^{\bm u}, \varepsilon_h^{\widehat{\bm u}})=\|\sqrt{\mu} \varepsilon^{ q}_h\|_{\mathcal T_h}^2 + \|\sqrt{\tau}\bm n\times(\varepsilon_h^{\bm u}-\varepsilon_h^{\widehat{\bm u}})\|_{\partial \mathcal T_h}^2.
	\end{align}
	Hence, by the equation \eqref{energy_of_q_proof_3} and \eqref{energy_of_q_proof_4}, we have 
	\begin{align*}
	\hspace{2em}&\hspace{-2em}\|\sqrt{\mu} \varepsilon^{ q}_h\|_{\mathcal T_h}^2 + \|\sqrt{\tau}\bm n\times(\varepsilon_h^{\bm u}-\varepsilon_h^{\widehat{\bm u}})\|_{\partial \mathcal T_h}^2\\
	&=\overline{(\mu({\Pi_V} q- q), \varepsilon_h^{ q})_{\mathcal{T}_h}}+(\kappa^2\epsilon_r(\bm u-\bm{\Pi_W} \bm u),{\varepsilon_h^{\bm u}})_{\mathcal{T}_h}\\
	&\le C\|{\Pi_Vq}- q\|_{\mathcal{T}_h}\|\sqrt{\mu}\varepsilon_h^{ q}\|_{\mathcal{T}_h}+C\|\bm u-\bm{\Pi_Wu}\|_{\mathcal{T}_h}\|\varepsilon_h^{\bm u}\|_{\mathcal{T}_h}.
	\end{align*}
	Use of Young's inequality gives our desired result.	
\end{proof}

\subsubsection{Step 2: Duality argument}

Similarly to \Cref{error_projection_lemma}, we have:
\begin{lemma}\label{error_projection_lemma_dual}
	Let $(\Psi,\bm \Phi)\in  H({\rm curl};\Omega)\times [\bm H_0({\rm curl};\Omega)\cap\bm H({\rm div}^0;\Omega)]$ be the solution of \eqref{dual_Mixed}, then for all $(r_h,\bm v_h,\widehat{\bm v}_h)\in  V_h\times\bm W_h\times\bm M_h^0$,  we have 
	\begin{align*}
	\mathscr B({\Pi_V} \Psi, \bm{\Pi_W}\bm \Phi, \bm{P_M}\bm \Phi; r_h,\bm v_h,\widehat{\bm v}_h) = (\mu({\Pi_V} \Psi- \Psi), r_h)_{\mathcal{T}_h}+(\nabla\times \Psi, \bm v_h)_{\mathcal{T}_h}.
	\end{align*}
\end{lemma}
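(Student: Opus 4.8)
The plan is to mimic the proof of Lemma~\ref{error_projection_lemma} essentially verbatim. The dual problem \eqref{dual_Mixed} has exactly the same mixed structure as the primal system \eqref{Maxwell_equation_mixed_form}, with $(q,\bm u,\bm f,g,\epsilon_r)$ replaced by $(\Psi,\bm\Phi,\bm\Theta,0,\overline{\epsilon_r})$; and the identity to be proved involves neither the $\kappa^2$-term nor $\epsilon_r$ (those sit outside $\mathscr B$ in \eqref{Maxwell_equation_HDG_form_comp}), so the argument is insensitive to the conjugation of $\epsilon_r$. The only PDE relation that will actually be used is the first dual equation \eqref{dual_Mixed_1}, $\mu_r\Psi=\nabla\times\bm\Phi$, together with the homogeneous boundary condition $\bm n\times\bm\Phi=\bm 0$ on $\Gamma$; both are the direct analogues of what Lemma~\ref{error_projection_lemma} uses in the case $g=0$.

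Concretely, I would first expand $\mathscr B({\Pi_V}\Psi,\bm{\Pi_W}\bm\Phi,\bm{P_M}\bm\Phi;r_h,\bm v_h,\widehat{\bm v}_h)$ via \eqref{def_maxwell_HDG_B}. Then I would use \eqref{P2} (since $\nabla\times r_h\in\widetilde{\bm W}(K)$ by \eqref{M_decomposition_2}) to replace $(\bm{\Pi_W}\bm\Phi,\nabla\times r_h)_{\mathcal T_h}$ by $(\bm\Phi,\nabla\times r_h)_{\mathcal T_h}$, and \eqref{def_PM} to replace $\langle\bm n\times\bm{P_M}\bm\Phi,r_h\rangle_{\partial\mathcal T_h}$ by $\langle\bm n\times\bm\Phi,r_h\rangle_{\partial\mathcal T_h}$; the integration-by-parts identity \eqref{integration_by_parts1}, applied element by element, then folds these into $(\nabla\times\bm\Phi,r_h)_{\mathcal T_h}$. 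Similarly \eqref{P1} (using $\nabla\times\bm v_h\in\widetilde V(K)$) turns $({\Pi_V}\Psi,\nabla\times\bm v_h)_{\mathcal T_h}$ into $(\Psi,\nabla\times\bm v_h)_{\mathcal T_h}$. For the edge terms, since $\widehat{\bm v}_h\in\bm M_h^0$ is single-valued on interior edges and $\bm n\times\widehat{\bm v}_h=\bm 0$ on $\partial\Omega$, while $\Psi\in H({\rm curl};\Omega)$ (equivalently $\Psi\in H^1(\Omega)$) has a single-valued trace, the term $\langle\Psi,\bm n\times\widehat{\bm v}_h\rangle_{\partial\mathcal T_h}$ vanishes; a further use of \eqref{integration_by_parts1} rewrites $(\Psi,\nabla\times\bm v_h)_{\mathcal T_h}+\langle{\Pi_V}\Psi,\bm n\times(\widehat{\bm v}_h-\bm v_h)\rangle_{\partial\mathcal T_h}$ as $(\nabla\times\Psi,\bm v_h)_{\mathcal T_h}+\langle\Psi-{\Pi_V}\Psi,\bm n\times(\bm v_h-\widehat{\bm v}_h)\rangle_{\partial\mathcal T_h}$, and the residual edge term is cancelled against the stabilization term $\langle\tau\bm n\times(\bm{\Pi_W}\bm\Phi-\bm\Phi),\bm n\times(\bm v_h-\widehat{\bm v}_h)\rangle_{\partial\mathcal T_h}$ using \eqref{P3} with the test function $\bm\mu_h=\bm n\times\bm v_h\times\bm n\in\bm M(\partial K)$ (legitimate by \eqref{M_decomposition_1}). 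What remains is $(\mu_r{\Pi_V}\Psi-\nabla\times\bm\Phi,r_h)_{\mathcal T_h}+(\nabla\times\Psi,\bm v_h)_{\mathcal T_h}$, and substituting $\nabla\times\bm\Phi=\mu_r\Psi$ from \eqref{dual_Mixed_1} gives $(\mu_r({\Pi_V}\Psi-\Psi),r_h)_{\mathcal T_h}+(\nabla\times\Psi,\bm v_h)_{\mathcal T_h}$, which is the claim.

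There is no genuine obstacle: the only points needing mild care are checking that every substitution via \eqref{P1}, \eqref{P2}, \eqref{P3} is licensed by the $M$-decomposition inclusions \eqref{M_decomposition_1}--\eqref{M_decomposition_2}, and that the interface/boundary cancellations go through with $g=0$ (exactly the situation already handled in Lemma~\ref{error_projection_lemma}). Indeed the cleanest write-up is simply to state that the proof is obtained from that of Lemma~\ref{error_projection_lemma} by the replacement $(q,\bm u,\bm f,g)\mapsto(\Psi,\bm\Phi,\bm\Theta,\bm 0)$, noting that only \eqref{dual_Mixed_1} is invoked.
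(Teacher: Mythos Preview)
Your proposal is correct and matches the paper's approach exactly: the paper does not write out a separate proof for this lemma but simply introduces it with ``Similarly to Lemma~\ref{error_projection_lemma}, we have,'' leaving the reader to make precisely the substitution $(q,\bm u,\bm f,g)\mapsto(\Psi,\bm\Phi,\bm\Theta,\bm 0)$ that you describe. Your observation that only \eqref{dual_Mixed_1} is invoked (not \eqref{dual_Mixed_2}, so the conjugation of $\epsilon_r$ is irrelevant here) is accurate and worth noting.
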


The next lemma gives a partial error estimate:
\begin{lemma}\label{estimation_vare_u_1}
	Assume  $\bm \Theta \in \bm H({\rm div};\Omega)$, $\nabla\cdot\bm\Theta=0$ and that the regularity estimate \eqref{regularity_dual} holds, then we have 
	\begin{align*}
	(\varepsilon_h^{\bm u},\bm{\Theta})_{\mathcal T_h}\le C h^{s}\left(
	\| q-{\Pi_V q}\|_{\mathcal{T}_h}+\|\bm{u}-\bm{\Pi_Wu}\|_{\mathcal{T}_h}\right)\|\bm \Theta\|_{\mathcal{T}_h}+Ch^s\|\varepsilon_h^{\bm u}\|_{\mathcal{T}_h}\|\bm \Theta\|_{\mathcal{T}_h}.
	\end{align*}	
\end{lemma}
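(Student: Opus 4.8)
The plan is to run a duality argument against the adjoint problem \eqref{dual_Mixed} with right-hand side $\bm\Theta$. The hypotheses $\bm\Theta\in\bm H({\rm div};\Omega)$, $\nabla\cdot\bm\Theta=0$ are exactly what makes this problem solvable, with solution $(\Psi,\bm\Phi)\in H({\rm curl};\Omega)\times[\bm H_0({\rm curl};\Omega)\cap\bm H({\rm div}^0;\Omega)]$ obeying the regularity bound \eqref{regularity_dual}. Substituting \eqref{dual_Mixed_2} and using that $\kappa^2$ is real-valued gives
\[
(\varepsilon_h^{\bm u},\bm\Theta)_{\mathcal T_h}=(\varepsilon_h^{\bm u},\nabla\times\Psi)_{\mathcal T_h}-\kappa^2\epsilon_r(\varepsilon_h^{\bm u},\bm\Phi)_{\mathcal T_h},
\]
so it suffices to get hold of the right-hand side.

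The engine of the argument is to evaluate $\mathscr B(\varepsilon_h^{q},\varepsilon_h^{\bm u},\varepsilon_h^{\widehat{\bm u}};\Pi_V\Psi,-\bm{\Pi_W}\bm\Phi,-\bm{P_M}\bm\Phi)$ in two ways. First, $(\Pi_V\Psi,\bm{\Pi_W}\bm\Phi,\bm{P_M}\bm\Phi)$ is an admissible test triple because $\bm n\times\bm{P_M}\bm\Phi=0$ on $\partial\Omega$ (since $\bm\Phi\in\bm H_0({\rm curl};\Omega)$ and $\bm M(F)$ is tangential), so the primal error equation \eqref{HDG_error_equation} evaluates it through $(\mu(\Pi_V q-q),\Pi_V\Psi)_{\mathcal T_h}$, $-\kappa^2\epsilon_r(\varepsilon_h^{\bm u},\bm{\Pi_W}\bm\Phi)_{\mathcal T_h}$ and $-\kappa^2\epsilon_r(\bm u-\bm{\Pi_W}\bm u,\bm{\Pi_W}\bm\Phi)_{\mathcal T_h}$. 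Second, by the adjoint symmetry of $\mathscr B$ (Lemma~\ref{symmertic_of_B}) the same quantity equals $\overline{\mathscr B(\Pi_V\Psi,\bm{\Pi_W}\bm\Phi,\bm{P_M}\bm\Phi;\varepsilon_h^{q},-\varepsilon_h^{\bm u},-\varepsilon_h^{\widehat{\bm u}})}$, where $\varepsilon_h^{\widehat{\bm u}}\in\bm M_h^0$ follows from the Dirichlet condition \eqref{Mixed-3} and the definition of $\bm M_h^g$; then the dual projection identity (Lemma~\ref{error_projection_lemma_dual}) evaluates this through $(\mu(\Pi_V\Psi-\Psi),\varepsilon_h^{q})_{\mathcal T_h}$ and $(\nabla\times\Psi,\varepsilon_h^{\bm u})_{\mathcal T_h}$. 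Equating the two evaluations, using $\overline{(\nabla\times\Psi,\varepsilon_h^{\bm u})_{\mathcal T_h}}=(\varepsilon_h^{\bm u},\nabla\times\Psi)_{\mathcal T_h}$ to recover part of $(\varepsilon_h^{\bm u},\bm\Theta)_{\mathcal T_h}$, and writing $\bm{\Pi_W}\bm\Phi=\bm\Phi+(\bm{\Pi_W}\bm\Phi-\bm\Phi)$ to recover the rest, yields, since $\mu$ and $\kappa^2\epsilon_r$ are constants,
\begin{align*}
(\varepsilon_h^{\bm u},\bm\Theta)_{\mathcal T_h}&=\overline{(\mu(\Pi_V\Psi-\Psi),\varepsilon_h^{q})_{\mathcal T_h}}+\kappa^2\epsilon_r(\varepsilon_h^{\bm u},\bm{\Pi_W}\bm\Phi-\bm\Phi)_{\mathcal T_h}\\
&\quad-(\mu(\Pi_V q-q),\Pi_V\Psi)_{\mathcal T_h}+\kappa^2\epsilon_r(\bm u-\bm{\Pi_W}\bm u,\bm{\Pi_W}\bm\Phi)_{\mathcal T_h}.
\end{align*}

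It remains to bound the four terms by Cauchy--Schwarz. The first is $\le C\|\Pi_V\Psi-\Psi\|_{\mathcal T_h}\|\varepsilon_h^{q}\|_{\mathcal T_h}$, controlled by \eqref{really_need1} together with Lemma~\ref{energy_of_q} to absorb $\|\varepsilon_h^{q}\|_{\mathcal T_h}$ (which introduces the terms $\|q-\Pi_V q\|_{\mathcal T_h}$, $\|\bm u-\bm{\Pi_W}\bm u\|_{\mathcal T_h}$ and $\|\varepsilon_h^{\bm u}\|_{\mathcal T_h}$, each carrying a factor $h^s$). The second is $\le C\|\varepsilon_h^{\bm u}\|_{\mathcal T_h}\|\bm{\Pi_W}\bm\Phi-\bm\Phi\|_{\mathcal T_h}$, again bounded via \eqref{really_need1}, giving the $h^s\|\varepsilon_h^{\bm u}\|_{\mathcal T_h}\|\bm\Theta\|_{\mathcal T_h}$ contribution. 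For the third and fourth terms I would not invoke a projection estimate directly: since $\mathcal P_0(K)\subset\widetilde V(K)$ and $[\mathcal P_0(K)]^2\subset\widetilde{\bm W}(K)$ (see \eqref{approx-prop}) the residuals $\Pi_V q-q$ and $\bm u-\bm{\Pi_W}\bm u$ are $L^2(K)$-orthogonal to constants, so one may subtract the element mean of $\Psi$, respectively $\bm\Phi$, and then estimate $\|\Pi_V\Psi-\overline{\Psi}^K\|_{\mathcal T_h}$ by \eqref{really_need1} plus an integer-order element Poincar\'e inequality (using $\|\Psi\|_{H^1(\Omega)}\le C\|\bm\Theta\|$), and $\|\bm{\Pi_W}\bm\Phi-\overline{\bm\Phi}^K\|_{\mathcal T_h}$ by \eqref{really_need1} plus a fractional element Poincar\'e inequality (using $\|\bm\Phi\|_{\bm H^s(\Omega)}\le C\|\bm\Theta\|$); the resulting factors $h_K$ and $h_K^s$ sum to $h^s$ because $s\le1$. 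Collecting the four bounds gives the stated estimate.

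I expect the principal difficulty to lie in the two-way evaluation of $\mathscr B$: keeping the signs and complex conjugates straight through Lemma~\ref{symmertic_of_B}, verifying the admissibility of the test triples (in particular $\bm{P_M}\bm\Phi\in\bm M_h^0$ and $\varepsilon_h^{\widehat{\bm u}}\in\bm M_h^0$), and recognizing that the two residual terms involving $\Pi_V q-q$ and $\bm u-\bm{\Pi_W}\bm u$ must be split about an element mean and finished off with a (fractional) Poincar\'e inequality rather than a ready-made projection bound. Everything else is a routine use of Cauchy--Schwarz together with Lemmas~\ref{energy_of_q} and~\ref{error_projection_lemma_dual} and equation~\eqref{really_need1}.
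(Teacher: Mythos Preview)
Your proposal is correct and follows essentially the same duality argument as the paper: test the error equation \eqref{HDG_error_equation} with the HDG projection of the dual solution, use the symmetry Lemma~\ref{symmertic_of_B} together with Lemma~\ref{error_projection_lemma_dual} to get the same four-term decomposition, and bound the first two terms via \eqref{really_need1} and Lemma~\ref{energy_of_q}. The only difference is cosmetic: for the two ``residual'' terms you subtract the element mean of the exact dual functions $\Psi$, $\bm\Phi$ and invoke a (fractional) Poincar\'e inequality on them, whereas the paper subtracts $\Pi_0(\mu\Pi_V\Psi)$, $\bm\Pi_0(\kappa^2\overline{\epsilon_r}\bm{\Pi_W}\bm\Phi)$ and appeals to \eqref{really_need2}; both routes exploit $\mathcal P_0(K)\subset\widetilde V(K)$, $[\mathcal P_0(K)]^2\subset\widetilde{\bm W}(K)$ in the same way and yield the same $h^s$ factor.
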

\begin{proof} 
	
	First, we take $(r_h, \bm v_h,\widehat{\bm v}_h) = (-{\Pi_V} \Psi, \bm{\Pi_W}\bm \Phi, \bm{P_M}\bm \Phi)$ in \eqref{HDG_error_equation} we get
	\begin{align}\label{dual_argu_proof_1}
	\begin{split}
	\hspace{2em}&\hspace{-2em} \mathscr B(\varepsilon^{ q}_h,\varepsilon_h^{\bm u}, \varepsilon_h^{\widehat{\bm u}};  -{\Pi_V} \Psi, \bm{\Pi_W}\bm \Phi, \bm{P_M}\bm \Phi)-(\kappa^2\epsilon_r\varepsilon_h^{\bm u}, \bm{\Pi_W}\bm \Phi)_{\mathcal{T}_h}\\
	& = -(\mu({\Pi_V} q- q), {\Pi_V} \Psi)_{\mathcal{T}_h}+(\kappa^2\epsilon_r(\bm u-\bm{\Pi_W} \bm u), \bm{\Pi_W}\bm \Phi)_{\mathcal{T}_h}.
	\end{split}
	\end{align}
	By \Cref{symmertic_of_B} and \Cref{error_projection_lemma_dual} and using \eqref{dual_Mixed_2} we have 
	\begin{align}\label{dual_argu_proof_2}
	\begin{split}
	\hspace{2em}&\hspace{-2em} \mathscr B(\varepsilon^{ q}_h,\varepsilon_h^{\bm u}, \varepsilon_h^{\widehat{\bm u}};  -{\Pi_V} \Psi, \bm{\Pi_W}\bm \Phi, \bm{P_M}\bm \Phi)\\
	& = \overline{\mathscr B({\Pi_V} \Psi, \bm{\Pi_W}\bm \Phi, \bm{P_M}\bm \Phi; -\varepsilon^{ q}_h,\varepsilon_h^{\bm u}, \varepsilon_h^{\widehat{\bm u}})}\\
	&= -(\varepsilon^{ q}_h, \mu({\Pi_V} \Psi- \Psi))_{\mathcal{T}_h}+(\varepsilon_h^{{\bm u}}, \nabla\times \Psi)_{\mathcal{T}_h}\\
	&= -(\varepsilon^{ q}_h, \mu({\Pi_V} \Psi- \Psi))_{\mathcal{T}_h}+(\varepsilon_h^{{\bm u}}, \bm\Theta + \kappa^2\overline{\epsilon_r}\bm \Phi)_{\mathcal{T}_h}.
	\end{split}
	\end{align}
	Comparing \eqref{dual_argu_proof_1} with \eqref{dual_argu_proof_2} to get
	\begin{align*}
	(\varepsilon_h^{\bm u},\bm{\Theta})_{\mathcal T_h}&
	=(\mu\varepsilon_h^{ q},{\Pi_V} \Psi- \Psi)_{\mathcal{T}_h}
	-(\mu({\Pi_V} q- q),{\Pi_V} \Psi)_{\mathcal{T}_h}\\
	&\quad+(\kappa^2\epsilon_r(\bm u-\bm{\Pi_W}\bm u),\bm{\Pi_W}\bm\Phi)_{\mathcal{T}_h}-(\kappa^2\epsilon_r(\bm \Phi-\bm{\Pi_W}\bm \Phi),\varepsilon_h^{\bm u})_{\mathcal{T}_h}\\
	& = T_1 + T_2 + T_3 + T_4.
	\end{align*}
	Next, we estimate $\{T_i\}_{i=1}^4$ one by one. For the terms $T_1$ and $T_4$, by \eqref{really_need1} and estimate for $\varepsilon_h^q$ in \Cref{energy_of_q},  we have 
	\begin{align*}
	|T_1|&\le C h^s\|\bm \Theta\|_{\mathcal{T}_h} (\| q-{\Pi_Vq}\|_{\mathcal{T}_h}+
	\|\bm u-\bm{\Pi_Wu}\|_{\mathcal{T}_h}+\|\varepsilon_h^{\bm u}\|_{\mathcal{T}_h}),\\
	|T_4| &\le C h^s\|\bm \Theta\|_{\mathcal{T}_h} \|\varepsilon_h^{\bm u}\|_{\mathcal{T}_h}.
	\end{align*}
	For the remain two terms $T_2$ and $T_3$, since $\mathcal {P}_0(K)\in \widetilde{ V}(K)$ and $[\mathcal {P}_0(K)]^2\in \widetilde{\bm W}(K)$ {with appropriate estimates for the projection (see Section~\ref{messy_discussion})}, then by \eqref{really_need2} we have 
	\begin{align*}
	T_2&=|({\Pi_V} q- q,\mu{\Pi_V} \Psi- {\Pi}_{0}(\mu{\Pi_V} \Psi))_{\mathcal{T}_h}|\le C h^{s}\| q-{\Pi_Vq}\|_{\mathcal{T}_h}\|\Pi_V\Psi\|_{H^s(\Omega)}\\
	&\le C h^{s} \| q-{\Pi_Vq}\|_{\mathcal{T}_h}\|\bm \Theta\|_{\mathcal{T}_h},\\
	|T_3| &=|(\bm u-\bm{\Pi_W}\bm u, \kappa^2\overline{\epsilon_r}\bm{\Pi_W}\bm\Phi-\bm{\Pi}_0\left[\kappa^2\overline{\epsilon_r}\bm{\Pi_W}\bm\Phi\right])_{\mathcal{T}_h}|
	\le C h^{s}\|\bm u-\bm{\Pi_W u}\|_{\mathcal{T}_h}(\|\bm{\Pi_W}\bm\Phi\|_{\bm H^s(\Omega)})\\
	&\le C h^{s}\|\bm u-\bm{\Pi_W u}\|_{\mathcal{T}_h}\|\bm \Theta\|_{\mathcal{T}_h}.
	\end{align*}
	By the above estimates of $\{T_i\}_{i=1}^4$ we get
	\begin{align}\label{T2}
	(\varepsilon_h^{\bm u},\bm{\Theta})_{\mathcal T_h}\le C h^{s}\left(
	\| q-{\Pi_V q}\|_{\mathcal{T}_h}+\|\bm{u}-\bm{\Pi_Wu}\|_{\mathcal{T}_h}\right)\|\bm \Theta\|_{\mathcal{T}_h}+Ch^s\|\varepsilon_h^{\bm u}\|_{\mathcal{T}_h}\|\bm \Theta\|_{\mathcal{T}_h}.
	\end{align}
	This completes the proof.\end{proof}

We cannot set $\bm{\Theta} = \varepsilon_h^{\bm u}$ to get an estimate of $ \varepsilon_h^{\bm u}$ since $ \varepsilon_h^{\bm u}\notin   \bm H({\rm div};\Omega)$, hence we need to modify the analysis.

{Recall the shape-regular submesh $\mathcal T_h^\star$ defined in Section~\ref{messy_discussion}.  We} define $\bm W_h^\star = \{\bm u \in \bm L^2(\Omega): \bm u|_K\in   [{\mathcal P}_\ell(K)]^2, \forall K\in\mathcal T_h^\star\}$ and $\ell\ge1$ is some integer  such that $\bm W_h\subset\bm W_h^\star$.

Next, we recall the $\bm H({\rm curl};\Omega)$ conforming element in 2D. For any $\bm v\in \bm H({\rm curl};K)$, with $K$ being a simplex,  find $\bm{\Pi}_{K,\ell}^{\rm curl}\bm v\in \bm{\mathcal P}_{\ell}(K)$ such that
\begin{subequations}\label{pi-curl}
	\begin{align}
	\langle \bm n\times\bm{\Pi}_{K,\ell}^{\rm curl}\bm v,p_{\ell} \rangle_{E}
	&=\langle \bm n\times\bm v,p_{\ell} \rangle_{E},&\forall p_{\ell}\in \mathcal P_{\ell}(E),\\
	(\bm{\Pi}_{K,\ell}^{\rm curl}\bm v,\nabla\times p_{{\ell}-1})_K
	&=	(\bm v,\nabla\times p_{{\ell}-1})_K,&\forall p_{{\ell}-1}\in \mathcal P_{\ell-1}(K),
	\end{align}
	and, when $\ell\ge 2$
	\begin{align}
	(\bm{\Pi}_{K,\ell}^{\rm curl}\bm v,\nabla(b_K p_{{\ell}-2}))_K
	&=	(\bm v,\nabla(b_K p_{{\ell}-2}))_K,&\forall p_{{\ell}-2}\in \mathcal P_{\ell-1}(K)
	\end{align}
	for all edges $F$ of $K$, where $b_K$ is the bubble function of $K$ of order three.
\end{subequations}

Following a standard procedure in \cite[Lemma 3.2, Theorem 3.1]{MR2459075},  we have the following theorem:

\begin{theorem}\label{th45} 
	Equation \eqref{pi-curl} defines  a unique $\bm{\Pi}_{K,\ell}^{\rm curl}\bm v\in \bm{\mathcal P}_{\ell}(K)$, and the following estimate holds:
	\begin{align}
	\|\bm{\Pi}_{K,\ell}^{\rm curl}\bm v-\bm v\|_{0,K}\le Ch_{K}^m
	\|\bm v\|_{m,K},
	\end{align}
	with $\bm v\in  \bm H^m(\Omega)$, and $m\in (\frac{1}{2},\ell+1]$. We define $\bm{\Pi}_{K,\ell}^{\rm curl} = \bm{\Pi}_{h,\ell}^{\rm curl}|_K$ for all $K\in \mathcal T_h^*$, then $\bm{\Pi}_{h,\ell}^{\rm curl}\bm v\in \bm H({\rm curl};\Omega)$. In addition, $\bm n\times\bm v|_{\partial\Omega}=0$ implies that $\bm n\times \bm{\Pi}_{h,\ell}^{\rm curl}\bm v|_{\partial\Omega}=0$.
\end{theorem}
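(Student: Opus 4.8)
The plan is to prove the four assertions — unisolvence on a single simplex, the local interpolation error bound, global $\bm H(\mathrm{curl})$-conformity, and the homogeneous boundary condition — in that order, following the template of \cite[Lemma 3.2, Theorem 3.1]{MR2459075}. For unisolvence I would first check the dimension count on a triangle $K$: the edge conditions furnish $3(\ell+1)$ functionals; the conditions against $\nabla\times p_{\ell-1}$ furnish $\dim\mathcal P_{\ell-1}(K)-1$ of them, since the constants lie in the kernel of $\nabla\times$ and contribute no constraint; and for $\ell\ge2$ the conditions against $\nabla(b_K p_{\ell-2})$ with $p_{\ell-2}\in\mathcal P_{\ell-2}(K)$ furnish $\dim\mathcal P_{\ell-2}(K)$. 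These add up to $3(\ell+1)+\tfrac{\ell(\ell+1)}{2}-1+\tfrac{(\ell-1)\ell}{2}=(\ell+1)(\ell+2)=\dim\bm{\mathcal P}_\ell(K)$, so \eqref{pi-curl} is a square system and it suffices to show that the only $\bm p\in\bm{\mathcal P}_\ell(K)$ annihilating every functional is $\bm p=\bm 0$ (whence existence and uniqueness follow). Since $(\bm n\times\bm p)|_F\in\mathcal P_\ell(F)$, the edge conditions give $\bm n\times\bm p=0$ on $\partial K$; using \eqref{integration_by_parts2} with $r=p_{\ell-1}$ and $\bm u=\bm p$ the boundary term then vanishes, so $(p_{\ell-1},\nabla\times\bm p)_K=(\nabla\times p_{\ell-1},\bm p)_K=0$, and taking $p_{\ell-1}=\nabla\times\bm p\in\mathcal P_{\ell-1}(K)$ forces $\nabla\times\bm p=0$ in $K$. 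As $K$ is simply connected, $\bm p=\nabla\phi$ with $\phi\in\mathcal P_{\ell+1}(K)$; the vanishing tangential trace makes $\phi$ constant on each edge, hence (connected boundary) on all of $\partial K$, so after normalization $\phi=b_K\psi$ with $\psi\in\mathcal P_{\ell-2}(K)$ when $\ell\ge2$, and $\phi\equiv0$ already when $\ell\le1$. Finally the condition against $\nabla(b_K\psi)$ gives $\|\nabla\phi\|_K^2=0$, so $\phi$ is constant, hence $0$, hence $\bm p=\bm 0$. The same argument applied to $\bm v\in\bm{\mathcal P}_\ell(K)$ itself shows $\bm{\Pi}_{K,\ell}^{\rm curl}$ reproduces $\bm{\mathcal P}_\ell(K)$.

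For the interpolation estimate, each defining functional is bounded on $\bm H^m(K)$ for $m>\tfrac12$: the edge moments are controlled by the trace inequality $\|\bm v\|_{L^2(\partial K)}\le C\|\bm v\|_{\bm H^m(K)}$, valid exactly when $m>\tfrac12$ — which is why $\tfrac12$ is the lower endpoint of the admissible range — and the interior moments by Cauchy--Schwarz. On the reference simplex, boundedness together with the polynomial reproduction property and the Bramble--Hilbert (Deny--Lions) lemma gives $\|\hat{\bm v}-\bm{\Pi}_{\hat K,\ell}^{\rm curl}\hat{\bm v}\|_{0,\hat K}\le C|\hat{\bm v}|_{m,\hat K}$ for $m\in(\tfrac12,\ell+1]$; transferring to each $K\in\mathcal T_h^\star$ via its affine map $F_K$ and invoking the shape-regularity of $\mathcal T_h^\star$ (so $\|B_K\|\simeq h_K$, $\|B_K^{-1}\|\simeq h_K^{-1}$, $|\det B_K|\simeq h_K^2$) yields $\|\bm v-\bm{\Pi}_{K,\ell}^{\rm curl}\bm v\|_{0,K}\le Ch_K^m\|\bm v\|_{m,K}$. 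This last step is the scaling computation carried out in \cite{MR2459075}, and — because the interior degrees of freedom are not affine-equivalent in the covariant sense — tracking how they transform under $F_K$ is the point that needs the most care.

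For conformity, recall that a piecewise polynomial lies in $\bm H(\mathrm{curl};\Omega)$ precisely when its tangential trace is single valued across each interior edge; the edge conditions determine $(\bm n\times\bm{\Pi}_{K,\ell}^{\rm curl}\bm v)|_F\in\mathcal P_\ell(F)$ solely from the $L^2(F)$-moments of $\bm n\times\bm v$, which depend only on $\bm v|_F$, and that trace is single valued since $\bm v\in\bm H^m(\Omega)$ with $m>\tfrac12$; hence the two elemental values agree on $F$ and $\bm{\Pi}_{h,\ell}^{\rm curl}\bm v\in\bm H(\mathrm{curl};\Omega)$. If in addition $\bm n\times\bm v=0$ on $\partial\Omega$, then on every boundary edge $F$ we get $\langle\bm n\times\bm{\Pi}_{K,\ell}^{\rm curl}\bm v,p_\ell\rangle_F=0$ for all $p_\ell\in\mathcal P_\ell(F)$, and since $(\bm n\times\bm{\Pi}_{K,\ell}^{\rm curl}\bm v)|_F\in\mathcal P_\ell(F)$ this forces $\bm n\times\bm{\Pi}_{h,\ell}^{\rm curl}\bm v=0$ on $\partial\Omega$. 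I expect the main obstacles to be the unisolvence bookkeeping (in particular the $-1$ coming from the kernel of $\nabla\times$ on $\mathcal P_{\ell-1}$) and the low-regularity scaling step; everything else is routine.
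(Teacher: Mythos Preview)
Your proposal is correct and is precisely the standard argument the paper defers to: the paper gives no proof of its own, merely citing \cite[Lemma 3.2, Theorem 3.1]{MR2459075}, and your plan fleshes out exactly that template (dimension count, kernel argument via $\bm n\times\bm p=0\Rightarrow\nabla\times\bm p=0\Rightarrow\bm p=\nabla(b_K\psi)$, Bramble--Hilbert plus affine scaling on the shape-regular submesh $\mathcal T_h^\star$, and edge-moment determination of the tangential trace for conformity and the boundary condition). Your dimension bookkeeping is right, including the $-1$ from $\ker(\nabla\times)$ on $\mathcal P_{\ell-1}(K)$; note that the paper's ``$p_{\ell-2}\in\mathcal P_{\ell-1}(K)$'' in the third line of \eqref{pi-curl} is a typo for $\mathcal P_{\ell-2}(K)$, which you have silently corrected.
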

Furthermore, the previously defined interpolation operator commutes with curl.

\begin{lemma} Let ${\Pi}_{K,\ell-1}$ be the $L^2$ projection onto space $\mathcal P_{\ell-1}(K)$, then we have the commutativity property
	\begin{align}
	\nabla\times\bm{\Pi}_{K,\ell}^{\rm curl}\bm v={\Pi}_{K,\ell-1}\nabla\times\bm v.\label{443}
	\end{align}
\end{lemma}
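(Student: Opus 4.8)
The plan is to reduce \eqref{443} to a single moment identity. Since $\bm{\Pi}_{K,\ell}^{\rm curl}\bm v\in\bm{\mathcal P}_{\ell}(K)$, its scalar curl satisfies $\nabla\times\bm{\Pi}_{K,\ell}^{\rm curl}\bm v\in\mathcal P_{\ell-1}(K)$, so by the defining property of the $L^2$ projection ${\Pi}_{K,\ell-1}$ it suffices to prove that
\[
(\nabla\times\bm{\Pi}_{K,\ell}^{\rm curl}\bm v,\,p_{\ell-1})_K=(\nabla\times\bm v,\,p_{\ell-1})_K\qquad\text{for all }p_{\ell-1}\in\mathcal P_{\ell-1}(K).
\]

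First I would apply the integration-by-parts identity \eqref{integration_by_parts1} to the vector field $\bm{\Pi}_{K,\ell}^{\rm curl}\bm v$ and the scalar $p_{\ell-1}$, which gives
\[
(\nabla\times\bm{\Pi}_{K,\ell}^{\rm curl}\bm v,\,p_{\ell-1})_K=\langle\bm n\times\bm{\Pi}_{K,\ell}^{\rm curl}\bm v,\,p_{\ell-1}\rangle_{\partial K}+(\bm{\Pi}_{K,\ell}^{\rm curl}\bm v,\,\nabla\times p_{\ell-1})_K.
\]
Next I would rewrite each term on the right using the defining relations of $\bm{\Pi}_{K,\ell}^{\rm curl}\bm v$ in \eqref{pi-curl}. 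For the boundary term, on every edge $E$ of $K$ one has $p_{\ell-1}|_E\in\mathcal P_{\ell-1}(E)\subset\mathcal P_{\ell}(E)$, so the first relation in \eqref{pi-curl} yields $\langle\bm n\times\bm{\Pi}_{K,\ell}^{\rm curl}\bm v,p_{\ell-1}\rangle_E=\langle\bm n\times\bm v,p_{\ell-1}\rangle_E$, and summing over the edges gives equality on $\partial K$. For the volume term, $\nabla\times p_{\ell-1}$ (the vector curl of a scalar, see \eqref{curl2D}) belongs to $[\mathcal P_{\ell-2}(K)]^2$, so the second relation in \eqref{pi-curl} applies verbatim and gives $(\bm{\Pi}_{K,\ell}^{\rm curl}\bm v,\nabla\times p_{\ell-1})_K=(\bm v,\nabla\times p_{\ell-1})_K$. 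Substituting both identities and then applying \eqref{integration_by_parts1} once more, this time to $\bm v$, produces
\[
(\nabla\times\bm{\Pi}_{K,\ell}^{\rm curl}\bm v,\,p_{\ell-1})_K=\langle\bm n\times\bm v,\,p_{\ell-1}\rangle_{\partial K}+(\bm v,\,\nabla\times p_{\ell-1})_K=(\nabla\times\bm v,\,p_{\ell-1})_K,
\]
which is exactly the required moment identity.

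I do not expect a genuine obstacle here: the whole argument is a direct computation built from two applications of \eqref{integration_by_parts1} together with the two moment conditions in \eqref{pi-curl}. The only point requiring a little attention is the degree bookkeeping, namely confirming that the edge traces of $p_{\ell-1}$ fall into the edge moment spaces used in \eqref{pi-curl} and that $\nabla\times p_{\ell-1}$ stays within the polynomial degree covered by the interior moment condition. It is also worth noting that the third (bubble-function) condition in \eqref{pi-curl} plays no role in commutativity; it is needed only to pin down the remaining interior degrees of freedom of the projection.
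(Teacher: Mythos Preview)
Your proof is correct and follows essentially the same route as the paper: integrate by parts, invoke the edge and interior moment conditions of \eqref{pi-curl} to pass from $\bm{\Pi}_{K,\ell}^{\rm curl}\bm v$ to $\bm v$, and integrate by parts back. Your added remarks about degree bookkeeping and the irrelevance of the bubble condition are accurate and make the argument slightly more explicit than the paper's version.
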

\begin{proof}
	For any $p_{\ell-1}\in \mathcal P_{\ell-1}(K)$, we have
	\begin{align*}
	(\nabla\times\bm{\Pi}_{K,\ell}^{\rm curl}\bm v,p_{\ell})_K&=
	(\bm{\Pi}_{K,\ell}^{\rm curl}\bm v,\nabla\times p_{\ell})_K
	+\langle\bm n\times\bm{\Pi}_{K,\ell}^{\rm curl}\bm v,p_{\ell}\rangle_{\partial K}\\
	&=(\bm v,\nabla\times p_{\ell-1})_K
	+\langle\bm n\times\bm v,p_{\ell-1}\rangle_{\partial K}\\
	&=(\nabla\times\bm v,p_{\ell-1})_K.
	\end{align*}
\end{proof}

Following the same techniques in \cite[Proposition 4.5]{Houston_Maxwell_NM_2005} of 3D case, we have the following result for 2D.
\begin{lemma}[{c.f \cite[Proposition 4.5]{Houston_Maxwell_NM_2005}}]\label{approxiamtion_of_pi_conforming_curl}
	For any $\bm v_h\in \bm W_h$, there exists $\bm{\Pi}_h^{\rm curl,c}\bm v_h\in  \bm W_h^\star\cap \bm H_0({\rm curl;\Omega}) $ such that
	\begin{subequations}
		\begin{align}
		\|\bm v_h-\bm{\Pi}_h^{\rm curl,c}\bm v_h\|_{\mathcal T_h}&\le C\|{\bf h}^{1/2} [\![\bm n\times\bm v_h]\!]\|_{\mathcal E_h},\label{approxiamtion_of_pi_conforming_curl_a}\\
		\|\nabla\times(\bm v_h-\bm{\Pi}_h^{\rm curl,c}\bm v_h)\|_{\mathcal T_h}&\le C\|{\bf h}^{-1/2} [\![\bm n\times\bm v_h]\!]\|_{\mathcal E_h},\label{approxiamtion_of_pi_conforming_curl_b}
		\end{align}	
	\end{subequations}
	where $\bm W_h^\star = \{\bm u \in \bm L^2(\Omega): \bm u|_K\in   [{\mathcal P}_\ell(K)]^2, \forall K\in\mathcal T_h^\star\}$ and $\ell$ is some integer  such that $\bm W_h\subset\bm W_h^\star$.	
\end{lemma}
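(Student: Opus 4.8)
The plan is to build $\bm{\Pi}_h^{\rm curl,c}\bm v_h$ by a degree-of-freedom averaging, in the spirit of Karakashian--Pascal and of \cite[Proposition 4.5]{Houston_Maxwell_NM_2005}, using the locally supported $\bm H(\mathrm{curl})$-conforming degrees of freedom of \eqref{pi-curl} on the shape-regular simplicial mesh $\mathcal T_h^\star$. Let $\{N_j\}$ be this set of degrees of freedom for the conforming space $\bm W_h^\star\cap\bm H(\mathrm{curl};\Omega)$ (which, by unisolvence of \eqref{pi-curl}, is exactly the subspace of $\bm W_h^\star$ with single-valued tangential trace): the $N_j$ split into \emph{edge} degrees of freedom, attached to the edges of $\mathcal T_h^\star$ and shared by the (at most two) simplices meeting that edge, and \emph{interior} degrees of freedom, attached to a single simplex. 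Since $\bm v_h\in\bm W_h\subset\bm W_h^\star$ is a polynomial on every $K^\star\in\mathcal T_h^\star$, I would define $\bm{\Pi}_h^{\rm curl,c}\bm v_h\in\bm W_h^\star$ by: (i) for an interior degree of freedom $N_j$ on $K^\star$, set $N_j(\bm{\Pi}_h^{\rm curl,c}\bm v_h)=N_j(\bm v_h|_{K^\star})$; (ii) for an edge degree of freedom on an interior edge $F^\star$ shared by $K^\star$ and $K'^\star$, set $N_j(\bm{\Pi}_h^{\rm curl,c}\bm v_h)=\tfrac12\bigl(N_j(\bm v_h|_{K^\star})+N_j(\bm v_h|_{K'^\star})\bigr)$; (iii) for an edge degree of freedom on an edge $F^\star\subset\partial\Omega$, set $N_j(\bm{\Pi}_h^{\rm curl,c}\bm v_h)=0$. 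By construction the resulting field has single-valued tangential trace across every interior edge and vanishing tangential trace on $\partial\Omega$, hence $\bm{\Pi}_h^{\rm curl,c}\bm v_h\in\bm W_h^\star\cap\bm H_0(\mathrm{curl};\Omega)$.

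Next I would estimate the difference element by element on $\mathcal T_h^\star$. Fix $K^\star\in\mathcal T_h^\star$. By (i), all interior degrees of freedom of $\bm v_h|_{K^\star}-\bm{\Pi}_h^{\rm curl,c}\bm v_h|_{K^\star}$ vanish, so, expanding in the nodal basis dual to $\{N_j\}$,
\[
(\bm v_h-\bm{\Pi}_h^{\rm curl,c}\bm v_h)|_{K^\star}=\sum_{F^\star\ \mathrm{edge\ of}\ K^\star}\ \sum_j c_j^{F^\star}\,\bm\varphi_{j}^{K^\star},
\]
where $\bm\varphi_j^{K^\star}$ is the basis function dual to the edge degree of freedom $(F^\star,j)$ and, by (ii)--(iii), $c_j^{F^\star}=\tfrac12\bigl(N_j(\bm v_h|_{K^\star})-N_j(\bm v_h|_{K'^\star})\bigr)$ at an interior edge and $c_j^{F^\star}=N_j(\bm v_h|_{K^\star})$ at a boundary edge. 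In either case $c_j^{F^\star}=\langle[\![\bm n\times\bm v_h]\!],p_j\rangle_{F^\star}$ with the usual convention that the jump equals the trace on $\partial\Omega$, and since $N_j$ is an edge degree of freedom it sees only the tangential trace. The crucial observation (no telescoping needed) is that if $F^\star$ lies in the interior of some $K\in\mathcal T_h$ then $[\![\bm n\times\bm v_h]\!]|_{F^\star}=\bm 0$, because $\bm v_h|_K$ is a single polynomial; hence only edges $F^\star$ contained in the true skeleton $\mathcal E_h$ contribute.

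Finally I would carry out the scaling estimates on the shape-regular simplices of $\mathcal T_h^\star$. A standard affine (covariant Piola) mapping argument gives $\|\bm\varphi_j^{K^\star}\|_{K^\star}\le C$ and $|c_j^{F^\star}|\le C\,h_{F^\star}^{1/2}\,\|[\![\bm n\times\bm v_h]\!]\|_{F^\star}$; summing over the $\mathcal O(1)$ degrees of freedom of $K^\star$, squaring, and summing over $K^\star\in\mathcal T_h^\star$ (the edge patches have bounded overlap, of cardinality at most two in 2D) yields \eqref{approxiamtion_of_pi_conforming_curl_a}. For \eqref{approxiamtion_of_pi_conforming_curl_b} I would apply the elementwise inverse inequality $\|\nabla\times\bm w\|_{K^\star}\le Ch_{K^\star}^{-1}\|\bm w\|_{K^\star}$ for $\bm w\in[\mathcal P_\ell(K^\star)]^2$ to $\bm w=(\bm v_h-\bm{\Pi}_h^{\rm curl,c}\bm v_h)|_{K^\star}$, use $h_{F^\star}\simeq h_{K^\star}$, and square-sum again, noting that the broken curl over $\mathcal T_h$ and over $\mathcal T_h^\star$ coincide for this function since $\bm{\Pi}_h^{\rm curl,c}\bm v_h\in\bm H(\mathrm{curl};\Omega)$ while $\bm v_h|_K$ is polynomial. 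The main obstacle is purely bookkeeping: verifying that the DOF-averaging genuinely localizes the error to single edges of $\mathcal E_h$ (using that $\mathcal T_h^\star$ refines $\mathcal T_h$ and that the interior degrees of freedom are left untouched) and that the homogeneous boundary condition is respected; all the analytic ingredients --- unisolvence of \eqref{pi-curl}, the scaling of the dual basis, and the inverse inequality --- are classical on the shape-regular auxiliary mesh.
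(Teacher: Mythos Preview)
The paper does not supply its own proof of this lemma; it simply states that the result follows ``by the same techniques'' as \cite[Proposition~4.5]{Houston_Maxwell_NM_2005}, adapted from 3D to 2D. Your proposal is precisely a detailed execution of that cited technique: degree-of-freedom averaging on the $H(\mathrm{curl})$-conforming element \eqref{pi-curl} over the shape-regular auxiliary mesh $\mathcal T_h^\star$, with the observation that only edges lying on the skeleton $\mathcal E_h$ contribute because $\bm v_h$ is a single polynomial on each $K\in\mathcal T_h$. The scaling arguments (covariant Piola transform giving $\|\bm\varphi_j^{K^\star}\|_{K^\star}\le C$ and $|c_j^{F^\star}|\le C\,h_{F^\star}^{1/2}\|[\![\bm n\times\bm v_h]\!]\|_{F^\star}$) and the inverse inequality for the curl bound are the standard ingredients; the bounded number of sub-triangles per element (Section~\ref{messy_discussion}) ensures $h_{F^\star}\simeq h_F$ so that the right-hand side is indeed controlled by the norm over $\mathcal E_h$. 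Your argument is correct and matches the approach the paper intends.
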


\begin{definition}\label{def_Pi_Wm}
	Let $Q_h^\star= H_0^1(\Omega)\cap \mathcal P_{\ell+1}(\mathcal T_h^{\star})$ be a finite element space with respect to the mesh $\mathcal T_h^\star$ (therefore, $\nabla Q_h^\star\subset \bm H_0({\rm curl};\Omega)\cap \bm W^*_h$) 
	with $\sigma_h\in H^1_0(\Omega)\cap Q_h^\star$ satisfy 
	\begin{align}\label{def_pi_w_m1}
	(\nabla\sigma_h,\nabla q_h)_{\mathcal T_h}=(\bm{\Pi}_h^{\rm curl,c}(\bm u_h-\bm{\Pi_Wu}),\nabla q_h)_{\mathcal T_h}
	\end{align}
	for all  $q_h\in H^1_0(\Omega)\cap Q_h^\star$. Then we define
	\begin{align}\label{def_pi_w_m2}
	\bm{\Pi_W}^{\rm m}(\bm u,\bm u_h)&=\bm{\Pi_W u}+\nabla \sigma_h.
	\end{align}
\end{definition}

It is easy to check the following lemma using \Cref{def_Pi_Wm} and \Cref{approxiamtion_of_pi_conforming_curl}, hence we omit the proof.
\begin{lemma}
	We have 
	\begin{subequations}
		\begin{gather}
		\nabla\times\bm{\Pi_W}^{\rm m}{(\bm u,\bm u_h)}=\nabla\times\bm{\Pi_Wu},\qquad 
		[\![\bm n\times\bm{\Pi_W}^{\rm m}{(\bm u,\bm u_h)}]\!]=[\![\bm n\times  \bm{\Pi_Wu}]\!],\label{projection_pro_pi_1}\\
		\nabla\times\bm{\Pi}_h^{\rm curl ,c}(\bm u_h-\bm{\Pi_W}^{\rm m}{(\bm u,\bm u_h)})=\nabla\times\bm{\Pi}_h^{\rm curl ,c}(\bm u_h-\bm{\Pi_Wu}),\label{projection_pro_pi_2}\\
		(\bm{\Pi}_h^{\rm curl,c}(\bm u_h-\bm{\Pi_W}^{\rm m}{(\bm u,\bm u_h)}),\nabla q_h)_{\mathcal T_h}=0,  \ \ \ \forall  q_h\in H^1_0(\Omega)\cap Q_h^*.\label{projection_pro_pi_3}
		\end{gather}
	\end{subequations}
	
\end{lemma}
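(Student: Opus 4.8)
The plan is to peel off the gradient correction in $\bm{\Pi_W}^{\rm m}(\bm u,\bm u_h)=\bm{\Pi_Wu}+\nabla\sigma_h$ (Definition~\ref{def_Pi_Wm}) and to exploit three elementary facts about $\nabla\sigma_h$. First, $\nabla\times(\nabla\sigma_h)=\bm 0$ on every element. Second, since $\sigma_h\in H_0^1(\Omega)$ is globally continuous, its tangential trace on each edge equals the single-valued tangential derivative of $\sigma_h$, so $[\![\bm n\times\nabla\sigma_h]\!]=\bm 0$ on $\mathcal E_h$. Third, $\nabla\sigma_h$ already lies in $\nabla Q_h^\star\subset \bm W_h^\star\cap\bm H_0({\rm curl};\Omega)$, which is the image space of the averaging operator $\bm{\Pi}_h^{\rm curl,c}$ from Lemma~\ref{approxiamtion_of_pi_conforming_curl}, so that operator reproduces it: $\bm{\Pi}_h^{\rm curl,c}(\nabla\sigma_h)=\nabla\sigma_h$.

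Granting these, \eqref{projection_pro_pi_1} is immediate: applying $\nabla\times$ to $\bm{\Pi_Wu}+\nabla\sigma_h$ annihilates the gradient by the first fact, and taking the tangential jump annihilates it by the second. For \eqref{projection_pro_pi_2} and \eqref{projection_pro_pi_3} I would use linearity of $\bm{\Pi}_h^{\rm curl,c}$ together with the reproduction identity to write
\[
\bm{\Pi}_h^{\rm curl,c}\!\left(\bm u_h-\bm{\Pi_W}^{\rm m}(\bm u,\bm u_h)\right)=\bm{\Pi}_h^{\rm curl,c}(\bm u_h-\bm{\Pi_Wu})-\nabla\sigma_h .
\]
Taking $\nabla\times$ of this identity and using $\nabla\times\nabla\sigma_h=\bm 0$ once more yields \eqref{projection_pro_pi_2}; pairing it with $\nabla q_h$ for any $q_h\in H_0^1(\Omega)\cap Q_h^\star$ and then invoking the defining relation \eqref{def_pi_w_m1} for $\sigma_h$ makes the right-hand side cancel, which is exactly \eqref{projection_pro_pi_3}.

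The only step that is not purely formal — and the reason this ``easy'' lemma still needs a word of justification — is the reproduction property $\bm{\Pi}_h^{\rm curl,c}(\nabla\sigma_h)=\nabla\sigma_h$. One must check that the quasi-interpolant of Lemma~\ref{approxiamtion_of_pi_conforming_curl} is well defined on all of $\bm W_h^\star$ (not merely on $\bm W_h$, to which its statement is restricted, since $\nabla\sigma_h$ need not lie in $\bm W_h$), and that, being assembled by averaging the degrees of freedom of the conforming N\'ed\'elec space $\bm W_h^\star\cap\bm H_0({\rm curl};\Omega)$, it acts as the identity on functions already belonging to that space. This is standard for Houston/Karakashian--Pascal-type averaging operators and follows directly from the degree-of-freedom construction underlying Lemma~\ref{approxiamtion_of_pi_conforming_curl}; once it is in hand, every remaining assertion reduces to the one-line manipulations above.
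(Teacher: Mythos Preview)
Your argument is correct and matches what the paper intends: the authors omit the proof entirely, saying only that it follows from Definition~\ref{def_Pi_Wm} and Lemma~\ref{approxiamtion_of_pi_conforming_curl}, which are precisely the two ingredients you use. Your identification of the one non-formal step---that $\bm{\Pi}_h^{\rm curl,c}$ must be linear and defined on $\bm W_h^\star$ (not just $\bm W_h$) so that it reproduces $\nabla\sigma_h$---is apt; note that once the domain issue is granted, the reproduction identity $\bm{\Pi}_h^{\rm curl,c}(\nabla\sigma_h)=\nabla\sigma_h$ can also be read off directly from estimate~\eqref{approxiamtion_of_pi_conforming_curl_a} applied to $\nabla\sigma_h$, since $[\![\bm n\times\nabla\sigma_h]\!]=0$, without appealing to the degree-of-freedom construction.
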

In addition, we have the following estimates:
\begin{lemma}
	We have the following estimates:
	\begin{subequations}
		\begin{align}
		\hspace{2em}&\hspace{-2em}\|\nabla\times(\bm{\Pi}_{h}^{\rm curl,c}(\bm{\Pi_W}^{\rm m}{(\bm u,\bm u_h)}-\bm u_h) )\|_{\mathcal T_h}\nonumber\\
		&\le C\left( \|{\bf h}^{-1/2} (q-{\Pi_Vq})\|_{\mathcal{T}_h}+\|{\bf h}^{-1/2}(\bm u-\bm{\Pi_Wu})\|_{\mathcal{T}_h}
		+\|{\bf h}^{-1/2} \varepsilon_h^{\bm u}\|_{\mathcal{T}_h}\right),\label{es1}\\
		\hspace{2em}&\hspace{-2em}\|(\bm{\Pi_W}^{\rm m}{(\bm u,\bm u_h)}-\bm u_h)-\bm{\Pi}_{h}^{\rm curl,c}(\bm{\Pi_W}^{\rm m}{(\bm u,\bm u_h)}-\bm u_h)\|_{\mathcal T_h}\nonumber\\
		&\le C\left( \|{\bf h}^{1/2} (q-{\Pi_Vq})\|_{\mathcal{T}_h}+\|{\bf h}^{1/2}(\bm u-\bm{\Pi_Wu})\|_{\mathcal{T}_h}
		+\|{\bf h}^{1/2} \varepsilon_h^{\bm u}\|_{\mathcal{T}_h}\right).\label{es2}
		\end{align}	
	\end{subequations}
	
\end{lemma}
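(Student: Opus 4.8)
The plan is to apply the conforming $\bm H_0({\rm curl})$ interpolant of \Cref{approxiamtion_of_pi_conforming_curl} to $\bm\delta_h:=\bm{\Pi_W}^{\rm m}(\bm u,\bm u_h)-\bm u_h$ and then replace each quantity produced on its right-hand side by the three error norms appearing in \eqref{es1}--\eqref{es2}. Since $\bm{\Pi_W}^{\rm m}(\bm u,\bm u_h)=\bm{\Pi_Wu}+\nabla\sigma_h$ with $\nabla\sigma_h\in\nabla Q_h^\star\subset\bm W_h^\star$, the field $\bm\delta_h$ is a piecewise polynomial on $\mathcal T_h^\star$ (so the construction of \Cref{approxiamtion_of_pi_conforming_curl} applies verbatim), and because $\bm n\times\nabla\sigma_h$ is the tangential derivative of a globally continuous function its jumps and $\partial\Omega$-trace vanish; hence by \eqref{projection_pro_pi_1}
\[
\nabla\times\bm\delta_h=\nabla\times\varepsilon_h^{\bm u},\qquad [\![\bm n\times\bm\delta_h]\!]=[\![\bm n\times\varepsilon_h^{\bm u}]\!]\ \text{ on }\mathcal E_h .
\]
Applying \Cref{approxiamtion_of_pi_conforming_curl} to $\bm\delta_h$ and using the triangle inequality then reduces \eqref{es1} to bounding $\|\nabla\times\varepsilon_h^{\bm u}\|_{\mathcal T_h}$ and $\|{\bf h}^{-1/2}[\![\bm n\times\varepsilon_h^{\bm u}]\!]\|_{\mathcal E_h}$, and \eqref{es2} to bounding $\|{\bf h}^{1/2}[\![\bm n\times\varepsilon_h^{\bm u}]\!]\|_{\mathcal E_h}$.

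For the curl term I would take $r_h=\nabla\times\varepsilon_h^{\bm u}$ (admissible since $\nabla\times\bm W(K)\subset V(K)$) in the error equation \eqref{HDG_error_equation} with $\bm v_h=\widehat{\bm v}_h=\bm 0$, integrate element by element using \eqref{integration_by_parts1}, and repeat the energy computation behind \eqref{curl_uh_qh_inequality}; together with the local inverse trace inequality of Section~\ref{messy_discussion} this yields
\[
\|\nabla\times\varepsilon_h^{\bm u}\|_{\mathcal T_h}\le C\left(\|\varepsilon_h^{q}\|_{\mathcal T_h}+\|q-{\Pi_Vq}\|_{\mathcal T_h}+\|{\bf h}^{-1/2}\bm n\times(\varepsilon_h^{\bm u}-\varepsilon_h^{\widehat{\bm u}})\|_{\partial\mathcal T_h}\right).
\]
For the jump term, on each interior edge the single-valuedness of $\varepsilon_h^{\widehat{\bm u}}$ makes it cancel in $[\![\cdot]\!]$, so $[\![\bm n\times\varepsilon_h^{\bm u}]\!]$ is bounded by the two one-sided traces of $\bm n\times(\varepsilon_h^{\bm u}-\varepsilon_h^{\widehat{\bm u}})$; on boundary edges the trace is handled the same way, the only new contribution being the consistency error of the datum $g$, which is of the required order. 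Consequently both $\|{\bf h}^{-1/2}[\![\bm n\times\varepsilon_h^{\bm u}]\!]\|_{\mathcal E_h}$ and $\|{\bf h}^{1/2}[\![\bm n\times\varepsilon_h^{\bm u}]\!]\|_{\mathcal E_h}$ are controlled by $\|{\bf h}^{-1/2}\bm n\times(\varepsilon_h^{\bm u}-\varepsilon_h^{\widehat{\bm u}})\|_{\partial\mathcal T_h}$, up to powers of ${\bf h}$.

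Finally I would invoke \Cref{energy_of_q}, which bounds $\|\varepsilon_h^{q}\|_{\mathcal T_h}$ and $\|\sqrt{\tau}\,\bm n\times(\varepsilon_h^{\bm u}-\varepsilon_h^{\widehat{\bm u}})\|_{\partial\mathcal T_h}$ by $C(\|q-{\Pi_Vq}\|_{\mathcal T_h}+\|\bm u-\bm{\Pi_Wu}\|_{\mathcal T_h}+\|\varepsilon_h^{\bm u}\|_{\mathcal T_h})$; comparing the $\sqrt\tau$-weighted boundary norm with the ${\bf h}^{-1/2}$-weighted one and using $\|f\|_{\mathcal T_h}\le\|{\bf h}^{-1/2}f\|_{\mathcal T_h}$ for $h\le h_0$ then assembles the right-hand sides of \eqref{es1} and \eqref{es2}. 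The main obstacle is exactly this bookkeeping of mesh-dependent weights: carefully tracking the powers of ${\bf h}$ introduced by the inverse inequality, by \Cref{approxiamtion_of_pi_conforming_curl}, and by the penalty $\tau$, and checking that what \Cref{energy_of_q} supplies (a $\sqrt\tau$-weighted boundary norm together with unweighted volume norms) is strong enough to absorb the ${\bf h}^{-1/2}$ appearing in the curl bound. The remaining ingredients---integration by parts, the use of \eqref{projection_pro_pi_1}, and the reduction of jumps to one-sided traces---are routine.
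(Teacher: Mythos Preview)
Your proposal is correct and follows essentially the same route as the paper: apply \Cref{approxiamtion_of_pi_conforming_curl} to $\bm\delta_h=\bm{\Pi_W}^{\rm m}(\bm u,\bm u_h)-\bm u_h$, use \eqref{projection_pro_pi_1}--\eqref{projection_pro_pi_2} to rewrite the resulting curl and jump in terms of $\varepsilon_h^{\bm u}$, insert $\varepsilon_h^{\widehat{\bm u}}$ into the jump via its single-valuedness, and close with \eqref{curl_uh_qh_inequality} and \eqref{es0}. Two small remarks: the paper simply asserts that $\varepsilon_h^{\widehat{\bm u}}$ vanishes on $\partial\Omega$ (this follows from the definitions of $\bm M_h^g$ and $\bm{P_M}$ together with \eqref{M_decomposition_4}), so no separate ``consistency error of $g$'' term arises; and the weight mismatch you flag between $\sqrt\tau$ in \eqref{es0} and ${\bf h}^{-1/2}$ here is indeed glossed over in the paper---it is resolved only under the implicit assumption $\tau\sim{\bf h}^{-1}$, which is the standard HDG scaling but is not stated explicitly.
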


\begin{proof} We use definition \Cref{def_Pi_Wm} and \Cref{approxiamtion_of_pi_conforming_curl}.
	By the definition of $\bm{\Pi_W}^{\rm m}$ in \eqref{def_pi_w_m2} and the approximation property of $\bm{\Pi}_{h}^{\rm curl,c}$ in \Cref{approxiamtion_of_pi_conforming_curl} to get
	\begin{align*}
	\hspace{2em}&\hspace{-2em}\|\nabla\times(\bm{\Pi}_{h}^{\rm curl,c}(\bm{\Pi_W}^{\rm m}{(\bm u,\bm u_h)}-\bm u_h) )\|_{\mathcal T_h}\\
	&\le \|\nabla\times(\bm{\Pi}_{h}^{\rm curl,c}(\bm{\Pi_W}^{\rm m}{(\bm u,\bm u_h)}-\bm u_h) )-\nabla\times(\bm{\Pi_W}^{\rm m}{(\bm u,\bm u_h)}-\bm u_h )\|_{\mathcal T_h}\\
	&\quad +\|\nabla\times(\bm{\Pi_W}^{\rm m}{(\bm u,\bm u_h)}-\bm u_h )\|_{\mathcal T_h}\\
	&=\|\nabla\times(\bm{\Pi}_{h}^{\rm curl,c}(\bm{\Pi_W} \bm u-\bm u_h) )-\nabla\times(\bm{\Pi_W} \bm u-\bm u_h)\|_{\mathcal T_h}&\textup{ by }\eqref{projection_pro_pi_2}\\
	&\quad +\|\nabla\times(\bm{\Pi_Wu}-\bm u_h )\|_{\mathcal T_h}&\textup{ by }\eqref{projection_pro_pi_1}\\
	&\le C\|{\bf h}^{-1/2}[\![\bm n\times(\bm{\Pi_Wu}-\bm u_h )]\!]\|_{\mathcal E_h}+\|\nabla\times(\bm{\Pi_Wu}-\bm u_h )\|_{\mathcal T_h} &\textup{ by }\eqref{approxiamtion_of_pi_conforming_curl_b}\\
	&{=} C\|{\bf h}^{-1/2}[\![\bm n\times \varepsilon_h^{\bm u}]\!]\|_{\mathcal E_h}+\|\nabla\times \varepsilon_h^{\bm u}\|_{\mathcal T_h}.
	\end{align*}
	Since $\varepsilon_h^{\widehat{\bm u}}$ is single valued on the interior faces and zero on the boundary, then we have 
	\begin{align*}
	\|\nabla\times(\bm{\Pi}_{h}^{\rm curl,c}(\bm{\Pi_W}^{\rm m}{(\bm u,\bm u_h)}-\bm u_h) )\|_{\mathcal T_h}\le C\|{\bf h}^{-1/2}[\![\bm n\times (\varepsilon_h^{\bm u}-\varepsilon_h^{\widehat{\bm u}})]\!]\|_{\mathcal E_h}+\|\nabla\times \varepsilon_h^{\bm u}\|_{\mathcal T_h}.
	\end{align*}
	Hence, \eqref{curl_uh_qh_inequality} and \eqref{es0} give the proof of \eqref{es1}.

	Next, by the approximation of $\bm{\Pi}_{h}^{\rm curl,c}$ in \Cref{approxiamtion_of_pi_conforming_curl} and \eqref{projection_pro_pi_1} to get
	\begin{align*}
	\hspace{2em}&\hspace{-2em}\|(\bm{\Pi_W}^{\rm m}{(\bm u,\bm u_h)}-\bm u_h)-\bm{\Pi}_{h}^{\rm curl,c}(\bm{\Pi_W}^{\rm m}{(\bm u,\bm u_h)}-\bm u_h)\|_{\mathcal T_h}\\
	&\le C\|{\bf h}^{1/2}[\![\bm n\times(\bm{\Pi_W}^{\rm m}{(\bm u,\bm u_h)}-\bm u_h)]\!]\|_{\mathcal E_h}\\
	&=   C\|{\bf h}^{1/2}[\![\bm n\times (\varepsilon_h^{\bm u}-\varepsilon_h^{\widehat{\bm u}})]\!]\|_{\mathcal E_h}.
	\end{align*}
	Finally,  \eqref{es0} gives the proof of \eqref{es2}.
\end{proof}

Next, we prove the following lemma which is  similar in \cite[Lemma 4.5]{Hiptmair_electromagnetism_Acta_2002}.
\begin{lemma}\label{theta}
	Let $\bm{\Theta}\in \bm H_0({\rm curl};\Omega)$ satisfy
	\begin{align}
	\nabla\times\bm{\Theta}=\nabla\times\bm w_h, \quad 	\nabla\cdot{\bm \Theta} =0 \quad \text{ in }\Omega,\label{448}
	\end{align}
	where $\bm w_h\in \bm H_0({\rm curl};\Omega)\cap \bm W_h^\star$ and $(\bm w_h,\nabla q_h)_{\Omega}=0$ for all $ q_h\in Q_h^\star$. Then we have 
	\begin{align}
	\|\bm w_h-\bm{\Theta}\|_{L^2(\Omega)}\le Ch^s\|\nabla\times\bm{\Theta}\|_{L^2(\Omega)},
	\end{align}
	where $s\in (\frac{1}{2},1]$ is defined in \Cref{th2}. {The following stability result also holds:
		\begin{align}\label{bound-theta}
		\|\bm{\Theta}\|_{\bm L^2(\Omega)}\le C\|\bm w_h\|_{\bm L^2(\Omega)}.
		\end{align}}
\end{lemma}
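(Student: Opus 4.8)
The plan is to compare $\bm\Theta$ with a curl-conforming interpolant of itself and then exploit the discrete orthogonality $(\bm w_h,\nabla q_h)_\Omega=0$. First I would record two facts. By \Cref{th2}, using $\nabla\cdot\bm\Theta=0$, we have $\bm\Theta\in\bm H^s(\Omega)$ with $\|\bm\Theta\|_{\bm H^s(\Omega)}\le C\|\nabla\times\bm\Theta\|_{L^2(\Omega)}$ and $s\in(\tfrac12,1]$; and since $\bm w_h\in\bm W_h^\star=[\mathcal P_\ell(\mathcal T_h^\star)]^2$, the function $\nabla\times\bm\Theta=\nabla\times\bm w_h$ is piecewise polynomial of degree at most $\ell-1$ on $\mathcal T_h^\star$. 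Because of this regularity, the interpolant $\bm\Theta_h:=\bm{\Pi}_{h,\ell}^{\rm curl}\bm\Theta$ of \Cref{th45} is well defined, lies in $\bm W_h^\star$, and satisfies $\bm n\times\bm\Theta_h|_{\partial\Omega}=\bm 0$ since $\bm n\times\bm\Theta|_{\partial\Omega}=\bm 0$, so $\bm\Theta_h\in\bm W_h^\star\cap\bm H_0({\rm curl};\Omega)$. Applying the commuting identity \eqref{443} on each $K\in\mathcal T_h^\star$ and using that the $L^2$-projection onto $\mathcal P_{\ell-1}(K)$ is the identity on $\mathcal P_{\ell-1}(K)$, we get $\nabla\times\bm\Theta_h=\nabla\times\bm\Theta=\nabla\times\bm w_h$.

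Next I would invoke exactness of the de Rham complex on the simply connected $\Omega$ with connected boundary. Since $\bm w_h-\bm\Theta_h\in\bm H_0({\rm curl};\Omega)$ is curl-free, there is $\psi_h\in H_0^1(\Omega)$ with $\bm w_h-\bm\Theta_h=\nabla\psi_h$; as $\bm w_h-\bm\Theta_h$ is continuous and piecewise polynomial of degree at most $\ell$ on $\mathcal T_h^\star$, the potential $\psi_h$ is a continuous piecewise polynomial of degree at most $\ell+1$ vanishing on $\partial\Omega$, i.e.\ $\psi_h\in Q_h^\star$ --- the step that makes the hypothesis $(\bm w_h,\nabla\psi_h)_\Omega=0$ usable. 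Combining it with $(\bm\Theta,\nabla\psi_h)_\Omega=0$ (integration by parts, using $\nabla\cdot\bm\Theta=0$ and $\psi_h\in H_0^1(\Omega)$) gives
\[
\|\nabla\psi_h\|_{\bm L^2(\Omega)}^2=(\bm w_h-\bm\Theta_h,\nabla\psi_h)_\Omega=-(\bm\Theta_h-\bm\Theta,\nabla\psi_h)_\Omega\le\|\bm\Theta_h-\bm\Theta\|_{\bm L^2(\Omega)}\|\nabla\psi_h\|_{\bm L^2(\Omega)},
\]
so $\|\nabla\psi_h\|_{\bm L^2(\Omega)}\le\|\bm\Theta_h-\bm\Theta\|_{\bm L^2(\Omega)}$, and a triangle inequality yields $\|\bm w_h-\bm\Theta\|_{\bm L^2(\Omega)}\le 2\|\bm\Theta_h-\bm\Theta\|_{\bm L^2(\Omega)}$. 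The interpolation estimate of \Cref{th45} with $m=s\in(\tfrac12,\ell+1]$, followed by the regularity bound of \Cref{th2}, then gives $\|\bm\Theta_h-\bm\Theta\|_{\bm L^2(\Omega)}\le Ch^s\|\bm\Theta\|_{\bm H^s(\Omega)}\le Ch^s\|\nabla\times\bm\Theta\|_{L^2(\Omega)}$, completing the main estimate (the elementwise bounds sum to the global one because the fractional $H^s$-seminorm dominates the sum of its elementwise counterparts).

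For the stability bound \eqref{bound-theta} no discrete interpolant is needed: $\bm w_h-\bm\Theta\in\bm H_0({\rm curl};\Omega)$ is curl-free on the simply connected $\Omega$, so $\bm w_h-\bm\Theta=\nabla\phi$ for some $\phi\in H_0^1(\Omega)$, whence $(\bm\Theta,\nabla\phi)_\Omega=0$ (integration by parts, $\nabla\cdot\bm\Theta=0$) and therefore $\|\bm w_h\|_{\bm L^2(\Omega)}^2=\|\bm\Theta\|_{\bm L^2(\Omega)}^2+\|\nabla\phi\|_{\bm L^2(\Omega)}^2\ge\|\bm\Theta\|_{\bm L^2(\Omega)}^2$. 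I expect the main obstacle to be the construction of $\bm\Theta_h$ with an \emph{exact} curl match and the verification that its potential $\psi_h$ lies in $Q_h^\star$: only then can the discrete orthogonality of $\bm w_h$ be brought to bear. The commuting-diagram identity \eqref{443} and the preservation of the homogeneous tangential boundary condition in \Cref{th45} are exactly what secure these two points, while the $\bm H^s$-regularity of \Cref{th2} is what converts mere convergence into the explicit fractional rate $h^s$.
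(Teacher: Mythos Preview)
Your proof of the main estimate is correct and follows essentially the same route as the paper: both introduce $\bm\Theta_h=\bm{\Pi}_{h,\ell}^{\rm curl}\bm\Theta$, use the commuting property \eqref{443} to see that $\bm w_h-\bm\Theta_h$ is a gradient of some $\psi_h\in Q_h^\star$, and then combine the discrete orthogonality $(\bm w_h,\nabla\psi_h)_\Omega=0$ with $(\bm\Theta,\nabla\psi_h)_\Omega=0$ and the interpolation/regularity bounds of \Cref{th45,th2}. The only cosmetic difference is that the paper writes $\|\bm w_h-\bm\Theta\|^2=(\bm w_h-\bm\Theta,\bm\Theta_h-\bm\Theta)$ directly, avoiding your factor of~$2$.

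For the stability bound \eqref{bound-theta} you take a slightly different, more elementary path: instead of the paper's Helmholtz decomposition $\bm\Theta=\nabla\phi+\nabla\times\psi$ followed by $\|\bm\Theta\|^2=(\nabla\times\psi,\bm w_h)_\Omega$, you observe that $\bm w_h-\bm\Theta=\nabla\phi$ with $\phi\in H_0^1(\Omega)$ and use $(\bm\Theta,\nabla\phi)_\Omega=0$ to get the Pythagorean identity $\|\bm w_h\|^2=\|\bm\Theta\|^2+\|\nabla\phi\|^2$. Both arguments are valid; yours is shorter and gives the constant~$1$.
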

\begin{proof}  We define ${\Pi}_{\ell-1}|_K:={\Pi}_{K,\ell-1}$, then the following holds
	\begin{align}
	\nabla\times(\bm w_h-\bm{\Pi}_{h,\ell}^{\rm curl}\bm{\Theta})=
	\nabla\times\bm w_h-\Pi_{\ell-1}\nabla\times\bm{\Theta}
	=
	\nabla\times\bm w_h-\Pi_{\ell-1}\nabla\times\bm w_h
	=0.
	\end{align}
	Thus there is a $q_h\in Q_h^{\star}=\mathcal P_{\ell+1}(\mathcal T_h^\star )\cap H_0^1(\Omega)$ such that $\bm w_h-\bm{\Pi}_{h,\ell}^{\rm curl}\bm{\Theta}=\nabla q_h$.
	By a direct calculation, one can obtain
	\begin{align*}
	\|\bm w_h-\bm{\Theta}\|_{L^2(\Omega)}^2&=(\bm w_h-\bm{\Theta},\bm w_h-\bm{\Theta})_{\Omega}\\
	&=(\bm w_h-\bm{\Theta},\bm w_h-\bm{\Pi}_{h,\ell}^{\rm curl}\bm{\Theta}+\bm{\Pi}_{h,\ell}^{\rm curl}\bm{\Theta}-\bm{\Theta})_{\Omega}\\
	&=(\bm w_h-\bm{\Theta},\nabla q_h+\bm{\Pi}_{h,\ell}^{\rm curl}\bm{\Theta}-\bm{\Theta})_{\Omega}\\
	&=(\bm w_h-\bm{\Theta},\bm{\Pi}_{h,\ell}^{\rm curl}\bm{\Theta}-\bm{\Theta})_{\Omega},\end{align*}
	where we have used that $\bm w_h$ is discrete divergence free, and $\bm\Theta$ is divergence free. Now using  \Cref{th45,th2} we get	%
	\begin{align*}
	\|\bm w_h-\bm{\Theta}\|_{L^2(\Omega)}^2	%
	&\le  Ch^s\|\bm w_h-\bm{\Theta}\|_{L^2(\Omega)}\|\bm{\Theta}\|_{\bm H^s(\Omega)}\\
	&\le  Ch^s\|\bm w_h-\bm{\Theta}\|_{L^2(\Omega)}\|\nabla\times\bm{\Theta}\|_{L^2(\Omega)},
	\end{align*}
	where $s\in (\frac{1}{2},1]$ is specified in \Cref{th2}. 
	
	By the Helmoltz decomposition in two dimensions, there is a ${\phi}\in H_0^1(\Omega)$ and ${\psi}\in H^1(\Omega)$	such that
	\begin{align*}
	\bm{\Theta}=\nabla\phi+\nabla\times\psi,\qquad \|\phi\|_{H^1(\Omega)}\le \|\bm{\Theta}\|_{\bm L^2(\Omega)},\qquad \|\psi\|_{H^1(\Omega)}\le \|\bm{\Theta}\|_{\bm L^2(\Omega)}.
	\end{align*}
	Then we use the integration by parts and \eqref{448} to get
	\begin{align*}
	\|\bm{\Theta}\|_{\bm L^2(\Omega)}^2&=(\bm{\Theta},\bm{\Theta})_{\Omega}\\
	&=(\nabla\phi+\nabla\times\psi,\bm{\Theta})_{\Omega}\\
	&=-(\phi,\nabla\cdot\bm{\Theta})_{\Omega}+(\psi,\nabla\times\bm{\Theta})_{\Omega}\\
	&=(\psi,\nabla\times\bm w_h)_{\Omega}\\
	&=(\nabla\times\psi,\bm w_h)_{\Omega}\\
	&\le \|\bm{\Theta}\|_{\bm L^2(\Omega)}\|\bm w_h\|_{\bm L^2(\Omega)}.
	\end{align*}
	Thus we obtain our result.
\end{proof}

\begin{lemma}\label{main_section_error_analysis} 
	Let $(q,\bm u)\in  H({\rm curl};\Omega)\times \bm H({\rm curl};\Omega)$ and $( q_h,\bm u_h)\in V_h\times \bm W_h$ be the solution of \eqref{Maxwell_equation_mixed_form} and \eqref{Maxwell_equation_HDG_form_ori}, respectively.  Then there exists an $h_0>0$ such that for all $h\le h_0$, we have the error estimate
	\begin{align*}
	\| q- q_h\|_{\mathcal{T}_h}&\le C \left(\| q-{\Pi_V q}\|_{\mathcal{T}_h}+\|\bm u-\bm{\Pi_Wu}\|_{\mathcal{T}_h}\right),\\
	\|\bm u-\bm u_h\|_{\mathcal{T}_h}&\le C \left( h^{s-1/2}\| q-{\Pi_V q}\|_{\mathcal{T}_h}+
	C\|\bm{u}-\bm{\Pi_Wu}\|_{\mathcal{T}_h}\right).
	\end{align*}
\end{lemma}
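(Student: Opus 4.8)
The plan is to derive everything from the energy estimate of \Cref{energy_of_q}. Using the notation \eqref{varepsilojn}, \Cref{energy_of_q} bounds $\|\sqrt\mu\,\varepsilon_h^{q}\|_{\mathcal T_h}$ by $C\bigl(\|q-\Pi_Vq\|_{\mathcal T_h}+\|\bm u-\bm{\Pi_W}\bm u\|_{\mathcal T_h}+\|\varepsilon_h^{\bm u}\|_{\mathcal T_h}\bigr)$, so once $\|\varepsilon_h^{\bm u}\|_{\mathcal T_h}$ is under control the estimate for $q-q_h$ follows from $\|q-q_h\|_{\mathcal T_h}\le\|q-\Pi_Vq\|_{\mathcal T_h}+\|\varepsilon_h^{q}\|_{\mathcal T_h}$, and similarly $\|\bm u-\bm u_h\|_{\mathcal T_h}\le\|\bm u-\bm{\Pi_W}\bm u\|_{\mathcal T_h}+\|\varepsilon_h^{\bm u}\|_{\mathcal T_h}$. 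Hence the lemma reduces to proving
\[
\|\varepsilon_h^{\bm u}\|_{\mathcal T_h}\le C\,h^{s-1/2}\bigl(\|q-\Pi_Vq\|_{\mathcal T_h}+\|\bm u-\bm{\Pi_W}\bm u\|_{\mathcal T_h}\bigr)\qquad\text{for }h\le h_0,
\]
after which $h^{s-1/2}\le C$ (for $h\le h_0\le1$) folds the $\bm u$-projection contribution into the constant, leaving the sharper power only on the $q$-term exactly as stated.

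Since $\varepsilon_h^{\bm u}$ is not divergence free, the duality \Cref{estimation_vare_u_1} cannot be tested against it directly; this is the reason for the surrogate built in \Cref{def_Pi_Wm}. I would split $\varepsilon_h^{\bm u}$ by a discrete Helmholtz-type decomposition: first write $\varepsilon_h^{\bm u}=\bm{\Pi}_h^{\rm curl,c}\varepsilon_h^{\bm u}+\bm r$ with $\bm r:=\varepsilon_h^{\bm u}-\bm{\Pi}_h^{\rm curl,c}\varepsilon_h^{\bm u}$, which by \Cref{approxiamtion_of_pi_conforming_curl} coincides with the remainder estimated in \eqref{es2}, so $\|\bm r\|_{\mathcal T_h}\le Ch^{1/2}(\ldots)$ (here and below $(\ldots)$ abbreviates $\|q-\Pi_Vq\|_{\mathcal T_h}+\|\bm u-\bm{\Pi_W}\bm u\|_{\mathcal T_h}+\|\varepsilon_h^{\bm u}\|_{\mathcal T_h}$, using $h_K\le h$). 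Then split the conforming part as $\bm{\Pi}_h^{\rm curl,c}\varepsilon_h^{\bm u}=\bm w_h-\nabla\sigma_h$ into its discrete divergence-free part $\bm w_h=\bm{\Pi}_h^{\rm curl,c}(\bm{\Pi_W}^{\rm m}(\bm u,\bm u_h)-\bm u_h)$ and a gradient part $\nabla\sigma_h$; by \eqref{def_pi_w_m1} and \eqref{projection_pro_pi_3} these are $\bm L^2$-orthogonal and $(\bm w_h,\nabla q_h)_{\mathcal T_h}=0$ for all $q_h\in Q_h^\star$, so $\|\bm{\Pi}_h^{\rm curl,c}\varepsilon_h^{\bm u}\|_{\mathcal T_h}^2=\|\bm w_h\|_{\mathcal T_h}^2+\|\nabla\sigma_h\|_{\mathcal T_h}^2$ and $\bm w_h\in\bm W_h^\star\cap\bm H_0({\rm curl};\Omega)$ satisfies the hypotheses of \Cref{theta}.

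To bound $\|\bm w_h\|_{\mathcal T_h}$ I would apply \Cref{theta} to obtain $\bm\Theta\in\bm H_0({\rm curl};\Omega)$ with $\nabla\times\bm\Theta=\nabla\times\bm w_h$, $\nabla\cdot\bm\Theta=0$, $\|\bm w_h-\bm\Theta\|_{\mathcal T_h}\le Ch^s\|\nabla\times\bm w_h\|_{\mathcal T_h}$, and $\|\bm\Theta\|_{\mathcal T_h}\le C\|\bm w_h\|_{\mathcal T_h}$; this $\bm\Theta$ is admissible in \Cref{estimation_vare_u_1}. Writing $\|\bm w_h\|_{\mathcal T_h}^2=(\bm w_h,\bm\Theta)_{\mathcal T_h}+(\bm w_h,\bm w_h-\bm\Theta)_{\mathcal T_h}$, replacing $\bm w_h$ by $\varepsilon_h^{\bm u}-\bm r+\nabla\sigma_h$ in the first inner product (the $\nabla\sigma_h$ contribution vanishes since $\sigma_h\in H^1_0(\Omega)$ and $\nabla\cdot\bm\Theta=0$), estimating $(\varepsilon_h^{\bm u},\bm\Theta)_{\mathcal T_h}$ by \Cref{estimation_vare_u_1}, $(\bm r,\bm\Theta)_{\mathcal T_h}$ by \eqref{es2}, and $(\bm w_h,\bm w_h-\bm\Theta)_{\mathcal T_h}$ by $Ch^s\|\nabla\times\bm w_h\|_{\mathcal T_h}$ together with the inverse-type bound \eqref{es1}, one gets $\|\bm w_h\|_{\mathcal T_h}\le Ch^{s-1/2}(\ldots)$ (the power $h^{s-1/2}$, coming from \eqref{es1}, dominates $h^s$ and $h^{1/2}$ since $\tfrac12<s\le1$). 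The gradient part $\|\nabla\sigma_h\|_{\mathcal T_h}$ is the delicate term: it is controlled by combining \eqref{def_pi_w_m1} (so that $\|\nabla\sigma_h\|_{\mathcal T_h}^2=-(\bm{\Pi}_h^{\rm curl,c}\varepsilon_h^{\bm u},\nabla\sigma_h)_{\mathcal T_h}=-(\varepsilon_h^{\bm u}-\bm r,\nabla\sigma_h)_{\mathcal T_h}$), an element-wise integration by parts that reduces $(\varepsilon_h^{\bm u},\nabla\sigma_h)_{\mathcal T_h}$ to a divergence term and a normal-jump term for $\varepsilon_h^{\bm u}=\bm{\Pi_W}\bm u-\bm u_h$, and the facts that the exact $\bm u\in\bm H({\rm div}^0;\Omega)$, that $\bm{\Pi_W}$ preserves $[\mathcal P_0(K)]^2$, and that the tangential jumps are controlled by the energy quantity in \eqref{es0}; the outcome is $\|\nabla\sigma_h\|_{\mathcal T_h}\le Ch^{\beta}(\ldots)$ with some $\beta>0$.

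Collecting $\|\varepsilon_h^{\bm u}\|_{\mathcal T_h}\le(\|\bm w_h\|_{\mathcal T_h}^2+\|\nabla\sigma_h\|_{\mathcal T_h}^2)^{1/2}+\|\bm r\|_{\mathcal T_h}\le Ch^{\gamma}\bigl(\|q-\Pi_Vq\|_{\mathcal T_h}+\|\bm u-\bm{\Pi_W}\bm u\|_{\mathcal T_h}+\|\varepsilon_h^{\bm u}\|_{\mathcal T_h}\bigr)$ with $\gamma:=\min(s-\tfrac12,\beta,\tfrac12)>0$; choosing $h_0$ so that $Ch_0^{\gamma}\le\tfrac12$ absorbs the last term and gives the displayed bound on $\|\varepsilon_h^{\bm u}\|_{\mathcal T_h}$, and then the two estimates of the lemma follow as in the first paragraph. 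The main obstacle is exactly this: because Maxwell's equations are non-coercive and $\varepsilon_h^{\bm u}$ fails to be divergence free, the Aubin--Nitsche duality must be run through the whole discrete-Helmholtz machinery (the surrogate $\bm{\Pi_W}^{\rm m}$, the conforming $\bm H({\rm curl})$ interpolation $\bm{\Pi}_h^{\rm curl,c}$, and the continuous divergence-free lift $\bm\Theta$), and one has to check that every term produced along the way which still carries $\|\varepsilon_h^{\bm u}\|_{\mathcal T_h}$ also carries a strictly positive power of $h$ — this is where $s>1/2$ and the smallness restriction $h\le h_0$ enter.
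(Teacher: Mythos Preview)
Your overall architecture is right and matches the paper's: pass through the discrete Helmholtz machinery $\bm{\Pi}_h^{\rm curl,c}$, $\bm{\Pi_W}^{\rm m}$, and the divergence-free lift $\bm\Theta$ of \Cref{theta}, then feed $\bm\Theta$ into the duality estimate \Cref{estimation_vare_u_1}. The estimates you quote for $\bm w_h$ and for the conforming remainder via \eqref{es1}--\eqref{es2} are exactly what the paper uses.

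The genuine gap is your treatment of the gradient term $\nabla\sigma_h$. Two of the ``facts'' you invoke are false in this setting: the exact solution $\bm u$ is \emph{not} assumed to lie in $\bm H({\rm div}^0;\Omega)$ (only the dual variable $\bm\Phi$ is), and element-wise integration by parts of $(\varepsilon_h^{\bm u},\nabla\sigma_h)_{\mathcal T_h}$ produces \emph{normal} jumps $[\![\bm n\cdot\varepsilon_h^{\bm u}]\!]$, not the tangential ones controlled in \eqref{es0}. So your route to $\|\nabla\sigma_h\|_{\mathcal T_h}\le Ch^{\beta}(\ldots)$ does not go through, and in fact no such bound is available: generically $\|\nabla\sigma_h\|_{\mathcal T_h}$ is only $\le C\|\varepsilon_h^{\bm u}\|_{\mathcal T_h}$ with no positive $h$-power.

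The paper avoids this entirely by organizing the argument around $\|\varepsilon_h^{\bm u}\|_{\mathcal T_h}^2$ rather than $\|\varepsilon_h^{\bm u}\|_{\mathcal T_h}$: it writes
\[
\|\varepsilon_h^{\bm u}\|_{\mathcal T_h}^2
=(\varepsilon_h^{\bm u},\bm\Theta)_{\mathcal T_h}
+(\varepsilon_h^{\bm u},\bm w_h-\bm\Theta)_{\mathcal T_h}
+(\varepsilon_h^{\bm u},\bm r)_{\mathcal T_h}
-(\varepsilon_h^{\bm u},\nabla\sigma_h)_{\mathcal T_h},
\]
and handles the last term by testing the HDG scheme \eqref{Maxwell_equation_HDG_form_comp} with $(r_h,\bm v_h,\widehat{\bm v}_h)=(0,\nabla\sigma_h,\nabla\sigma_h-(\bm n\cdot\nabla\sigma_h)\bm n)$ (admissible because $\sigma_h\in H_0^1(\Omega)$ globally). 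All curl terms vanish and one gets $(\kappa^2\epsilon_r\bm u_h,\nabla\sigma_h)_{\mathcal T_h}=-(\bm f,\nabla\sigma_h)_{\mathcal T_h}=(\kappa^2\epsilon_r\bm u,\nabla\sigma_h)_{\mathcal T_h}$ from \eqref{Mixed-2}, hence the exact orthogonality $(\bm u_h-\bm u,\nabla\sigma_h)_{\mathcal T_h}=0$. This yields $|(\varepsilon_h^{\bm u},\nabla\sigma_h)_{\mathcal T_h}|=|(\bm u-\bm{\Pi_W}\bm u,\nabla\sigma_h)_{\mathcal T_h}|\le C\|\bm u-\bm{\Pi_W}\bm u\|_{\mathcal T_h}\|\varepsilon_h^{\bm u}\|_{\mathcal T_h}$, which is precisely why the $\|\bm u-\bm{\Pi_W}\bm u\|_{\mathcal T_h}$ term in the lemma carries no $h$-power. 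Replace your integration-by-parts argument with this scheme-consistency identity and the proof closes.
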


\begin{proof}
	First, let $\bm{\Theta}\in \bm H_0({\rm curl};\Omega)\cap \bm H({\rm div};\Omega)$ be the solution of 
	\begin{align*}
	\nabla\times\bm{\Theta}=\nabla\times(\bm{\Pi}_{h}^{\rm curl,c}(\bm{\Pi_W}^{\rm m}{(\bm u,\bm u_h)}-\bm u_h)), \quad 	\nabla\cdot{\bm \Theta} =0 \quad \text{ in }\Omega.
	\end{align*}
	By \Cref{theta} and \eqref{es1}, one has 
	\begin{align}\label{estimatiuon_S_1}
	\begin{split}
	\hspace{2em}&\hspace{-2em}\|\bm{\Theta}-(\bm{\Pi}_{h}^{\rm curl,c}(\bm{\Pi_W}^{\rm m}{(\bm u,\bm u_h)}-\bm u_h) )\|_{\mathcal T_h}\\
	&\le C\|{\bf h}^s\nabla\times(\bm{\Pi}_{h}^{\rm curl,c}(\bm{\Pi_W}^{\rm m}{(\bm u,\bm u_h)}-\bm u_h) )\|_{\mathcal T_h}\\
	& \le  Ch^{s-1/2}\left(\| q-{\Pi_Vq}\|_{\mathcal{T}_h}+\|\bm u-\bm{\Pi_Wu}\|_{\mathcal{T}_h}+\|\varepsilon_h^{\bm u}\|_{\mathcal{T}_h}\right).
	\end{split}
	\end{align}
	Therefore, by the triangle inequality, \eqref{bound-theta} and \Cref{def_Pi_Wm} we have 
	
	\begin{align}\label{estimatiuon_S_2}
	\begin{split}
	\|\bm{\Theta}\|_{\mathcal T_h} &\le    \|(\bm{\Pi}_{h}^{\rm curl,c}(\bm{\Pi_W}^{\rm m}{(\bm u,\bm u_h)}-\bm u_h) )\|_{\mathcal T_h}
	\le 2 \|\bm{\Pi}_{h}^{\rm curl,c}(\bm{\Pi_W} \bm u-\bm u_h)\|_{\mathcal T_h}
	\le C \|\varepsilon_h^{\bm u}\|_{\mathcal{T}_h}.
	\end{split}
	\end{align}

	Next, we rewrite $\|\varepsilon_h^{\bm u}\|_{\mathcal{T}_h}^2$ as follows:
	\begin{align*}
	\|\varepsilon_h^{\bm u}\|_{\mathcal{T}_h}^2
	&=(\varepsilon_h^{\bm u},\bm{\Pi}_{h}^{\rm curl,c}(\bm{\Pi_W}^{\rm m}{(\bm u,\bm u_h)}-\bm u_h)-\bm{\Theta})_{\mathcal T_h}
	+(\varepsilon_h^{\bm u},\bm{\Theta})_{\mathcal T_h}\\
	&\quad+(\varepsilon_h^{\bm u},(\bm{\Pi_W}^{\rm m}{(\bm u,\bm u_h)}-\bm u_h)-\bm{\Pi}_{h}^{\rm curl,c}(\bm{\Pi_W}^{\rm m}{(\bm u,\bm u_h)}-\bm u_h))_{\mathcal T_h}\\
	&\quad+(\varepsilon_h^{\bm u},\bm{\Pi_Wu}-\bm{\Pi_W}^{\rm m}(\bm u,\bm u_h))_{\mathcal T_h}\\
	&=(\varepsilon_h^{\bm u},\bm{\Pi}_{h}^{\rm curl,c}(\bm{\Pi_W}^{\rm m}{(\bm u,\bm u_h)}-\bm u_h)-\bm{\Theta})_{\mathcal T_h}
	+(\varepsilon_h^{\bm u},\bm{\Theta})_{\mathcal T_h}\\
	&\quad+(\varepsilon_h^{\bm u},(\bm{\Pi_W}^{\rm m}{(\bm u,\bm u_h)}-\bm u_h)-\bm{\Pi}_{h}^{\rm curl,c}(\bm{\Pi_W}^{\rm m}{(\bm u,\bm u_h)}-\bm u_h))_{\mathcal T_h}\\
	&\quad-(\varepsilon_h^{\bm u},\nabla\sigma_h)_{\mathcal T_h}\\
	& = S_1 + S_2 + S_3 + S_4.
	\end{align*}
	
	The first three terms $S_1$,  $S_2$ and $S_3$ have been estimated in \eqref{estimatiuon_S_1}, \Cref{estimation_vare_u_1}, \eqref{estimatiuon_S_2}, and \eqref{es2}, respectively. We next estimate the last term $S_4$ by taking  $(r_h, \bm v_h,\widehat{\bm v}_h)=(0,\nabla\sigma_h,\nabla\sigma_h-(\bm n\cdot \nabla\sigma_h)\bm n)$ in \eqref{Maxwell_equation_HDG_form_comp} to get
	\begin{align}
	(\kappa^2\epsilon_r\bm u_h,\nabla\sigma_h)_{\mathcal T_h}=-(\bm f,\nabla\sigma_h)_{\mathcal T_h}.
	\end{align}
	Moreover, we have $-(\bm f,\nabla\sigma_h)_{\mathcal T_h}=(\kappa^2\epsilon_r\bm u,\nabla\sigma_h)_{\mathcal T_h}$, since $\epsilon_r$ is  a constant therefore
	\begin{align*}
	(\bm u_h-\bm u,\nabla \sigma_h)_{\mathcal T_h}=0.
	\end{align*}
	This implies
	\begin{align*}
	|S_4|=|(\bm u_h-\bm u,\nabla\sigma_h)_{\mathcal T_h}+
	(\bm u-\bm{\Pi_Wu},\nabla\sigma_h)_{\mathcal T_h}|
	=|(\bm u-\bm{\Pi_Wu},\nabla\sigma_h)_{\mathcal T_h}|
	\le C\|\bm u-\bm{\Pi_Wu}\|_{\mathcal T_h}\|\varepsilon_h^{\bm u}\|_{\mathcal T_h}.
	\end{align*}
	By the above estimations of $\{S_i\}_{i=1}^4$,  there exists an $h_0>0$ such that for all $h\le h_0$, we have 
	\begin{align*}
	\|\varepsilon_h^{\bm u}\|_{\mathcal{T}_h}\le C \left(h^{s-1/2}\| q-{\Pi_V q}\|_{\mathcal{T}_h}+
	\|\bm{u}-\bm{\Pi_Wu}\|_{\mathcal{T}_h}\right).
	\end{align*}
	By the above estimate and \Cref{energy_of_q} we get
	\begin{align*}
	\|\varepsilon_h^{ q}\|_{\mathcal{T}_h} \le C\left( \| q-{\Pi_Vq}\|_{\mathcal{T}_h}+\|\bm u-\bm{\Pi_Wu}\|_{\mathcal{T}_h}\right).
	\end{align*} 	
	Combining the above estimates with the triangle inequality gives the desired result.
\end{proof}

\subsubsection{Step 3: Post-processing}
Let $\bm W^{\star}(K)$ be a finite element space, we first define the following space:
\begin{align}\label{def_V_star}
V^{\star}(K)=\{v: \nabla v\in  \bm W^{\star}(K)\}.
\end{align}
The post-processing method reads: we seek $\bm u_h^{\star}\in \bm W^{\star}(K)$ such that
\begin{subequations}\label{post_processing}
	\begin{align}
	(\nabla\times\bm u^{\star}_h,\nabla\times\bm w)_K&=(q_h,\nabla\times\bm w)_K,&\text{ for all }\bm w\in \bm W^{\star}(K),\label{post_processing_1}\\
	(\bm u^{\star}_h,\nabla v)_K&=(\bm u_h,\nabla v)_K,&\text{ for all }v\in V^{\star}(K).\label{post_processing_2}
	\end{align}
\end{subequations}
Now, we state the main result in this section.
\begin{lemma} 
	Let $( q,\bm u)$ be the solution of \eqref{Maxwell_equation_mixed_form}. Then the system \eqref{post_processing} is well-defined and there exists an $h_0>0$ such that for all $h\le h_0$, we have the error estimate
	\begin{align*}
	\|\nabla\times(\bm u-\bm u_h^{\star})\|_{\mathcal{T}_h} \le C \left( \| q_h - q\|_{\mathcal{T}_h}+\inf_{\bm w_h\in \bm W^{\star}(\mathcal T_h)}\|\nabla\times(\bm u-\bm w_h)\|_{\mathcal{T}_h}\right).
	\end{align*}
\end{lemma}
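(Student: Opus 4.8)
The plan is to argue element by element, since both equations in \eqref{post_processing} are local to a single $K$, and then sum over the elements. Throughout, $K$ denotes a simplex of the auxiliary mesh, hence a simply connected polygon.

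\textbf{Well-posedness.} First I would observe that, on a fixed $K$, system \eqref{post_processing} is a \emph{square} linear system for $\bm u_h^\star\in\bm W^\star(K)$. Indeed, because $K$ is simply connected, every curl-free field in $\bm W^\star(K)$ is the gradient $\nabla\phi$ of a polynomial $\phi$, and $\nabla\phi\in\bm W^\star(K)$ forces $\phi\in V^\star(K)$ by \eqref{def_V_star}; hence $\nabla V^\star(K)$ is exactly the kernel of $\nabla\times$ acting on $\bm W^\star(K)$, so $\dim\bm W^\star(K)=\dim(\nabla\times\bm W^\star(K))+\dim(\nabla V^\star(K))$, which is precisely the number of independent constraints in \eqref{post_processing_1}--\eqref{post_processing_2}. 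It therefore suffices to prove uniqueness. Setting $q_h=0$ and $\bm u_h=\bm 0$ and testing \eqref{post_processing_1} with $\bm w=\bm u_h^\star$ gives $\|\nabla\times\bm u_h^\star\|_K=0$, so $\bm u_h^\star=\nabla\phi$ with $\phi\in V^\star(K)$; then testing \eqref{post_processing_2} with $v=\phi$ gives $\|\bm u_h^\star\|_K=0$. Hence $\bm u_h^\star$ is uniquely (and thus well-) defined.

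\textbf{The curl estimate.} Note that only \eqref{post_processing_1} enters the bound on $\nabla\times(\bm u-\bm u_h^\star)$; equation \eqref{post_processing_2} merely pins down the gradient part of $\bm u_h^\star$. Fix $K$ and an arbitrary $\bm w_h\in\bm W^\star(K)$, and set $\bm z:=\bm w_h-\bm u_h^\star\in\bm W^\star(K)$. Testing \eqref{post_processing_1} with $\bm z$ gives $(\nabla\times\bm u_h^\star,\nabla\times\bm z)_K=(q_h,\nabla\times\bm z)_K$, so that $\|\nabla\times\bm z\|_K^2=(\nabla\times\bm w_h-q_h,\nabla\times\bm z)_K$. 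By \eqref{Mixed-1} the exact solution satisfies $\nabla\times\bm u=q$ (the positive constant $\mu_r$ only rescales the right-hand side of \eqref{post_processing_1} and is harmless), so inserting $\pm\nabla\times\bm u$ yields $\nabla\times\bm w_h-q_h=\nabla\times(\bm w_h-\bm u)+(q-q_h)$, and Cauchy--Schwarz gives $\|\nabla\times\bm z\|_K\le\|\nabla\times(\bm u-\bm w_h)\|_K+C\|q-q_h\|_K$. The triangle inequality then gives $\|\nabla\times(\bm u-\bm u_h^\star)\|_K\le 2\|\nabla\times(\bm u-\bm w_h)\|_K+C\|q-q_h\|_K$. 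Squaring, summing over $K\in\mathcal T_h$, and taking the infimum over $\bm w_h\in\bm W^\star(\mathcal T_h)$ (which may be carried out element by element, the bound being purely local) yields the claimed estimate.

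\textbf{Remarks on the difficulty.} The argument is exact and local, so no scaling or shape-regularity of $\mathcal T_h^\star$ is required for \emph{this} estimate, and the constant depends only on $\mu_r$. The only point that needs care is the well-posedness: the identity $\nabla V^\star(K)=\ker(\nabla\times|_{\bm W^\star(K)})$—i.e. the ``completeness'' of the pair $(\bm W^\star(K),V^\star(K))$ together with the simple connectedness of $K$—is what makes \eqref{post_processing} square, and I expect this to be the main (and only mild) obstacle. Finally, the threshold $h_0>0$ appearing in the statement is not needed for the estimate above; it is inherited only when one subsequently replaces $\|q-q_h\|_{\mathcal T_h}$ on the right-hand side by the projection-error bound of \Cref{main_result_error_analysis} to obtain the superconvergent form quoted there.
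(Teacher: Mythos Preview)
Your proof is correct and follows essentially the same approach as the paper's: the square-system count via the identification $\ker(\nabla\times|_{\bm W^\star(K)})=\nabla V^\star(K)$, the same uniqueness argument (test with $\bm w=\bm u_h^\star$, then with the potential), and the same curl estimate obtained by subtracting an arbitrary $\bm w_h\in\bm W^\star(K)$, testing with the difference, and inserting $\nabla\times\bm u$. One minor correction: $K$ here is an element of the original mesh $\mathcal T_h$ (a triangle, parallelogram, or square in the paper's examples), not a simplex of the auxiliary mesh $\mathcal T_h^\star$; since every such $K$ is still a simply connected polygon, your kernel argument is unaffected. Your closing remarks that $h_0$ is not needed for this local estimate and that shape-regularity plays no role here are correct observations.
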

\begin{proof}
	Since the  constraints for \eqref{post_processing_1} and \eqref{post_processing_2}  are
	$\dim \bm W^{\star}(K)-\dim V^{\star}(K)+1$ and
	$\dim V^{\star}(K)-1$, then \eqref{post_processing} is a square system. Therefore,  we only need to prove  uniqueness for \eqref{post_processing}. Let $q_h=0$ and $\bm u_h=0$ in \eqref{post_processing} and we take  $\bm w=\bm u_h^{\star}$ in \eqref{post_processing_1} to get $\nabla\times\bm u_h^\star=0$. Next, by the definition of $V^\star (K)$ in \eqref{def_V_star}, there is a $v\in V^{\star}(K)$ such that $\nabla v= \bm u_h^{\star}$. By \eqref{post_processing_2}, we get $\bm u_h^{\star}=\bm 0$. This  proves the uniqueness.
	
	For any $\bm w_h\in \bm W^{\star}(K)$, we rewrite the system \eqref{post_processing} into
	\begin{subequations}\label{p22}
		\begin{align}
		(\nabla\times\bm u^{\star}_h-\nabla\times\bm w_h,\nabla\times\bm w)_K&=(q_h-\nabla\times\bm w_h,\nabla\times\bm w)_K&\text{ for all }\bm w\in \bm W^{\star}(K),\label{p11}\\
		(\bm u^{\star}_h-\bm w_h,\nabla v)_K&=(\bm u_h-\bm w_h,\nabla v)_K&\text{ for all }v\in V^{\star}(K).
		\end{align}
	\end{subequations}
	By \eqref{p11} te get
	\begin{align*}
	\|\nabla\times(\bm u_h^{\star}-\bm w_h)\|_{\mathcal{T}_h} \le C \|q_h-\nabla\times\bm w_h\|_{\mathcal{T}_h}\le C\left(\|q_h-q\|_{\mathcal{T}_h}+\|\nabla\times(\bm u-\bm w_h)\|_{\mathcal{T}_h}\right).
	\end{align*}
	Using the above estimate and the triangle inequality gives the desired result.
\end{proof}

In practice, problem \eqref{post_processing} is complicated to implement. The next lemma provides a simple way to do this, that is equivalent to  \eqref{post_processing}.

\begin{lemma}
	The post-processing problem \eqref{post_processing} is equivalent to the following system:
	find $(\bm u_h^{\star},\eta_h,\gamma_h )\in \bm W^{\star}(K)\times V^{\star}(K)\times \mathcal P_0(K)$, such that
	\begin{subequations}\label{post2}
		\begin{align}
		(\nabla\times\bm u^{\star}_h,\nabla\times\bm w)_K
		+(\nabla\eta_h,\bm w)_K
		&=(q_h,\nabla\times\bm w)_K&\text{ for all }\bm w\in \bm W^{\star}(K),\\
		(\bm u^{\star}_h,\nabla v)_K+(\gamma_h,v)_K&=(\bm u_h,\nabla v)_K&\text{ for all }v\in V^{\star}(K),\\
		(\eta_h,s)_K&=0&\text{ for all }s\in \mathcal P_0(K).
		\end{align}
	\end{subequations}	
\end{lemma}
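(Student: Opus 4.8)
The plan is to prove the two systems have identical solution sets by showing that, in any solution of \eqref{post2}, the Lagrange multipliers $\eta_h$ and $\gamma_h$ are necessarily zero, after which \eqref{post2} collapses onto \eqref{post_processing}. First I would note that \eqref{post2} is a square system: the unknowns $(\bm u_h^\star,\eta_h,\gamma_h)\in \bm W^\star(K)\times V^\star(K)\times\mathcal P_0(K)$ and the three families of test functions both have total dimension $\dim\bm W^\star(K)+\dim V^\star(K)+1$. Hence it suffices to match solutions.

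For the easy direction, given a solution $\bm u_h^\star$ of \eqref{post_processing}, I claim $(\bm u_h^\star,0,0)$ solves \eqref{post2}: with $\eta_h=0$ the first equation of \eqref{post2} is exactly \eqref{post_processing_1}, with $\gamma_h=0$ the second is exactly \eqref{post_processing_2}, and the third holds trivially. For the converse, let $(\bm u_h^\star,\eta_h,\gamma_h)$ solve \eqref{post2}. In the first equation I would test with $\bm w=\nabla\psi$ for arbitrary $\psi\in V^\star(K)$; this is admissible because $\nabla\psi\in\bm W^\star(K)$ by the definition \eqref{def_V_star} of $V^\star(K)$. Since the scalar curl of a gradient vanishes in two dimensions, $\nabla\times\nabla\psi=0$, so both $(\nabla\times\bm u_h^\star,\nabla\times\nabla\psi)_K$ and $(q_h,\nabla\times\nabla\psi)_K$ drop out, leaving $(\nabla\eta_h,\nabla\psi)_K=0$ for all $\psi\in V^\star(K)$. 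Choosing $\psi=\eta_h\in V^\star(K)$ gives $\nabla\eta_h=0$, hence $\eta_h$ is constant, and the third equation of \eqref{post2} with $s\equiv 1$ forces $\eta_h=0$. Similarly, taking $v\equiv 1$ (which lies in $V^\star(K)$ since $\bm W^\star(K)$ is a linear space) in the second equation yields $(\gamma_h,1)_K=0$, so the constant $\gamma_h$ vanishes as well. With $\eta_h=\gamma_h=0$ the first two equations of \eqref{post2} reduce to \eqref{post_processing_1}–\eqref{post_processing_2}, so $\bm u_h^\star$ solves \eqref{post_processing}.

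Finally, I would deduce that \eqref{post2} is well-defined from this correspondence together with the previous lemma: being square, \eqref{post2} is uniquely solvable provided the only solution for the homogeneous data $q_h=\bm 0$, $\bm u_h=\bm 0$ is zero; but the argument above then forces $\eta_h=\gamma_h=0$ and $\bm u_h^\star$ to solve the homogeneous \eqref{post_processing}, whence $\bm u_h^\star=\bm 0$ by the uniqueness already established for \eqref{post_processing}. I do not expect any genuine obstacle here; the only point requiring attention is the bookkeeping of which discrete test functions annihilate the principal terms — namely $\bm w=\nabla\psi$ in the first equation and $v\equiv1$ in the second — and verifying these lie in the correct spaces, which is immediate from \eqref{def_V_star} and linearity of $\bm W^\star(K)$.
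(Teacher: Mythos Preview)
Your proof is correct and follows essentially the same approach as the paper: the paper also shows that \eqref{post2} is square and then proves $\eta_h=\gamma_h=0$ by testing the first equation with $\bm w=\nabla\eta_h$, the second with $v=\gamma_h$ (a constant, equivalent to your $v\equiv 1$), and the third with $s=1$. Your version is slightly more explicit in spelling out both directions of the equivalence and in deducing well-posedness from the previous lemma, but the mechanism is identical.
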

\begin{proof} 
	To prove this, we only need to prove \eqref{post2} is well-defined and $\eta_h=\gamma_h=0$. It is obvious to see that the system \eqref{post2} is a square system, hence we only need to prove the uniqueness. We take $\bm w=\nabla\eta_h$, $v=\gamma_h$ and $s=1$ in \eqref{post2} to get $\nabla\eta_h=0, \gamma_h=0$ and $(\eta_h,1)=0$. Hence $\eta_h=\gamma_h=0$.
\end{proof}

\section{Construction and numerical experiments}
\label{construction_and_numerical_experiments}

In this section, we shall present some concrete examples of spaces $V(K), \bm W(K), \bm M(\partial K)$ and  associated spaces $\widetilde V(K)$ and $\widetilde {\bm  W}(K)$ that satisfy the definition of the $M$-decomposition; see \Cref{def_M_decomposition}. In addition the spaces $\widetilde V(K)$ and $\widetilde {\bm  W}(K)$ need to satisfy (\ref{approx-prop}).  The approach to constructing the upcoming spaces follows \cite{Cockburn_M_decomposition_Part2_M2AN_2017}.  It is straightforward to check that
the examples in Table~\ref{examples} have an $M$-decomposition.  In this section we consider higher order families of elements.

Based on the construction below,  conditions \eqref{M_decomposition_1}, \eqref{M_decomposition_2} and \eqref{M_decomposition_4} are easy to check, while   condition \eqref{M_decomposition_3} is not always obvious. Fortunately, the following equivalent of condition \eqref{M_decomposition_3} is easy to check in our construction.

\begin{lemma}\label{condtion_3c_qauivalence} 
	Assume the conditions \eqref{M_decomposition_1}, \eqref{M_decomposition_2} and \eqref{M_decomposition_4} hold,  then  \eqref{M_decomposition_3} is equivalent to
	\begin{subequations}
		\begin{align}
		\gamma \text{ is  injective on the space }\widetilde V^{\perp}(K),\label{condtion_3c_qauivalence_proof2}\\
		\gamma \text{ is  injective on the space }\widetilde {\bm W}^{\perp}(K),\label{condtion_3c_qauivalence_proof3}\\
		\gamma\widetilde{V}^{\perp}(K)\oplus\gamma\widetilde{\bm W}^{\perp}(K)=\bm M(\partial K), \label{condtion_3c_qauivalence_proof4}
		\end{align}
		where $\gamma\widetilde{ V}^{\perp}(K):=\{\bm n\times v^{\perp}|_{\partial K}: v^{\perp}\in\widetilde{ V}^{\perp}(K) \}$ and 
		$\gamma\widetilde{\bm W}^{\perp}(K):=\{\bm n\times\bm w^{\perp}\times\bm n|_{\partial K}:\bm w^{\perp}\in\widetilde{\bm W}^{\perp}(K) \}$.
	\end{subequations}
	
\end{lemma}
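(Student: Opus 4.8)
The plan is to prove the two implications separately, relying on the observation that, by \eqref{combine_trace_operator}, for $(v^{\perp},\bm w^{\perp})\in \widetilde V^{\perp}(K)\times\widetilde{\bm W}^{\perp}(K)$ one has ${\rm tr}(v^{\perp},\bm w^{\perp})=\bm n\times v^{\perp}|_{\partial K}+\bm n\times\bm w^{\perp}\times\bm n|_{\partial K}$, so by \eqref{M_decomposition_1} the operator ${\rm tr}$ indeed maps $\widetilde V^{\perp}(K)\times\widetilde{\bm W}^{\perp}(K)$ into $\bm M(\partial K)$, and its range is exactly $\gamma\widetilde V^{\perp}(K)+\gamma\widetilde{\bm W}^{\perp}(K)$.

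For the direction ``\eqref{condtion_3c_qauivalence_proof2}--\eqref{condtion_3c_qauivalence_proof4} $\Rightarrow$ \eqref{M_decomposition_3}'', I would note first that surjectivity of ${\rm tr}$ onto $\bm M(\partial K)$ is immediate from \eqref{condtion_3c_qauivalence_proof4}. For injectivity, if ${\rm tr}(v^{\perp},\bm w^{\perp})=\bm 0$ then $\bm n\times v^{\perp}|_{\partial K}=-\bm n\times\bm w^{\perp}\times\bm n|_{\partial K}$ lies in $\gamma\widetilde V^{\perp}(K)\cap\gamma\widetilde{\bm W}^{\perp}(K)$, which is trivial because the sum in \eqref{condtion_3c_qauivalence_proof4} is direct; hence $\bm n\times v^{\perp}|_{\partial K}=\bm 0$ and $\bm n\times\bm w^{\perp}\times\bm n|_{\partial K}=\bm 0$, and the injectivity hypotheses \eqref{condtion_3c_qauivalence_proof2} and \eqref{condtion_3c_qauivalence_proof3} then force $v^{\perp}=0$ and $\bm w^{\perp}=\bm 0$. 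A linear map between finite-dimensional spaces that is both injective and surjective is an isomorphism, which is \eqref{M_decomposition_3}.

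For the converse, the key point is that since \eqref{M_decomposition_1}, \eqref{M_decomposition_2} and \eqref{M_decomposition_4} are part of the standing assumptions, adding \eqref{M_decomposition_3} means exactly that $V(K)\times\bm W(K)$ admits an $M$-decomposition with associated spaces $\widetilde V(K)$ and $\widetilde{\bm W}(K)$ in the sense of \Cref{def_M_decomposition}. Hence \Cref{the_triple_orthogonal_property} applies and gives $\bm M(\partial K)=\gamma\widetilde V^{\perp}(K)\oplus\gamma\widetilde{\bm W}^{\perp}(K)$, which is \eqref{condtion_3c_qauivalence_proof4}. Then \eqref{condtion_3c_qauivalence_proof2} and \eqref{condtion_3c_qauivalence_proof3} follow by feeding $(v^{\perp},\bm 0)$, respectively $(0,\bm w^{\perp})$, into the injectivity of ${\rm tr}$: if $\bm n\times v^{\perp}|_{\partial K}=\bm 0$ then ${\rm tr}(v^{\perp},\bm 0)=\bm 0$, so $v^{\perp}=0$, and symmetrically for $\bm w^{\perp}$.

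I do not expect a serious obstacle; the statement is essentially a repackaging of the isomorphism in \eqref{M_decomposition_3} into its surjectivity and injectivity components. The one step that requires care is invoking \Cref{the_triple_orthogonal_property} in the converse direction—one must recognize that assuming \eqref{M_decomposition_3} together with the three standing hypotheses is precisely enough to apply that lemma, so the directness of the sum need not be re-derived by hand. A secondary subtlety is the correct reading of ``$\gamma$ injective on $\widetilde V^{\perp}(K)$'': it is a statement about functions on all of $K$ (vanishing of $\bm n\times v^{\perp}$ on $\partial K$ forcing $v^{\perp}\equiv 0$ on $K$), which is exactly what injectivity of ${\rm tr}$ delivers.
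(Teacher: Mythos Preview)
Your proposal is correct and follows essentially the same route as the paper: both directions hinge on recognizing that the range of ${\rm tr}$ is $\gamma\widetilde V^{\perp}(K)+\gamma\widetilde{\bm W}^{\perp}(K)$, invoking \Cref{the_triple_orthogonal_property} (or its proof) for the directness of this sum, and then reading off injectivity/surjectivity componentwise. The only cosmetic difference is that the paper deduces the trivial intersection $\gamma\widetilde V^{\perp}(K)\cap\gamma\widetilde{\bm W}^{\perp}(K)=\{0\}$ from the $L^2(\partial K)$-orthogonality identity \eqref{the_triple_orthogonal_property_proof_1}, whereas you read it directly from the direct-sum hypothesis in \eqref{condtion_3c_qauivalence_proof4}.
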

\begin{proof}
	By \eqref{the_triple_orthogonal_property_proof_1}, it is easy to see that
	\begin{align}\label{condtion_3c_qauivalence_proof5}
	{\rm tr}:\left( \widetilde{ V}^{\perp}(K)\times\widetilde{\bm W}^{\perp}(K)\right)=	\gamma\widetilde{V}^{\perp}(K)\oplus\gamma\widetilde{\bm W}^{\perp}(K).
	\end{align}
	Assuming \eqref{condtion_3c_qauivalence_proof2}-\eqref{condtion_3c_qauivalence_proof4} hold, we need to prove that \eqref{M_decomposition_3} holds. By \eqref{condtion_3c_qauivalence_proof4} and \eqref{condtion_3c_qauivalence_proof5} it holds
	\begin{align}
	{\rm tr}:\left( \widetilde{ V}^{\perp}(K)\times\widetilde{\bm W}^{\perp}(K)\right)=\bm M(\partial K).
	\end{align}
	This proves the mapping $\rm{tr}$ is  surjective. Next, we prove $\rm{tr}$ is  injective. 
	Let $\bm q\in\bm M(\partial K)$, then there exist $v_1,v_2\in \widetilde V^{\perp}(K)$ and $\bm w_1,\bm w_2\in \widetilde{\bm W}^{\perp}(K)$ such that
	\begin{align*}
	\bm q&=\bm n\times v_1+\bm n\times\bm w_1\times\bm n,\\
	\bm q&=\bm n\times v_2+\bm n\times\bm w_2\times\bm n.
	\end{align*}
	Therefore,
	\begin{align*}
	\bm n\times(v_1-v_2)+\bm n\times(\bm w_1-\bm w_2)\times\bm n=\bm 0.
	\end{align*}
	Using  \eqref{the_triple_orthogonal_property_proof_1} we get $\bm n\times(v_1-v_2)=\bm 0$ and $\bm n\times(\bm w_1-\bm w_2)\times\bm n=0$. By \eqref{condtion_3c_qauivalence_proof3} and \eqref{condtion_3c_qauivalence_proof4}, it holds
	$v_1=v_2$ and $\bm w_1=\bm w_2$. This proves ${\rm tr}$ is  injective on the space
	$ \widetilde{ V}^{\perp}(K)\times\widetilde{\bm W}^{\perp}(K)$ and hence we finish the proof of condition \eqref{M_decomposition_3}.
	
	On the other hand, assume that condition \eqref{M_decomposition_3} holds so we need  to prove \eqref{condtion_3c_qauivalence_proof2}-\eqref{condtion_3c_qauivalence_proof4} hold. By \eqref{the_triple_orthogonal_property_3}, the condition  \eqref{condtion_3c_qauivalence_proof4} holds. Next, we prove \eqref{condtion_3c_qauivalence_proof2} and \eqref{condtion_3c_qauivalence_proof3}. Let $v_1,v_2\in \widetilde V^{\perp}(K)$ and $\bm w_1,\bm w_2$ in $\widetilde{\bm W}^{\perp}(K)$ such that
	\begin{align*}
	\bm n\times v_1&=\bm n\times v_2,\\
	\bm n\times \bm w_1\times\bm n&=\bm n\times\bm w_2\times\bm n.
	\end{align*}
	Then it holds
	\begin{align}
	\bm n\times(v_1-v_2)+\bm n\times(\bm w_1-\bm w_2)\times\bm n=\bm 0
	\end{align}
	Since ${\rm tr}$ is an injective on space
	$ \widetilde{ V}^{\perp}(K)\times\widetilde{\bm W}^{\perp}(K)$,  then it holds $v_1=v_2$ and $\bm w_1=\bm w_2$. This proves that \eqref{condtion_3c_qauivalence_proof2} and \eqref{condtion_3c_qauivalence_proof3} hold and hence completes proof.
	
\end{proof}

In this section, we shall show some numerical experiments for each choice of element. In all numerical experiments, we  take $ \mu=1,$ and $\kappa^2\epsilon_r=10$. The exact solution is
\begin{align*}
u_1=\sin(2\pi x)\sin(2\pi y),\quad
u_2=\sin( \pi x)\sin( \pi y),\quad
q  =\pi\cos(\pi x)\sin(\pi y)-2\pi\sin(2\pi x)\cos(2\pi y).
\end{align*}
Boundary data is chosen so that the above functions satisfy (\ref{Maxwell_equation_ori_form})

The post-processing spaces in all experiments are taken as 
\begin{align*}
V^{\star}(K)=\mathcal P_{k+2}(K),\qquad \bm W^{\star}(K)=\bm{\mathcal P}_{k+1}(K).
\end{align*}

\subsection{Triangle Mesh}

{We assume that the mesh $\mathcal T_h$ consists of shape regular triangles and choose $\mathcal T_h^*=\mathcal T_h$ (see Section~\ref{messy_discussion}).}
We might hope that standard $\mathcal P_k$ polynomial spaces could work, and this is indeed the case as shown in the next lemma.
\begin{lemma}\label{example_triangle_M_S_index}
	For any integer $k\ge 1$, let 
	\begin{align*}
	&V(K)=\mathcal P_k(K),\qquad \bm W(K)=\bm{\mathcal P}_k(K), \\
	&\bm M(\partial K)=\{\bm \mu: \bm\mu|_F= \bm n\times p_k,\mbox{ for some }p_k\in \mathcal{P}_k(F)\mbox{ and  for each edge }F\subset\partial K\},
	\end{align*}
	then we have
	\begin{align*}
	I_M(V(K)\times\bm W(K))=0.
	\end{align*}
\end{lemma}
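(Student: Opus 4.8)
The plan is to evaluate directly the three dimensions in the definition \eqref{index_of_I_M} of $I_M(V(K)\times\bm W(K))$ and check that they cancel. Since $K$ is a triangle with three edges, and on each edge $F$ the map $p_k\mapsto\bm n\times p_k$ is injective from $\mathcal P_k(F)$ into $\bm L^2(F)$, I would first record that $\dim\bm M(\partial K)=3(k+1)$.

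For the scalar term, note from \eqref{curl2D} that $\nabla\times v=(\partial_y v,-\partial_x v)^T$, so $\nabla\times v=\bm 0$ forces $v$ to be constant on the connected element $K$; hence $\{v\in\mathcal P_k(K):\nabla\times v=\bm 0\}=\mathcal P_0(K)$. The trace map $c\mapsto\bm n\times c|_{\partial K}$ is injective, because on any fixed edge $\bm n\times c$ equals $c$ times a fixed nonzero (tangential) vector and so vanishes only for $c=0$; therefore $\dim\{\bm n\times v|_{\partial K}:v\in V(K),\ \nabla\times v=\bm 0\}=1$.

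The substantive step is the vector term. Because $K$ is simply connected, every curl-free $\bm w\in\bm{\mathcal P}_k(K)$ is a gradient, and $\nabla:\mathcal P_{k+1}(K)/\mathbb R\to\{\bm w\in\bm{\mathcal P}_k(K):\nabla\times\bm w=0\}$ is an isomorphism, so this space has dimension $\dim\mathcal P_{k+1}(K)-1$. For $\bm w=\nabla\phi$, the tangential trace on an edge $F$ is $\bm n\times\bm w\times\bm n|_F=(\partial_{t_F}\phi)\,\bm t_F$, where $\bm t_F$ is the unit tangent to $F$; this vanishes on $F$ exactly when $\phi$ is constant along $F$. Using that $\partial K$ is connected, ``$\phi$ constant on every edge'' upgrades to ``$\phi\equiv c$ on $\partial K$''. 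Then $\phi-c$ vanishes on each of the three lines carrying the edges of $K$, hence is divisible by the cubic bubble $b_K$, i.e.\ $\phi-c=b_K\psi$ with $\psi\in\mathcal P_{k-2}(K)$ (understood as $\{0\}$ when $k\le1$). Consequently the kernel of the tangential-trace map, seen inside $\mathcal P_{k+1}(K)/\mathbb R$, has dimension $\dim\mathcal P_{k-2}(K)$, and rank-nullity together with $\dim\mathcal P_{k+1}(K)=\tfrac{(k+2)(k+3)}{2}$ and $\dim\mathcal P_{k-2}(K)=\tfrac{k(k-1)}{2}$ gives
\[
\dim\{\bm n\times\bm w\times\bm n|_{\partial K}:\bm w\in\bm W(K),\ \nabla\times\bm w=0\}=\bigl(\dim\mathcal P_{k+1}(K)-1\bigr)-\dim\mathcal P_{k-2}(K)=3k+2.
\]

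Combining the three counts, $I_M(V(K)\times\bm W(K))=3(k+1)-1-(3k+2)=0$, which is the claim. The only point requiring genuine care is the kernel computation in the vector term: one must invoke connectedness of $\partial K$ to pass from ``constant on each edge'' to ``constant on $\partial K$'', and then the factorization of a boundary-vanishing polynomial by $b_K$; the remaining arithmetic with dimensions of polynomial spaces is routine.
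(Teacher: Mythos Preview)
Your proof is correct and follows essentially the same approach as the paper: both compute the three dimensions in \eqref{index_of_I_M} directly, identify curl-free vectors in $\bm{\mathcal P}_k(K)$ with gradients of $\mathcal P_{k+1}(K)$, characterize the kernel of the tangential trace via the bubble $b_K=\lambda_1\lambda_2\lambda_3$, and verify the resulting arithmetic. Your version spells out a few justifications (injectivity of the trace on constants, connectedness of $\partial K$) that the paper leaves implicit, but the argument is the same.
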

\begin{proof}
	It is easy to see that 
	\begin{align*}
	\dim \{\bm n\times v|_{\partial K}:v\in V(K),\nabla \times v=\bm 0\}=1,\ \ \dim \bm M(\partial K)=3(k+1).
	\end{align*}
	Next, we give more details to compute
	\begin{align*}
	\hspace{2em}&\hspace{-6em}\dim\{\bm n\times\bm w\times \bm n|_{\partial K}:\bm w\in \bm W(K),\nabla\times\bm w=0\}\\
	&=\dim\{\bm n\times(\nabla p_{k+1})\times\bm n:p_{k+1}\in\mathcal P_{k+1}(K)\}\\
	&=\dim\{\nabla p_{k+1}: p_{k+1}\in\mathcal P_{k+1}(K)\}\\
	&\quad-\dim \{p_{k+1}\in \mathcal P_{k+1}(K):\bm n\times(\nabla p_{k+1})\times\bm n=\bm 0\}\\
	&=\dim\{\nabla p_{k+1}:p_{k+1}\in\mathcal P_{k+1}(K)\}\\
	&\quad-\dim \{ \lambda_1\lambda_2\lambda_3p_{k-2}:p_{k-2}\in \mathcal P_{k-2}(K)\}\\
	&=\binom{k+3}{2}-1-\binom{k}{2}.
	\end{align*}
	By the definitions of $I_M$ in \eqref{index_of_I_M} to get
	\begin{align*}
	I_M(V(K)\times\bm W(K))=0.
	\end{align*}
\end{proof}

By \Cref{index_IM_M_decomposition} and \Cref{example_triangle_M_S_index}, we know the finite element spaces $	\mathcal P_k(K)\times \bm{\mathcal P}_k(K)$ admits an $M$-decomposition, i.e., there exist two spaces $\widetilde{V}(K)$ and $\widetilde{\bm W}(K)$ satisfy the \Cref{def_M_decomposition}. In the following \Cref{Simple_V_W_tilde_triangle}, we give a concrete construction of the spaces $\widetilde{V}(K)$ and $\widetilde{\bm W}(K)$.

\begin{lemma}\label{Simple_V_W_tilde_triangle}
	For any integer $k\ge 1$, let 
	\begin{gather*}
	V(K)=\mathcal P_k(K),\qquad \bm W(K)=\bm{\mathcal P}_k(K), \qquad \bm M(\partial K)=\{\bm \mu\;|\; \bm\mu|_F= \bm n\times\mathcal{P}_k(F)\mbox{ for each face }F\subset\partial K\},\\
	\widetilde{V}(K)=\mathcal P_{k-1}(K),\qquad  \widetilde{\bm W}(K)=\bm{\mathcal P}_{k-1}(K).
	\end{gather*}
	Then $V(K)$ and $\bm W(K)$ admit an $M$-decomposition with respect the space $\widetilde{V}(K)$ and $\widetilde{\bm W}(K)$.  
\end{lemma}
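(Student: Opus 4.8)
The plan is to verify the four conditions of Definition~\ref{def_M_decomposition} directly with the given choices $\widetilde V(K)=\mathcal P_{k-1}(K)$ and $\widetilde{\bm W}(K)=\bm{\mathcal P}_{k-1}(K)$, rather than re-deriving the $M$-decomposition from \Cref{index_IM_M_decomposition}. Conditions \eqref{M_decomposition_1}, \eqref{M_decomposition_2} and \eqref{M_decomposition_4} are immediate: $\bm n\times v|_F$ is $\bm n\times$ a degree-$k$ polynomial on each edge $F$ (hence in $\bm M(\partial K)$), and $\bm n\times\bm w\times\bm n|_F$ likewise; the curl of a $\mathcal P_k$ scalar lands in $\bm{\mathcal P}_{k-1}$, and the curl of a $\bm{\mathcal P}_k$ vector lands in $\mathcal P_{k-1}$, so \eqref{M_decomposition_2} holds with the stated $\widetilde V,\widetilde{\bm W}$; and \eqref{M_decomposition_4} is the standard fact that $\bm n\times(\bm n\times p)=-p$ pointwise on each edge (using the 2D cross-product conventions in the excerpt), so $\bm n\times\bm\mu=0$ forces $\bm\mu=0$. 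I would dispatch these in a sentence or two each.

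The substantive step is \eqref{M_decomposition_3}: that $\mathrm{tr}:\widetilde V^\perp(K)\times\widetilde{\bm W}^\perp(K)\to\bm M(\partial K)$ is an isomorphism, where the complements are taken in $\mathcal P_k(K)$ and $\bm{\mathcal P}_k(K)$ respectively. Here I would invoke \Cref{condtion_3c_qauivalence}, which reduces \eqref{M_decomposition_3} to three facts: $\gamma$ injective on $\widetilde V^\perp(K)$, $\gamma$ injective on $\widetilde{\bm W}^\perp(K)$, and the direct-sum decomposition $\gamma\widetilde V^\perp(K)\oplus\gamma\widetilde{\bm W}^\perp(K)=\bm M(\partial K)$. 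For the injectivity on $\widetilde V^\perp(K)$: if $v^\perp\in\mathcal P_k(K)$ is $L^2(K)$-orthogonal to $\mathcal P_{k-1}(K)$ and $\bm n\times v^\perp=0$ on $\partial K$, then $v^\perp=0$ on $\partial K$, so $v^\perp=\lambda_1\lambda_2\lambda_3\, r$ for some $r\in\mathcal P_{k-3}(K)$ (with $\lambda_i$ the barycentric coordinates); but the $L^2$-orthogonality to $\mathcal P_{k-1}\supset\mathcal P_{k-3}$ forces $(\lambda_1\lambda_2\lambda_3 r, r)_K=0$, hence $r=0$. The argument for $\widetilde{\bm W}^\perp(K)$ is the same run componentwise, using that $\bm n\times\bm w^\perp\times\bm n=0$ on $\partial K$ means $\bm w^\perp$ has vanishing tangential trace so $\bm w^\perp=\lambda_1\lambda_2\lambda_3\,\bm r$ with $\bm r\in\bm{\mathcal P}_{k-3}(K)$, and $\bm L^2$-orthogonality to $\bm{\mathcal P}_{k-1}$ kills it.

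For the direct-sum identity I would first note the sum is already direct by \eqref{the_triple_orthogonal_property_proof_1} (the trace-orthogonality lemma applies to these spaces since \eqref{M_decomposition_2} holds), so it suffices to check dimensions: $\dim\gamma\widetilde V^\perp(K)+\dim\gamma\widetilde{\bm W}^\perp(K)=\dim\bm M(\partial K)=3(k+1)$. By the injectivity just proved, $\dim\gamma\widetilde V^\perp=\dim\widetilde V^\perp=\dim\mathcal P_k-\dim\mathcal P_{k-1}=k+1$ and $\dim\gamma\widetilde{\bm W}^\perp=\dim\widetilde{\bm W}^\perp=\dim\bm{\mathcal P}_k-\dim\bm{\mathcal P}_{k-1}=2(k+1)$; these add to $3(k+1)$, and since the direct sum sits inside $\bm M(\partial K)$ and has full dimension it equals $\bm M(\partial K)$. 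Alternatively, one could simply cite \Cref{example_triangle_M_S_index} ($I_M=0$) together with \Cref{index_IM_M_decomposition} to get existence of \emph{some} $\widetilde V,\widetilde{\bm W}$, then use \Cref{uniquness_of_widetilde_V} to identify $\widetilde V(K)=\nabla\times\bm{\mathcal P}_k(K)=\mathcal P_{k-1}(K)$, and a dimension count to pin down $\widetilde{\bm W}(K)$; I would mention this as the shorter route but prefer the direct verification for transparency. The main obstacle is precisely the counting in \eqref{M_decomposition_3}: one must be careful that ``vanishing tangential/normal-cross trace on $\partial K$'' really does force the barycentric-bubble factorization with the correct residual degree ($k-3$, empty when $k\le 2$, in which case $\widetilde V^\perp,\widetilde{\bm W}^\perp$ inject trivially), and that the orthogonality condition is against a space large enough to contain that residual.
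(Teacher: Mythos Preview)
Your overall structure matches the paper's: dispatch \eqref{M_decomposition_1}, \eqref{M_decomposition_2}, \eqref{M_decomposition_4} quickly and use \Cref{condtion_3c_qauivalence} for \eqref{M_decomposition_3}. The injectivity argument on $\widetilde V^\perp(K)$ and the final dimension count are correct and essentially identical to the paper.

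However, your injectivity argument on $\widetilde{\bm W}^\perp(K)$ has a genuine gap. You claim that $\bm n\times\bm w^\perp\times\bm n=\bm 0$ on $\partial K$ allows you to ``run componentwise'' and write $\bm w^\perp=\lambda_1\lambda_2\lambda_3\,\bm r$ with $\bm r\in\bm{\mathcal P}_{k-3}(K)$. But $\bm n\times\bm w^\perp\times\bm n$ is only the \emph{tangential} part of $\bm w^\perp$ (cf.\ the decomposition $\bm w^\perp=\bm n\times\bm w^\perp\times\bm n+(\bm w^\perp\cdot\bm n)\bm n$ in \eqref{decomposition1}); the normal component $(\bm w^\perp\cdot\bm n)\bm n$ need not vanish on $\partial K$, so the bubble factorization is unjustified. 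A concrete counterexample is $\bm w=\nabla(\lambda_1\lambda_2\lambda_3)\in\bm{\mathcal P}_2(K)$: its tangential trace is zero on every edge (since $\lambda_1\lambda_2\lambda_3$ is constant on $\partial K$), yet $\bm w$ does not vanish on $\partial K$ and certainly cannot be written as $\lambda_1\lambda_2\lambda_3\,\bm r$ with $\bm r\in\bm{\mathcal P}_{-1}=\{0\}$.

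The paper repairs this with an extra step you are missing: from $\bm n\times\bm w^\perp\times\bm n=\bm 0$ and the orthogonality $(\bm w^\perp,\nabla\times v)_K=0$ for all $v\in V(K)$ (since $\nabla\times V(K)\subset\widetilde{\bm W}(K)$), integrate by parts to get $(\nabla\times\bm w^\perp,v)_K=0$ for all $v\in V(K)$, hence $\nabla\times\bm w^\perp=0$. Then $\bm w^\perp=\nabla q_{k+1}$ for some $q_{k+1}\in\mathcal P_{k+1}(K)$; vanishing tangential trace now says $q_{k+1}$ is constant on $\partial K$, so $q_{k+1}=c+\lambda_1\lambda_2\lambda_3\,p_{k-2}$ and $\bm w^\perp=\nabla(\lambda_1\lambda_2\lambda_3\,p_{k-2})$. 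Finally, test the orthogonality to $\widetilde{\bm W}(K)=\bm{\mathcal P}_{k-1}(K)$ against $\bm f_{k-1}$ and integrate by parts once more to obtain $(\lambda_1\lambda_2\lambda_3\,p_{k-2},\nabla\cdot\bm f_{k-1})_K=0$; since $\nabla\cdot\bm{\mathcal P}_{k-1}(K)=\mathcal P_{k-2}(K)$, this forces $p_{k-2}=0$. Note the residual degree here is $k-2$, not $k-3$.
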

\begin{proof}
	We notice that \eqref{M_decomposition_1}, \eqref{M_decomposition_2} and \eqref{M_decomposition_4}  obviously hold. We only have to prove \eqref{M_decomposition_3}.  By \Cref{condtion_3c_qauivalence}, we need to check the following three conditions:
	
	\begin{itemize}
		\item[(1)] $\gamma$ is  injective on the space $\widetilde V^{\perp}(K)$:  Let $v^{\perp}\in \widetilde V^{\perp}(K)$, if $\bm n\times v^{\perp}|_{\partial K}=\bm 0$, we have $v^{\perp}|_{\partial K}=0$. If $k\le 2$, then $v^\perp|_K=0$. If $k\ge 3$, we can set $v^{\perp}=p_{k-3}\lambda_1\lambda_2\lambda_3$, where $p_{k-3}\in \mathcal P_{k-3}(K)$. Then $(v^{\perp}, p_{k-3})_K=(p_{k-3}\lambda_1\lambda_2\lambda_3,p_{k-3})_K=0$, this gives $p_{k-3}=0$, hence $v^{\perp}=0$. This proves  $\gamma$ is an injective, also  $\gamma$ is an onjective; therefore, $\gamma$  is an isomorphism, which implies  
		\begin{align*}
		\dim \gamma \widetilde V^{\perp}(K)=\dim \widetilde V^{\perp}(K).
		\end{align*}

		\item[(2)] $\gamma$ is  injective on the space $\widetilde {\bm W}^{\perp}(K)$:  Let $\bm w^{\perp}\in \widetilde{\bm W}^{\perp}(K)$ satisfy $\bm n\times\bm w^{\perp}\times\bm n=\bm 0$, this implies $\bm n\times\bm w^{\perp}=0$.  For all $v\in V(K)$, we have  $\nabla\times v\in \nabla\times V(K)\subset\widetilde{\bm W}(K)$, then
		\begin{align*}
		0=(\bm w^{\perp},\nabla\times v)_K
		=(\nabla\times\bm w^{\perp},v)_K
		+\langle\bm w^{\perp},\bm n\times v \rangle_{\partial K}=(\nabla\times\bm w^{\perp},v)_K.
		\end{align*}
		Since $\nabla\times\bm W(K)\subset V(K)$, we take $v=\nabla\times\bm w^{\perp}$ in the above equation to get $\nabla\times\bm w^{\perp}=0$, then there is a $p_{k-2}\in \mathcal P_{k-2}(K)$ such that
		\begin{align*}
		\bm w^{\perp}=\nabla (\lambda_1\lambda_2\lambda_3 p_{k-2}).
		\end{align*}
		For any $\bm f_{k-1}\in\bm{\mathcal{P}}_{k-1}(K)=\widetilde{\bm W}(K)$,
		we have
		\begin{align*}
		0=(\nabla (p_{k-2}\lambda_1\lambda_2\lambda_3),\bm f_{k-1})_K
		=(p_{k-2}\lambda_1\lambda_2\lambda_3,\nabla\cdot\bm f_{k-1})_K.
		\end{align*}
		Since $\nabla\cdot\bm{\mathcal P}_{k-1}(K)=\mathcal P_{k-2}(K)$, we can take $\bm f_{k-1}$ such that $\nabla\cdot\bm f_{k-1}=p_{k-2}$, then we have $p_{k-2}=0$, and therefore, $\bm w^{\perp}=\bm 0$, hence $\gamma$ is an injective; thus $\gamma$ is an isomorphism, which implies 
		\begin{align*}
		\dim \gamma \widetilde{\bm W}^{\perp}(K)=\dim \widetilde{\bm W}^{\perp}(K).
		\end{align*}

		\item[(3)]$\gamma\widetilde{V}^{\perp}(K)\oplus\gamma\widetilde{\bm W}^{\perp}(K)=\bm M(\partial K)$:
		on the one hand $\gamma\widetilde{V}^{\perp}(K)\oplus\gamma\widetilde{\bm W}^{\perp}(K)\subset\bm M(\partial K)$; on the other hand
		\begin{align*}
		\hspace{2em}&\hspace{-2em}\dim\bm M(\partial K) - \dim \gamma\widetilde{V}^{\perp}(K) - \dim \gamma\widetilde{\bm W}^{\perp}(K) \\
		&= \dim\bm M(\partial K) - \dim \widetilde{V}^{\perp}(K) - \dim \widetilde{\bm W}^{\perp}(K) \\
		& = \dim\bm M(\partial K) - (\dim V(K) - \dim \widetilde V(K)) -( \dim \bm W(K) - \dim {\widetilde {\bm W}}(K))\\
		& =  0.
		\end{align*}
		Hence $\gamma\widetilde{V}^{\perp}(K)\oplus\gamma\widetilde{\bm W}^{\perp}(K)=\bm M(\partial K)$  holds.
	\end{itemize}
\end{proof}

Next,  we give the canonical construction on a triangle element.
\begin{lemma}[Canonical Construction]\label{Canonical_construction} 
	For any integer $k\ge 0$, let 
	\begin{align}\label{711}
	&V(K)=\mathcal P_k(K),\qquad \bm W(K)=\bm{\mathcal P}_k(K),\nonumber \\
	&\bm M(\partial K)=\{\bm \mu: \bm\mu|_F= \bm n\times p_k,\mbox{ for some }p_k\in \mathcal{P}_k(F)\mbox{ and  for each edge }F\subset\partial K\},
	\end{align}
	and
	\begin{align}  \label{712}
	\widetilde{V}(K)=\mathcal P_{k-1}(K),\qquad  \widetilde{\bm W}(K)=\nabla\times V(K)\oplus\bm W_0(K),
	\end{align}
	where
	\begin{align*}
	\bm W_0(K)=\{\bm w_0\in \bm W(K):\nabla\times\bm w_0=0,\bm n\times\bm w_0\times \bm n=\bm 0\}.
	\end{align*}
	Then $V(K)$ and $\bm W(K)$ admit the canonical $M$-decomposition with respect the spaces $\widetilde{V}(K)$ and $\widetilde{\bm W}(K)$.  
\end{lemma}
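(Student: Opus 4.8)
The plan is to reduce everything to results already established: the characterization of $M$-decompositions in \Cref{index_IM_M_decomposition}, the index computation in \Cref{example_triangle_M_S_index}, and the canonical-form result in \Cref{can_M_decomposition_theorem}. So the proof should be short.

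First I would show that $V(K)\times\bm W(K)=\mathcal P_k(K)\times\bm{\mathcal P}_k(K)$ admits an $M$-decomposition. For $k\ge 1$ this is exactly \Cref{Simple_V_W_tilde_triangle}; alternatively one checks conditions \eqref{iM1}--\eqref{iM4} directly, which is routine: the trace inclusions in \eqref{iM1} hold because along each edge $F$ the quantities $\bm n\times v|_F$ and the tangential component $\bm n\times\bm w\times\bm n|_F$ are of the form $\bm n\times p$ with $p\in\mathcal P_k(F)$; \eqref{iM2} holds because $\nabla\times$ lowers total degree by one; \eqref{iM3} is \Cref{example_triangle_M_S_index}; and \eqref{iM4} follows from the elementary identity $\bm n\times(\bm n\times p_k)=-p_k$ on each edge. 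For $k=0$ the assertion is simply the first line of \Cref{table_p0}. Hence by \Cref{index_IM_M_decomposition} an $M$-decomposition exists.

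Next I would invoke \Cref{can_M_decomposition_theorem}: once an $M$-decomposition is known to exist, $V(K)\times\bm W(K)$ automatically admits the canonical one, with associated spaces $\widetilde V(K)=\nabla\times\bm W(K)$ and $\widetilde{\bm W}_c(K)=\nabla\times V(K)\oplus\bm W_0(K)$, where $\bm W_0(K)$ is the space in \eqref{definition_W0}, which is precisely the $\bm W_0(K)$ appearing in the statement. The last step is then purely algebraic: I would check that $\nabla\times\bm{\mathcal P}_k(K)=\mathcal P_{k-1}(K)$ (with the convention $\mathcal P_{-1}(K)=\{0\}$ for $k=0$). The inclusion $\subseteq$ is clear since $\nabla\times\bm w=-\partial_y w_1+\partial_x w_2$ drops the degree by one; surjectivity follows by exhibiting, for any $p\in\mathcal P_{k-1}(K)$, the preimage $\bm w=(0,\int_{x_0}^{x}p(t,y)\,dt)^T\in\bm{\mathcal P}_k(K)$, which satisfies $\nabla\times\bm w=\partial_x w_2=p$. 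This identifies the canonical associated spaces with $\widetilde V(K)=\mathcal P_{k-1}(K)$ and $\widetilde{\bm W}(K)=\nabla\times V(K)\oplus\bm W_0(K)$ of \eqref{712}, completing the argument.

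Since essentially all the work has been front-loaded into the earlier lemmas, I do not expect a genuine obstacle here; the only points requiring a moment's care are the surjectivity identity $\nabla\times\bm{\mathcal P}_k(K)=\mathcal P_{k-1}(K)$ and, in the degenerate case $k=0$, noting directly that $\bm W_0(K)=\{\bm 0\}$ (because the three edge tangents of a triangle span $\mathbb R^2$), so that the canonical spaces collapse to $\{0\}$ and $\{\bm 0\}$ as recorded in \Cref{table_p0}.
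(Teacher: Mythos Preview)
Your argument is correct, but it takes a different route from the paper's. The paper does not invoke \Cref{can_M_decomposition_theorem}; instead it verifies the $M$-decomposition directly for the specific pair $(\widetilde V(K),\widetilde{\bm W}(K))$ in \eqref{712} by checking the three conditions of \Cref{condtion_3c_qauivalence}. Condition \eqref{condtion_3c_qauivalence_proof2} is inherited from \Cref{Simple_V_W_tilde_triangle}; for \eqref{condtion_3c_qauivalence_proof3} one observes that any $\bm w^\perp\in\widetilde{\bm W}^\perp(K)$ with vanishing tangential trace satisfies $\nabla\times\bm w^\perp=0$ (same integration-by-parts step as before), hence $\bm w^\perp\in\bm W_0(K)\subset\widetilde{\bm W}(K)$, forcing $\bm w^\perp=\bm 0$; and for \eqref{condtion_3c_qauivalence_proof4} the paper performs an explicit dimension count, the crux of which is showing $\dim\bm W_0(K)=\binom{k}{2}$ via the representation $\bm w_0=\nabla(\lambda_1\lambda_2\lambda_3\,p_{k-2})$.

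Your approach is shorter and makes better use of the general machinery already built: once \Cref{Simple_V_W_tilde_triangle} (or \Cref{example_triangle_M_S_index} combined with \Cref{index_IM_M_decomposition}) supplies \emph{some} $M$-decomposition, \Cref{can_M_decomposition_theorem} hands you the canonical one for free, and only the elementary identity $\nabla\times\bm{\mathcal P}_k(K)=\mathcal P_{k-1}(K)$ remains. The paper's route, by contrast, is more self-contained and yields as a byproduct the explicit formula $\dim\bm W_0(K)=\binom{k}{2}$, which your argument never needs to compute.
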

\begin{proof} 
	Similar with \Cref{Simple_V_W_tilde_triangle}, we only need to check the three conditions in \Cref{condtion_3c_qauivalence}. Moreover, the condition \eqref{condtion_3c_qauivalence_proof2} is the same with \Cref{Simple_V_W_tilde_triangle}. Hence we check the conditions \eqref{condtion_3c_qauivalence_proof3} and \eqref{condtion_3c_qauivalence_proof4} in the following.
	\begin{itemize}
		
		\item[(1)] $\gamma$ is  injective on the space $\widetilde {\bm W}^{\perp}(K)$: Let $\bm w^{\perp}\in \widetilde{\bm W}^{\perp}(K)$ satisfy $\bm n\times\bm w^{\perp}\times\bm n=\bm 0$, by the proof in (2) in \Cref{Simple_V_W_tilde_triangle}, we get $\nabla\times\bm w^{\perp}=0$. Therefore, $\bm w^{\perp}\in \bm W_0(K)\subset \widetilde{\bm W}(K)$. This implies $\bm w^{\perp}\in \widetilde{\bm W}^{\perp}(K)\cap \widetilde{\bm W}(K)=\{0\}$, hence $\gamma$ is an injective; thus $\gamma$ is an isomorphism, which implies 
		\begin{align*}
		\dim \gamma \widetilde{\bm W}^{\perp}(K)=\dim \widetilde{\bm W}^{\perp}(K).
		\end{align*}
		
		\item[(2)]$\gamma\widetilde{V}^{\perp}(K)\oplus\gamma\widetilde{\bm W}^{\perp}(K)=\bm M(\partial K)$:
		Since $\gamma\widetilde{V}^{\perp}(K)\oplus\gamma\widetilde{\bm W}^{\perp}(K)\subset\bm M(\partial K)$, then we only need  to prove
		\begin{align*}
		\dim\bm M(\partial K) &= \dim \gamma\widetilde{V}^{\perp}(K) + \dim \gamma\widetilde{\bm W}^{\perp}(K) \\
		&= \dim \widetilde{V}^{\perp}(K) + \dim \widetilde{\bm W}^{\perp}(K) \\
		& = \dim V(K) - \dim \widetilde V(K) + \dim \bm W(K) - \dim {\widetilde {\bm W}}(K)\\
		& =  \dim V(K) - \dim \widetilde V(K) + \dim \bm W(K) - \dim(\nabla\times V(K))-\dim(\bm W_0(K)),
		\end{align*}
		i.e., we need to prove $\dim(\bm W_0(K)) = \binom k 2 $.
		
		\hspace{1em} For any $\bm w_0\in \bm W_0(K)$, since $\nabla\times\bm w_0=0$, there exists $q_{k+1}\in \mathcal{P}_{k+1}(K)$ such that
		$\bm w_0=\nabla q_{k+1}$. Therefore, 
		\begin{align*}
		\bm 0=\bm n\times \bm w_0|_{\partial K}\times\bm n=\bm n\times \nabla q_{k+1}|_{\partial K}\times\bm n.
		\end{align*}
		Therefore, there is a constant $C_0$ and $p_{k-2}\in \mathcal{P}_{k-2}(K)$ such that $q_{k+1}=p_{k-2}\lambda_1\lambda_2\lambda_3+C_0$, i.e., $\dim \bm W_0(K)=\binom{k}{2}$.
	\end{itemize}
\end{proof}

In \Cref{table1}, we show numerical results on the unit square with a uniform triangular mesh. We obtain an optimal convergence rate for the solution $\bm u$ and superconvergence rate for $\nabla \times \bm u$.
\begin{table}[H]
	\small
	\caption{Results for a uniform triangular mesh and degree $k$ elements on the unit square} $\Omega = (0,1)\times(0,1)$
	\centering
	\label{table1}
	
	\begin{tabular}{c|c|c|c|c|c|c|c|c|c|c|c}
		\Xhline{1pt}
		\multirow{2}{*}{$k$} &
		\multirow{2}{*}{$\frac{\sqrt{2}}{h}$} &
		\multicolumn{2}{c|}{$\|\bm u-\bm u_h\|_{\mathcal{T}_h}$} &
		\multicolumn{2}{c|}{$\|\nabla\times(\bm u-\bm u_h)\|_{\mathcal{T}_h}$} &
		\multicolumn{2}{c|}{$\|q-q_h\|_{\mathcal{T}_h}$}&
		\multicolumn{2}{c|}{$\|\bm u-\bm u_h^{\star}\|_{\mathcal{T}_h}$}&
		\multicolumn{2}{c}{$\|\nabla\times(\bm u-\bm u_h^{\star})\|_{\mathcal{T}_h}$} \\
		\cline{3-12}		
		& &Error &Rate  &Error &Rate  &Error &Rate&Error &Rate
		&Error &Rate \\
		\hline

		&$2^3$	&2.14e-1	&2.96 	&8.07e+0	&1.71 	&1.71e-1	&2.73 	&1.80e-1	&3.04 	&1.71e-1	& \\
		1
		&$2^4$	&4.43e-2	&2.27 	&3.53e+0	&1.19 	&3.58e-2	&2.26 	&3.64e-2	&2.31 	&3.58e-2	&2.26 \\
		&$2^5$	&1.03e-2	&2.10 	&1.67e+0	&1.08 	&8.46e-3	&2.08 	&8.44e-3	&2.11 	&8.46e-3	&2.08 \\
		&$2^6$	&2.51e-3	&2.04 	&8.10e-1	&1.04 	&2.09e-3	&2.02 	&2.04e-3	&2.05 	&2.09e-3	&2.02 \\
		&$2^7$	&6.18e-4	&2.02 	&4.00e-1	&1.02 	&5.20e-4	&2.00 	&5.03e-4	&2.02 	&5.20e-4	&2.00 \\
		
		\Xhline{1pt}
		
		&$2^3$	&2.20e-2	&3.31 	&1.47e+0	&2.25 	&1.46e-2	&3.17 	&1.80e-2	&3.34 	&1.46e-2	& \\
		2
		&$2^4$	&2.51e-3	&3.13 	&3.40e-1	&2.12 	&1.81e-3	&3.02 	&2.04e-3	&3.14 	&1.81e-3	&3.02 \\
		&$2^5$	&3.00e-4	&3.06 	&8.15e-2	&2.06 	&2.26e-4	&3.00 	&2.44e-4	&3.07 	&2.26e-4	&3.00 \\
		&$2^6$	&3.67e-5	&3.03 	&2.00e-2	&2.03 	&2.82e-5	&3.00 	&2.98e-5	&3.03 	&2.82e-5	&3.00 \\
		&$2^7$	&4.54e-6	&3.02 	&4.94e-3	&2.02 	&3.52e-6	&3.00 	&3.68e-6	&3.02 	&3.52e-6	&3.00\\ 
		
		\Xhline{1pt}	
	\end{tabular}	
\end{table}

\subsection{Parallelogram Mesh}
{The mesh $\mathcal T_h$ is assumed to consist of parallelograms. For this mesh we construct $\mathcal T_h^*$ by subdividing each parallelogram into two subtriangles. The triangular mesh is assumed to be shape regular so satisfying the requirements from Section~\ref{messy_discussion}.}

For the parallelogram mesh we have the following which shows that $\mathcal{P}_k$ is not sufficient on such elements:
\begin{lemma}\label{nogo}
	For any integer $k\ge 1$, let 
	\begin{align*}
	&V(K)=\mathcal P_k(K),\qquad \bm W(K)=\bm{\mathcal P}_k(K), \\
	&\bm M(\partial K)=\{\bm \mu: \bm\mu|_F= \bm n\times p_k,\mbox{ for some }p_k\in \mathcal{P}_k(F)\mbox{ and  for each edge }F\subset\partial K\},
	\end{align*}
	then we have
	\begin{align*}
	I_M(V(K)\times \bm W(K)) =2.
	\end{align*}
\end{lemma}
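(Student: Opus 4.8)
The plan is to compute the three dimensions appearing in the definition \eqref{index_of_I_M} of $I_M$ directly, exactly as in the proof of \Cref{example_triangle_M_S_index}, but keeping careful track of the fact that a parallelogram has four edges rather than three. First I would record the two easy pieces. Since $K$ has four edges, $\dim\bm M(\partial K)=4(k+1)$. Since a scalar $v$ satisfies $\nabla\times v=\bm 0$ exactly when $v$ is constant, while $\bm n\times c$ is a nonzero (tangential) vector on every edge when $c\neq 0$, the map $c\mapsto\bm n\times c|_{\partial K}$ is injective on $\mathcal P_0(K)$, so $\dim\{\bm n\times v|_{\partial K}:v\in V(K),\ \nabla\times v=\bm 0\}=1$.

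The substance is the third term, $d:=\dim\{\bm n\times\bm w\times\bm n|_{\partial K}:\bm w\in\bm{\mathcal P}_k(K),\ \nabla\times\bm w=0\}$. Since $K$ is convex, hence simply connected, $\nabla\times\bm w=0$ with $\bm w\in\bm{\mathcal P}_k(K)$ holds iff $\bm w=\nabla p$ for some $p\in\mathcal P_{k+1}(K)$, so this curl-free space has dimension $\binom{k+3}{2}-1$. On an edge $F$, the tangential trace $\bm n\times(\nabla p)\times\bm n|_F$ is, up to the orientation fixed by $\bm n\times(\cdot)$, the tangential derivative $\partial_t p|_F$; hence the trace map $\bm w=\nabla p\mapsto\bm n\times\bm w\times\bm n|_{\partial K}$ vanishes exactly when $p$ is constant along each of the four edges. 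Walking around the $4$-cycle of edges (consecutive edges share a vertex) forces $p$ to take one common constant value $c$ on all of $\partial K$, so $p-c$ vanishes on each of the four edge-lines $\ell_1,\dots,\ell_4$. These four linear forms are pairwise coprime (adjacent edges are nonparallel; opposite edges are parallel but distinct), so $\ell_1\ell_2\ell_3\ell_4\mid(p-c)$ and $p=c+\ell_1\ell_2\ell_3\ell_4\,r$ with $r\in\mathcal P_{k-3}(K)$ (and $p=c$ when $k\le 2$). Thus the kernel of the trace map, as a subspace of $\{\nabla p\}$, has dimension $\dim\mathcal P_{k-3}(K)=\binom{k-1}{2}$ (convention $\binom{n}{2}=0$ for $n<2$), so $d=\binom{k+3}{2}-1-\binom{k-1}{2}$.

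Finally I would assemble the count: using the elementary identity $\binom{k+3}{2}-\binom{k-1}{2}=4k+2$,
\[
I_M(V(K)\times\bm W(K))=4(k+1)-1-\Bigl(\binom{k+3}{2}-1-\binom{k-1}{2}\Bigr)=4(k+1)-(4k+2)=2,
\]
for every $k\ge 1$; the small cases $k=1,2$ can be checked separately (there $\mathcal P_{k-3}(K)=\{0\}$ and the same formula still yields $2$). As a consistency check, replacing "four edges'' by "three'' and $\ell_1\ell_2\ell_3\ell_4$ by $\lambda_1\lambda_2\lambda_3$ reproduces $I_M=0$ for the triangle, in agreement with \Cref{example_triangle_M_S_index}.

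I expect the main obstacle to be the computation of $d$, specifically the kernel of the tangential trace map: one must argue that "$\partial_t p=0$ on every edge'' upgrades to "$p$ is globally constant on $\partial K$'', which is where the $4$-cycle structure of the boundary enters, and then that divisibility by the product of the four edge-lines is the right bookkeeping. This is precisely the step where the parallelogram's four edges produce the discrepancy $I_M=2$ rather than the $I_M=0$ of the triangle. Everything else is routine dimension arithmetic with binomial coefficients.
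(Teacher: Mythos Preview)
Your proposal is correct and follows essentially the same approach as the paper's own proof: both compute $\dim\bm M(\partial K)=4(k+1)$, the curl-free scalar contribution $=1$, and the tangential-trace dimension $d=\binom{k+3}{2}-1-\binom{k-1}{2}=4k+1$ via the gradient representation $\bm w=\nabla p_{k+1}$ and the kernel characterization $p_{k+1}-c\in\ell_1\ell_2\ell_3\ell_4\,\mathcal P_{k-3}(K)$. Your write-up just fills in a few more words on why the tangential trace vanishing forces $p$ constant on $\partial K$.
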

\begin{proof}
	By \Cref{example_triangle_M_S_index}, we have 
	\begin{gather*}
	\dim \{\bm n\times v|_{\partial K}:v\in V(K),\nabla \times v=0\}=1, \quad 	\dim \bm M(\partial K)=4k+4.
	\end{gather*}
	For $k\ge 1$, we have 
	\begin{align*}
	\hspace{2em}&\hspace{-2em}\dim\{\bm n\times\bm w\times \bm n|_{\partial K}:\bm w\in \bm W(K),\nabla\times \bm w=0 \}\\
	&=\dim\{\bm n\times(\nabla  p_{k+1})\times\bm n:p_{k+1}\in\mathcal P_{k+1}(K)\}\\
	&=\dim\{\nabla  p_{k+1}:p_{k+1}\in\mathcal P_{k+1}(K)\}\\
	&\quad-\dim \{p_{k+1}\in \mathcal P_{k+1}(K):\bm n\times(\nabla  p_{k+1})\times\bm n=\bm 0\}\\
	&=\dim\{\nabla  p_{k+1}:p_{k+1}\in\mathcal P_{k+1}(K)\}\\
	&\quad-\dim \{p_{k-3}\Pi_{i=1}^4(a_i\lambda_i+b_i\lambda_i+c_i) :p_{k-3}\in \mathcal P_{k-3}(K)\}\\
	&=\binom{k+3}{2}-1-\binom{k-1}{2}\\
	& = 4k+1.
	\end{align*}
	This implies our result.
\end{proof}
By enriching the space we can arrive at an $M$-decomposable set of spaces (note that since
$I_M=2$ in Lemma~\ref{nogo} we add just two functions to the spaces in that lemma):
\begin{lemma}[Enriched  Construction I]\label{pecI} For any integer $k\ge 0$, let 
	\begin{align*}
	V(K)=\mathcal P_k(K),\quad \bm W(K)=\bm{\mathcal P}_k(K)+ \nabla \span\{ x^{k+1}y,xy^{k+1}   \}, \\
	\bm M(\partial K)=\{\bm \mu: \bm\mu|_F= \bm n\times p_k,\mbox{ for some }p_k\in \mathcal{P}_k(F)\mbox{ and  for each edge }F\subset\partial K\},\\
	\widetilde{V}(K)=\mathcal P_{k-1}(K),\qquad   \widetilde{\bm W}(K)=\nabla\times V(K)\oplus\bm W_0(K).
	\end{align*}
	Then $V(K)$ and $\bm W(K)$ admit an $M$-decomposition with respect the spaces $\widetilde{V}(K)$ and $\widetilde{\bm W}(K)$.  
\end{lemma}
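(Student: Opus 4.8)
The plan is to mirror the proof of \Cref{Canonical_construction} (the triangular canonical construction) almost line by line, the only new ingredient being the parallelogram geometry. Note first that the proposed pair is the canonical one: $\nabla\times\bm W(K)=\nabla\times\bm{\mathcal P}_k(K)=\mathcal P_{k-1}(K)=\widetilde V(K)$ because the gradient enrichment is curl-free, and $\widetilde{\bm W}(K)=\nabla\times V(K)\oplus\bm W_0(K)$ is exactly the canonical $\widetilde{\bm W}_c(K)$. Conditions \eqref{M_decomposition_4} and \eqref{M_decomposition_2} are immediate: \eqref{M_decomposition_4} holds by the form of $\bm M(\partial K)$, and \eqref{M_decomposition_2} holds since $\nabla\times V(K)\subset\bm{\mathcal P}_{k-1}(K)\subset\bm W(K)$ and $\nabla\times\bm W(K)=\mathcal P_{k-1}(K)=\widetilde V(K)$. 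For \eqref{M_decomposition_1} the only point that needs work is $\bm n\times\bm W(K)\times\bm n\subset\bm M(\partial K)$ for the two enrichment fields $\nabla(x^{k+1}y)$ and $\nabla(xy^{k+1})$: one must check that their tangential traces have degree at most $k$ on each edge of $K$, which relies on the two added monomials being adapted to the edge directions of the parallelogram.

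The substance of the proof is \eqref{M_decomposition_3}, which by \Cref{condtion_3c_qauivalence} splits into \eqref{condtion_3c_qauivalence_proof2}, \eqref{condtion_3c_qauivalence_proof3} and \eqref{condtion_3c_qauivalence_proof4}. For \eqref{condtion_3c_qauivalence_proof2}: if $v^\perp\in\widetilde V^\perp(K)\subset\mathcal P_k(K)$ satisfies $\bm n\times v^\perp=\bm 0$ on $\partial K$, then $v^\perp$ vanishes on all four edges of $K$, hence (for $k\ge4$) $v^\perp=b_K q$ where $b_K=\ell_1\ell_2\ell_3\ell_4$ is the degree-four bubble of the parallelogram and $q\in\mathcal P_{k-4}(K)\subset\mathcal P_{k-1}(K)$; the orthogonality relation $(v^\perp,q)_K=0$ then forces $q=0$, and for $k\le 3$ one gets $v^\perp=0$ directly. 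This is the argument of \Cref{Simple_V_W_tilde_triangle}(1) with the triangle bubble $\lambda_1\lambda_2\lambda_3$ replaced by $b_K$. For \eqref{condtion_3c_qauivalence_proof3}: if $\bm w^\perp\in\widetilde{\bm W}^\perp(K)$ has zero tangential trace, then $(\bm w^\perp,\nabla\times v)_K=0$ for all $v\in V(K)$ since $\nabla\times V(K)\subset\widetilde{\bm W}(K)$, and integrating by parts via \eqref{integration_by_parts1} gives $(\nabla\times\bm w^\perp,v)_K=0$ for all $v\in V(K)$; since $\nabla\times\bm w^\perp\in\mathcal P_{k-1}(K)\subset V(K)$ (the gradient enrichment is annihilated by the curl), taking $v=\nabla\times\bm w^\perp$ yields $\nabla\times\bm w^\perp=0$, so $\bm w^\perp\in\bm W_0(K)\subset\widetilde{\bm W}(K)$ and therefore $\bm w^\perp=\bm 0$, exactly as in \Cref{Canonical_construction}(1).

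Condition \eqref{condtion_3c_qauivalence_proof4} then follows by a dimension count. The inclusion $\gamma\widetilde V^\perp(K)\oplus\gamma\widetilde{\bm W}^\perp(K)\subset\bm M(\partial K)$ holds by \eqref{M_decomposition_1}, the sum is $L^2(\partial K)$-orthogonal by \eqref{the_triple_orthogonal_property_proof_1}, and by the injectivity just proved its dimension equals $(\dim V(K)-\dim\widetilde V(K))+(\dim\bm W(K)-\dim\widetilde{\bm W}(K))$. Here $\dim V(K)-\dim\widetilde V(K)=\dim\mathcal P_k-\dim\mathcal P_{k-1}=k+1$; $\dim\bm W(K)=(k+1)(k+2)+2$; $\dim\nabla\times V(K)=\dim\mathcal P_k-1$; and $\dim\bm W_0(K)=\binom{k-1}{2}$, computed exactly as in \Cref{Canonical_construction}(2): any $\bm w_0\in\bm W_0(K)$ is a gradient $\nabla\phi$, the enrichment part cannot occur because its degree-$(k+2)$ leading monomials $x^{k+1}y$, $xy^{k+1}$ are not divisible by the degree-four bubble $b_K$, so $\phi\in\mathcal P_{k+1}(K)$, and $\bm n\times\bm w_0\times\bm n=\bm 0$ on $\partial K$ forces $\phi$ constant on $\partial K$, hence $\phi=c+b_K q$ with $q\in\mathcal P_{k-3}(K)$. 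Summing, $(k+1)+\bigl[(k+1)(k+2)+2-(\dim\mathcal P_k-1)-\binom{k-1}{2}\bigr]=(k+1)+(3k+3)=4(k+1)=\dim\bm M(\partial K)$, which gives \eqref{condtion_3c_qauivalence_proof4} and completes the verification of \Cref{def_M_decomposition}.

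I expect the main obstacle to be the two places where the parallelogram structure is genuinely needed rather than copied from the triangular case: verifying in \eqref{M_decomposition_1} that the tangential traces of $\nabla(x^{k+1}y)$ and $\nabla(xy^{k+1})$ lie in $\mathcal P_k$ on each edge (this does not hold for a general quadrilateral and forces the monomials to be aligned with the edge directions of $K$), and the divisibility argument pinning down $\dim\bm W_0(K)=\binom{k-1}{2}$, i.e.\ showing that the enriched gradients are not themselves members of $\bm W_0(K)$. Everything else is a transcription of the triangular proofs with the triangle bubble replaced by the degree-four parallelogram bubble $b_K=\ell_1\ell_2\ell_3\ell_4$.
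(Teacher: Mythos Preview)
Your proposal is correct and follows essentially the same route as the paper: reduce \eqref{M_decomposition_3} via \Cref{condtion_3c_qauivalence} to the three conditions, dispatch \eqref{condtion_3c_qauivalence_proof2}--\eqref{condtion_3c_qauivalence_proof3} exactly as in the triangular canonical case (with the quartic parallelogram bubble $b_K$ in place of $\lambda_1\lambda_2\lambda_3$), and finish \eqref{condtion_3c_qauivalence_proof4} by the dimension count $\dim\bm W_0(K)=\binom{k-1}{2}$.

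The one place your argument differs from the paper is how the enrichment is excluded from $\bm W_0(K)$. The paper writes $\bm w_0=\nabla q_{k+1}+a\nabla(x^{k+1}y)+b\nabla(xy^{k+1})$, sets the tangential trace to zero, and asserts ``by a direct calculation'' that $a=-b$ when $k=0$ and $a=b=0$ when $k\ge1$; you instead argue that the degree-$(k+2)$ homogeneous part $ax^{k+1}y+bxy^{k+1}$ cannot lie in $[b_K]_4\cdot\widetilde{\mathcal P}_{k-2}$. These are the same fact viewed from two sides; your version is arguably cleaner once one notes $[b_K]_4=L_1^2L_2^2$ (with $L_1,L_2$ the two edge directions), so every element of $[b_K]_4\cdot\widetilde{\mathcal P}_{k-2}$ has both $L_i$-degrees at least~$2$, while $x^{k+1}y$ and $xy^{k+1}$ do not. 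Your observation that this step (and the trace inclusion \eqref{M_decomposition_1} for the enrichment) genuinely requires the monomials $x^{k+1}y$, $xy^{k+1}$ to be taken in coordinates aligned with the edges of $K$ is well placed; the paper leaves this implicit.
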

\begin{proof} 
	As discussed in \Cref{Simple_V_W_tilde_triangle}, we only need to prove the condition of \eqref{condtion_3c_qauivalence_proof4}. 
	Since $\gamma\widetilde{V}^{\perp}(K)\oplus\gamma\widetilde{\bm W}^{\perp}(K)\subset\bm n\times\mathcal{P}_k(\partial K)$, then we only need  to prove
	\begin{align*}
	\dim\bm M(\partial K) =  \dim V(K) - \dim \widetilde V(K) + \dim \bm W(K) - \dim(\nabla\times V(K))-\dim(\bm W_0(K)),
	\end{align*}
	i.e., we need to prove $\dim(\bm W_0(K)) = \binom {k-1} {2} $.
	
	Since $\nabla\times\bm w_0=0$, there exists $q_{k+1}\in \mathcal{P}_{k+1}(K)$, and constants $a$, $b$,
	such that
	$\bm w_0=\nabla q_{k+1}
	+a\nabla x^{k+1}y+b\nabla xy^{k+1}
	$. Therefore, by a direct calculation we can get
	\begin{align*}
	\bm 0 &=\bm n\times \bm w_0|_{\partial K}\times\bm n=\bm n\times (\nabla q_{k+1}
	+a\nabla x^{k+1}y+b\nabla xy^{k+1})|_{\partial K}\times\bm n\\
	&=\nabla_{\partial K}q_{k+1}+a\nabla_{\partial K} x^{k+1}y+b\nabla_{\partial K}xy^{k+1}.
	\end{align*}
	Then  $a=-b$ when $k=0$, $a=b=0$ when $k\ge 1$, and $q_{k+1}$ is a constant on $\partial K$. Therefore, there is a constant $C_0$ and $p_{k-3}\in \mathcal{P}_{k-3}(K)$, such that $q_{k+1}=p_{k-3}\Pi_{i=1}^4(a_ix+b_iy+c_i)+C_0$.
	Then it holds
	\begin{align*}
	\bm w_0=\nabla (p_{k-3}\Pi_{i=1}^4(a_ix+b_iy+c_i)).
	\end{align*}
	This implies
	\begin{align*}
	\dim \bm W_0(K)=\binom{k-1}{2}.
	\end{align*}
\end{proof}

Now, we give another construction:
\begin{lemma}[Enriched  Construction II] \label{pecII}
	For any integer $k\ge 0$, let 
	\begin{align*}
	&V(K)=\mathcal P_k(K),\quad \bm W(K)=\bm{\mathcal P}_k(K)+ \nabla \{ x^{k+1}y,xy^{k+1}   \}+\left(^y_{-x}\right)\widetilde{\mathcal P}_k(K), \\
	&\bm M(\partial K)=\{\bm \mu: \bm\mu|_F= \bm n\times p_k,\mbox{ for some }p_k\in \mathcal{P}_k(F)\mbox{ and  for each edge }F\subset\partial K\},\\
	&\widetilde{\bm W}(K)=\nabla\times V(K)\oplus\bm W_0(K).
	\end{align*}
	Then $V(K)$ and $\bm W(K)$ admit an $M$-decomposition with respect the spaces $\widetilde{V}(K)$ and $\widetilde{\bm W}(K)$.  
\end{lemma}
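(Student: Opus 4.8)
The plan is to invoke \Cref{condtion_3c_qauivalence} and argue exactly as for \Cref{pecI} and \Cref{Canonical_construction}, taking the associated spaces to be $\widetilde V(K)=\nabla\times\bm W(K)$ and $\widetilde{\bm W}(K)=\nabla\times V(K)\oplus\bm W_0(K)$. The one feature distinguishing the present construction from \Cref{pecI} is the extra summand: for a homogeneous $p\in\widetilde{\mathcal P}_k(K)$, Euler's identity gives $\nabla\times\!\big(\left(^y_{-x}\right)p\big)=-2p-(x\partial_x+y\partial_y)p=-(k+2)p$, so that $\nabla\times\bm W(K)=\nabla\times\bm{\mathcal P}_k(K)+\widetilde{\mathcal P}_k(K)=\mathcal P_{k-1}(K)+\widetilde{\mathcal P}_k(K)=\mathcal P_k(K)=V(K)$. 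Hence $\widetilde V(K)=V(K)$, so $\widetilde V^{\perp}(K)=\{\bm 0\}$ and \eqref{condtion_3c_qauivalence_proof2} is vacuous; this identity also yields \eqref{M_decomposition_2}. Conditions \eqref{M_decomposition_1} and \eqref{M_decomposition_4} are checked by the same edge-by-edge computation as in the earlier constructions, using that on each edge the tangential traces of $\left(^y_{-x}\right)p$ and of $\nabla(x^{k+1}y),\nabla(xy^{k+1})$ reduce to polynomials of degree $\le k$; one also records $\nabla\times V(K)\cap\bm W_0(K)=\{\bm 0\}$ (a field that is at once a polynomial curl and a polynomial gradient is a harmonic gradient, and if its tangential trace vanishes its potential is harmonic and constant on the connected $\partial K$, hence constant), so that $\widetilde{\bm W}(K)$ is well defined.

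Next I would prove \eqref{condtion_3c_qauivalence_proof3}, that $\gamma$ is injective on $\widetilde{\bm W}^{\perp}(K)$, copying the argument of \Cref{Canonical_construction}: if $\bm w^{\perp}\in\widetilde{\bm W}^{\perp}(K)$ and $\bm n\times\bm w^{\perp}\times\bm n=\bm 0$ on $\partial K$, then $\bm n\times\bm w^{\perp}=\bm 0$ on $\partial K$, and for every $v\in V(K)$ the identity \eqref{integration_by_parts1} together with $\nabla\times v\in\nabla\times V(K)\subset\widetilde{\bm W}(K)$ gives $0=(\bm w^{\perp},\nabla\times v)_K=(\nabla\times\bm w^{\perp},v)_K$; since $\nabla\times\bm W(K)=V(K)$ we may take $v=\nabla\times\bm w^{\perp}$ and conclude $\nabla\times\bm w^{\perp}=0$, so $\bm w^{\perp}\in\bm W_0(K)\subset\widetilde{\bm W}(K)$ and therefore $\bm w^{\perp}=\bm 0$. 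In particular $\dim\gamma\widetilde{\bm W}^{\perp}(K)=\dim\widetilde{\bm W}^{\perp}(K)$.

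It remains to verify \eqref{condtion_3c_qauivalence_proof4}. Since $\gamma\widetilde V^{\perp}(K)\oplus\gamma\widetilde{\bm W}^{\perp}(K)\subset\bm M(\partial K)$ and the sum is automatically $L^2(\partial K)$-orthogonal, it suffices to compare dimensions; using $\widetilde V^{\perp}(K)=\{\bm 0\}$ and $\dim\gamma\widetilde{\bm W}^{\perp}=\dim\widetilde{\bm W}^{\perp}$, this amounts to
\[
\dim\bm W(K)-\dim(\nabla\times V(K))-\dim\bm W_0(K)=\dim\bm M(\partial K)=4(k+1).
\]
The three summands $\bm{\mathcal P}_k(K)$, $\nabla\{x^{k+1}y,xy^{k+1}\}$ and $\left(^y_{-x}\right)\widetilde{\mathcal P}_k(K)$ are linearly independent (the last two consist of degree-$(k+1)$ homogeneous vector fields on which $\bm x\cdot(\cdot)$ vanishes precisely on $\left(^y_{-x}\right)\widetilde{\mathcal P}_k(K)$), so $\dim\bm W(K)=(k+1)(k+2)+2+(k+1)$; together with $\dim(\nabla\times V(K))=\dim\mathcal P_k(K)-1$, the displayed identity is equivalent to $\dim\bm W_0(K)=\binom{k-1}{2}$.

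Finally I would compute $\dim\bm W_0(K)=\binom{k-1}{2}$ exactly as in \Cref{pecI}. The key observation is that the added term does not enlarge $\bm W_0(K)$: if $\bm w_0\in\bm W(K)$ has $\nabla\times\bm w_0=0$, then comparing the homogeneous degree-$k$ contribution $-(k+2)\widetilde{\mathcal P}_k(K)$ of the rotation term with $\nabla\times\bm{\mathcal P}_k(K)\subset\mathcal P_{k-1}(K)$ forces the $\left(^y_{-x}\right)\widetilde{\mathcal P}_k(K)$ component of $\bm w_0$ to vanish, so $\bm w_0$ lies in the space of \Cref{pecI} and $\bm W_0(K)$ coincides with the one there; the argument of \Cref{pecI} then applies verbatim (write $\bm w_0=\nabla\psi$ with $\psi$ in $\mathcal P_{k+1}(K)$ plus a combination of $x^{k+1}y,xy^{k+1}$; vanishing of the tangential trace forces, for $k\ge1$, the coefficients of $x^{k+1}y,xy^{k+1}$ to be zero and $\psi$ to be constant on $\partial K$, hence $\psi=C_0+p_{k-3}\prod_{i=1}^4(a_ix+b_iy+c_i)$ with $p_{k-3}\in\mathcal P_{k-3}(K)$), giving $\dim\bm W_0(K)=\dim\mathcal P_{k-3}(K)=\binom{k-1}{2}$; the case $k=0$ is checked directly and reproduces the last line of \Cref{table_p0}. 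The main obstacle is this bookkeeping — tracking which degree-$(k+1)$ pieces of $\bm W(K)$ end up in $\nabla\times\bm W(K)$ as opposed to $\bm W_0(K)$ — together with the routine but edge-dependent verification that $\bm n\times\bm W(K)\times\bm n\subset\bm M(\partial K)$ on a general parallelogram.
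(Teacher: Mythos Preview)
The paper does not give a proof of this lemma; it is stated and then the paper passes directly to the numerical tables. Your proposal therefore cannot be compared line by line with the paper, but it correctly fills in the argument using the same template the paper uses for \Cref{pecI} and \Cref{Canonical_construction}, and it is essentially right.

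The one genuinely new point compared with \Cref{pecI}, which you identify correctly, is that the extra summand $\left(^y_{-x}\right)\widetilde{\mathcal P}_k(K)$ enlarges $\nabla\times\bm W(K)$ from $\mathcal P_{k-1}(K)$ to $\mathcal P_k(K)=V(K)$ via Euler's identity, so that $\widetilde V(K)=V(K)$ and $\widetilde V^{\perp}(K)=\{0\}$; this is exactly why the lemma does not state $\widetilde V(K)$ separately. With $\widetilde V^{\perp}(K)$ trivial, \eqref{condtion_3c_qauivalence_proof2} is vacuous and the whole burden lies on the dimension count \eqref{condtion_3c_qauivalence_proof4}. Your observation that the rotation term, having nonzero curl in degree $k$, cannot appear in $\bm W_0(K)$, so that $\bm W_0(K)$ coincides with that of \Cref{pecI} and $\dim\bm W_0(K)=\binom{k-1}{2}$, is the right way to close the count; the linear-independence check of the three summands of $\bm W(K)$ via $\bm x\cdot(\cdot)$ is clean.

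One caveat worth making explicit (it affects \Cref{pecI} equally, and the paper glosses over it there too): the inclusion $\bm n\times\bm W(K)\times\bm n\subset\bm M(\partial K)$ requires that the tangential derivative of $x^{k+1}y$ and $xy^{k+1}$ restrict to polynomials of degree at most $k$ on each edge. On an edge whose normal has both components nonzero, $x^{k+1}y$ restricted to the edge is generically of degree $k+2$ in the arclength parameter, so its tangential derivative has degree $k+1$. Thus the construction should be read with $(x,y)$ denoting affine coordinates adapted to the parallelogram (so that the edges are coordinate lines), not arbitrary physical coordinates; you flag this at the end but it would be worth saying so outright rather than calling the check ``routine.''
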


In \Cref{table2,table21}, we show  numerical results on a parallelogram with a uniform parallelogram mesh. We obtain the optimal convergence rate for the solution $\bm u$ and superconvergence rate for $\nabla \times \bm u$. In terms of accuracy, and order of convergence, the enriched space in Lemma~\ref{pecII} does not offer any advantages over the space in Lemma~\ref{pecI} as is to be expected since the space in Lemma~\ref{pecI} was already sufficiently 
enriched to have an $M$-decomposition.
\begin{table}[H]
	\small
	\caption{Results for parallelogram mesh and enriched case I on $\Omega=\{(x,y): 0\le x-\sqrt{3}y\le 1, 0\le y\le 1/2\}$}
	\centering
	\label{table2}
	
	\begin{tabular}{c|c|c|c|c|c|c|c|c|c|c|c}
		\Xhline{1pt}
		\multirow{2}{*}{$k$} &
		\multirow{2}{*}{$\frac{\sqrt{2}}{h}$} &
		\multicolumn{2}{c|}{$\|\bm u-\bm u_h\|_{\mathcal{T}_h}$} &
		\multicolumn{2}{c|}{$\|\nabla\times(\bm u-\bm u_h)\|_{\mathcal{T}_h}$} &
		\multicolumn{2}{c|}{$\|q-q_h\|_{\mathcal{T}_h}$}&
		\multicolumn{2}{c|}{$\|\bm u-\bm u_h^{\star}\|_{\mathcal{T}_h}$}&
		\multicolumn{2}{c}{$\|\nabla\times(\bm u-\bm u_h^{\star})\|_{\mathcal{T}_h}$} \\
		\cline{3-12}		
		& &Error &Rate  &Error &Rate  &Error &Rate&Error &Rate
		&Error &Rate \\
		\hline	
		
		&$2^3$	&1.30e+1	&-2.58	&1.15e+2	&-3.63	&4.35e+0	&-1.89	&1.29e+1	&-2.58	&4.35e+0	&\\
		1
		&$2^4$	&1.57e-1	&6.37	&5.11e+0	&4.49	&6.49e-2	&6.07	&1.54e-1	&6.39	&6.49e-2	&6.07\\
		&$2^5$	&1.48e-2	&3.41	&2.11e+0	&1.28	&1.08e-2	&2.59	&1.32e-2	&3.55	&1.08e-2	&2.59\\
		&$2^6$	&2.52e-3	&2.55	&1.02e+0	&1.04	&2.37e-3	&2.19	&1.93e-3	&2.77	&2.37e-3	&2.19\\
		&$2^7$	&5.23e-4	&2.27	&5.07e-1	&1.01	&5.71e-4	&2.05	&3.34e-4	&2.53	&5.71e-4	&2.05\\

		\Xhline{1pt}
		
		&$2^3$	&7.19e-1	&2.86	&9.18e+0	&2.74	&1.40e-1	&4.45	&7.15e-1	&2.84	&1.40e-1	&\\
		2
		&$2^4$	&1.46e-2	&5.62	&8.16e-1	&3.49	&2.68e-3	&5.71	&1.43e-2	&5.65	&2.68e-3	&5.71\\
		&$2^5$	&1.35e-3	&3.44	&1.66e-1	&2.29	&3.23e-4	&3.05	&1.31e-3	&3.45	&3.23e-4	&3.05\\
		&$2^6$	&1.49e-4	&3.18	&3.85e-2	&2.11	&4.03e-5	&3.00	&1.44e-4	&3.18	&4.03e-5	&3.00\\
		&$2^7$	&1.76e-5	&3.08	&9.29e-3	&2.05	&5.04e-6	&3.00	&1.70e-5	&3.09	&5.04e-6	&3.00\\

		\Xhline{1pt}	
	\end{tabular}	
\end{table}

\begin{table}[H]
	\small
	\caption{Results for parallelogram mesh and enriched  case II on $\Omega=\{(x,y): 0\le x-\sqrt{3}y\le 1, 0\le y\le 1/2\}$}
	\centering
	\label{table21}
	\begin{tabular}{c|c|c|c|c|c|c|c|c|c|c|c}
		\Xhline{1pt}
		\multirow{2}{*}{$k$} &
		\multirow{2}{*}{$\frac{\sqrt{2}}{h}$} &
		\multicolumn{2}{c|}{$\|\bm u-\bm u_h\|_{\mathcal{T}_h}$} &
		\multicolumn{2}{c|}{$\|\nabla\times(\bm u-\bm u_h)\|_{\mathcal{T}_h}$} &
		\multicolumn{2}{c|}{$\|q-q_h\|_{\mathcal{T}_h}$}&
		\multicolumn{2}{c|}{$\|\bm u-\bm u_h^{\star}\|_{\mathcal{T}_h}$}&
		\multicolumn{2}{c}{$\|\nabla\times(\bm u-\bm u_h^{\star})\|_{\mathcal{T}_h}$} \\
		\cline{3-12}		
		& &Error &Rate  &Error &Rate  &Error &Rate&Error &Rate
		&Error &Rate \\
		\hline	
		
		&$2^3$	&1.20e+1	&-0.64	&1.75e+2	&-0.54	&3.61e+0	&1.66	&1.19e+1	&-0.68	&3.61e+0	&\\
		1
		&$2^4$	&1.60e-1	&6.23	&5.18e+0	&5.08	&6.52e-2	&5.79	&1.56e-1	&6.24	&6.52e-2	&5.79\\
		&$2^5$	&1.47e-2	&3.44	&2.11e+0	&1.30	&1.08e-2	&2.59	&1.31e-2	&3.58	&1.08e-2	&2.59\\
		&$2^6$	&2.51e-3	&2.55	&1.02e+0	&1.05	&2.37e-3	&2.19	&1.91e-3	&2.78	&2.37e-3	&2.19\\
		&$2^7$	&5.22e-4	&2.27	&5.05e-1	&1.01	&5.71e-4	&2.05	&3.32e-4	&2.53	&5.71e-4	&2.05\\
		
		\Xhline{1pt}
		
		&$2^3$	&7.69e+0	&0.29	&1.35e+2	&-0.30	&1.04e+0	&1.45	&7.61e+0	&0.29	&1.04e+0	&\\
		2
		&$2^4$	&1.46e-2	&9.04	&8.17e-1	&7.37	&2.66e-3	&8.61	&1.43e-2	&9.06	&2.66e-3	&8.61\\
		&$2^5$	&1.35e-3	&3.44	&1.66e-1	&2.30	&3.22e-4	&3.04	&1.31e-3	&3.45	&3.22e-4	&3.04\\
		&$2^6$	&1.49e-4	&3.18	&3.84e-2	&2.11	&4.02e-5	&3.00	&1.44e-4	&3.18	&4.02e-5	&3.00\\
		&$2^7$	&1.76e-5	&3.08	&9.27e-3	&2.05	&5.03e-6	&3.00	&1.70e-5	&3.09	&5.03e-6	&3.00\\

		\Xhline{1pt}	
	\end{tabular}	
\end{table}

\subsection{Rectangle Mesh}
{The mesh $\mathcal T_h$ is assumed to consist of squares. For this mesh we construct $\mathcal T_h^*$ by subdividing each square into two subtriangles. The triangular mesh is  shape regular so satisfying the requirements from Section~\ref{messy_discussion} (for a general rectangular mesh, the triangular mesh must be shape regular).}

In this section, we assume that all elements $K$ are squares with edges parallel to the coordinate axes. We denote by ${\mathcal Q}_k$ the standard space of polynmials in two variables with maximum degree $k$ in each variable.  Unlike
in the parallelogram case, we consider the use of $\mathcal Q_k$ based elements as these are often used for square elements.
Our first lemma shows that simple $\mathcal Q_k$ elements alone do not suffice.
\begin{lemma}
	For any integer $k\ge 1$, let 
	\begin{align*}
	&V(K)=\mathcal Q_k(K),\qquad \bm W(K)=\bm{\mathcal Q}_k(K), \\
	&\bm M(\partial K)=\{\bm \mu: \bm\mu|_F= \bm n\times p_k,\mbox{ for some }p_k\in \mathcal{P}_k(F)\mbox{ and  for each edge }F\subset\partial K\}.
	\end{align*}
	We have
	\begin{align*}
	I_M(V(K)\times\bm W(K))=2.
	\end{align*}
\end{lemma}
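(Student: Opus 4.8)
The plan is to evaluate the three quantities in the definition \eqref{index_of_I_M} of $I_M(V(K)\times\bm W(K))$ directly, exactly as in \Cref{example_triangle_M_S_index} and \Cref{nogo}. Since the square $K$ has four edges and, by definition, $\bm M(\partial K)$ is spanned edge by edge by $\bm n\times\mathcal P_k(F)$, we have $\dim\bm M(\partial K)=4(k+1)$. A scalar $v\in V(K)=\mathcal Q_k(K)$ has $\nabla\times v=\bm 0$ iff $v$ is constant, so $\{\bm n\times v|_{\partial K}:v\in V(K),\ \nabla\times v=\bm 0\}$ is one dimensional. The whole content of the lemma is therefore the computation of the third term, $D:=\dim\{\bm n\times\bm w\times\bm n|_{\partial K}:\bm w\in\bm W(K),\ \nabla\times\bm w=0\}$.

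To compute $D$, I would first describe the scalar potentials: because $K$ is simply connected, a curl-free $\bm w\in\bm{\mathcal Q}_k(K)$ equals $\nabla\phi$, and $\nabla\phi\in\bm{\mathcal Q}_k(K)$ holds exactly for
\[
\phi\in\mathcal G(K):=\mathcal Q_k(K)\oplus\operatorname{span}\{x^{k+1},\,y^{k+1}\},\qquad \dim\mathcal G(K)=(k+1)^2+2 .
\]
The inclusion $\supseteq$ is immediate, and $\subseteq$ follows by checking monomials of $\phi$: a monomial $x^iy^j$ with $i,j\ge1$ forces $i\le k$ and $j\le k$, while a pure power $x^{k+1}$ (resp. $y^{k+1}$) is admissible because its $y$- (resp. $x$-) derivative vanishes. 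The tangential trace map $\phi\mapsto\bm n\times\nabla\phi\times\bm n|_{\partial K}$ on $\mathcal G(K)$ has image precisely the set whose dimension is $D$, and its kernel consists of those $\phi\in\mathcal G(K)$ whose tangential derivative vanishes on every edge, i.e. $\phi$ is constant on $\partial K$. Writing $b_K$ for the product of the four affine functions vanishing on the edges of $K$, a polynomial constant on $\partial K$ is a constant plus $b_K\,r$ for some polynomial $r$; since multiplication by $b_K$ raises the degree in $x$ and in $y$ each by two, and a would-be contribution $x^{k+1}$ or $y^{k+1}$ would force a product of a quadratic in one variable with a nonzero polynomial to be constant (impossible), one gets $b_K r\in\mathcal G(K)$ iff $r\in\mathcal Q_{k-2}(K)$, with the convention $\mathcal Q_{k-2}=\{0\}$ for $k=1$. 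Hence the kernel has dimension $1+(k-1)^2$, so
\[
D=\dim\mathcal G(K)-\bigl(1+(k-1)^2\bigr)=(k+1)^2+2-1-(k-1)^2=4k+1 .
\]

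Substituting into \eqref{index_of_I_M} then gives $I_M(V(K)\times\bm W(K))=(4k+4)-1-(4k+1)=2$, as claimed. The one step needing genuine care (and the main obstacle) is the identification of $\mathcal G(K)$ together with the kernel of its tangential trace: one must notice both that exactly the two extra pure powers $x^{k+1},y^{k+1}$ survive in the potential space and that divisibility by the edge product $b_K$ controls the kernel; everything else is the same bookkeeping with dimensions as in \Cref{example_triangle_M_S_index} and \Cref{nogo}.
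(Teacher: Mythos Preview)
Your proof is correct and follows essentially the same approach as the paper's own proof: both identify the scalar potential space for curl-free $\bm w\in\bm{\mathcal Q}_k(K)$ as $\mathcal Q_k(K)\oplus\operatorname{span}\{x^{k+1},y^{k+1}\}$, compute the kernel of the tangential trace via the bubble $b_K$, and arrive at the identical dimension count $(k+1)^2+2-1-(k-1)^2=4k+1$. Your write-up is in fact a bit more explicit than the paper's about why exactly the two extra pure powers appear and why the kernel is controlled by $b_K\cdot\mathcal Q_{k-2}(K)$, but the argument is the same.
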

\begin{proof}
	It is easy to see that
	\begin{align*}
	\dim \{\bm n\times v|_{\partial K}:v\in V(K),\nabla \times v=\bm 0\}=1, \quad  	\dim \bm M(\partial K)=4k+4.
	\end{align*}
	Moreover, we have 
	\begin{align*}
	\hspace{2em}&\hspace{-2em}\dim\{\bm n\times\bm w\times \bm n|_{\partial K}:\bm w\in \bm W(K),\nabla\times\bm w=0 \}\\
	&=\dim\{\bm n\times(\nabla p_{k+1})\times\bm n:p_{k+1}=x^{\alpha}y^{\beta},\alpha\le k,\beta\le k; \alpha =k+1, \beta=0; \alpha =0, \beta=k+1\}\\
	&=\dim\{\nabla p_{k+1}:p_{k+1}=x^{\alpha}y^{\beta}, \alpha\le k,\beta\le k\}\\
	&\quad-\dim \{p_{k+1}=x^{\alpha}y^{\beta}, \alpha\le k-2,\beta\le k-2\}\\
	&=(k+1)^2+2-1-(k-1)^2\\
	&=4k+1.
	\end{align*}
	This implies that $	I_M(V(K)\times\bm W(K))=2$ and completes our proof.
\end{proof}

The previous result shows that we must add two basis functions to the space.  A possible choice is given by the 
following lemma:

\begin{lemma}[Enriched  Construction I] 
	For any integer $k\ge 1$, let 
	\begin{gather*}
	V(K)=\mathcal Q_k(K),\quad \bm W(K)=\bm{\mathcal Q}_k(K)+ \nabla \textup{span}\{ x^{k+1}y,xy^{k+1}       \}, \\
	\bm M(\partial K)=\{\bm \mu: \bm\mu|_F= \bm n\times p_k,\mbox{ for some }p_k\in \mathcal{P}_k(F)\mbox{ and  for each edge }F\subset\partial K\},\\
	\widetilde{V}(K)=\mathcal Q_{k-1}(K),\qquad  \widetilde{\bm W}(K)=\nabla\times V(K)\oplus\bm W_0(K).
	\end{gather*}
	Then $V(K)$ and $\bm W(K)$ admit an $M$-decomposition with respect the spaces $\widetilde{V}(K)$ and $\widetilde{\bm W}(K)$.  
\end{lemma}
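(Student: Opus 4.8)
The plan is to argue exactly as in \Cref{Simple_V_W_tilde_triangle}, \Cref{Canonical_construction} and \Cref{pecI}: invoke \Cref{condtion_3c_qauivalence}, so that -- granting \eqref{M_decomposition_1}, \eqref{M_decomposition_2} and \eqref{M_decomposition_4} -- it suffices to check the three items \eqref{condtion_3c_qauivalence_proof2}, \eqref{condtion_3c_qauivalence_proof3} and \eqref{condtion_3c_qauivalence_proof4}. Conditions \eqref{M_decomposition_1} and \eqref{M_decomposition_4} are immediate, since on any axis-parallel edge $F$ the trace of a $\mathcal Q_k$ function, and the tangential trace of each enriching gradient $\nabla(x^{k+1}y)$ and $\nabla(xy^{k+1})$, is a one-variable polynomial of degree at most $k$, which spans $\bm M(F)$; and \eqref{M_decomposition_2} holds because $\nabla\times V(K)\subset\widetilde{\bm W}(K)$ by the definition of $\widetilde{\bm W}(K)$, while $\nabla\times\bm W(K)=\nabla\times\bm{\mathcal Q}_k(K)\subset\widetilde V(K)$ (by \Cref{uniquness_of_widetilde_V} the admissible choice is $\widetilde V(K)=\nabla\times\bm W(K)$). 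The injectivity statements \eqref{condtion_3c_qauivalence_proof2} and \eqref{condtion_3c_qauivalence_proof3} are obtained verbatim from the triangle/parallelogram proofs, with the simplex bubble $\lambda_1\lambda_2\lambda_3$ replaced by the square bubble: after an affine normalization so that $K=(0,1)^2$ this is $b_K=x(1-x)y(1-y)$, and one uses that any polynomial vanishing on $\partial K$ is divisible by $b_K$ and that $b_K>0$ in the interior of $K$.

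Everything then reduces, as in \Cref{pecI}, to the single dimension identity \eqref{condtion_3c_qauivalence_proof4}, which once \eqref{condtion_3c_qauivalence_proof2}, \eqref{condtion_3c_qauivalence_proof3} are known becomes
\begin{align*}
\big(\dim V(K)-\dim\widetilde V(K)\big)+\big(\dim\bm W(K)-\dim\widetilde{\bm W}(K)\big)=\dim\bm M(\partial K)=4(k+1),
\end{align*}
and, since $\widetilde{\bm W}(K)=\nabla\times V(K)\oplus\bm W_0(K)$ (the sum is direct because an element of $\nabla\times V(K)\cap\bm W_0(K)$ equals $\nabla\times v$ with $\nabla\times(\nabla\times v)=0$, hence $\Delta v=0$, and $\partial_n v=0$ on $\partial K$, hence is $0$), the whole problem collapses to computing $\dim\bm W_0(K)$. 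To do this I would take $\bm w_0\in\bm W_0(K)$, write $\bm w_0=\nabla q$ (legitimate since $\nabla\times\bm w_0=0$ and $K$ is simply connected), observe that $\nabla q\in\bm W(K)$ forces $q$ to lie, modulo constants, in $\mathcal Q_k(K)\oplus\operatorname{span}\{x^{k+1},y^{k+1},x^{k+1}y,xy^{k+1}\}$, observe that $\bm n\times\bm w_0\times\bm n=0$ on $\partial K$ means the tangential derivative of $q$ vanishes on every edge, so $q$ is constant on the connected boundary and hence $q=C_0+b_K\,s$, and finally restrict these two descriptions of $q$ to the edges $x=0,1$ and $y=0,1$: a degree count shows the four enrichment coefficients all drop out and $s\in\mathcal Q_{k-2}(K)$. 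Thus $\bm W_0(K)=\nabla\big(b_K\,\mathcal Q_{k-2}(K)\big)$, so $\dim\bm W_0(K)=\dim\mathcal Q_{k-2}(K)=(k-1)^2$ (read as $0$ when $k=1$); plugging in $\dim V(K)=(k+1)^2$, $\dim\widetilde V(K)=(k+1)^2-1$, $\dim\bm W(K)=2(k+1)^2+2$ and $\dim\widetilde{\bm W}(K)=(k+1)^2-1+(k-1)^2$ makes both sides equal to $4(k+1)$, which verifies \eqref{condtion_3c_qauivalence_proof4} and completes the argument through \Cref{condtion_3c_qauivalence}.

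The step I expect to be the real obstacle is the dimension bookkeeping for the $\mathcal Q_k$-type spaces, which does not parallel the simplicial case as cleanly as one might hope: differentiation on $\mathcal Q_k$ does not lower the total degree uniformly, so $\nabla\times\bm W(K)$ is strictly larger than $\mathcal Q_{k-1}(K)$ and both the curl-free part of the enriched $\bm W(K)$ and $\bm W_0(K)$ itself must be identified monomial by monomial -- the saving grace being that $x^{k+1},y^{k+1},x^{k+1}y,xy^{k+1}$ are precisely the enrichment directions and the restriction of any of them to an edge of the square is a one-variable polynomial whose degree is what forces the enrichment coefficients to vanish. Once that monomial analysis is carried out, the injectivity of $\gamma$ and the assembly of the count are a routine transcription of the proof of \Cref{pecI} with $b_K=x(1-x)y(1-y)$ in place of $\lambda_1\lambda_2\lambda_3$.
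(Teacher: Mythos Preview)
Your overall strategy---reducing to \Cref{condtion_3c_qauivalence}, checking injectivity of the two trace maps using the square bubble $b_K=x(1-x)y(1-y)$, and collapsing \eqref{condtion_3c_qauivalence_proof4} to the computation $\dim\bm W_0(K)=(k-1)^2$---is exactly what the paper intends (it omits the proof as ``similar to the proofs in the previous section''), and your determination of $\bm W_0(K)=\nabla\big(b_K\,\mathcal Q_{k-2}(K)\big)$ is correct.

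There is, however, a genuine inconsistency in your argument that you have half noticed but not resolved.  You assert $\nabla\times\bm W(K)=\nabla\times\bm{\mathcal Q}_k(K)\subset\widetilde V(K)=\mathcal Q_{k-1}(K)$ in order to verify \eqref{M_decomposition_2}, yet a few lines later you (correctly) observe that $\nabla\times\bm W(K)$ is \emph{strictly larger} than $\mathcal Q_{k-1}(K)$: for instance $x^k=-\partial_y(x^ky)\in\nabla\times\bm{\mathcal Q}_k(K)$ but $x^k\notin\mathcal Q_{k-1}(K)$.  Thus the inclusion $\nabla\times\bm W(K)\subset\widetilde V(K)$ fails for the stated $\widetilde V(K)$, so \eqref{M_decomposition_2} is \emph{not} satisfied as written.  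Your dimension count then silently switches horses: you take $\dim\widetilde V(K)=(k+1)^2-1$, which is the dimension of $\nabla\times\bm W(K)$, not of $\mathcal Q_{k-1}(K)$ (which has dimension $k^2$).  With $k^2$ in place of $(k+1)^2-1$ the two sides of your identity differ by $2k$, so the count would fail for every $k\ge1$.

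The fix is that the choice $\widetilde V(K)=\mathcal Q_{k-1}(K)$ in the lemma appears to be a slip carried over by analogy with the simplicial case; by \Cref{uniquness_of_widetilde_V} the only admissible choice is $\widetilde V(K)=\nabla\times\bm W(K)=\{-\partial_y w_1+\partial_x w_2:w_1,w_2\in\mathcal Q_k(K)\}$, which has dimension $(k+1)^2-1$.  Once you replace the stated $\widetilde V(K)$ by this space, your proof goes through verbatim.  You should flag this correction explicitly rather than carry both the false inclusion and the contradicting remark in the same argument.
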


We omit the proofs of this and the following lemma, since they are similar to the proofs in the previous section.
An alternative choice of enriched space is given next.

\begin{lemma}[Enriched  Construction II] 
	For any integer $k\ge 0$, let 
	\begin{gather*}
	V(K)=\mathcal Q_k(K),\quad \bm W(K)=\bm{\mathcal Q}_k(K)+ \nabla  \textup{span}\{ x^{k+1}y,xy^{k+1}       \}
	+ \textup{span} \left\{ \left({ }^{x^ky^{k+1}}_{x^{k+1}y^k}\right)\right\},\\
	\bm M(\partial K)=\{\bm \mu\;|\; \bm\mu|_F= \bm n\times\mathcal{P}_k(F)\mbox{ for each edge }F\subset\partial K\}, \\ \widetilde{V}(K)=\mathcal Q_{k}(K),\qquad  \widetilde{\bm W}(K)=\nabla\times V(K)\oplus\bm W_0(K).
	\end{gather*}
	Then $V(K)$ and $\bm W(K)$ admit an $M$-decomposition with respect the spaces $\widetilde{V}(K)$ and $\widetilde{\bm W}(K)$.  
\end{lemma}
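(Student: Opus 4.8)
The plan is to mirror the proofs of \Cref{Simple_V_W_tilde_triangle}, \Cref{Canonical_construction} and \Cref{pecI}. First I would dispose of the three ``cheap'' conditions \eqref{M_decomposition_1}, \eqref{M_decomposition_2} and \eqref{M_decomposition_4}. Because $K$ is a square with axis-parallel edges, $\mathcal Q_k(K)$ restricts to $\mathcal P_k(F)$ on every edge $F$, and a direct computation shows that the tangential traces of $\nabla(x^{k+1}y)$, $\nabla(xy^{k+1})$ and $(x^ky^{k+1},x^{k+1}y^k)^T$ are polynomials of degree at most $k$ on each horizontal or vertical edge; this gives \eqref{M_decomposition_1}, while \eqref{M_decomposition_4} is verbatim the earlier argument since $\bm n\times(\bm n\times p)=-p$. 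For \eqref{M_decomposition_2}, the inclusion $\nabla\times V(K)\subset\widetilde{\bm W}(K)$ is immediate from $\widetilde{\bm W}(K)=\nabla\times V(K)\oplus\bm W_0(K)$, so the only delicate point is $\nabla\times\bm W(K)\subset\widetilde V(K)=\mathcal Q_k(K)$; in fact one verifies the sharper identity $\nabla\times\bm W(K)=\mathcal Q_k(K)$. This is where the enriching vector field enters: $\nabla\times\bm{\mathcal Q}_k(K)$ is the codimension one subspace of $\mathcal Q_k(K)$ missing the ``corner'' monomial $x^ky^k$, the two gradient generators are annihilated by $\nabla\times$, and the extra vector's curl is a nonzero multiple of that missing monomial; together with \Cref{uniquness_of_widetilde_V} this is exactly what makes $\widetilde V(K)=\mathcal Q_k(K)$ the (unique) admissible choice, and it is the structural novelty relative to \Cref{pecI}.

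Next I would invoke \Cref{condtion_3c_qauivalence}. Since $\widetilde V(K)=V(K)=\mathcal Q_k(K)$, we have $\widetilde V^{\perp}(K)=\{0\}$, so \eqref{condtion_3c_qauivalence_proof2} is vacuous and $\gamma\widetilde V^{\perp}(K)=\{0\}$; hence only \eqref{condtion_3c_qauivalence_proof3} (injectivity of $\gamma$ on $\widetilde{\bm W}^{\perp}(K)$) and the reduced form $\gamma\widetilde{\bm W}^{\perp}(K)=\bm M(\partial K)$ of \eqref{condtion_3c_qauivalence_proof4} remain. The injectivity is the argument of item~(1) in the proof of \Cref{Canonical_construction}: if $\bm w^{\perp}\in\widetilde{\bm W}^{\perp}(K)$ satisfies $\bm n\times\bm w^{\perp}\times\bm n=\bm 0$ on $\partial K$, then $\bm n\times\bm w^{\perp}=\bm 0$; using \eqref{integration_by_parts1} with test functions from $V(K)$, the inclusion $\nabla\times V(K)\subset\widetilde{\bm W}(K)$, and then the choice $v=\nabla\times\bm w^{\perp}\in\nabla\times\bm W(K)\subset V(K)$, one gets $\nabla\times\bm w^{\perp}=0$, so $\bm w^{\perp}\in\bm W_0(K)\subset\widetilde{\bm W}(K)$ and therefore $\bm w^{\perp}\in\widetilde{\bm W}(K)\cap\widetilde{\bm W}^{\perp}(K)=\{\bm 0\}$. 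Granting this, $\gamma\widetilde{\bm W}^{\perp}(K)$ has dimension $\dim\widetilde{\bm W}^{\perp}(K)=\dim\bm W(K)-\dim\left(\nabla\times V(K)\right)-\dim\bm W_0(K)$, and as it is automatically a subspace of $\bm M(\partial K)$, surjectivity reduces to the scalar identity $\dim\bm W(K)-\dim\left(\nabla\times V(K)\right)-\dim\bm W_0(K)=\dim\bm M(\partial K)=4(k+1)$.

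The main obstacle is thus this single dimension count, and in particular the evaluation of $\dim\bm W_0(K)$, done exactly as in \Cref{Canonical_construction} and \Cref{pecI}. Any $\bm w_0\in\bm W_0(K)$ is curl free, hence $\bm w_0=\nabla\phi$ for some polynomial $\phi$, and $\nabla\phi\in\bm W(K)$ pins $\phi$ down to the associated ``potential space'' $P=\mathcal Q_k(K)+\textup{span}\{x^{k+1},y^{k+1},x^{k+1}y,xy^{k+1}\}$ (the first two monomials from $\nabla\bm{\mathcal Q}_k(K)$, the last two from the explicit gradient generators). The condition $\bm n\times\bm w_0\times\bm n=\bm 0$ in \eqref{definition_W0} forces $\nabla\phi$ to have zero tangential trace on $\partial K$, i.e.\ $\phi$ constant on $\partial K$; up to an additive constant $\phi=b_K\,r$, with $b_K$ the quartic bubble of the square (the product of its four edge linear forms) and $r$ a polynomial. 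A degree count gives $r\in\mathcal Q_{k-1}(K)$, and imposing $b_K r\in P$ forces the coefficients in $b_K r$ of the ``forbidden'' monomials $x^{k+1}y^{j}$ and $x^{i}y^{k+1}$ with $i,j\ge2$ to vanish; counting these linear constraints on $r$ produces $\dim\bm W_0(K)$ and hence \eqref{M_decomposition_3}. I expect the bookkeeping in this last step to be the only genuinely delicate part, and the smallest values of $k$ (where several added monomials coincide) would be checked separately by hand. The preceding Enriched Construction~I for squares, and its analogue for general rectangles, follow by the identical three-step scheme, only the arithmetic in this final count changing.
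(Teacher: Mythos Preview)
Your overall strategy --- verify \eqref{M_decomposition_1}, \eqref{M_decomposition_2}, \eqref{M_decomposition_4} directly, then invoke \Cref{condtion_3c_qauivalence} and reduce \eqref{M_decomposition_3} to a dimension count involving $\bm W_0(K)$ --- is exactly the template used in \Cref{Simple_V_W_tilde_triangle}, \Cref{Canonical_construction} and \Cref{pecI}, and the paper itself omits the present proof with a reference to those. In that sense you are on the right track. However, one concrete computation is wrong, and it breaks the argument as written.

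You assert that the curl of the enriching vector $(x^ky^{k+1},\,x^{k+1}y^k)^T$ is a nonzero multiple of the ``missing'' monomial $x^ky^k$. Using the paper's convention $\nabla\times\bm v=-\partial_y v_1+\partial_x v_2$ one finds
\[
\nabla\times\begin{pmatrix}x^ky^{k+1}\\[2pt]x^{k+1}y^k\end{pmatrix}
=-(k+1)x^ky^k+(k+1)x^ky^k=0,
\]
so this vector is in fact curl-free --- indeed it equals $\tfrac{1}{k+1}\nabla\bigl(x^{k+1}y^{k+1}\bigr)$. Hence $\nabla\times\bm W(K)=\nabla\times\bm{\mathcal Q}_k(K)$, which for $k\ge 1$ is the codimension-one subspace of $\mathcal Q_k(K)$ that omits $x^ky^k$, not all of $\mathcal Q_k(K)$. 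By \Cref{uniquness_of_widetilde_V} the space $\widetilde V(K)$ in any $M$-decomposition is forced to be $\nabla\times\bm W(K)$, so the identification $\widetilde V(K)=\mathcal Q_k(K)$ and your crucial simplification $\widetilde V^{\perp}(K)=\{0\}$ both fail; the reduction of \eqref{condtion_3c_qauivalence_proof4} to $\gamma\widetilde{\bm W}^{\perp}(K)=\bm M(\partial K)$ then collapses. This strongly suggests a sign typo in the stated enrichment (for instance $(-x^ky^{k+1},\,x^{k+1}y^k)^T$ has curl $2(k+1)x^ky^k$, and with that correction your argument goes through verbatim, in parallel with \Cref{pecII}). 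If the vector is kept as printed, you must instead work with the one-dimensional $\widetilde V^{\perp}(K)$ and the enlarged potential space $P=\mathcal Q_k(K)+\mathrm{span}\{x^{k+1},y^{k+1},x^{k+1}y,xy^{k+1},x^{k+1}y^{k+1}\}$, and redo both the injectivity check \eqref{condtion_3c_qauivalence_proof2} and the dimension count accordingly.
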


In \Cref{table3}, we show the numerical results on unit square with rectangle mesh and we obtain optimal convergence rate for the solution $\bm u$ and superconvergence rate for $\nabla \times \bm u$ using Enrichment Construction I elements.  Numerical results for Enrichment Construction II elements show that exactly the same error
is observed so we do note reproduce the results here.

\begin{table}[H]
	\small
	\caption{Results for a uniform square mesh with Enrichment case I on the unit square $\Omega = (0,1)\times(0,1)$}
	\centering
	\label{table3}

	\begin{tabular}{c|c|c|c|c|c|c|c|c|c|c|c}
		\Xhline{1pt}
		\multirow{2}{*}{$k$} &
		\multirow{2}{*}{$\frac{\sqrt{2}}{h}$} &
		\multicolumn{2}{c|}{$\|\bm u-\bm u_h\|_{\mathcal{T}_h}$} &
		\multicolumn{2}{c|}{$\|\nabla\times(\bm u-\bm u_h)\|_{\mathcal{T}_h}$} &
		\multicolumn{2}{c|}{$\|q-q_h\|_{\mathcal{T}_h}$}&
		\multicolumn{2}{c|}{$\|\bm u-\bm u_h^{\star}\|_{\mathcal{T}_h}$}&
		\multicolumn{2}{c}{$\|\nabla\times(\bm u-\bm u_h^{\star})\|_{\mathcal{T}_h}$} \\
		\cline{3-12}		
		& &Error &Rate  &Error &Rate  &Error &Rate&Error &Rate
		&Error &Rate \\
		\hline	
		
		&$2^3$	&1.78e-1	&2.60	&6.93e+0	&1.46	&1.41e-1	&2.65	&4.09e-2	&3.82	&2.17e-1	&\\
		1
		&$2^4$	&3.90e-2	&2.19	&3.05e+0	&1.19	&2.88e-2	&2.29	&5.38e-3	&2.93	&5.11e-2	&2.09\\
		&$2^5$	&9.19e-3	&2.08	&1.44e+0	&1.09	&6.83e-3	&2.08	&9.12e-4	&2.56	&1.26e-2	&2.02\\
		&$2^6$	&2.23e-3	&2.04	&6.98e-1	&1.04	&1.68e-3	&2.02	&1.96e-4	&2.22	&3.14e-3	&2.00\\
		&$2^7$	&5.51e-4	&2.02	&3.44e-1	&1.02	&4.20e-4	&2.00	&4.70e-5	&2.06	&7.85e-4	&2.00\\
		\Xhline{1pt}
		
		&$2^3$	&1.86e-2	&2.87	&8.44e-1	&2.32	&8.50e-3	&3.09	&1.37e-2	&2.08	&2.44e-2	&\\
		2
		&$2^4$	&1.10e-3	&4.08	&1.37e-1	&2.62	&8.76e-4	&3.28	&4.76e-4	&4.84	&3.09e-3	&2.98\\
		&$2^5$	&1.34e-4	&3.05	&3.31e-2	&2.05	&1.09e-4	&3.00	&5.64e-5	&3.08	&3.88e-4	&2.99\\
		&$2^6$	&1.65e-5	&3.02	&8.13e-3	&2.02	&1.36e-5	&3.00	&6.90e-6	&3.03	&4.85e-5	&3.00\\
		&$2^7$	&2.04e-6	&3.01	&2.02e-3	&2.01	&1.70e-6	&3.00	&8.53e-7	&3.02	&6.07e-6	&3.00\\	
		\Xhline{1pt}	
	\end{tabular}	
\end{table}

\section{Conclusion}
In this paper we have shown that the $M$-decomposition, together with sufficiently rich auxilary spaces, is sufficient  to guarantee optimal order convergence
for the vector 2D problem arising from Maxwell's equations.  This can be used to evaluate and construct HDG
schemes on two commonly occurring elements (triangles and squares).

An interesting extension which we have not yet completed would be to devise spaces on general quadrilateral elements that have an $M$-decomposition.  More interesting is to devise a similar theory for the full Maxwell's
equations in three dimensions.  Not only is this more complicated, but it is also essentially different compared to 2D. This will be explored in our future work. 
\bibliographystyle{amsplain}
\bibliography{HDG,Maxwell,Mypapers,ADD}

\end{document}